\newtheorem{theorem}{Theorem}[section]
\newenvironment{customthm}[1]
  {\innercustomthm}
  {\endinnercustomthm}
\newtheorem{corollary}[theorem]{Corollary}
\newcommand{\ignore}[1]{}
\newtheorem{lemma}[theorem]{Lemma}
\newtheorem{prop}[theorem]{Proposition}
\newtheorem{fact}[theorem]{Fact}
\theoremstyle{remark}
\newtheorem{remark}[theorem]{Remark}
\newtheorem{notation}[theorem]{Notation}
\theoremstyle{definition}
\newtheorem{definition}[theorem]{Definition}
\newenvironment{customq}[1]
  {\innercustomq}
  {\endinnercustomq}
\newtheorem{prob}[theorem]{Problem}
\def\acts{\curvearrowright}
\title{Minimal operations over permutation groups\footnote{An extended abstract of this paper is accepted for publication by the 40th Annual ACM/IEEE Symposium on Logic in Computer Science
(LICS)~\cite{Binarysymm}.}}
\author[1]{Paolo Marimon}
\author[2]{Michael Pinsker}
\affil[1]{Institut f\"{u}r Diskrete Mathematik \& Geometrie, TU Wien. Vienna, Austria.}
\affil[2]{Institut f\"{u}r Diskrete Mathematik \& Geometrie, TU Wien. Vienna, Austria.}
\begin{document}

\maketitle
\begin{abstract} 
We classify the possible types of minimal operations above an arbitrary permutation group. Above the trivial group, a theorem of Rosenberg yields that there are five types of minimal operations. We show that above any non-trivial permutation group there are at most four such types. Indeed, except above Boolean groups acting freely on a set, there are only three. In particular, this is the case for oligomorphic permutation groups, for which we improve a result of Bodirsky and Chen by showing one of the types in their classification does not exist. Building on these results, we answer three questions of Bodirsky that were previously open.
\end{abstract}
\textbf{Keywords:} Minimal operation, clone, local clone, permutation group, oligomorphic permutation group,   constraint satisfaction problem,  homogeneous structure.\\ 
\textbf{Primary MSC classes:} 08A40, 20B05, 08A70.\\
\textbf{Secondary MSC classes:} 03C05, 20B07, 20M20.

\tableofcontents

\section{Introduction}
\subsection{Motivation}\label{subsect:motivation}

One of the aims of universal algebra is  classifying algebras up to term equivalence~\cite{szendrei1986clones}: term equivalence gives a language-independent way of studying most important properties of an algebra such as subalgebras, congruences, endomorphisms, etc. Indeed, much of universal algebra focuses on obtaining structural information about an algebra from the existence of a given kind of term operation. This perspective naturally gives rise to the notion of a function clone: a set of operations from some finite power of the domain into itself closed under composition and projections (Definition~\ref{def:clone}). Algebras up to term equivalence correspond to clones: we can associate to each algebra the clone of its term operations, term equivalent algebras give rise to the same clone, and every clone is associated to the algebra with function symbols for all of its operations.\\

The clones on a given domain $B$ form a lattice with respect to inclusion. Given a transformation monoid $\mathcal{T}$ on $B$, the clones whose unary operations equal $\mathcal{T}$ form an interval in this lattice~\cite[Proposition 3.1]{szendrei1986clones}, called the monoidal interval of $\mathcal{T}$, in which the clone $\langle \mathcal{T}\rangle$ generated by the transformation monoid and projections is the smallest element: it consists of all functions obtained from functions in $\mathcal{T}$ by adding dummy variables. Since monoidal intervals partition the lattice of clones, the task of classifying the clones on a given domain can be split into their study. Universal algebra distinguishes itself from group and semigroup theory by its focus on higher arity operations. This might be one reason why most research has been on idempotent algebras and clones, i.e. the interval lying above the trivial monoid, where all unary operations in the clones are trivial. Moving to  monoidal intervals for larger monoids $\mathcal{T}$, it becomes easier for an operation to generate another modulo $\mathcal{T}$. Hence, overall, large monoids tend to have monoidal intervals which are smaller and easier to describe\footnote{Note that it is not the case that the larger the monoid, the smaller the associated monoidal interval. For example, for $B$ finite, there are permutation groups whose monoidal interval consists of a single element~\cite{palfy1982contributions, kearnes2001collapsing}, whilst the monoidal interval above the full monoid $\mathcal{T}_B$  has $(|B|+1)$-many elements~\cite{burle1967classes}.} than that of the trivial monoid~\cite{burle1967classes, Haddad_Rosenberg_1994, machida2007minimal, PINSKER200859}, and semigroup theory plays a stronger role in their study ~\cite{krokhin1995monoid, PINSKER200859}.\\

\ignore{
Moving to  monoidal intervals for larger monoids $\mathcal{T}$, one would expect the interval to shrink since the larger the monoid, the easier it is for a given operation to generate another modulo $\mathcal{T}$. The way this works is not straightforward, and semigroup theory plays a stronger role in the picture~\cite{krokhin1995monoid, PINSKER200859}: for $B$ finite there are permutation groups whose monoidal interval consists of a single element~\cite{palfy1982contributions, kearnes2001collapsing}, whilst the monoidal interval above the full monoid $\mathcal{T}_B$  has $(|B|+1)$-many elements~\cite{burle1967classes}. Nevertheless, for large monoids it is often possible to describe their monoidal intervals~\cite{burle1967classes, Haddad_Rosenberg_1994, machida2007minimal, PINSKER200859}.}

It is natural to ask what is the minimal amount of structure that can be found in a clone of a given monoidal interval if that clone contains an essential operation, i.e. an  operation which depends on more than just one variable (and hence the clone is not just the disguised semigroup $\langle \mathcal{T}\rangle$). This gives rise to the definition of a minimal clone above $\langle \mathcal{T}\rangle$: a clone $\mathcal{C}\supsetneq\langle\mathcal{T}\rangle$ with no other clone sandwiched in between them. It is easy to see that any minimal clone above $\langle \mathcal{T}\rangle$ is generated by a single operation together with $\langle \mathcal{T}\rangle$, which, when chosen to be of minimal arity, is called a minimal operation above $\langle \mathcal{T}\rangle$. Any unary operation not in $\mathcal{T}$ generates together with $\langle\mathcal{T}\rangle$ a clone outside the monoidal interval of $\mathcal{T}$ all whose operations are essentially unary, i.e.\ depend on at most one variable  (Definition~\ref{def:essential}). If on the other hand a minimal operation above $\langle\mathcal{T}\rangle$ is of arity $>1$, it has to be essential,  and the clone it generates together with $\langle\mathcal{T}\rangle$ lies in the monoidal interval of $\mathcal{T}$. Hence, minimal elements in the monoidal interval above $\langle \mathcal{T}\rangle$ correspond to minimal clones above $\langle \mathcal{T}\rangle$ whose minimal operations are non-unary.  Over a finite set, every clone $\mathcal{D}\supsetneq{\langle\mathcal{T}\rangle}$ contains a minimal clone above ${\langle\mathcal{T}\rangle}$. While this is in general  false over an infinite set (cf.~\cite{csakany2005minimal}), it still holds true in many important situations, e.g. if $\mathcal{T}$ is the automorphism group of an $\omega$-categorical structure in a finite language (see below,~\cite{BodChen}). \\

The question of understanding the minimal clones above the clone $\langle \mathrm{Id} \rangle$  generated by the trivial monoid, which is the clone $\mathcal{P}_B$ consisting of all  projections on the domain $B$, has a long history in universal algebra. A classical theorem of Rosenberg~\cite{fivetypes} shows that over a finite set $B$  there are only five types of minimal operations:

\begin{theorem}[Five Types Theorem~\cite{fivetypes}]\label{thm:firstheorem} Let $B$ be finite and let $f$ be a minimal operation above $\langle \mathrm{Id} \rangle$. Then $f$ is of one of the following types:
\begin{enumerate}
    \item a unary operation;
    \item a binary operation;
    \item a ternary majority operation;
    \item a ternary minority operation of the form $x+y+z$ in a Boolean group $(B,+)$;
    \item a $k$-ary semiprojection for some $k\geq 3$.
\end{enumerate}
\end{theorem}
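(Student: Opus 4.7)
The plan is to exploit the minimality of $f$ by analyzing all of its identification minors. Let $k$ denote the arity of $f$. The cases $k=1$ and $k=2$ directly yield types 1 and 2, so assume $k\geq 3$ and consider the minimal clone $\mathcal{C}$ generated by $f$ together with the projections. Any operation obtained from $f$ by identifying two variables has arity at most $k-1$ and lies in $\mathcal{C}$. If such an identification minor were essential, the clone it generates together with the projections would be sandwiched between $\mathcal{P}_B$ and $\mathcal{C}$, and by minimality of $\mathcal{C}$ it would have to equal one of them; equalling $\mathcal{C}$ would contradict the minimality of the arity $k$, so the minor generates only the projections. Since the monoid is trivial, this forces every identification minor of $f$ to be a projection.

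With this rigidity in hand, for each ordered pair $i\neq j$ in $[k]$ let $\sigma(i,j)\in [k]\setminus\{j\}$ be the index such that substituting $x_i$ for $x_j$ in $f$ yields the projection $\pi_{\sigma(i,j)}$. The next step is a case split on the combinatorics of $\sigma$. First, if $\sigma(i,j)\notin\{i,j\}$ for all pairs, one argues by chasing the consistency relations between overlapping identifications (e.g.\ comparing the minors obtained by two successive variable collapses in different orders) that $\sigma(i,j)$ is actually a single constant $l\in[k]$ independent of $(i,j)$; this says exactly that $f$ collapses to $\pi_l$ whenever two of its inputs coincide, so $f$ is a $k$\textendash ary semiprojection (type 5). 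Second, if some $\sigma(i,j)\in\{i,j\}$, one derives symmetry constraints that are only compatible with $k=3$, leaving two subcases: either $f(x,x,y)=f(x,y,x)=f(y,x,x)=x$, giving a ternary majority (type 3), or the same three minors all equal $y$, giving a ternary minority (part of type 4).

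The main obstacle is the last step in the minority branch: promoting the identities $f(x,x,y)=f(x,y,x)=f(y,x,x)=y$ to an actual Boolean group representation $f(x,y,z)=x+y+z$. The plan is to fix an element $a\in B$ and define $x+y:=f(x,y,a)$, then verify commutativity, associativity, and the property $x+x=a$ using the minor identities together with minimality of $\mathcal{C}$; repeated applications of the identification relations force the binary term to satisfy all axioms of a Boolean group with neutral element $a$, and a direct computation recovers $f(x,y,z)=x+y+z$. This is the delicate part of the argument because only the three identification identities are directly available, and one must generate the full group-theoretic content from them, using finiteness of $B$ to conclude that the additive structure is defined on all of $B$. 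The rigidity argument in the semiprojection case (showing $\sigma$ is constant) is the secondary combinatorial obstacle; together, these two steps carry the weight of the theorem.
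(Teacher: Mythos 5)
You are reconstructing Rosenberg's classical theorem, which the paper only cites and does not prove (the closest it comes is quoting the non-idempotent variant, Theorem~\ref{fivetypes}, from the literature, and remarking after Lemma~\ref{lem:boolmin} that its Section~5 machinery recovers the characterisation of minimal minorities as Boolean Steiner $3$-quasigroups). Your overall strategy --- show all identification minors of $f$ are projections, classify which projections they can be, and then build a Boolean group in the minority case --- is exactly the standard route, and the first and last steps are sound in outline.

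The genuine gap is in the middle case analysis, which as stated is incorrect. A $k$-ary semiprojection onto the $l$-th variable has $\sigma(i,j)=l$ for \emph{every} pair, so $\sigma(i,j)\in\{i,j\}$ whenever $l\in\{i,j\}$: semiprojections of every arity $k\geq 3$ land in your second case, not your first, and your claim that the second case forces $k=3$ is therefore false. Conversely, for $k=3$ the ternary minority satisfies $\sigma(i,j)\notin\{i,j\}$ for all pairs (each minor returns the non-repeated variable) without $\sigma$ being constant, so your first case does not yield a semiprojection there. The correct split is by arity: for $k\geq 4$, \'{S}wierczkowski's lemma shows that if all identification minors are projections then $f$ is a semiprojection; for $k=3$ one must enumerate all eight assignments of projections to the three minors $f(x,x,y)$, $f(x,y,x)$, $f(y,x,x)$, which up to permuting variables give majority, minority, semiprojection, and a fourth Pixley/Mal'cev-type case in which exactly two of the three minors equal the non-repeated variable (e.g.\ $f(x,x,y)\approx f(y,x,x)\approx y$ but $f(x,y,x)\approx x$). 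Your proposal never mentions this fourth case, and it does not disappear by symmetry: ruling it out requires a separate composition argument using minimality, and in the non-idempotent setting of Theorem~\ref{fivetypes} it in fact survives as the quasi-Mal'cev type. Until the Pixley case is excluded and the dichotomy on $\sigma$ is repaired, the classification into the five types does not go through. The Boolean-group step, while only sketched, is the standard argument and is recoverable: commutativity and $x+x=a$ follow from the symmetry of $f$ and the minority identities, while associativity is the delicate part and needs the Boolean law $f(x,y,f(z,y,w))\approx f(x,z,w)$, which is where minimality must be invoked again, exactly as in the paper's Lemma~\ref{lem:minBool}.
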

We describe these operations in Definition~\ref{def:mainops}. Whilst splitting the possible minimal operations into five types, Rosenberg does not give necessary and sufficient conditions for an operation of these types to be minimal except for the cases of a unary operation and of a minority. Hence, current research, e.g.~\cite{brady2023coarse, machida2024orderly}, focuses on describing in more detail the possible behaviours of the remaining three cases. From Post~\cite{postlattice} and Cs\'{a}k\'{a}ny~\cite{csakany1983all}, whose work precedes that of Rosenberg, we know all of the minimal clones on a two-element and a three-element domain. However, even the problem of describing all minimal clones on a four-element domain is still open. We refer the reader to~\cite{szendrei2024ivo} for a recent review of this topic, which discusses progress of~\cite{brady2023coarse} on the case of binary operations. Other review papers on minimal operations are~\cite{quackenbush1995survey} and~\cite{csakany2005minimal}. In a sense, a full classification of the minimal clones on a finite domain seems to be harder than that of the maximal clones, for which we have a complete description by Rosenberg~\cite{rosenberg1965structure, rosenberg1970uber}.\\

It has been observed in~\cite{BodCSP} that basically following the same proof, a weakening of Theorem~\ref{thm:firstheorem} which still features five types of minimal operations can be obtained for arbitrary transformation monoids $\mathcal T$ which do not contain any constant operations (Theorem~\ref{fivetypes}). However, for $\mathcal{T}$ a non-trivial permutation group, one might expect stronger statements to hold. This is, in particular,  suggested both by a long history of results showing that the monoidal interval of many  permutation groups collapses into a single element~\cite{Haddad_Rosenberg_1994,palfy1984unary, palfy1982contributions, kearnes2001collapsing, krokhin1995monoid}, and by a result of Bodirsky and Chen (Theorem~\ref{thm:fourtypesbodchen} below) who obtained a stronger classification with only four types above certain infinite permutation groups. Function clones whose unary part forms a permutation group are of particular importance since on the one hand, they correspond to relational structures which are cores (in the sense of~\cite{hell1992core, bodirsky2007cores}, see below), and since on the other hand, they promise connections  between group theory and universal algebra.\\

We investigate the problem of classifying minimal operations above permutation groups both over a finite and an infinite domain. Our main interest lies in clones that arise as the higher-arity symmetries (i.e. polymorphisms) of some relational structure. Analogously to the case of groups, where, over arbitrary domains, automorphism groups of relational structures correspond to those permutation groups which are closed in the pointwise convergence topology, polymorphism clones of relational structures correspond to  topologically closed clones. Hence, we will focus on clones closed with respect to the pointwise convergence topology (Definition~\ref{def:closed}), also known as locally closed clones; this topology trivialises on a finite domain. Consequently, for  $G$ a permutation group acting on $B$, we investigate minimality above $\overline{\langle G\rangle}$ in the lattice of closed clones (Definitions~\ref{def:minimalc} and~\ref{def:minimalop}), where $\overline{\langle G\rangle}$ is the closed clone generated by $G$: it consists of all (essentially unary) functions arising by the addition of  dummy variables to a unary function 
which on each finite subset of $B$ agrees with some element of $G$. We remark that focusing on closed clones only increases the generality of our results, basically  since the notion of generation it gives rise to is more complex than generation just by composition and projections. In particular, our methods also classify minimal operations above permutation groups in the lattice of all clones on a given domain (cf. Remark~\ref{rem:oknoclosure}). Meanwhile, our focus on closed clones allows us to obtain several results for infinite-domain constraint satisfaction problems (CSPs).\\

Given a relational structure $B$, its constraint satisfaction problem $\mathrm{CSP}(B)$ is the computational problem  of deciding for a finite structure $A$ (in the same relational signature) whether it maps homomorphically into $B$. On a finite domain, a deep theory using tools from universal algebra has been  developed, building on the insight that the richer the clone of polymorphisms of $B$ is, the easier  $\mathrm{CSP}(B)$ is~\cite{JBK}. This culminated in the Bulatov~\cite{bulatov2017dichotomy} and Zhuk~\cite{zhuk2020proof} proof of the Feder and Vardi conjecture~\cite{feder1993monotone}: assuming P$\neq$NP, all constraint satisfaction problems on a finite domain $B$ are either in P or NP-complete, with the dividing line between the two classes being whether the polymorphisms of $B$ satisfy any non-trivial height~1 identity (which yields that $\mathrm{CSP}(B)$ is in P). Many of the universal-algebraic methods  also lift to the context of $B$ being a countable structure whose automorphism group is {oligomorphic}, i.e. has finitely many orbits on $n$-tuples for each $n$~\cite{BodCSP}. These are also known as ${\omega}${-categorical} structures, since their first-order theory has a unique countable model up to isomorphism~\cite[Theorem 7.3.1]{fathodges}. The main research programme on constraint satisfaction problems on an infinite domain aims at answering a question of Bodirsky and Pinsker of whether a complexity dichotomy holds for CSPs of a natural class of $\omega$-categorical structures (i.e. first-order reducts of finitely bounded homogeneous structures, cf.~\cite{BPP-projective-homomorphisms,irrelevant,BodCSP, infinitesheep}).\\

When studying $\mathrm{CSP}(B)$ in the finite or oligomorphic setting, if the polymorphism clone of $B$, $\mathrm{Pol}(B)$, is essentially unary (and with no constant operations), then $\mathrm{CSP}(B)$ is NP-hard~\cite{JBK,Topo-Birk}. Hence, we may focus on the case of $\mathrm{Pol}(B)$ containing some essential operation of arity $>1$. Moreover, for the purposes of studying $\mathrm{CSP}$s, it is sufficient to consider the case of $B$ being a (model-complete) {core}, i.e. such that the closure of its automorphism group in the pointwise convergence topology $\overline{\mathrm{Aut}(B)}$ corresponds to its endomorphism monoid $\mathrm{End}(B)$~\cite{bodirsky2007cores}. In general, since $B$ is in a finite language, $\mathrm{Pol}(B)$ contains some  operation of arity $>1$ minimal above its essentially unary part~\cite{BodChen, JBK}, which, if $B$ is a core, is $\overline{\langle\mathrm{Aut}(B)\rangle}$.\\

Indeed, classifying the minimal operations above $\overline{\langle\mathrm{Aut}(B)\rangle}$ is a part of the traditional "bottom-up" approach to a complexity dichotomy for CSPs: first classify the minimal operations above $\overline{\langle \mathrm{Aut}(B)\rangle}$, and then investigate 
 which functions may live above different minimal operations and what identities they may satisfy. This was the approach of Jeavons~\cite{jeavons1998algebraic} in his algebraic proof of Schaefer's complexity dichotomy for constraint satisfaction problems on a two-element domain~\cite{schaefer1978complexity}. Moreover, this strategy was heavily used in an oligomorphic setting~\cite{bodirsky2010complexity, bodirsky2015schaefer, kompatscher2018complexity, bodirsky2019constraint}, with the minimal operations over the automorphism group of the random graph being classified in an own paper,~\cite{minrandom},  for this purpose. Other modern techniques~\cite{Smoothapp, mottet2023order, feller2024algebraic, bitter2024generalized}, while avoiding this precise approach,  also require arguments to find, given the existence of some polymorphism which is not essentially unary, the existence of a binary one.\\

Motivated by these applications, Bodirsky and Chen~\cite{BodChen} previously obtained a Rosenberg-style type classification of minimal operations above oligomorphic permutation groups:
\begin{theorem}[Four types, oligomorphic case, ~\cite{BodChen, BodCSP}] \label{thm:fourtypesbodchen} Let $G\acts B$ be an oligomorphic permutation group on a countably infinite $B$. Let $f$ be minimal above $\overline{\langle G\rangle}$. Then, $f$ is of one of the following four types:
    \begin{enumerate}
    \item a unary operation;
    \item a binary operation;
    \item a ternary quasi-majority operation;
    \item a $k$-ary quasi-semiprojection for some $3\leq k\leq 2r-s$, where $r$ is the number of $G$-orbitals (orbits under the componentwise action of $G$ on pairs) and $s$ is the number of $G$-orbits.
\end{enumerate}
\end{theorem}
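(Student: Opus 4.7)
The plan is to refine the general Five Types Theorem for transformation monoids without constants (Theorem~\ref{fivetypes} in this paper) by exploiting the oligomorphicity of $G$ to eliminate the Boolean minority case and to impose an arity bound on the semiprojection case.

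First, I would invoke the analogue of Rosenberg's theorem (Theorem~\ref{fivetypes}) applied to the transformation monoid of unaries of $\overline{\langle G\rangle}$. Since $G$ is a permutation group, this monoid contains no constant operations, so we obtain that a minimal $f$ above $\overline{\langle G\rangle}$ is, modulo composition with unaries from $\overline{\langle G\rangle}$, either unary, binary, a ternary quasi-majority, a ternary quasi-minority of the form $u(x+y+z)$ for some Boolean group $(B,+)$ and some $u\in\overline{\langle G\rangle}$, or a $k$-ary quasi-semiprojection for some $k\geq 3$. What remains is to exclude the Boolean quasi-minority case and to bound $k$ in the quasi-semiprojection case.

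To rule out the Boolean quasi-minority, the strategy is to show that such an $f$ interacts too richly with $G$ to keep the generated clone minimal. If $f(x,y,z)$ is essentially $x+y+z$ for some Boolean group on $B$, then for any $\sigma\in G$ the conjugate operation $f_\sigma(x,y,z):=\sigma^{-1}f(\sigma x,\sigma y,\sigma z)$ lies in $\langle\overline{\langle G\rangle}\cup\{f\}\rangle$ and is again Mal'cev. Unless every $\sigma\in G$ acts as an affine transformation of $(B,+)$, varying $\sigma$ produces distinct Mal'cev operations in the clone. Using oligomorphicity, one can combine two such distinct Mal'cev operations, along with the standard Mal'cev identities, to extract a non-trivial operation (for example, a ternary majority or an essential binary operation) whose generated closed clone is sandwiched strictly between $\overline{\langle G\rangle}$ and $\langle\overline{\langle G\rangle}\cup\{f\}\rangle$, contradicting minimality. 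The edge case where $G$ does act by affine maps would be handled separately by showing that the affine clone on $(B,+)$ is never minimal above $\overline{\langle G\rangle}$, since it already contains intermediate closed clones (e.g.\ the one generated by $x+y$, or by iterates of the Mal'cev operation).

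The main obstacle is the arity bound $k\leq 2r-s$ for quasi-semiprojections. Suppose $f$ is a $k$-ary quasi-semiprojection to the first coordinate, so modulo some $u\in\overline{\langle G\rangle}$ we have $f(x_1,\dots,x_k)=u(x_1)$ on every tuple with a repeated entry. The non-projection behaviour of $f$ lives only on tuples with pairwise distinct entries. Given such a $k$-tuple $\bar a=(a_1,\dots,a_k)$, the key invariant is the function that assigns to each ordered pair of coordinates $(i,j)$ the $G$-orbital containing $(a_i,a_j)$. By oligomorphicity, there are only $r$ orbitals, $s$ of which are the diagonal ones (ruled out here since the $a_i$ are distinct), and the remaining $r-s$ off-diagonal orbitals come in $\le r-s$ pairs related by coordinate swap. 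The plan is to show, by substituting equal entries into two chosen coordinates and using the semiprojection identity together with minimality, that if $k>2r-s$ then some pair of coordinates can be "collapsed" without changing the orbital profile of $f$'s output across orbits, forcing $f$ to already agree with a projection and contradicting essentiality. I expect this combinatorial bookkeeping---ensuring the $2r-s$ bound is tight by carefully distinguishing the contribution of symmetric versus paired orbitals---to be the main technical difficulty; once it is in place, the exclusion of the Boolean minority type is conceptually cleaner.
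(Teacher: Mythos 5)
There is a genuine gap, and in fact the comparison here is slightly unusual: the paper does not prove Theorem~\ref{thm:fourtypesbodchen} at all but cites it from~\cite{BodChen, BodCSP}, so your proposal has to stand on its own. It does not. First, your starting point misquotes the five types theorem for essentially unary clones (Theorem~\ref{fivetypes}): in the non-idempotent setting the fourth type is merely a \emph{quasi-Malcev} operation, not ``a ternary quasi-minority of the form $u(x+y+z)$ in a Boolean group.'' That refinement is itself a substantial theorem (it occupies Section~\ref{sec:mintwisted} of this paper in the case where it actually applies), so you cannot assume it as input. Second, and more seriously, your argument for excluding the Malcev/minority case misses the one fact that makes it work: an oligomorphic permutation group never acts freely, so there is a non-identity $\alpha\in G$ fixing some $a$ and moving some $b$ to $c$. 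From this, the short argument of Lemma~\ref{notfree} applies: $h(x,y):=M(x,\alpha x,y)$ must be essentially unary by minimality, and whichever variable it depends on, one gets $M(a,a,a)=M(b,b,b)$ or $M(c,c,c)=M(b,b,b)$, contradicting injectivity of $M(x,x,x)\in\overline{\langle G\rangle}$. Your route via conjugates $f_\sigma$ and ``combining two distinct Malcev operations'' never produces a concrete intermediate clone, and your fallback for the affine edge case is based on a false claim: the clone generated by a Boolean minority $x+y+z$ does \emph{not} contain $x+y$ (it contains exactly the sums of an odd number of variables, which is precisely why such minorities \emph{are} minimal above the trivial group --- Rosenberg's type~4). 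So the ``edge case'' you dismiss is exactly the configuration that requires the non-freeness argument, and your proposed dismissal of it fails.

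Third, the arity bound $k\leq 2r-s$ is only gestured at. You correctly identify that the obstruction must come from the orbital profile of an injective tuple witnessing essentiality, but ``some pair of coordinates can be collapsed without changing the orbital profile, forcing $f$ to agree with a projection'' is the entire content of that part of the theorem, and you explicitly defer it as ``the main technical difficulty.'' Without a pigeonhole argument that actually produces, from $k>2r-s$, a group element identifying two coordinates in a way compatible with the quasi-semiprojection identity, this part of the proof is absent. For comparison, the present paper's Lemma~\ref{semiorb} shows (by substituting $\alpha x_i$ for $x_j$ and using almost minimality) that an almost minimal quasi-semiprojection is an orbit\-/semiprojection, which immediately bounds the arity by the number $s$ of orbits --- a strictly stronger bound than $2r-s$ obtained by a cleaner mechanism than the one you sketch.
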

Again, these operations are defined in Definition~\ref{def:mainops}. In this paper, we study the fully general context of $G\acts B$ being a non-trivial group acting faithfully on $B$, where $B$ is a set (not necessarily finite or countable). Nevertheless, our results will also offer a surprising  improvement of Theorem~\ref{thm:fourtypesbodchen}. 

\subsection{Main results}

\subsubsection{The minimal functions theorem}

We obtain the following:

\begin{customthm}{A}\label{maintheorem} Let $G\acts B$ be a non-trivial group acting faithfully on $B$ with $s$ many orbits (where $s$ is possibly infinite). Let $f$ be a minimal operation above $\overline{\langle G \rangle}$. Then, $f$ is of one of the following types:
\begin{enumerate}
    \item a unary operation;
    \item a binary operation;
    \item \label{cas:min} a ternary quasi-minority operation of the form $\alpha\mathfrak{q}$ for $\alpha\in G$, where 
    \begin{itemize}
        \item $G$ is a Boolean group acting freely on $B$;
        \item the operation $\mathfrak{q}$ is a $G$-invariant Boolean Steiner $3$-quasigroup.
    \end{itemize}
    \item \label{cas:semi} a $k$-ary orbit\-/semiprojection for $3\leq k \leq s$. 
\end{enumerate}
    Moreover, in case (3), such operations exist if and only if $s=2^n$ for some $n\in\mathbb{N}$ or is infinite. In case (4), if $G$ is finite or oligomorphic, we have that a minimal $k$-ary orbit semiprojection exists for each $2\leq k\leq s$.
\end{customthm}
The definitions of orbit\-/semiprojection and $G$-invariant Boolean Steiner $3$-quasigroup are given in Definitions~\ref{def:orbitsemi} and~\ref{def:boolgtwisted}, and they correspond to strengthenings of being a quasi\-/semiprojection, and being a minority of the form $x+y+z$ in a Boolean group respectively. We actually obtain stronger results classifying operations satisfying weaker conditions than minimality. We outline these in Subsection~\ref{sub:main}.\\

A surprising feature of Theorem~\ref{maintheorem} is that (quasi-)majorities, item 3 in Theorems~\ref{thm:firstheorem} and~\ref{thm:fourtypesbodchen}, cannot exist as minimal above any non-trivial permutation group. Moreover, whenever $G$ is not a Boolean group acting freely on $B$ with $2^n$ or infinitely many orbits, there are only three possible minimal operations. Since the action of an oligomorphic permutation group is never free, our results improve on Theorem~\ref{thm:fourtypesbodchen} from~\cite{BodChen} in two ways: (i) we show that minimal quasi-majorities cannot exist, and (ii)  for the only remaining type of operations of arity $>2$ (i.e. the quasi-semiprojections), we specify the behaviour on the orbits and bound their arity by the number of orbits rather than the number of orbitals. More generally, no analogue of Theorem~\ref{maintheorem} was known even on a finite domain, where it constitutes progress for the study of the monoidal intervals of permutation groups and their collapse (cf. Claim 3.6 in~\cite{kearnes2001collapsing}). Finally, it should have implications for the study of CSPs both of non-rigid finite structures and of $\omega$-categorical structures.

\subsubsection{Binary essential operations and three answers to questions of Bodirsky}

Theorem~\ref{maintheorem} has various consequences for finding low-arity essential operations in polymorphism clones of non-rigid structures, which we explore in Section~\ref{consequences}. In particular, some of our original motivation for studying minimal operations above permutation groups comes from the following question of Bodirsky:
\begin{customq}{1}({Question 14.2.6  (24) in~\cite{BodCSP}})\label{Bodq} Does every countable $\omega$-categorical model-complete core with an essential polymorphism also have a binary essential polymorphism?
\end{customq}
 Whilst we give a negative answer to Question~\ref{Bodq} in the fully general setting (Corollary~\ref{cor:counterex}), Theorem~\ref{maintheorem} (or more precisely Theorem~\ref{thmc}) yields a positive answer whenever $G\acts B$ is transitive (Corollary~\ref{niceres}). Moreover, since orbit\-/semiprojections only satisfy trivial identities, we get the following surprising result:
\begin{customthm}{B}(Theorem~\ref{thm:findbin})\label{mainthmb}
 Let $G\acts B$ be non-trivial such that $G$ is not a Boolean group acting freely on $B$. Let $\mathcal{C}$ be a closed clone belonging to the  monoidal interval of  $\overline{\langle G \rangle}$. Suppose that $\mathcal{C}$ contains no binary essential operation. Then, $\mathcal{C}$ has a uniformly continuous clone 
 homomorphism to $\mathcal{P}_{\{0,1\}}$, the clone of projections on the set $\{0,1\}$.
\end{customthm}
In a finite or $\omega$-categorical context, a uniformly continuous 
 clone homomorphism to $\mathcal{P}_{\{0,1\}}$ is known to imply NP-hardness~\cite{Topo-Birk}, meaning that for the purposes of CSPs, in these cases one may assume there is some binary essential operation in $\mathrm{Pol}(B)$ as long as $\mathrm{Aut}(B)$ is non-trivial and not a Boolean group acting freely on $B$.\\

Previous literature on infinite-domain CSPs also developed techniques to find binary essential polymorphisms given the existence of an essential polymorphism~\cite{BodKara, minrandom, Smoothapp}. These rely on $G\acts B$ having a particular property, the orbital extension property (Definition~\ref{def:OEP}), for which there were no known primitive oligomorphic counterexamples. In Subsection~\ref{sub:OEP}, we give examples of primitive finitely bounded homogeneous structures without the orbital extension property answering negatively another question of Bodirsky:

\begin{customq}{2}(Question 14.2.1 (2) in {\cite{BodCSP}})\label{qOEP}
    Does every primitive oligomorphic permutation group have the orbital extension property?
\end{customq}

 Hence, our results on finding binary essential operations expand the reach of pre-existing techniques. Our examples come from Cherlin's recent classification of homogeneous $2$-multitournaments~\cite{Cherlinclass}.\\

Often, arguments for the tractability of $\mathrm{CSP}$s rely on finding a binary injective polymorphism~\cite{BodKara, bodirsky2010complexity, minrandom, Smoothapp, bodirsky2020complexity}. Since in Theorem~\ref{mainthmb} we found general reasons for why binary essential polymorphisms appear in tractable $\mathrm{CSP}$s of $\omega$-categorical structures, it is reasonable to investigate when we can also get a binary injective polymorphism. We conclude the paper with Section~\ref{sec:datalog}, where we answer negatively another question of Bodirsky, which asks whether binary injective polymorphisms can always be found when $\mathrm{CSP}(B)$ meets sufficiently strong tractability assumptions:

\begin{customq}{3}(Question 14.2.6 (27)  in {\cite{BodCSP}})\label{q:datalog}
    Does every $\omega$-categorical structure without algebraicity that can be solved
by Datalog also have a binary injective polymorphism?
\end{customq}

We give a counterexample to Question~\ref{q:datalog} which is a reduct of a finitely bounded homogeneous unary structure, i.e. it lies within the scope of above-mentioned dichotomy conjecture of Bodirsky and Pinsker. \\ 

\textbf{Acknowledgements:} The first author would like to thank Jakub Rydval and Carl-Fredrik Nyberg Brodda for helpful discussions. He would also like to thank the participants to a reading group on Bodirsky's book~\cite{BodCSP}. Funded by the European Union (ERC, POCOCOP, 101071674). Views and opinions expressed are however those of the authors only and do not necessarily reflect those of the European Union or the European Research Council Executive Agency. Neither the European Union nor the granting authority can be held responsible for them. 
This research was funded in whole or in part by the Austrian Science Fund (FWF) [I 5948]. For the purpose
of Open Access, the authors have applied a CC BY public copyright licence to any Author Accepted Manuscript (AAM) version arising from this submission.
\subsection{Preliminaries}\label{sub:prelim}
In this subsection we give some basic definitions and results relevant to the rest of the paper. We follow the discussion of~\cite{szendrei1986clones} and~\cite{BodChen}. We refer the reader to these references for the relevant proofs.\\

Let $B$ denote a set. For $n\in\mathbb{N}$, $\mathcal{O}^{(n)}$ denotes the set $B^{{B}^n}$ of functions  $B^n\to B$. We write
\[\mathcal{O}:=\bigcup_{n\in\mathbb{N}} \mathcal{O}^{(n)}.\]
We equip $\mathcal{O}^{(n)}$ with the product topology and then $\mathcal{O}$ with the sum topology, where $B$ was endowed with the discrete topology.

\begin{definition}\label{def:clone} Let $B$ be a set. A \textbf{function clone} over $B$ is a set $\mathcal{C}\subseteq \mathcal{O}$ such that
\begin{itemize}
    \item $\mathcal{C}$ contains all projections: for each $1\leq i\leq k\in\mathbb{N}$, $\mathcal{C}$ contains the $k$-ary projection to the $i$th coordinate $\pi_{i}^k\in\mathcal{O}^{(k)}$, given by $(x_1, \dots, x_k)\mapsto x_i$;
    \item $\mathcal{C}$ is closed under composition: for all $f\in\mathcal{C}\cap\mathcal{O}^{(n)}$ and all $g_1, \dots, g_n\in \mathcal{C}\cap\mathcal{O}^{(m)}$, $f(g_1, \dots, g_n)$, given by
    \[(x_1, \dots, x_m)\mapsto f(g_1(x_1, \dots, x_m), \dots, g_n(x_1, \dots, x_m)),\]
    is in $\mathcal{C}\cap\mathcal{O}^{(m)}$.
\end{itemize}
\end{definition}

\begin{definition}\label{def:closed} Given $\mathcal{S}\subseteq \mathcal{O}$, $\langle \mathcal{S}\rangle$ denotes the smallest function clone containing $\mathcal{S}$. It consists of all functions in $\mathcal{O}$ which can be written as a term function using functions from $\mathcal{S}$ and projections. Meanwhile, $\overline{\mathcal{S}}$ denotes the closure of $\mathcal{S}$ in $\mathcal{O}$ with respect to the topology we described. We call a function clone $\mathcal{C}$ \textbf{closed}, when it is closed with respect to this  topology. We have that $f\in\overline{\mathcal{S}}$ if for each finite $A\subseteq B$, there is some $g\in\mathcal{S}$ such that $g|_{A}=f|_{A}$. It is easy to see that for $\mathcal{S}\subseteq\mathcal{O}$, $\overline{\langle \mathcal{S}\rangle}$ is the smallest closed function clone containing $\mathcal{S}$. We say that $\mathcal{S}$ \textbf{locally generates} $g$ if $g\in\overline{\langle \mathcal{S}\rangle}$. Frequently, we will just say that $\mathcal{S}$ generates $g$ since we will always work with local generation. 
\end{definition}

\begin{definition}\label{def:pol}
Let $B$ be a relational structure. 
 We say that $f:B^n\to B$ is a \textbf{polymorphism} of $B$ if it preserves all relations of $B$: for any 
 such relation $R$,
\[\text{ if } \begin{pmatrix}
    a^1_1\\
    \vdots\\
    a_1^k
\end{pmatrix}, \dots, 
\begin{pmatrix}
    a_n^1\\
    \vdots\\
    a_n^k
\end{pmatrix}
\in R \text{, then }
\begin{pmatrix}
    f(a_1^1, \dots, a_n^1)\\
    \vdots\\
    f(a_1^k, \dots, a_n^k)
\end{pmatrix}\in R.\]
    
    We call the set of polymorphisms of $B$ the \textbf{polymorphism clone of} $B$, and denote it by $\mathrm{Pol}(B)$. 
\end{definition}

Below, we list some basic facts about closed clones. Their proofs can be found in~\cite{szendrei1986clones}:

\begin{fact} The closed subclones of $\mathcal{O}$ ordered by inclusion form a lattice. 
\end{fact}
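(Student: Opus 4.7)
The plan is to exhibit the lattice structure explicitly by showing that the collection of closed subclones of $\mathcal{O}$ is closed under arbitrary intersections and contains a maximum element; this will automatically provide meets and joins (and in fact yield a complete lattice).

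First I would verify that the intersection of any family $(\mathcal{C}_i)_{i \in I}$ of closed subclones of $\mathcal{O}$ is again a closed subclone. This splits into two elementary checks. On the algebraic side, each projection $\pi_i^k$ lies in every $\mathcal{C}_i$, hence in the intersection; moreover, if $f \in \mathcal{O}^{(n)}$ and $g_1,\dots,g_n \in \mathcal{O}^{(m)}$ all lie in $\bigcap_i \mathcal{C}_i$, then the composition $f(g_1,\dots,g_n)$ lies in each $\mathcal{C}_i$ by Definition~\ref{def:clone}, so it lies in the intersection. On the topological side, an arbitrary intersection of closed subsets of $\mathcal{O}$ is closed. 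Combining these observations, $\bigcap_i \mathcal{C}_i$ is a closed subclone.

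Next I would note that $\mathcal{O}$ itself is trivially a closed subclone of $\mathcal{O}$, so the family of closed subclones is non-empty and has a largest element. Consequently, for any $\mathcal{S} \subseteq \mathcal{O}$, the intersection of all closed subclones containing $\mathcal{S}$ is well-defined and, by the previous paragraph, is itself a closed subclone; it is clearly the smallest one containing $\mathcal{S}$, and by Definition~\ref{def:closed} it coincides with $\overline{\langle \mathcal{S}\rangle}$.

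Given this, the lattice operations on the poset of closed subclones (ordered by inclusion) are defined by
\begin{equation*}
\mathcal{C}_1 \wedge \mathcal{C}_2 := \mathcal{C}_1 \cap \mathcal{C}_2, \qquad \mathcal{C}_1 \vee \mathcal{C}_2 := \overline{\langle \mathcal{C}_1 \cup \mathcal{C}_2 \rangle}.
\end{equation*}
The first is the greatest lower bound by the intersection argument, and the second is the least upper bound since any closed subclone containing both $\mathcal{C}_1$ and $\mathcal{C}_2$ must contain $\mathcal{C}_1 \cup \mathcal{C}_2$ and hence (by minimality of $\overline{\langle \cdot \rangle}$) their join. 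The same construction applied to arbitrary families yields that the poset is in fact a complete lattice. There is no real obstacle here: the only subtlety worth flagging is that both the algebraic closure (clone axioms) and the topological closure are intersective-type closure operators, which is precisely what makes the whole argument go through uniformly.
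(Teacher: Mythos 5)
Your proof is correct: closure under arbitrary intersections (for both the clone axioms and the topological condition) together with the top element $\mathcal{O}$ gives a complete lattice, with meet the intersection and join the smallest closed clone containing the union. The paper itself offers no proof of this fact, deferring to \cite{szendrei1986clones}, and your argument is exactly the standard one that reference supplies, so there is nothing to fault or to contrast.
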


\begin{fact}[{\cite{rosenberg1999locally}}] \label{fact:correspondence} The closed subclones of $\mathcal{O}$ on a set $B$  correspond to the polymorphism clones of relational structures on $B$.
\end{fact}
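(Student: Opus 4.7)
The plan is to establish the two directions of the correspondence separately. The first direction, that every polymorphism clone $\mathrm{Pol}(B)$ is a closed subclone of $\mathcal{O}$, is the easier one: containment of projections and closure under composition are immediate from Definition~\ref{def:pol}; for topological closedness, if $f\in\overline{\mathrm{Pol}(B)}\cap\mathcal{O}^{(n)}$ and $R$ is a $k$-ary relation of $B$, then given any $k$ input tuples in $R$ the values of $f$ on them can be computed using some $g\in\mathrm{Pol}(B)$ that agrees with $f$ on the finite set of relevant coordinates, and any such $g$ witnesses membership of the output tuple in $R$.

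The harder direction is to show that every closed subclone $\mathcal{C}\subseteq\mathcal{O}$ arises as $\mathrm{Pol}(B)$ for some relational structure $B$ on the carrier set. The plan is to build $B$ by equipping the underlying set with the relations preserved by $\mathcal{C}$, namely
\[
\mathrm{Inv}(\mathcal{C}):=\{R\subseteq B^{k}\mid k\in\mathbb{N},\ R\text{ is preserved by every }f\in\mathcal{C}\},
\]
and then showing that $\mathrm{Pol}(B_{\mathrm{Inv}(\mathcal{C})})=\mathcal{C}$. One inclusion is tautological: every $f\in\mathcal{C}$ preserves, by definition, every $R\in\mathrm{Inv}(\mathcal{C})$.

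The main obstacle is the reverse inclusion: if $f\in\mathcal{O}^{(n)}\setminus\mathcal{C}$, we must exhibit an invariant relation $R\in\mathrm{Inv}(\mathcal{C})$ that $f$ fails to preserve. I would argue by the standard ``generic relation'' construction. Since $\mathcal{C}$ is closed in the topology of pointwise convergence, $f\notin\mathcal{C}$ means there is a finite set $\{a_{1},\dots,a_{k}\}\subseteq B^{n}$ such that no element of $\mathcal{C}\cap\mathcal{O}^{(n)}$ agrees with $f$ on this set. Writing each $a_{i}$ as $(a_{i}^{1},\dots,a_{i}^{n})$ and setting the $n$ column tuples $c_{j}:=(a_{1}^{j},\dots,a_{k}^{j})\in B^{k}$, define the $k$-ary relation
\[
R:=\{(g(c_{1},\dots,c_{n})_{1},\dots,g(c_{1},\dots,c_{n})_{k})\mid g\in\mathcal{C}\cap\mathcal{O}^{(n)}\},
\]
i.e.\ the orbit of $(c_{1},\dots,c_{n})$ under the componentwise action of $\mathcal{C}$ on $B^{k}$. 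By construction $R\in\mathrm{Inv}(\mathcal{C})$ because $\mathcal{C}$ is closed under composition and contains the projections, so it is closed under componentwise application to its own generators. On the other hand the tuples $c_{1},\dots,c_{n}$ lie in $R$ (take $g$ to be a projection), and applying $f$ componentwise to them yields $(f(a_{1}),\dots,f(a_{k}))$; this tuple cannot belong to $R$, since membership would provide a $g\in\mathcal{C}$ agreeing with $f$ on $\{a_{1},\dots,a_{k}\}$, contradicting our choice.

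The expected difficulty is therefore modest and concentrated in the separation argument above; everything else is bookkeeping. I would also note at the end that the correspondence is in fact order-reversing between $\mathcal{C}$ and $\mathrm{Inv}(\mathcal{C})$, giving the Pol-Inv Galois connection that underlies essentially all of the universal-algebraic methods used later in the paper.
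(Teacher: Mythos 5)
Your argument is correct and is precisely the standard proof of the local Pol--Inv Galois connection that the paper invokes by citation only (it states this as a Fact and defers to the literature rather than proving it). Both directions are handled the right way: topological closedness of $\mathrm{Pol}(B)$ via a finite-support agreement argument, and the separation of an $n$-ary $f\notin\mathcal{C}$ by the $k$-ary orbit relation of the witnessing columns, which lies in $\mathrm{Inv}(\mathcal{C})$ because $\mathcal{C}$ contains the projections and is closed under composition, and which $f$ visibly fails to preserve.
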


\begin{definition}\label{def:minimalc} Let $\mathcal{D}\supsetneq \mathcal{C}$ be closed subclones of $\mathcal{O}$. We say that $\mathcal{D}$ is \textbf{minimal above} $\mathcal{C}$ if there is no closed clone $\mathcal{E}$ such that 
\[\mathcal{C}\subsetneq \mathcal{E}\subsetneq \mathcal{D}.\]
\end{definition}

\begin{definition}\label{def:minimalop} Let $\mathcal{C}$ be a closed subclone of $\mathcal{O}$. We say that an operation $f\in\mathcal{O}\setminus\mathcal{C}$ is \textbf{minimal above } $\mathcal{C}$ if $\overline{\langle \mathcal C\cup\{f\}\rangle}$ is minimal above $\mathcal{C}$ and every operation in $\overline{\langle \mathcal C\cup\{f\}\rangle}\setminus \mathcal{C}$ has arity greater than or equal to that of $f$.
\end{definition}

\begin{fact} Let $\mathcal{C}\subseteq\mathcal{D}$ be closed subclones of $\mathcal{O}$. Then, $\mathcal{D}$ is minimal above $\mathcal{C}$ if and only if there is some operation $f$ on $B$ which is minimal above $\mathcal{C}$ and such that $\overline{\langle \mathcal{C}\cup\{f\}\rangle}=\mathcal{D}$. 
\end{fact}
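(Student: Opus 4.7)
The statement is essentially bookkeeping tying Definitions~\ref{def:minimalc} and~\ref{def:minimalop} together, so the proof should be short and direct.

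The plan is to verify the two implications separately, the backward one being immediate from the definitions and the forward one amounting to choosing a witness of minimum arity. For the backward direction, suppose that $f$ is minimal above $\mathcal{C}$ with $\overline{\langle\mathcal{C}\cup\{f\}\rangle}=\mathcal{D}$. By Definition~\ref{def:minimalop}, the clone $\overline{\langle\mathcal{C}\cup\{f\}\rangle}$ is already minimal above $\mathcal{C}$ in the sense of Definition~\ref{def:minimalc}, and this is exactly the conclusion.

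For the forward direction, assume that $\mathcal{D}$ is minimal above $\mathcal{C}$. Since $\mathcal{D}\supsetneq\mathcal{C}$, the set $\mathcal{D}\setminus\mathcal{C}$ is non-empty, and hence the set of arities of its operations is a non-empty subset of $\mathbb{N}$ which admits a minimum. Let $f\in\mathcal{D}\setminus\mathcal{C}$ be any operation whose arity realises this minimum. Then $\overline{\langle\mathcal{C}\cup\{f\}\rangle}$ is a closed subclone of $\mathcal{O}$ satisfying $\mathcal{C}\subsetneq\overline{\langle\mathcal{C}\cup\{f\}\rangle}\subseteq\mathcal{D}$, where the strict containment on the left holds because $f\notin\mathcal{C}$. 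By minimality of $\mathcal{D}$ above $\mathcal{C}$ we conclude that $\overline{\langle\mathcal{C}\cup\{f\}\rangle}=\mathcal{D}$. In particular $\overline{\langle\mathcal{C}\cup\{f\}\rangle}$ is minimal above $\mathcal{C}$, and by our choice of $f$ every operation in $\overline{\langle\mathcal{C}\cup\{f\}\rangle}\setminus\mathcal{C}=\mathcal{D}\setminus\mathcal{C}$ has arity at least that of $f$. Hence $f$ is minimal above $\mathcal{C}$, as required.

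The only mildly delicate point is the well-definedness of the minimum arity, which uses that operations in $\mathcal{O}$ have arities in $\mathbb{N}$; there is no genuine obstacle here, and the argument does not use the topology beyond what is already built into the definition of $\overline{\langle\cdot\rangle}$.
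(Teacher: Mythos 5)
Your proof is correct: the backward direction is immediate from Definition~\ref{def:minimalop}, and the forward direction correctly picks a witness of minimum arity in $\mathcal{D}\setminus\mathcal{C}$ and uses minimality of $\mathcal{D}$ to conclude $\overline{\langle\mathcal{C}\cup\{f\}\rangle}=\mathcal{D}$. The paper states this as a Fact without proof (deferring to the cited literature), and your argument is exactly the standard one that is intended.
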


\begin{notation} We use the symbol '$\approx$' to denote identities which hold universally. For example, instead of
\[\forall x,y\in B \ f(x,y)=f(y,x),\]
we write
\[f(x,y)\approx f(y,x).\]
\end{notation}

\begin{definition} \label{def:clonehom} Let $\mathcal{C}$ and $\mathcal{D}$ be clones with domains $C$ and $D$ respectively. A map $\eta:\mathcal{C}\to\mathcal{D}$ is a \textbf{clone homomorphism} if it preserves arities and universally quantified identities. In the special case where the domain $D$ of $\mathcal{D}$ is finite, we say that $\eta$ is \textbf{uniformly continuous} if there exists a finite $A\subseteq C$ such that $f_{\upharpoonright A}=g_{\upharpoonright A}$ implies $\eta(f)=\eta(g)$ for all $f,g\in \mathcal{C}$; see~\cite{uniformbirkhoff} for more information on the uniform structure justifying this terminology.
\end{definition}

\begin{definition}\label{def:mainops} We define some types of operations relevant to the rest of the paper by the identities that they satisfy:
\begin{itemize}
    \item a \textbf{ternary quasi-majority} is a ternary operation $m$ such that 
\[m(x,x,y)\approx m(x,y,x)\approx m(y,x,x)\approx m(x,x,x);\]
\item a \textbf{quasi-Malcev} operation is a ternary operation $M$ such that
\[M(x,y,y)\approx M(y,y,x)\approx M(x,x,x);\]
    \item a \textbf{ternary quasi-minority} is a ternary operation $\mathfrak{m}$ such that 
\[\mathfrak{m}(x,x,y)\approx \mathfrak{m}(x,y,x)\approx \mathfrak{m}(y,x,x)\approx \mathfrak{m}(y,y,y);\]
\item a \textbf{quasi-semiprojection} is a $k$-ary operation $f$ such that there is an $i\in\{1, \dots, k\}$ and a unary operation $g$ such that whenever $(a_1, \dots, a_k)$ is a non-injective tuple from $B$,
\[f(a_1, \dots, a_k)= g(a_i).\]
\end{itemize}
For each of the operations above, we remove the prefix "quasi" when the operation is \textbf{idempotent}, i.e. satisfies $f(x, \dots, x)\approx x$. For example, a ternary majority is a ternary quasi-majority $m$ which also satisfies $m(x,x,x)\approx x$; in the case of a semiprojection idempotency  implies $g(x)\approx x
$.
\end{definition}

\begin{definition}\label{def:essential} We say that a $k$-ary operation $f$ is \textbf{essentially unary} if there is a unary operation $g$ and $1\leq i\leq k$ such that
\[f(x_1, \dots, x_k)\approx g(x_i).\]
We say that $f$ is \textbf{essential} if it is not essentially unary.  A clone $\mathcal{C}$ is essentially unary if all of its operations are essentially unary. It is essential if it has an essential operation.
\end{definition}

\subsection{Classifying minimal operations via almost minimal operations}\label{sub:main}
Our path towards  Theorem~\ref{maintheorem}  consists in first classifying operations satisfying weaker versions of minimality and then refining the types of operations obtained from this classification as we approach minimality. The main notion that we investigate is that of almost minimality (Definition~\ref{def:almostmin}), which captures that $f$ does not locally generate anything new of strictly lower arity together with $\overline{\langle G\rangle}$ (which is clearly implied by minimality). Already looking at this notion, for $G\acts B$ not being a Boolean group acting freely on $B$, we get just three types of almost minimal operations. In the case of Boolean groups, after classifying almost minimal operations, we need the  stronger  notion  of  strictly almost minimal operations (Definition~\ref{def:semimin}): functions such that every operation of lower or equal arity they locally generate with $\overline{\langle G\rangle}$ locally generates them back with $\overline{\langle G\rangle}$ (but an operation of larger   arity might not). Strict almost minimality is therefore situated between almost minimality and minimality. Finally, once strictly almost minimal operations are understood, we move to minimal operations by adding the requirement that also operations of strictly larger arity which are locally generated by $f$ together with $\overline{\langle G\rangle}$ locally generate $f$ back with $\overline{\langle G\rangle}$.\\

\begin{notation}
   Let $G$ be a group acting on a set $B$. By $G\acts B$ we mean that $G$ acts faithfully on $B$. Moreover, throughout this paper, we assume that $G$ is non-trivial.
\end{notation}

\begin{definition} A \textbf{Boolean group} is a group where every element except the identity has order $2$; in other words, an elementary Abelian 2-group.
\end{definition}

\begin{definition} We say that the action $G\acts B$ is \textbf{free} if no non-trivial element in $G$ fixes a point of $B$.
\end{definition}

\subsubsection{Almost minimal operations}

\begin{definition}\label{def:almostmin} Let $G\acts B$. The $k$-ary operation $f\in\mathcal{O}\setminus\overline{\langle G\rangle}$ is \textbf{almost minimal} if $\overline{\langle G \cup\{f\}\rangle}\cap \mathcal{O}^{(r)}=\overline{\langle G\rangle}\cap \mathcal{O}^{(r)}$ for each $r<k$. 
\end{definition}

\begin{remark} There is no harm in similarly defining a $k$-ary operation being almost minimal above an arbitrary closed clone $\mathcal{D}$ (rather than $\overline{\langle G\rangle}$). Moreover, note that if $f$ is minimal above $\mathcal{D}$, then it is also almost minimal above it. 
\end{remark}

\begin{remark}\label{alwaysunary} Unary functions in $\mathcal{O}^{(1)}\setminus \overline{\langle G\rangle}$ are vacuously almost minimal.  
\end{remark}

Whilst minimal operations do not  exist for every action $G\acts B$ over an infinite domain $B$ (cf.~\cite{csakany2005minimal}), it is easy to see that almost minimal operations do:

\begin{lemma}\label{lem:existencealmost} Let $G\acts B$. Suppose that $\mathcal{C}$ is a closed function clone strictly containing $\overline{\langle G\rangle}$. Then, $\mathcal{C}$ contains some almost minimal operation.
\end{lemma}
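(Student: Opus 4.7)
The plan is to obtain an almost minimal operation simply by taking an operation of minimal arity in $\mathcal{C}\setminus\overline{\langle G\rangle}$ and verifying that the defining condition of almost minimality is automatic.

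Since $\mathcal{C}\supsetneq\overline{\langle G\rangle}$, the set of arities of operations in $\mathcal{C}\setminus\overline{\langle G\rangle}$ is a non-empty subset of $\mathbb{N}$, hence has a least element $k$. I would pick any $f\in \mathcal{C}\setminus\overline{\langle G\rangle}$ of arity exactly $k$; this $f$ is the candidate for an almost minimal operation. Note that in the edge case $k=1$, almost minimality holds vacuously by Remark~\ref{alwaysunary}, so we may assume $k\geq 2$ in what follows.

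Next I would argue that $\overline{\langle G\cup\{f\}\rangle}\subseteq\mathcal{C}$: this is because $\mathcal{C}$ is a closed clone containing $G$ (as $\overline{\langle G\rangle}\subseteq\mathcal{C}$) and $f$, and $\overline{\langle G\cup\{f\}\rangle}$ is by Definition~\ref{def:closed} the smallest such closed clone. Consequently, for each $r<k$, any operation $g\in\overline{\langle G\cup\{f\}\rangle}\cap\mathcal{O}^{(r)}$ lies in $\mathcal{C}\cap\mathcal{O}^{(r)}$; by the minimality of the arity $k$, such a $g$ cannot be in $\mathcal{C}\setminus\overline{\langle G\rangle}$, so $g\in\overline{\langle G\rangle}$. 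The reverse inclusion $\overline{\langle G\rangle}\cap\mathcal{O}^{(r)}\subseteq\overline{\langle G\cup\{f\}\rangle}\cap\mathcal{O}^{(r)}$ is immediate. Thus $f$ satisfies Definition~\ref{def:almostmin}, witnessing an almost minimal operation inside $\mathcal{C}$.

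There is no real obstacle here; the statement is really a matter of unpacking the definitions and invoking well-ordering of $\mathbb{N}$ to pick an arity-minimal witness. The content of the lemma is conceptual rather than technical: it ensures that for investigating the monoidal interval above $\overline{\langle G\rangle}$ one always has access to almost minimal operations, even in the infinite-domain setting where genuinely minimal operations may fail to exist.
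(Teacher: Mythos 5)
Your proposal is correct and follows essentially the same argument as the paper: pick $f\in\mathcal{C}\setminus\overline{\langle G\rangle}$ of minimal arity, note that everything it locally generates with $G$ stays inside $\mathcal{C}$, and conclude by arity-minimality that anything of strictly smaller arity must already lie in $\overline{\langle G\rangle}$. The explicit treatment of the unary edge case and the inclusion $\overline{\langle G\cup\{f\}\rangle}\subseteq\mathcal{C}$ are just slightly more detailed renderings of the same steps.
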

\begin{proof}
Let $f\in\mathcal{C}\setminus\overline{\langle{G}\rangle}$ be of minimal arity. Any function that  $f$ locally generates with $G$ belongs to $\mathcal C$; hence if such a function is of strictly smaller arity than $f$, then it also belongs to $\overline{\langle{G}\rangle}$, by the minimality of the arity of $f$. Thus,  $f$ is almost minimal. 
\end{proof}

As mentioned above, in this paper we also classify the almost minimal operations above permutation groups. We begin from the observation that the following version of Theorem~\ref{thm:firstheorem} in a non-idempotent context from~\cite[Theorem 6.1.42]{BodCSP} (recall the discussion after Theorem~\ref{thm:firstheorem}) only uses the assumption of almost minimality:

\begin{theorem}[Five types theorem over essentially unary clones,~\cite{BodCSP}]\label{fivetypes} Let $\mathcal{T}$ be a transformation monoid on $B$ without constant operations and let $f$ be minimal above $\overline{\langle\mathcal{T}\rangle}$. Then, up to permuting its variables, $f$ is of one of the following five types:
\begin{enumerate}
    \item a unary operation;
    \item a binary operation;
    \item a ternary quasi-majority operation;
    \item a quasi-Malcev operation;
    \item a $k$-ary quasi-semiprojection for some $k\geq 3$.
\end{enumerate}
\end{theorem}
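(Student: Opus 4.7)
The plan is to follow the classical proof of Rosenberg's Five Types Theorem, adapted to the non-idempotent setting and to the slightly weaker hypothesis of almost minimality (the author notes that minimality is not used). The key observation is that if $f$ is almost minimal of arity $k \geq 3$, then every operation obtained from $f$ by identifying two variables has arity $k-1 < k$ and lies in $\overline{\langle\mathcal{T}\rangle}$; such operations are essentially unary. Concretely, for each pair $\{i,j\}\subseteq\{1,\ldots,k\}$, the diagonal $d_{ij}$ obtained by identifying $x_i$ with $x_j$ has the form $g_{ij}(x_{\ell(i,j)})$ for some $g_{ij}\in\overline{\mathcal{T}}$ and some coordinate $\ell(i,j)$. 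Setting all remaining variables equal to the identified value forces $g_{ij}=f_\Delta$, where $f_\Delta(x):=f(x,\ldots,x)$; the assumption that $\mathcal{T}$ has no constant operations ensures $f_\Delta$ is non-constant, so the witness coordinates $\ell(i,j)$ are well-defined. The cases $k\leq 2$ immediately give (1) and (2).

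\textbf{Ternary case ($k=3$).} Classify each $\ell(i,j)$ as either \emph{rep} (if $\ell(i,j)\in\{i,j\}$) or \emph{non-rep} (otherwise). The configuration (rep, rep, rep) forces $f$ to be a ternary quasi-majority, giving (3). A configuration with exactly one \emph{non-rep} entry yields $f(a_1,a_2,a_3)=f_\Delta(a_\ell)$ on every non-injective triple for one fixed $\ell$, so $f$ is a ternary quasi-semiprojection, giving (5). Any configuration with at least two \emph{non-rep} entries coincides, after a suitable permutation of the variables of $f$, with the quasi-Malcev pattern $f(x,y,y)\approx f(y,y,x)\approx f(x,x,x)$, giving (4); the all-\emph{non-rep} case specialises to a quasi-minority, which is itself a special quasi-Malcev.

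\textbf{Higher arity ($k\geq 4$).} Apply a non-idempotent form of Świerczkowski's lemma. By further identifying variables inside each diagonal $d_{ij}$ and comparing the essentially unary operations of arity $\leq k-2$ so obtained, one shows that the witness coordinates $\ell(i,j)$ must all equal a single common coordinate $\ell$. Consequently $f(a_1,\ldots,a_k)=f_\Delta(a_\ell)$ on every non-injective tuple, so $f$ is a $k$-ary quasi-semiprojection, giving (5).

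The main obstacle is the Świerczkowski-style consistency argument in arity $\geq 4$: one must propagate the coincidence of witness coordinates $\ell(i,j)$ across all pairs. This is where the no-constants assumption plays its role, since two essentially unary operations $f_\Delta(x_p)$ and $f_\Delta(x_q)$ coincide as functions on $B^k$ only if $p=q$, precisely because $f_\Delta$ is non-constant; without this, distinct witness coordinates could produce the same function and the reduction to a single semiprojection coordinate would fail.
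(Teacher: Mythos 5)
The paper does not actually prove this theorem: it is imported from~\cite{BodCSP} (Theorem 6.1.42 there), and Remark~\ref{rem:onlyalmostmin} only records that the proof in that source is the first part of the standard proof of Rosenberg's Theorem~\ref{thm:firstheorem}, and that it uses nothing beyond almost minimality. Your sketch is exactly that standard argument in its non-idempotent form, so it is consistent with what the paper points to, and the parts you spell out are correct: each identification minor of an almost minimal $f$ of arity $k\geq 3$ lands in $\overline{\langle\mathcal{T}\rangle}\cap\mathcal{O}^{(k-1)}$ and hence equals $f_\Delta(x_{\ell(i,j)})$; in arity three the rep/non-rep bookkeeping does give quasi-majority (all rep), quasi-semiprojection onto the coordinate outside the unique non-rep pair (exactly one non-rep), and quasi-Malcev up to permutation of variables (at least two non-rep, with the all-non-rep case a quasi-minority); and arity $\geq 4$ is Świerczkowski. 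Two caveats. First, the Świerczkowski step is the only part of the proof with genuine content and you have named it rather than carried it out; the non-idempotent version does go through, but the consistency argument comparing double identifications repeatedly uses the observation you defer to the last paragraph, namely that $f_\Delta(x_p)=f_\Delta(x_q)$ forces $p=q$ because $f_\Delta$ is non-constant. Second, $f_\Delta$ lies in $\overline{\langle\mathcal{T}\rangle}\cap\mathcal{O}^{(1)}=\overline{\mathcal{T}}$, and over an infinite domain the closure of a constant-free monoid may contain constant operations, so the hypothesis you actually need is that $\overline{\mathcal{T}}$ is constant-free; this does not affect the paper, which only ever applies the theorem to permutation groups, where all unary operations of the closure are injective.
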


\begin{remark}\label{rem:onlyalmostmin} Inspecting the proof of Theorem~\ref{fivetypes} in~\cite[\S 6.1.8]{BodCSP}, one can see that the only property of minimal operations being used is almost minimality. In particular, Theorem~\ref{fivetypes} actually gives a classification into five types of the almost minimal operations above a transformation monoid (not containing any constant operations). The proof of Theorem~\ref{fivetypes} follows the same structure as the first part of the standard proof of Rosenberg's Theorem~\ref{thm:firstheorem} (cf.~\cite{fivetypes, csakany2005minimal}).
\end{remark}

We classify almost minimal operations above permutation groups $G\acts B$ using Theorem~\ref{fivetypes} for $\mathcal{T}=G$ as a starting point. Firstly, in Lemma~\ref{semiorb}, we prove that for any permutation group $G\acts B$, a $k$-ary quasi-semiprojection almost minimal above $\overline{\langle G\rangle}$ satisfies the stronger property of being an orbit\-/semiprojection:

\begin{definition}\label{def:orbitsemi} Let $G\acts B$. We say that the $k$-ary operation $f$ on $B$ is an \textbf{orbit\-/semiprojection} with respect to the $i$-th variable for $i\in\{1, \dots, k\}$ if there is a unary operation $g\in\overline{G}$ such that for any tuple $(a_1, \dots, a_k)$ where at least two of the $a_j$ lie in the same $G$-orbit, 
\[f(a_1, \dots, a_k)=g(a_i).\]
\end{definition}

Our classification of almost minimal operations above non-trivial permutation groups splits into three cases. The first, considered in Section~\ref{sec:threetypes}, is that of $G\acts B$ not being a Boolean group acting freely on $B$. In this case, we are able to show that neither quasi-Malcev operations nor quasi-majorities from Theorem~\ref{fivetypes} can be almost minimal above $\overline{\langle G\rangle}$, yielding the following:

\begin{customthm}{C}[Theorem~\ref{thm:threetypes}, Three Types Theorem] \label{thmc}
Let $G\acts B$ be such that $G$ is not a Boolean group acting freely on $B$. Let $s$ be the (possibly infinite) number of orbits of $G$ on $B$. Let $f$ be an almost minimal operation above $\overline{\langle G \rangle}$. Then, $f$ is of one of the following types:
\begin{enumerate}
    \item a unary operation;
    \item a binary operation;
    \item a $k$-ary orbit\-/semiprojection for $3\leq k \leq s$.
\end{enumerate}
\end{customthm}

In Proposition~\ref{prop:exproj} we note that for any $G\acts B$ (including Boolean groups acting freely), almost minimal $k$-ary orbit\-/semi-projections exist for all finite $2\leq k\leq s$.\\

We can also classify the almost minimal functions above the clone locally generated by a Boolean group acting freely on a set. In this context, freeness of $G\acts B$ yields that $\overline{\langle G\rangle}=\langle G\rangle$ (Remark~\ref{rem:free}), and hence we shall write only $\langle G\rangle$. For $|G|>2$, a new type of almost minimal function is possible, which we call a $G$-quasi-minority:

\begin{definition}\label{deforbitmin}
   Let $G\acts B$. We say that a ternary operation  $\mathfrak{m}$ is a $\bm{G}$\textbf{-quasi-minority} if  for all $\beta \in G$, 
\begin{equation}\label{orbitmindef}
  \mathfrak{m}(y, x, \beta x)\approx \mathfrak{m}(x, \beta x, y)\approx \mathfrak{m}(x, y, \beta x)\approx \mathfrak{m}(\beta y, \beta y, \beta y).  
\end{equation}
We say that a $G$-quasi-minority is a $\bm{G}$\textbf{-minority} if it is idempotent. 
\end{definition}

\begin{remark}
For a $G$-quasi-minority $\mathfrak{m}$ to make sense as a possible almost minimal function above $\overline{\langle G\rangle}$, we really need $G$ to be a Boolean group acting freely on $B$. Firstly, condition (\ref{orbitmindef}) implies that for all $\beta\in G$,
\[\mathfrak{m}(x,x,x)\approx \mathfrak{m}(\beta^{-1}x, x, \beta x)\approx \mathfrak{m}(\beta^2 x, \beta^2 x, \beta^2 x).\]
Since all elements in $\overline{\langle G\rangle}\cap\mathcal{O}^{(1)}$ are injective functions, if $\mathfrak m$ is almost minimal above $\overline{\langle G\rangle}$, we must have that the map  $x\mapsto \mathfrak{m}(x,x,x)$ (which is generated by $\mathfrak{m}$) is injective. This yields that $\beta^2=1$ for all $\beta\in G$, as otherwise $\beta^2$ would move some element by faithfulness; hence $G$ is Boolean. Secondly, if $\alpha\in G$ fixes a point $a\in B$, then for all $b\in B$, by condition (\ref{orbitmindef}),
\[\mathfrak{m}(b,b,b)=\mathfrak{m}(b,a, a)=\mathfrak{m}(b,a, \alpha a)=\mathfrak{m}(\alpha b, \alpha b, \alpha b).\]
By injectivity of $x\mapsto \mathfrak{m}(x,x,x)$, we must have that $\alpha b= b$ for all $b\in B$, which, since $G\acts B$ is faithful, implies that $\alpha$ is the identity in $G$. Hence, $G\acts B$ must be free.\\

Conversely, whenever $G\acts B$ is a Boolean group acting freely on $B$, then  almost minimal $G$-quasi-minorities always exist, as we show in Proposition~\ref{exmin}.
\end{remark}

For the case of $G\acts B$ being the free action of a Boolean group of size $>2$, we get the following classification of almost minimal operations:

\begin{customthm}{D}[Theorem~\ref{Boolcase}, Boolean case]\label{thmd} Let $G\acts B$ be a Boolean group acting freely on $B$ with $s$-many orbits (where $s$ is possibly infinite) and $|G|>2$. Let $f$ be an almost minimal operation above $\langle G \rangle$. Then,  $f$ is of one of the following types:
\begin{enumerate}
    \item a unary operation;
    \item a binary operation;
    \item a ternary $G$-quasi-minority;
    \item a $k$-ary orbit\-/semiprojection for $3\leq k \leq s$.
\end{enumerate} 
\end{customthm}

In the only remaining case of the group $\mathbb{Z}_2$ acting freely on $B$, there are two more types of almost minimal operations; these actually generate each other.

\begin{definition}\label{oddmajdef} Let $G\acts B$. Then, we say that $m(x,y,z)$ is an \textbf{odd majority} if $m$ is a quasi-majority such that for all $\gamma \in G\setminus\{1\}$,
\begin{equation}\label{oddmaj}
    m(y,x, \gamma x) \approx m(x,\gamma x, y)\approx m(x,y,\gamma x)\approx m(y,y,y).
\end{equation}

\end{definition}

\begin{definition}\label{oddmaldef} Let $G\acts B$. The ternary operation $M$ is an \textbf{odd Malcev} if $M$ is a quasi-Malcev such that for all $\gamma\in G\setminus\{1\}$,
\begin{align}
M(x,y,x) \approx M(x,x,x) \label{oddm1}\\
M(y, \gamma x, x)\approx M(x, \gamma x, y)\approx M(x,x,x) \label{oddm2}\\
    M(x,y,\gamma x)\approx M(\gamma y, \gamma y, \gamma y). \label{oddm3}
\end{align}
\end{definition}

\begin{customthm}{E}[Theorem~\ref{Z2case}, $\mathbb{Z}_2 $ case]\label{thme} Let $\mathbb{Z}_2\acts B$ freely with $s$-many orbits (where $s$ is possibly infinite). Let $f$ be an almost minimal operation above ${\langle \mathbb{Z}_2 \rangle}$. Then,  $f$ is of one of the following types:
\begin{enumerate}
    \item a unary operation;
    \item a ternary $G$-quasi-minority;
    \item an odd majority;
    \item an odd Malcev (up to permutation of variables);
    \item a $k$-ary orbit\-/semiprojection for $2\leq k \leq s$.
\end{enumerate} 
\end{customthm}
Odd majorities and odd Malcev operations above $\mathbb{Z}_2$ are in a sense dual of each other, since an odd majority $m(x,y,z)$ always generates an odd Malcev $m(x,\gamma y, z)$ for  $\gamma\in\mathbb{Z}_2\setminus \{0\}$ and vice versa. In Proposition~\ref{prop:optimalZ2} we prove that almost minimal odd majorities and odd Malcev operations always exist over $\langle \mathbb{Z}_2\rangle$.

\subsubsection{Back to minimality}

We can actually prove that there are no minimal odd majorities or odd Malcevs (Corollary~\ref{nominmaj}). This allows us to treat uniformly all Boolean groups acting freely on a set when classifying minimal operations above $\langle G\rangle$ (as opposed to almost minimal operations). Moreover, we are able to prove that minimal $G$-quasi-minorities exhibit considerable structure:

\begin{definition}\label{def:boolgtwisted} A $\bm{G}$\textbf{-invariant Boolean Steiner }$\bm{3}$\textbf{-quasigroup} is a symmetric ternary minority operation $\mathfrak{q}$ also satisfying the following conditions:
\begin{align}
    \mathfrak{q}(x, y, \mathfrak{q}(x,y,z))&\approx z\; ; \tag{SQS} \label{eq:sqslaw}\\
    \mathfrak{q}(x, y, \mathfrak{q}(z, y, w))&\approx \mathfrak{q}(x, z, w) \; ; \tag{Bool} \label{boolaw}\\
    \text{for all }\alpha, \beta, \gamma\in G, \mathfrak{q}(\alpha x, \beta y, \gamma z)&\approx \alpha\beta\gamma \mathfrak{q}(x, y, z) \;. \tag{Inv} \label{eq:twistid}
\end{align}
\end{definition}

As we discuss more in Section~\ref{sec:mintwisted}, idempotent symmetric minorities satisfying (\ref{eq:sqslaw}) are called Steiner $3$-quasigroups or Steiner Skeins in the literature and have been studied in universal algebra and design theory~\cite{quackenbush1975algebraic, armanious1980algebraische, lindner1978steiner}. When they further satisfy  (\ref{boolaw}), they are called Boolean Steiner $3$-quasigroups, since they are known to be of the form $\mathfrak{q}(x,y,z)=x+y+z$ in a Boolean group. The condition (\ref{eq:twistid}) is novel and adds further constraints on the behaviour of $\mathfrak{q}$: it must induce another Boolean Steiner $3$-quasigroup on the orbits which is related to $\mathfrak{q}$ in a particular way. Indeed, we can characterise $G$-invariant Boolean Steiner $3$-quasigroups in much detail, and we do so in Subsections~\ref{sub:chartwist} and~\ref{sub:counting}. In particular, in the latter subsection we give a detailed enough description of them that we are able to count their exact number for each free action of a Boolean group on a finite set (Corollary~\ref{cor:counting}). All of this yields the following theorem:

\begin{customthm}{F}[Theorem~\ref{Boolminclass}, minimal operations, Boolean case] \label{thmf} Let $G\acts B$ be a non-trivial Boolean group acting freely on $B$ with $s$-many orbits (where $s$ is possibly infinite). Let $f$ be a minimal operation above $\langle G \rangle$. Then,  $f$ is of one of the following types:
\begin{enumerate}
    \item a unary operation;
    \item a binary operation;
    \item a ternary minority of the form $\alpha \mathfrak{q}$ where $\mathfrak{q}$ is a $G$-invariant Boolean Steiner $3$-quasigroup and $\alpha\in G$;
    \item a $k$-ary orbit\-/semiprojection for $3\leq k \leq s$.
\end{enumerate} 
Furthermore, all $G$-invariant Boolean Steiner $3$-quasigroups on $B$ are minimal, and they exist if and only if $s=2^n$ for some $n\in\mathbb{N}$ or is infinite.
\end{customthm}

Theorems~\ref{thmc} and~\ref{thmf} together give us a classification of the types of minimal operations above (the clone locally generated by) any  non-trivial permutation group, with the latter theorem giving necessary and sufficient conditions for the existence of the minimal operation of the minority type. We would like to also have necessary and sufficient conditions for the existence of minimal orbit\-/semiprojections in a given arity $k$. Above the trivial group, P\'{a}lfy proved that on a domain of size $s$ there are minimal semiprojections for each size $2\leq k\leq s$~\cite{palfy1986arity}. A similar construction also yields necessary and sufficient conditions for a $k$-ary orbit\-/semiprojection to exist in the settings most relevant to us:

\begin{customthm}{G}(Theorem~\ref{thm:Palfy}, P\'{a}lfy's Theorem for orbit\-/semiprojections) \label{thmg} Let $G\acts B$ with $s$-many orbits be finite or oligomorphic. Then, for all finite $1<k\leq s$, there is a $k$-ary orbit\-/semiprojection minimal above $\overline{\langle G\rangle}$.
\end{customthm}

The various results mentioned in this section are then summarised in Theorem~\ref{maintheorem}:

\begin{proof}[Proof of Theorem~\ref{maintheorem}] Since minimal operations are almost minimal, Theorem~\ref{thmc} already tells us the best possible classification of minimal operations above $\overline{\langle G\rangle}$ when $G$ is not Boolean acting freely on $B$. The latter case is dealt with in Theorem~\ref{thmf}. Finally, Theorem~\ref{thmg} gives necessary and sufficient conditions for the existence of minimal orbit\-/semiprojections in a finite or oligomorphic context.
\end{proof}

\begin{remark}[Minimality above permutation groups in the lattice of all clones]\label{rem:oknoclosure} Our results also yield a classification of minimal and almost minimal clones above the clone $\langle G \rangle$ generated by a permutation group, when considering minimality in the lattice of all clones rather than that of closed clones. In this context, for a set of operations $\mathcal{S}$, we work with the notion of generation given by $\langle \mathcal{S} \rangle$, rather than $\overline{\langle \mathcal{S} \rangle}$ in the definitions of minimal clone (\ref{def:minimalc}), minimal operation (\ref{def:minimalop}), and almost minimal operation (\ref{def:almostmin}). With these (different) notions of minimality, Theorems~\ref{thmc},~\ref{thmd},~\ref{thme},~\ref{thmf} are all true when replacing $\overline{\langle G \rangle}$ by the clone $\langle \mathcal{T}\rangle$ generated by any transformation monoid $\mathcal{T}$ such that $G\subseteq \mathcal{T}\subseteq\overline{G}$. This can be seen by noting that at every step where we exclude the existence of a certain type of minimal (or almost minimal) operation, we do so by proving that if an operation $f$ of that type exists, then it generates (just by composition with itself and group elements and without local closure) another operation which does not locally generate it back. For Theorem~\ref{thmg}, we actually get the better result that, for all $G\acts B$ with $s$-many orbits (without further assumptions) and all finite $2\leq k\leq s$, there are minimal (in the lattice of all clones) $k$-ary orbit\-/semiprojections above $\langle G\rangle$ (Remark~\ref{rem:minimalityalways}). This yields a version of Theorem~\ref{maintheorem} for minimality above $\langle G\rangle$ in the lattice of all clones where the possible types of minimal operations are exactly those described in Theorem~\ref{maintheorem}, and with necessary and sufficient conditions for the existence of those of arity $>2$. Firstly, for $G\acts B$ a Boolean group acting freely on $B$, a $G$-invariant Boolean Steiner $3$-quasigroup is always minimal above $\langle G\rangle$ both in the interval of all clones and in the interval of closed clones above $\langle G\rangle$ (Lemma~\ref{lem:whatgislike}). Hence, such an operation exists and is minimal above $\langle G\rangle$ if and only if the number $s$ of $G$-orbits is either $2^n$ or infinite. Secondly, as mentioned above, there is a minimal (in the lattice of all clones) $k$-ary orbit\-/semiprojection above $\langle G\rangle$ if and only if $2\leq k\leq s$.
\end{remark}

\section{A three types theorem when \texorpdfstring{$G\acts B$}{G} is not the free action of a Boolean group}\label{sec:threetypes}
The main result of this section, proven in Subsection~\ref{sub:three}, is Theorem~\ref{thm:threetypes}, which classifies almost minimal operations above $\overline{\langle G\rangle}$ for $G$ not being a Boolean group acting freely on $B$. We use as a starting point Theorem~\ref{fivetypes} and explore in more detail the consequences of almost minimality for each of the types of operations of arity $>2$. We begin with Lemma~\ref{semiorb}, which, with no assumption on $G$, shows that almost minimal quasi-semiprojections  are orbit\-/semiprojections. Then, Lemma~\ref{notfree} and~\ref{notBool} show that we need $G\acts B$ to be the free action of a Boolean group in order for almost minimal quasi-Malcev operations to possibly exist. We will see this is a sufficient condition for them to exist in Section~\ref{sec:boolalmost}. Thus, we proceed by considering the case of a quasi-majority and prove that these can also only exist as almost minimal if $G$ is a Boolean group acting freely on $B$ in Lemma~\ref{nomaj}.  Indeed, later in Lemma~\ref{Bnomaj}, we show that almost minimal quasi-majorities only exist for the free action of $\mathbb{Z}_2$. Thus, Theorem~\ref{thm:threetypes} follows from putting together these results.\\

We then show in Subsection~\ref{sub:exminorb}  that $k$-ary orbit-semiprojections always exist as almost minimal above $\overline{\langle G\rangle}$ as long as $G\acts B$ is not transitive. Later in Section~\ref{sec:palfy}, we will give sufficient conditions for their existence as minimal operations above $\overline{\langle G\rangle}$.

\subsection{The three types theorem}\label{sub:three}

\begin{lemma}\label{semiorb} Let $G\acts B$. For $k\geq 3$, let $f$ be a $k$-ary quasi-semiprojection which is almost minimal above $\overline{\langle G \rangle}$. Then, $f$ is an orbit\-/semiprojection.
\end{lemma}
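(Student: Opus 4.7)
The plan is to use almost minimality to reduce $f$ to an essentially unary auxiliary operation, and then pin that unary operation down using the quasi-semiprojection property. Without loss of generality, assume $f$ is a quasi-semiprojection with respect to its first variable, with unary witness $g$. The operation $x\mapsto f(x,\ldots,x)$ is a unary function locally generated by $f$, so by almost minimality it lies in $\overline{\langle G\rangle}\cap\mathcal{O}^{(1)}=\overline{G}$. Moreover, applying the quasi-semiprojection property to the (non-injective) constant tuple shows this operation coincides with $g$. Hence $g\in\overline{G}$, which matches one part of what is required for $f$ to be an orbit-semiprojection.

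Fix a tuple $(a_1,\ldots,a_k)$ with two entries $a_p, a_q$, $p\neq q$, in the same $G$-orbit. If $a_p=a_q$ the tuple is non-injective and $f(a_1,\ldots,a_k)=g(a_1)$ follows. Otherwise write $a_p=\alpha(a_q)$ for some $\alpha\in G\setminus\{1\}$, and form the $(k-1)$-ary operation $h$ by substituting $\alpha(x_q)$ into the $p$-th argument of $f$: if $p\neq 1$, set
\[
h(x_1,\ldots,\widehat{x_p},\ldots,x_k) := f(x_1,\ldots,x_{p-1},\alpha(x_q),x_{p+1},\ldots,x_k),
\]
and analogously if $p=1$. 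Since $h$ is built from $f$, $\alpha\in G$, and projections, $h\in\overline{\langle G\cup\{f\}\rangle}$; as its arity is $k-1<k$, almost minimality forces $h\in\overline{\langle G\rangle}$. Therefore $h(x)=u(x_j)$ for some coordinate $j$ and some $u\in\overline{G}$.

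It remains to identify $j$ and $u$. Plugging the constant tuple into $h$ and invoking the quasi-semiprojection property forces $u(c)=g(c)$ when $p\neq 1$ and $u(c)=g(\alpha(c))$ when $p=1$, determining $u$ regardless of $j$. To identify $j$, I argue by contradiction: if $j$ is not $1$ (when $p\neq 1$) or not $q$ (when $p=1$), I substitute either $x_1 = \alpha(x_q)$, or $x_q = \alpha^{-1}(c)$ for a $c\in B$ with $\alpha(c)\neq c$ (which exists by faithfulness and non-triviality of $\alpha$), and fill the remaining coordinates with $c$. Two entries in the input to $f$ then coincide, so the quasi-semiprojection property computes $f$; comparing with the formula $u(x_j)$ yields $g\circ\alpha=g$, contradicting the injectivity of $g\in\overline{G}$ together with $\alpha\neq 1$. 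The further subcase in which $j$ lies outside $\{1,p,q\}$ (possible only when $k\geq 4$) is ruled out similarly, by forcing $u$ to be constant, which is incompatible with $u\in\overline{G}$. Once $j$ and $u$ are pinned down, evaluating $h$ at $(a_1,\ldots,\widehat{a_p},\ldots,a_k)$ gives $f(a_1,\ldots,a_k)=g(a_1)$ in both branches.

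The main obstacle is the small-arity case $k=3$, where $h$ is only binary and there are essentially no free coordinates with which to engineer a non-injective input to $f$. Here the identification of $j$ is forced only through the non-triviality of $\alpha$: choosing $c$ with $\alpha(c)\neq c$ and substituting $x_1=\alpha(x_3)$ or $x_q=\alpha^{-1}(c)$ produces the required collapse and drives the identity $g\circ\alpha=g$, closing the argument.
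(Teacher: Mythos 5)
Your proof is correct and follows essentially the same route as the paper's: substitute $\alpha x_q$ (there, $\alpha x_i$ into position $j>i$) for the orbit-equivalent entry, invoke almost minimality to make the resulting lower-arity operation essentially unary, and then pin down its essential coordinate and unary witness via the quasi-semiprojection property, injectivity of $g\in\overline{G}$, and faithfulness. Your case analysis identifying the essential coordinate (in particular the $k=3$ situation, resolved through $g\circ\alpha=g$ forcing $\alpha=1$) is spelled out in more detail than the paper's rather terse treatment, but the underlying argument is the same.
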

\begin{proof}
We may say without loss of generality that there is a unary operation $g$ such that whenever $|\{a_1, \dots, a_k\}|<k$, 
\[f(a_1, \dots, a_k)=g(a_1).\]
Let $\alpha\in G$. For all $i<j$, the function $h_{i,j, \alpha}$ defined by 
\[
(x_1,\ldots,x_k)\mapsto f(x_1,\ldots,x_{j-1},\alpha x_i,x_{j+1},\ldots,x_k)
\]
does not depend on $x_j$, hence it depends on only one variable by the almost minimality of $f$. Moreover,  setting for any $\ell\notin\{ 1,j\}$ the variable $x_\ell$  equal to $x_1$ and applying  $h_{i,j,\alpha}$ yields $g(x_1)$, by the property on $f$ assumed above. Hence, the variable $h_{i,j,\alpha}$ depends on is $x_1$, and 
\[
h_{i,j,\alpha}(x_1,\ldots,x_k)\approx g(x_1)\; .
\]
 Whenever $(a_1,\ldots,a_k)$ is any tuple where $a_i, a_j$ belong to the same orbit, then $a_j=\alpha a_i$ for some $\alpha \in G$. By the above, $f$ will return $g(a_1)$ on this tuple; hence, $f$ is an orbit\-/semiprojection onto $x_1$ as witnessed by $g$.
\end{proof}
\ignore{
\begin{proof} We may say without loss of generality that there is a unary operation $g$ such that whenever $\{a_1, \dots, a_k\}<k$, 
\[f(a_1, \dots, a_k)=g(a_1).\]
Let $\alpha\in G\setminus\{1\}$. Consider the function
$f(x_1, x_2, \alpha x_2, \dots, x_k)$.
By almost minimality, there is $1\leq i\leq k$ and a function $g_\alpha\in\overline{\langle G \rangle}$ such that
\[f(x_1, x_2, \alpha x_2, \dots, x_k)\approx g_\alpha(x_i).\]
Since $f$ is a quasi-semiprojection, 
\begin{equation}\label{semi1}
   f(x, x, \alpha x, u_4, \dots, u_k)\approx g(x) \approx f(x, \alpha^{-1} x, x, v_4, \dots, v_k). 
\end{equation}
Choose $a\in B$ such that $\alpha a\neq a$ and $b_4, \dots, b_k, c_4, \dots, c_k\in B$ such that $b_i\neq c_i$ for each $4\leq i\leq k$. Hence, from~\ref{semi1}, we get that
\[f(a,a, \alpha a, b_4, \dots, b_k)=f(a, \alpha^{-1} a, a, c_4, \dots, c_k).\]
Since $g_\alpha\in\langle G \rangle$ is injective and the tuples on the left and on the right of the above identity only agree on their first entry, we must have that $i=1$. Moreover, $g_\alpha=g$ since
\[g_\alpha(x)\approx f(x,x,\alpha x, \dots)\approx g(x).\]
Now, consider the function $f(x_1,\alpha x_1, x_3, \dots, x_k)$. Again, there is $1\leq i\leq k$ and $h_\alpha\in\overline{\langle G \rangle}$ such that
\[f(x_1,\alpha x_1, x_3, \dots, x_k)\approx h_\alpha(x_i).\]
Again, note that
\[f(x, \alpha x, x, u_4, \dots, u_k)\approx g(x)\approx f(x, \alpha x, \alpha x, v_4, \dots, v_k)\]
yields, by injectivity of $h_\alpha$, that $i=1$. By the same argument as above, $h_\alpha=g$.

We can now see that $f$ is indeed an orbit\-/semiprojection. Let $(a_1, \dots, a_k)$ be a tuple with two entries $a_i$ and $a_j$ in the same orbit, for $1\leq i<j\leq k$. Then, there is some $\alpha\in G$ such that $\alpha a_i=a_j$. But then, 
\[f(a_1, \dots, a_i, \dots, a_j, \dots )=f(a_1, \dots, a_i, \dots, \alpha a_i, \dots )=g(a_1).\]
Hence, $f$ is an orbit\-/semiprojection.
\end{proof}}

\begin{lemma}\label{notfree} Let $G\acts B$ be such that the action of $G$ on $B$ is not free. Then, no almost minimal function over $\overline{\langle G \rangle}$ can be a quasi-Malcev operation. 
\end{lemma}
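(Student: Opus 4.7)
The plan is to assume, toward contradiction, that $M$ is a ternary quasi-Malcev operation almost minimal above $\overline{\langle G\rangle}$, and to exploit a fixed point $a\in B$ of some non-identity $\alpha\in G$. Almost minimality applied in arity one forces the unary function $x\mapsto M(x,x,x)$ (which $M$ generates together with $G$) to lie in $\overline{\langle G\rangle}$, so it equals $\gamma(x)$ for some $\gamma\in\overline{G}$, which is in particular injective since pointwise limits of permutations remain injective. The fixed point $a$ and the injective map $\gamma$ will be the two ingredients with which we force a collision.

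The key move is to examine the binary operation $h(x,y):=M(x,y,\alpha y)$. By almost minimality in arity $2<3$, it belongs to $\overline{\langle G\rangle}$, so by the characterization recalled in Definition~\ref{def:closed} it agrees globally with either $\beta\circ\pi_1$ or $\beta\circ\pi_2$ for some injective $\beta\in\overline{G}$. If $h$ depends on $x$, then specializing $y=a$ and using $\alpha a=a$ together with the identity $M(x,y,y)\approx M(x,x,x)$ pins $\beta=\gamma$; specializing instead $y=x$ then gives $M(x,x,\alpha x)=\gamma(x)$. On the other hand, the second quasi-Malcev identity $M(y,y,x)\approx M(x,x,x)$, applied with $y:=x$ and $x:=\alpha x$, yields $M(x,x,\alpha x)=\gamma(\alpha x)$. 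Injectivity of $\gamma$ then forces $\alpha x=x$ for every $x\in B$, contradicting faithfulness of $G\acts B$. If instead $h$ depends on $y$, specializing $y=a$ gives $\beta(a)=M(x,a,a)=\gamma(x)$, which would be simultaneously constant in $x$ and injective in $x$, absurd as soon as $|B|\geq 2$ (which holds whenever $G$ is non-trivial).

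The only subtle step is picking the right binary witness. The operation $h(x,y)=M(x,y,\alpha y)$ is engineered precisely so that the substitution $y=a$ collapses its third argument onto the second (activating $M(x,y,y)\approx M(x,x,x)$), while the substitution $y=x$ collapses its second argument onto the first up to an $\alpha$-shift (activating $M(y,y,x)\approx M(x,x,x)$). Once this operation is on the table, the case analysis is mechanical, and both branches are killed by the injectivity of $\gamma$ in $\overline{G}$.
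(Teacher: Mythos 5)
Your proof is correct and follows essentially the same strategy as the paper's: both exploit a fixed point $a$ of a non-identity $\alpha\in G$, form a binary specialization of $M$ with an $\alpha$-twist in one slot (the paper uses $M(x,\alpha x,y)$, you use $M(x,y,\alpha y)$), and rule out dependence on either single variable via the quasi-Malcev identities together with injectivity of $x\mapsto M(x,x,x)\in\overline{G}$. The only cosmetic difference is that the paper derives a pointwise collision $M(a,a,a)=M(b,b,b)$ for specific $b\neq c=\alpha b$, whereas your first case derives the universal identity $\gamma\approx\gamma\circ\alpha$ and invokes faithfulness; both are valid and of the same difficulty.
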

\begin{proof}
Since the action of $G$ on $B$ is not free, there is some non-trivial $\alpha\in G$ and distinct $a,b,c\in B$ such that $\alpha(a)=a, \alpha(b)=c$. Suppose by contradiction that $M(x,y,z)$ is almost minimal and quasi-Malcev, and consider $h(x,y)=M(x,\alpha x, y)$.
If $h(x,y)$ depends on the first argument, we have that
\[M(a,a,a)=h(a,a)=h(a,b)=M(a,a,b)=M(b,b,b)\;,\]
contradicting injectivity of $M(x,x,x)\in\overline{\langle G\rangle}$. 
Similarly, if $h(x,y)$ depends on the second argument,
\[M(c,c,c)=M(a,a,c)=h(a,c)=h(b,c)=M(b,c,c)=M(b,b,b)\; ,\]
again contradicting injectivity of $M(x,x,x)\in\overline{\langle G\rangle}$. Thus, $h(x,y)$ depends on both arguments, contradicting the almost minimality of $M$. 
\end{proof}

\begin{lemma}\label{notBool} Let $G\acts B$ with $G$ not Boolean. Then, no almost minimal function over $\overline{\langle G \rangle}$ can be a quasi-Malcev operation. 
\end{lemma}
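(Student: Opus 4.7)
The plan is to mirror Lemma~\ref{notfree}: pick $\gamma\in G$ tailored to the non-Boolean hypothesis, compose $M$ with $\gamma$ to obtain a binary term $h$, then exploit almost minimality (which forces $h\in\overline{\langle G\rangle}$, i.e.\ $h$ to be essentially unary with an injective unary part) to squeeze the quasi-Malcev identities until they contradict the injectivity of $t(x):=M(x,x,x)\in\overline{\langle G\rangle}$.

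Since $G$ is not Boolean, fix $\gamma\in G$ with $\gamma^2\neq 1$; by faithfulness, $\gamma$ and $\gamma^2$ each move some point. Consider
\[
h(x,y)\;:=\;M(\gamma x,\,y,\,x).
\]
By almost minimality of $M$, $h\in\overline{\langle G\rangle}$, so $h(x,y)=g(x)$ or $h(x,y)=g(y)$ for some $g\in\overline G$, and every such $g$ is injective because on every finite subset of $B$ it agrees with an element of~$G$.

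In the first case, specialise $y=x$ and $y=\gamma x$ in $M(\gamma x,y,x)=g(x)$. Using the identity $M(a,b,b)\approx M(a,a,a)$ with $(a,b):=(\gamma x,x)$ turns the first into $g(x)=M(\gamma x,\gamma x,\gamma x)$, while the identity $M(a,a,b)\approx M(b,b,b)$ with $(a,b):=(\gamma x,x)$ turns the second into $g(x)=M(x,x,x)$. Thus $t(\gamma x)=t(x)$ for all $x$, contradicting injectivity of $t$ since $\gamma\neq 1$. In the second case, specialising $x=y$ yields $g(y)=M(\gamma y,y,y)=M(\gamma y,\gamma y,\gamma y)$, while specialising $y=\gamma x$ yields $g(\gamma x)=M(\gamma x,\gamma x,x)=M(x,x,x)$. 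Substituting $y:=\gamma x$ into the first expression for $g$ gives $g(\gamma x)=M(\gamma^2 x,\gamma^2 x,\gamma^2 x)$, so $t(\gamma^2 x)=t(x)$ for all $x$, which by injectivity of $t$ and faithfulness forces $\gamma^2=1$, contradicting the choice of $\gamma$.

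The main obstacle is choosing the binary composition cleverly: once $h(x,y)=M(\gamma x,y,x)$ is on the table, all that remains are routine substitutions in the two quasi-Malcev identities. It is instructive to note that Case~A rules out $h(x,y)=g(x)$ for \emph{any} $\gamma\neq 1$ (Boolean or not); the non-Boolean assumption is consumed only in Case~B, where it upgrades the $\gamma$-identity to a $\gamma^2$-identity that cannot otherwise be refuted.
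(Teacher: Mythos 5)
Your proof is correct and follows essentially the same route as the paper's: compose $M$ with a group element $\gamma$ satisfying $\gamma^2\neq 1$ to obtain a binary minor, use almost minimality to force that minor to be essentially unary with injective unary part, and contradict injectivity via the quasi-Malcev identities. The paper takes $h(x,y)=M(x,y,\alpha x)$ and evaluates once at the three distinct points $b,\alpha b,\alpha^2 b$ to get a single collision $h(c,d)=h(b,b)$ that disposes of both dependence cases simultaneously, whereas you use $h(x,y)=M(\gamma x,y,x)$ and derive the universal identities $t(\gamma x)=t(x)$ and $t(\gamma^2 x)=t(x)$ in two separate cases --- a purely cosmetic difference.
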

\begin{proof} Let $\alpha\in G$ have order $\geq 3$. Since $\alpha^2\neq 1$, there is $b\in B$ such that $b, \alpha(b)=c,$ and $\alpha^2(b)=d$ are all distinct. Suppose that $M(x,y,z)$ is almost minimal and quasi-Malcev. Consider $h(x,y)=M(x,y,\alpha x)$. We have that 
\[h(c,d)=M(c,d,d)=M(c,c,c)=M(b,b,c)=h(b,b).\]
By almost minimality, $h\in\overline{\langle G\rangle}$, and so it equals a unary injective function of either its first or its second argument. However, since $b,c,d$ are distinct,  this yields a contradiction.
\end{proof}

    \begin{lemma}\label{majtoMal}
     Let $G\acts B$. Suppose that $m(x,y,z)$ is a quasi-majority operation which is almost minimal above $\overline{\langle G \rangle}$. Then, for any $\beta\in G\setminus \{1\}$, 
\[h(x,y,z):=m(x,\beta y, z)\]
is a quasi-Malcev operation such that
\begin{equation}\label{weirdid}
  h(x,x,x)\approx m(x,x,x).  
\end{equation}
\end{lemma}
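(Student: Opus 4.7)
The plan is to verify the auxiliary identity~(\ref{weirdid}) directly from the quasi-majority identities of $m$, and then to establish the two quasi-Malcev identities $h(x,y,y) \approx h(x,x,x)$ and $h(y,y,x) \approx h(x,x,x)$ by using almost minimality to force the relevant binary operations derived from $h$ to be essentially unary in the correct variable. Identity~(\ref{weirdid}) is essentially free: $h(x,x,x) = m(x, \beta x, x)$, and the quasi-majority identity $m(x,y,x) \approx m(x,x,x)$ applied with $y = \beta x$ immediately gives $h(x,x,x) \approx m(x,x,x)$.

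For $h(x,y,y) \approx h(x,x,x)$ I would consider the binary operation $p(x,y) := h(x,y,y) = m(x, \beta y, y)$. By almost minimality of $m$, $p \in \overline{\langle G \rangle}$, and binary functions in $\overline{\langle G\rangle}$ are essentially unary (every finite restriction agrees with $g(x)$ or $g(y)$ for some $g\in G$), so $p(x,y) \approx g(x)$ or $p(x,y) \approx g(y)$ for some $g \in \overline{\langle G\rangle} \cap \mathcal{O}^{(1)}$; here $g$ is injective, since unary elements of $\overline{\langle G\rangle}$ are pointwise limits of permutations from $G$. The key obstacle, and the heart of the argument, is ruling out the case $p(x,y) \approx g(y)$. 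For this, using faithfulness and $\beta \neq 1$, one picks $y_0 \in B$ with $\beta y_0 \neq y_0$, so that $x \mapsto m(x, \beta y_0, y_0)$ is constant. Substituting $x := y_0$ and applying the quasi-majority identity $m(x,y,x) \approx m(x,x,x)$ gives $p(y_0, y_0) = m(y_0, y_0, y_0)$, while substituting $x := \beta y_0$ and applying $m(x,x,y) \approx m(x,x,x)$ gives $p(\beta y_0, y_0) = m(\beta y_0, \beta y_0, \beta y_0)$; equating the two contradicts injectivity of the unary function $x \mapsto m(x,x,x) \in \overline{\langle G\rangle}$. Hence $p(x,y) \approx g(x)$, and setting $y = x$ together with quasi-majority identifies $g(x) = m(x, \beta x, x) = m(x,x,x)$, so $h(x,y,y) \approx h(x,x,x)$.

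The identity $h(y,y,x) \approx h(x,x,x)$ follows by a symmetric argument applied to $r(y,x) := h(y,y,x) = m(y, \beta y, x)$: the case where $r$ depends only on $y$ is excluded by the same collapse-to-diagonal trick (using the same two quasi-majority identities, but now collapsing the first and third arguments of $m$), leaving $r(y,x) \approx g'(x)$ with $g'(x) = m(x, \beta x, x) = m(x,x,x)$ after setting $y = x$. I do not anticipate any serious obstacle beyond rigging the substitutions so that each side of the putative equality collapses, via a different quasi-majority identity, to a diagonal $m(z,z,z)$, where injectivity of the unary part of $\overline{\langle G\rangle}$ can be exploited.
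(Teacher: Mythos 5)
Your proof is correct and follows essentially the same route as the paper's: identify two variables of $h$ to obtain a binary operation, invoke almost minimality to force it into $\overline{\langle G\rangle}$ and hence to be essentially unary via an injective $g\in\overline{G}$, and rule out the unwanted dependency by collapsing to two diagonals $m(y_0,y_0,y_0)$ and $m(\beta y_0,\beta y_0,\beta y_0)$, contradicting injectivity of $x\mapsto m(x,x,x)$. The only differences are cosmetic (which identification of variables is treated first, and phrasing the contradiction with a concrete witness $y_0$ rather than as an identity), so no further comment is needed.
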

\begin{proof} Note that $h$ is also almost minimal above $\overline{\langle G \rangle}$ since $m(x,y,z)=h(x,\beta^{-1} y, z)$. Hence, the binary function given by $l(x,y):=h(x,x,y)$ must be in $\overline{\langle G \rangle}$ and so essentially unary. Note that $l(x,y)$ cannot depend on the first variable. Otherwise,
\begin{align*}
    m(x,x,x) &\approx m(x, \beta x, x)\approx h(x,x,x) \approx l(x,x)\\ \approx l(x,\beta x)&\approx 
 h(x,x,\beta  x)\approx m(x, \beta  x, \beta x)\approx m(\beta x,\beta x,\beta x),
\end{align*}

which contradicts injectivity of $m(x,x,x)\in \overline{\langle G \rangle}$. Hence, $l(x,y)$ depends on the second variable yielding
\[h(x,x,y)\approx l(x,y)\approx l(y,y)\approx  
 h(y,y,y)\approx m(y,\beta y, y)\approx m(y,y,y).\]
From this equation we obtain that $h$ satisfies condition~(\ref{weirdid}). By symmetry, the same argument applies for $h(y,x,x)$, yielding that 
\[h(x,x,y)\approx h(y,y,y)\approx h(y,x,x),\]
and so that $h$ is quasi-Malcev.
\end{proof}


\begin{lemma}\label{nomaj} Let $G\acts B$. Suppose no almost minimal function over $\overline{\langle G \rangle}$ is quasi-Malcev. Then, no almost minimal function over $\overline{\langle G \rangle}$ can be a ternary quasi-majority operation. 
\end{lemma}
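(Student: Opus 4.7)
The plan is a short contradiction argument that piggybacks on Lemma~\ref{majtoMal}. Suppose for contradiction that some ternary quasi-majority $m$ is almost minimal above $\overline{\langle G\rangle}$. Since $G$ is non-trivial by our standing convention, we may fix some $\beta\in G\setminus\{1\}$, and then Lemma~\ref{majtoMal} immediately hands us a quasi-Malcev operation $h(x,y,z):=m(x,\beta y,z)$. The remaining task is to show that this $h$ is itself almost minimal above $\overline{\langle G\rangle}$, which will directly contradict the standing hypothesis of the lemma.

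For that step, I would observe that $m$ and $h$ are interderivable modulo $G$: by definition $h$ is obtained from $m$ by precomposing the middle variable with $\beta$, and the identity $m(x,y,z)=h(x,\beta^{-1}y,z)$ reverses this construction. Consequently $\overline{\langle G\cup\{m\}\rangle}=\overline{\langle G\cup\{h\}\rangle}$, so these closed clones contain the same operations in every arity. Almost minimality of $m$ thus transfers verbatim: for each $r<3$,
\[
\overline{\langle G\cup\{h\}\rangle}\cap \mathcal{O}^{(r)}=\overline{\langle G\cup\{m\}\rangle}\cap \mathcal{O}^{(r)}=\overline{\langle G\rangle}\cap \mathcal{O}^{(r)}.
\]
One still has to verify that $h\notin\overline{\langle G\rangle}$, since almost minimality is defined only for operations outside this clone; but this is automatic, for if $h$ lay in $\overline{\langle G\rangle}$ then $m(x,y,z)=h(x,\beta^{-1}y,z)$ would too, contradicting that $m$ is almost minimal (hence in particular essential).

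There is no genuine obstacle here: Lemma~\ref{majtoMal} already did the technical work of converting a quasi-majority into a quasi-Malcev operation of the same generative strength modulo $G$, and the present lemma is essentially bookkeeping on the clone $\overline{\langle G\cup\{m\}\rangle}=\overline{\langle G\cup\{h\}\rangle}$.
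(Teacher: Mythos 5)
Your proposal is correct and follows essentially the same route as the paper: the paper's proof likewise applies Lemma~\ref{majtoMal} to obtain the quasi-Malcev operation $m(x,\beta y,z)$ and notes (inside the proof of that lemma, via the identity $m(x,y,z)=h(x,\beta^{-1}y,z)$) that it is almost minimal, contradicting the hypothesis. Your version merely makes the transfer of almost minimality and the check that $h\notin\overline{\langle G\rangle}$ explicit, which the paper leaves as a one-line remark.
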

\begin{proof}
Suppose that $m$ is a ternary almost minimal quasi-majority above $\overline{\langle G \rangle}$. Then, for $\beta\in G\setminus \{1\}$, $m(x,\beta y, z)$ is quasi-Malcev and almost minimal above $\overline{\langle G \rangle}$ by Lemma~\ref{majtoMal}. Since we are assuming there are no quasi-Malcev operators almost minimal above $\overline{\langle G \rangle}$, this yields a contradiction.
\end{proof}

\begin{theorem}[Thereom~\ref{thmc}, Three types theorem]\label{thm:threetypes} Let $G\acts B$ be such that $G$ is not a Boolean group acting freely on $B$. Let $s$ be the (possibly infinite) number of orbits of $G$ on $B$. Let $f$ be an almost minimal operation above $\overline{\langle G \rangle}$. Then, $f$ is of one of the following types:
\begin{enumerate}
    \item a unary operation;
    \item a binary operation;
    \item a $k$-ary orbit\-/semiprojection for $3\leq k \leq s$.
\end{enumerate}
\end{theorem}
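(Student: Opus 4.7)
My approach is to assemble the preceding lemmas into a direct proof, with one small additional observation needed for the arity bound. The starting point is Theorem~\ref{fivetypes} applied with $\mathcal{T}=G$; by Remark~\ref{rem:onlyalmostmin} it classifies every almost minimal operation above $\overline{\langle G\rangle}$ into one of five types: unary, binary, ternary quasi-majority, quasi-Malcev, or $k$-ary quasi-semiprojection. Under the hypothesis that $G$ is not a Boolean group acting freely on $B$, the plan is to rule out the quasi-Malcev and quasi-majority types, to upgrade the quasi-semiprojection type to an orbit-semiprojection, and to establish the upper bound $k\leq s$ on its arity.

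To eliminate quasi-Malcev operations, I split on which half of the hypothesis fails: either $G$ does not act freely on $B$, in which case Lemma~\ref{notfree} applies, or $G$ is not a Boolean group, in which case Lemma~\ref{notBool} applies. Together these cover exactly the hypothesis of the theorem, so no almost minimal operation above $\overline{\langle G\rangle}$ can be quasi-Malcev. Lemma~\ref{nomaj} now plugs in directly: since there are no almost minimal quasi-Malcev operations, there are no almost minimal ternary quasi-majorities either. So both of the non-semiprojection ternary types from Theorem~\ref{fivetypes} are excluded.

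The only remaining case of arity $\geq 3$ is a $k$-ary quasi-semiprojection. Lemma~\ref{semiorb} (which is proved with no assumption on $G$) upgrades this to an orbit-semiprojection, giving the structural description in item~(3). For the arity bound, I would argue by a pigeonhole observation: if $k>s$, then every $k$-tuple $(a_1,\ldots,a_k)\in B^k$ has two coordinates in a common $G$-orbit, so by the orbit-semiprojection property $f(a_1,\ldots,a_k)=g(a_i)$ everywhere for some fixed $i$ and some $g\in\overline{G}$. But then $f$ is essentially unary with witness in $\overline{G}$, hence $f\in\overline{\langle G\rangle}$, contradicting Definition~\ref{def:almostmin}. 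Thus $k\leq s$, finishing the classification.

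The proof is essentially a synthesis, so there is no real obstacle left at this point: all the hard work sits in Lemmas~\ref{semiorb},~\ref{notfree},~\ref{notBool}, and~\ref{nomaj}, which collectively translate the five-type classification into the sharper three-type one under the given hypothesis on $G$. The only genuinely new observation is the pigeonhole-style arity bound, and it is immediate from the definitions.
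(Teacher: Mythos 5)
Your proposal is correct and follows essentially the same route as the paper: apply Theorem~\ref{fivetypes}, eliminate the quasi-Malcev case via Lemmas~\ref{notfree} and~\ref{notBool}, the quasi-majority case via Lemma~\ref{nomaj}, upgrade quasi-semiprojections to orbit-semiprojections via Lemma~\ref{semiorb}, and conclude $k\leq s$. The paper states the arity bound without elaboration; your pigeonhole argument is precisely the intended (and correct) justification.
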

\begin{proof} We need to consider the five possibilities for $f$ from Rosenberg's five types theorem (Theorem~\ref{fivetypes}). Lemma~\ref{notBool} and Lemma~\ref{notfree} tell us that $f$ cannot be a quasi-Malcev. Moreover, $f$ cannot be a quasi-majority from Lemma~\ref{nomaj}. Finally, if $f$ is a quasi-semiprojection it must be an orbit\-/semiprojection from Lemma~\ref{semiorb}. Since $f$ is an orbit\-/semiprojection, its arity must be $\leq s$, where $s$ is the number of orbits of $G$.
\end{proof}

Comparing Theorem~\ref{thm:threetypes} to Theorem~\ref{fivetypes}, we excluded the existence of certain kinds of almost minimal operations. It is worth discussing whether and when the three types we obtain to can exist as almost minimal or minimal. Firstly, by definition, any unary operation is almost minimal above $\overline{\langle G\rangle}$, and any constant operation is minimal above it. For binary operations, the situation is more complex. If $G\acts B$ is not transitive or satisfies some strong forms of imprimitivity, there are binary almost minimal (and minimal) operations above $\overline{\langle G\rangle}$~\cite{palfy1982contributions}. However, if $G\acts B$ satisfies some strong forms of primitivity, there are no binary almost minimal operations above  $\overline{\langle G\rangle}$~\cite{palfy1982contributions, kearnes2001collapsing}. In the next subsection we show that almost minimal $k$-ary orbit\-/semiprojections always exist for all $2\leq k\leq s$, where $s$ is the number of $G$-orbits. In Section~\ref{sec:palfy}, we show this is the case also for minimal orbit\-/semiprojections in the cases that interest us.

\subsection{Orbit-semiprojections} \label{sub:exminorb}
In this subsection, we note that essential $k$-ary orbit\-/semiprojections exist and are almost minimal for all $2\leq k\leq s$. Indeed, it is easy to prove that essential $k$-ary orbit\-/semiprojections and various other operations introduced in Subsection~\ref{sub:main} are always almost minimal. We prove this in Lemma~\ref{lem:collapse}. From this observation, it also follows that there are almost minimal $k$-ary orbit\-/semiprojections for all $2\leq k\leq s$ (Proposition~\ref{prop:exproj}).

\begin{definition} Let $G\acts B$. A \textbf{weak orbit\-/semiprojection} is a $k$-ary operation $f$ for $k\geq 2$ such that for any distinct $ i<j\leq k$, there is some $s(i,j)\leq k$ and $\alpha_{i,j}\in \overline{G}$ such that whenever $(a_1, \dots, a_k)$ is a $k$-tuple where $a_i$ and $a_j$ are in the same $G$-orbit, then 
\[f(a_1, \dots, a_k)=\alpha_{i,j}(a_{s(i,j)}).\]
\end{definition}

\begin{remark}\label{allweak} Let $G\acts B$. Suppose that $f$ is an orbit\-/semiprojection, a $G$-quasi-minority, an odd majority, or an odd Malcev. Then, $f$ is a weak orbit\-/semiprojection. This can be seen from the definitions of these operations.
\end{remark}

Below we prove that essential weak orbit\-/semiprojections are almost minimal, thus showing that $G$-quasi-minorities, odd majorities, odd Malcev operations, and essential orbit\-/semiprojections are always almost minimal. 

\begin{lemma}\label{lem:collapse} Let $G\acts B$. Let $f:B^k\to B$ be an essential weak orbit\-/semiprojection. Then, $f$ is almost minimal above $\overline{\langle G\rangle}$.
\end{lemma}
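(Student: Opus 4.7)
The plan is to show that every operation in $\overline{\langle G\cup\{f\}\rangle}$ of arity $r<k$ already lies in $\overline{\langle G\rangle}$, and separately to note that $f\notin\overline{\langle G\rangle}$. The latter is immediate since elements of $\overline{\langle G\rangle}$ are essentially unary while $f$ is assumed essential.

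For the former, I would argue by induction on term structure, showing that any term $t\in\langle G\cup\{f\}\rangle$ of arity $r<k$ lies in $\overline{\langle G\rangle}$. Passing from terms to the local closure is then automatic: if $h\in\overline{\langle G\cup\{f\}\rangle}$ has arity $r<k$, then on every finite $A\subseteq B^r$ it agrees with some such term, hence with an element of $\overline{\langle G\rangle}$, so $h\in\overline{\langle G\rangle}$ by closedness of the latter.

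The base case (projections) is trivial, and the case where the outermost symbol of $t$ is a unary $\alpha\in G$ reduces to the fact that $\overline{G}$ is closed under left composition with elements of $G$. The crux, and the main obstacle, is the case $t=f(t_1,\dots,t_k)$ with each $t_i$ of arity $r<k$. By the induction hypothesis each $t_i$ is of the form $t_i(y_1,\dots,y_r)=g_i(y_{l_i})$ for some $g_i\in\overline{G}$ and $l_i\in\{1,\dots,r\}$. Since $k>r$, the pigeonhole principle yields distinct $i<j$ with $l_i=l_j$. Because any element of $\overline{G}$ agrees on any single input with some element of $G$, for every choice of $y_1,\dots,y_r$ the values $t_i(y_1,\dots,y_r)$ and $t_j(y_1,\dots,y_r)$ both lie in the $G$-orbit of $y_{l_i}$, so the $i$-th and $j$-th coordinates of $(t_1,\dots,t_k)$ are always in the same $G$-orbit. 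The weak orbit-semiprojection property of $f$ therefore collapses the expression to $t=\alpha_{i,j}\circ t_{s(i,j)}$ with $\alpha_{i,j}\in\overline{G}$. Applying the induction hypothesis to $t_{s(i,j)}$ and then composing with $\alpha_{i,j}$ keeps us inside $\overline{\langle G\rangle}$, closing the induction.
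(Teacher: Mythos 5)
Your proof is correct and follows essentially the same route as the paper's: induction on terms showing every $r$-ary term ($r<k$) in $\langle G\cup\{f\}\rangle$ is essentially unary arising from $\overline{G}$, with the pigeonhole step forcing two arguments of $f$ into the same orbit so that the weak orbit\-/semiprojection property collapses the composition, and then passing to the local closure. The only detail you make more explicit than the paper is the justification that elements of $\overline{G}$ preserve $G$-orbits pointwise, which is a welcome clarification rather than a deviation.
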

\begin{proof} First, observe that since $f$ is  essential, it is not in $\overline{\langle G\rangle}$. Let $r< k$. We show by induction on terms that all $r$-ary functions in $\langle G \cup \{f\}\rangle$ are essentially unary and arise from a function in $\overline{G}$ by adding dummy variables; taking the topological closure then yields the lemma. For the base case, the elements of $G$ and the projections satisfy the claim. In the induction step, it is sufficient to consider $f(g_1,\dots ,g_k)$, where $g_1,\dots ,g_k$ are $r$-ary and satisfy our claim. Since all of the $g_i$ are essentially unary and $r<k$, two of them must depend on the same variable and so there are $1\leq i<j\leq k$ such that $g_i(x_1, \dots, x_{r})$ and $g_j(x_1, \dots, x_r)$ always lie in the same $G$-orbit. In particular, since $f$ is a $k$-ary weak orbit\-/semiprojection, there is $\alpha_{i,j}\in \overline{G}$ and $s(i,j)\leq k$ such that
  \[f(g_1, \dots, g_k)(x_1, \dots x_{r})=\alpha_{i,j}g_{s(i,j)}(x_1, \dots x_{r}),\]
Since by the inductive hypothesis, $g_{s(i,j)}$ is essentially unary and obtained from a function in $\overline G$ by adding dummy variables, so is $f(g_1, \dots, g_k)$. This completes the proof.
\end{proof}

\begin{prop}\label{prop:exproj} Let $G\acts B$, where $s$ is the number of orbits of $G$ on $B$. Then, for every $2\leq k\leq s$ there is a $k$-ary orbit\-/semiprojection which is almost minimal over $\overline{\langle G \rangle}$. 
\end{prop}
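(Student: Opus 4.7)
The plan is to produce, for each $2\leq k\leq s$, an explicit $k$-ary function $f \in \mathcal{O}^{(k)}$ that is simultaneously an orbit-semiprojection and an essential operation; then, since every orbit-semiprojection is a weak orbit-semiprojection by Remark~\ref{allweak}, Lemma~\ref{lem:collapse} immediately yields that $f$ is almost minimal above $\overline{\langle G \rangle}$, finishing the proof.

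For the construction, pick $k$ pairwise distinct $G$-orbits $O_1, \ldots, O_k \subseteq B$, which is possible since $k \leq s$. Because $G$ is non-trivial and acts faithfully on $B$, not every orbit can be a singleton (otherwise every element of $G$ would fix every point of $B$), so after reordering we may assume $|O_1| \geq 2$. Fix distinct $a, a' \in O_1$ and, for each $2 \leq j \leq k$, an arbitrary $b_j \in O_j$. Define
\[
f(x_1, \ldots, x_k) := \begin{cases} a' & \text{if } (x_1, \ldots, x_k) = (a, b_2, \ldots, b_k),\\ x_1 & \text{otherwise.} \end{cases}
\]

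To verify that $f$ is an orbit-semiprojection with respect to the first variable with witness $\mathrm{id} \in \overline{G}$, observe that the exceptional tuple $(a, b_2, \ldots, b_k)$ has its entries in $k$ pairwise distinct orbits; hence any tuple with two coordinates in the same $G$-orbit cannot coincide with it, and $f$ returns its first argument on every such tuple. To verify essentiality, note that $f(a, b_2, b_3, \ldots, b_k) = a'$ whereas $f(a, b_3, b_3, b_4, \ldots, b_k) = a$ (the latter being a collision tuple), so $f$ depends on $x_2$; likewise $f(a, b_2, \ldots, b_k) = a' \in O_1$ while $f(b_2, b_2, b_3, \ldots, b_k) = b_2 \in O_2$, so $f$ depends on $x_1$. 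Thus $f$ is not essentially unary.

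The only real care point is that we need the auxiliary value $a' \neq a$ available within $O_1$: this is exactly what faithfulness and non-triviality of $G \acts B$ furnish, via the existence of at least one non-singleton orbit. Without a non-singleton orbit, the construction would collapse to the first projection, which is not essential. Given the construction and the two verifications above, Lemma~\ref{lem:collapse} directly gives that $f$ is almost minimal above $\overline{\langle G \rangle}$, which completes the proof.
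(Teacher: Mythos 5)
Your proof is correct and follows essentially the same route as the paper's: both construct an explicit essential orbit-semiprojection (yours by perturbing the first projection at a single tuple whose entries lie in pairwise distinct orbits, the paper's by redirecting, on every such tuple, to the entry in the largest-indexed orbit) and then invoke Lemma~\ref{lem:collapse} via Remark~\ref{allweak}. One small slip: your witness for dependence on $x_2$ substitutes $b_3$ for $b_2$, and $b_3$ does not exist when $k=2$; substituting $a$ for $b_2$ instead yields the collision tuple $(a,a,b_3,\dots,b_k)$ (just $(a,a)$ when $k=2$) and works uniformly for all $k\geq 2$. Also, ``after reordering we may assume $|O_1|\geq 2$'' should read ``we may choose the $k$ orbits so that one of them is non-singleton'', since reordering cannot help if all $k$ orbits initially picked happen to be singletons; the needed freedom of choice is of course already present in your construction.
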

\begin{proof}
    We construct a $k$-ary quasi-semiprojection $f$ as follows. Let $(O_r|r<s)$ enumerate the $G$-orbits of $B$. Then, given $(a_1, \dots, a_k)\in B^k$, if any two of the $a_i$ are in the same orbit, $f(a_1, \dots, a_k)=a_1$. Otherwise, all of the $a_i$ belong to different orbits, and we let $f(a_1, \dots, a_k)=a_j$, where $a_j$ is the element appearing in the orbit with the largest index in our ordering. By construction, $f$ is a $k$-ary orbit\-/semiprojection which is essential. So almost minimality follows from Lemma~\ref{lem:collapse}.
\end{proof}

\begin{remark}\label{rem:binarysuff} From Theorem~\ref{thm:threetypes} and Proposition~\ref{prop:exproj}, we can easily recover the following fact mentioned in~\cite[Remark 7]{palfy1982contributions}: for $|B|>2$, a permutation group $G\acts B$ is collapsing (i.e., its monoidal interval consists of a single element) if and only if there is no binary essential operation in $\mathrm{St}(G)$, the maximal element in the monoidal interval of $G$.\footnote{As usual (cf. Remark~\ref{rem:oknoclosure}), this statement is true both working in the lattice of all clones on $B$ and in the lattice of closed clones on $B$. We give the proof below in the context of the lattice of closed clones to maintain a consistent notation.} We can prove this fact with our methods as follows: the left-to-right direction is trivial. Hence, we prove that if $G\acts B$ is not collapsing, then there is a binary essential operation in $\mathrm{St}(G)$. If $G\acts B$ is not transitive, then there is a binary orbit\-/semiprojection almost minimal over $\overline{\langle G \rangle}$ by Proposition~\ref{prop:exproj}. This will be contained in $\mathrm{St}(G)$. If $G\acts B$ is transitive, but not the free action of a Boolean group, Theorem~\ref{thm:threetypes} implies that $G$ is collapsing if and only if there is a binary almost minimal operation above $G$; and if there is a binary almost minimal operation above $G$, this is in $\mathrm{St}(G)$. The only remaining case is that of $G\acts B$ being the regular (i.e., transitive and free) action of a Boolean group. Whenever $G$ acts regularly on $B$ and $|B|>2$, it is easy to construct a binary almost minimal operation above $\langle G\rangle$ (in this case $\langle G\rangle=\overline{\langle G\rangle}$, cf. Remark~\ref{rem:free})~\cite[Proposition 6]{palfy1982contributions}: take any $\alpha\in G\setminus\{1\}$. Define $f_\alpha\in\mathcal{O}^{(2)}$ as follows:
\[f_\alpha(x,y)=\begin{cases}
    x &\text{if } \alpha x=y\; ;\\
    y &\text{otherwise}\;.
\end{cases}\]
Since $G\acts B$ is regular, it is easy to see $f_\alpha$ is almost minimal over $\langle G\rangle$, completing the proof. Interestingly, it is also true for a transformation monoid $\mathcal{T}$ on a finite set $|B|>2$ that $\mathcal{T}$ is collapsing  if and only if there is no binary essential operation in $\mathrm{St}(\mathcal{T})$, the maximal element in the monoidal interval of $\mathcal{T}$~\cite{grabowski1997binary}. We thank Andrei Krokhin for drawing our attention to~\cite{grabowski1997binary}.
\end{remark}

We end this section with the following proposition which  points out that an orbit\-/semiprojection can only generate other orbit\-/semiprojections in its same arity. 
This is important later in Section~\ref{sec:mintwisted}, since it implies that if an operation which is not an orbit\-/semiprojection generates an essential orbit\-/semiprojection, then it cannot be minimal (since it cannot be generated back by something it generates). Note, however,  that a $k$-ary orbit\-/semiprojection always generates operations which are not orbit\-/semiprojections in higher arity, for example, by adding dummy variables.

\begin{prop}\label{orbitgen} Let $G\acts B$ and suppose that $f$ is a $k$-ary orbit\-/semiprojection. Then, every function in $\overline{\langle G\cup \{f\}\rangle}\cap \mathcal{O}^{(k)}$ is an 
orbit\-/semiprojection.
\end{prop}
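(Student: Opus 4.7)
The plan is to prove by induction on term complexity that every $k$-ary function in $\langle G\cup\{f\}\rangle$ is itself a $k$-ary orbit\-/semi\-projection, and then to transfer to $\overline{\langle G\cup\{f\}\rangle}\cap\mathcal{O}^{(k)}$ by verifying that the set $S$ of $k$-ary orbit\-/semi\-projections on $B$ is closed in the pointwise convergence topology on $\mathcal{O}^{(k)}$.

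For the induction, let $i^{\ast}\in\{1,\ldots,k\}$ and $g^{\ast}\in\overline{G}$ witness that $f$ is an orbit\-/semi\-projection. The base case covers the $k$-ary projections $\pi_j^k$, which are trivially orbit\-/semi\-projections with unary witness the identity. Composition on the outside with some $\alpha\in G$ preserves the class, replacing the unary witness $g$ of a subterm $t$ by $\alpha\circ g\in\overline{G}$ while leaving the distinguished variable unchanged. The substantial case is the composition $f(t_1,\ldots,t_k)$, where by inductive hypothesis each $t_j$ is a $k$-ary orbit\-/semi\-projection with distinguished variable $i_j\in\{1,\ldots,k\}$ and unary witness $g_j\in\overline{G}$.

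The key observation I would establish in this case is the following: whenever an input tuple $(a_1,\ldots,a_k)\in B^k$ has two entries in a common $G$-orbit, the same holds for the output tuple $(t_1(a_1,\ldots,a_k),\ldots,t_k(a_1,\ldots,a_k))$, which then triggers the orbit\-/semi\-projection property of $f$. This would split into two subcases depending on whether the indices $i_1,\ldots,i_k$ have a repetition (in which case two of the $t_j(a_1,\ldots,a_k)$ lie in the orbit of the same $a_{i_j}$, since elements of $\overline{G}$ preserve $G$-orbits) or are all distinct, hence a permutation of $\{1,\ldots,k\}$ (in which case the orbit pattern of the output tuple equals that of the input up to reordering, and any repetition is preserved). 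Either way one obtains
\[
f(t_1,\ldots,t_k)(a_1,\ldots,a_k)=g^{\ast}\bigl(t_{i^{\ast}}(a_1,\ldots,a_k)\bigr)=g^{\ast}\bigl(g_{i^{\ast}}(a_{i_{i^{\ast}}})\bigr),
\]
exhibiting $f(t_1,\ldots,t_k)$ as an orbit\-/semi\-projection with distinguished variable $i_{i^{\ast}}$ and unary witness $g^{\ast}\circ g_{i^{\ast}}\in\overline{G}$ (composition being well defined inside $\overline{G}$).

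For the closure step I would write $S=\bigcup_{i=1}^{k}S_i$, where $S_i$ consists of orbit\-/semi\-projections with distinguished variable $i$, and verify each $S_i$ is closed in $\mathcal{O}^{(k)}$: given a pointwise limit $f_n\to f$ with $f_n\in S_i$ witnessed by $g_n\in\overline{G}$, the candidate witness is recovered as $g(b):=f(b,\ldots,b)$, lies in $\overline{G}$ since $\overline{G}$ is closed under pointwise limits, and satisfies $f(a_1,\ldots,a_k)=g(a_i)$ on all orbit\-/identifying tuples by passage to the limit. I expect the main obstacle to be the pigeonhole-style case split in the inductive step, since this is exactly the point where the matching between the arity of $f$ and the number of its subterms is used to force the required orbit repetition in $(t_1(a_1,\ldots,a_k),\ldots,t_k(a_1,\ldots,a_k))$; in higher arities no such coincidence is forced, which is consistent with the remark after the proposition that a $k$-ary orbit\-/semi\-projection does generate non\-/orbit\-/semi\-projections in larger arities.
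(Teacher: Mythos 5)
Your proof is correct and follows essentially the same route as the paper: induction on terms showing every $k$-ary term over $G\cup\{f\}$ is an orbit\-/semiprojection, using the coincidence of arities to force an orbit repetition in the tuple of subterm values, and then observing that the class of $k$-ary orbit\-/semiprojections is closed under pointwise limits. In fact your case split in the inductive step is slightly more careful than the paper's, which asserts that two of the subterms must depend on the same variable --- not true when the distinguished variables form a permutation of $\{1,\dots,k\}$ --- whereas you correctly note that in the permutation case the required orbit repetition in the output tuple is still inherited from the input.
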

\begin{proof} Similarly to Lemma~\ref{lem:collapse}, we first show the proposition holds for ${\langle G\cup \{f\}\rangle}\cap \mathcal{O}^{(k)}$ by induction on terms. The base case is trivial (recalling that essentially unary operations are non-essential orbit\-/semiprojections). Suppose that $g_1, \dots, g_k$ are $k$-ary orbit\-/semiprojections (we count among these also essentially unary operations). Now, suppose that $(a_1, \dots, a_k)$ is a tuple with two elements in the same orbit. We know that each of $g_1, \dots, g_k$ depend on precisely one variable whenever this is the case. In particular, at least two of them will depend on the same variable, say $g_i$ and $g_j$ for $i<j$. Suppose that $f$ depends on the $l$th variable whenever two entries are in the same orbit and corresponds to $\alpha\in G$. Then, 
    \[f(g_1, \dots, g_k)(a_1, \dots, a_k)=\alpha g_l(a_1, \dots, a_k),\]
    and so $f(g_1, \dots, g_k)$ is also an orbit\-/semiprojection. In particular, by induction on terms, we get that all operations in ${\langle G\cup \{f\}\rangle}\cap \mathcal{O}^{(k)}$ are orbit\-/semiprojections. We can conclude this is the case for all operations in $\overline{\langle G\cup \{f\}\rangle}\cap \mathcal{O}^{(k)}$ since any operation in the local closure of a set of orbit\-/semiprojections is also an orbit\-/semiprojection.
\end{proof}

\section{Consequences of the three types theorem and oligomorphic clones}\label{consequences}
In this section, we explore  consequences of the three types Theorem~\ref{thm:threetypes} in an oligomorphic context, and more generally for clones whose unary operations correspond to $\overline{G}$ for $G\acts B$ non-trivial and not being the free action of a Boolean group on $B$.

\subsection{Essential polymorphisms in \texorpdfstring{$\omega$}{omega}-categorical structures}\label{sub:essentialpolys}
We start by looking  at Question~\ref{Bodq} of Bodirsky, which asks whether an $\omega$-categorical model-complete core with an essential polymorphism also has a binary essential polymorphism. Whilst we give a negative answer to it in Corollary~\ref{cor:counterex}, we prove in Corollary~\ref{niceres} that the answer is positive for $\omega$-categorical structures with at most two orbits, and thus in particular for the important class of transitive structures. In   Subsection~\ref{whyeasybin},  we then are going to find weaker assumptions that imply the existence of binary essential polymorphisms. 

\begin{definition}\label{def:coreclone}
   For $G\acts B$, we say that a closed function clone $\mathcal{C}$ is a \textbf{core clone} with respect to $G$ if it belongs to the monoidal interval of $\overline{G}$, that is $\mathcal{C}\cap\mathcal{O}^{(1)}=\overline{G}$. 
 We simply say that $\mathcal{C}$ is a core clone if it is a core clone with respect to some $G\acts B$.
\end{definition}

\begin{corollary}\label{cor:generalcor} Let $G\acts B$ not be the free action of a Boolean group and have $\leq 2$ orbits. Let $\mathcal{C}$ be an essential core clone with respect to $G$. Then,  $\mathcal{C}$ contains a binary essential polymorphism.
\end{corollary}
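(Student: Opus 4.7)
My plan is to apply the Three Types Theorem~\ref{thm:threetypes} to an almost minimal operation inside $\mathcal{C}$ and eliminate every possibility but the binary one. Since $\overline{\langle G\rangle}$ consists entirely of essentially unary operations (it is generated, under local generation, by unary functions together with projections), the assumption that $\mathcal{C}$ is essential forces $\mathcal{C}\supsetneq\overline{\langle G\rangle}$. Lemma~\ref{lem:existencealmost} then supplies an operation $f\in\mathcal{C}$ which is almost minimal above $\overline{\langle G\rangle}$. Because $G\acts B$ is not the free action of a Boolean group, Theorem~\ref{thm:threetypes} classifies $f$ into one of three types: unary, binary, or a $k$-ary orbit\-/semiprojection with $3\leq k\leq s$, where $s$ is the number of $G$-orbits.

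Next, I rule out the two non-binary types. The unary case is impossible: the core clone hypothesis gives $\mathcal{C}\cap\mathcal{O}^{(1)}=\overline{G}$, so any unary member of $\mathcal{C}$ already lies in $\overline{G}\subseteq\overline{\langle G\rangle}$, contradicting $f\notin\overline{\langle G\rangle}$. The orbit\-/semiprojection case is excluded by the hypothesis $s\leq 2$, which makes the arity range $3\leq k\leq s$ empty. Therefore $f$ must be binary.

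Finally, I verify that $f$ is essential. If instead $f(x,y)\approx g(x_i)$ for some unary $g$, then $g$ is obtained from $f$ by identifying variables, hence $g\in\mathcal{C}\cap\mathcal{O}^{(1)}=\overline{G}$, and then $f$ is generated from $g$ by adding a dummy variable, putting $f\in\overline{\langle G\rangle}$ and again contradicting almost minimality. Thus $f$ is a binary essential polymorphism in $\mathcal{C}$, as required. I do not expect a real obstacle: the heavy lifting has been carried out in Theorem~\ref{thm:threetypes}, and the corollary is essentially the observation that when $s\leq 2$ the only surviving higher-arity type in the classification disappears, so the single remaining source of essential operations is the binary one.
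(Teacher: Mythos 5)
Your proposal is correct and follows essentially the same route as the paper's proof: obtain an almost minimal operation via Lemma~\ref{lem:existencealmost}, apply the Three Types Theorem~\ref{thm:threetypes}, eliminate the unary case via the core clone hypothesis and the orbit\-/semiprojection case via $s\leq 2$, and conclude essentiality from almost minimality. The paper's version is merely terser, absorbing your explicit case eliminations into the single observation that the operation has arity $\geq 2$.
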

\begin{proof} By assumption, $\mathcal{C}$ is essential with $\mathcal{C}\cap\mathcal{O}^{(1)}=\overline{G}$. Hence, by Lemma~\ref{lem:existencealmost} it has an operation $f$ of arity $\geq 2$ and almost minimal above $\overline{\langle G\rangle}$. By the three types theorem (Theorem~\ref{thm:threetypes}), since $s\leq 2$, $f$ has to be binary. By almost minimality, $f$ is essential.
\end{proof}

\begin{remark} No oligomorphic permutation group $G\acts B$ 
acts freely, and hence, Theorem~\ref{thm:threetypes} applies to this context. To see this, 
take any  element $a\in B$. The action of the stabilizer of $a$, $G_a$ on $B$ still has finitely many orbits by oligomorphicity. Hence, there are two elements $b,c\in B$ such that $(a,b)$ and $(a,c)$ lie in the same $G$-orbit. In particular, this means that there is a non-identity group element  fixing $a$.
\end{remark}

\begin{definition}\label{mccores}  We say that a countable $\omega$-categorical structure $B$ is a \textbf{model-complete core} if $\mathrm{Pol}(B)$ is a core clone.
\end{definition}

\begin{remark}
If a countable $\omega$-categorical structure $B$ is a model-complete core, then $\mathrm{Pol}(B)$ is a core clone with respect to $\mathrm{Aut}(B)$ (and with respect to any permutation group dense in $\mathrm{Aut}(B)$).
\end{remark}

\begin{remark}\label{rem:mincor} Let $B$ be a countable $\omega$-categorical structure with automorphism group $G$. For $f$ almost minimal above $\overline{\langle G \rangle}$, $\mathcal{C}=\overline{\langle G \cup \{f\}\rangle}$ is the polymorphism clone of a model-complete core with automorphism group $G$. This follows from Fact~\ref{fact:correspondence}.
\end{remark}

\begin{corollary}\label{niceres} Let $B$ be an $\omega$-categorical countable model-complete core such that $\mathrm{Aut}(B)$ has $\leq 2$ orbits. Then, if $\mathrm{Pol}(B)$ has an essential polymorphism, it also has a binary essential polymorphism. 
\end{corollary}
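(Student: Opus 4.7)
The plan is to derive this corollary as a direct specialization of Corollary~\ref{cor:generalcor} applied with $G := \mathrm{Aut}(B)$ and $\mathcal{C} := \mathrm{Pol}(B)$. The proof thus reduces to checking that all three hypotheses of Corollary~\ref{cor:generalcor} are satisfied under the present assumptions.

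First, I would observe that $\mathrm{Aut}(B)$ is oligomorphic, since $B$ is $\omega$-categorical. By the remark appearing just after Corollary~\ref{cor:generalcor}, no oligomorphic permutation group acts freely on its domain: the stabilizer of any point still has only finitely many orbits, forcing two points in the same $\mathrm{Aut}(B)$-orbit to share a stabilizer element and hence producing a non-identity group element that fixes a point. In particular, $\mathrm{Aut}(B) \acts B$ cannot be the free action of a Boolean group, so the first hypothesis of Corollary~\ref{cor:generalcor} holds. Second, the hypothesis that $\mathrm{Aut}(B)$ has at most two orbits on $B$ is part of the statement. Third, since $B$ is a model-complete core, Definition~\ref{mccores} and the remark immediately following it give that $\mathrm{Pol}(B)$ is a core clone with respect to $\mathrm{Aut}(B)$; and the standing hypothesis that $\mathrm{Pol}(B)$ contains an essential polymorphism makes it an essential core clone in the sense required.

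With the three hypotheses in place, I would simply invoke Corollary~\ref{cor:generalcor} to produce a binary essential operation in $\mathrm{Pol}(B)$, which is exactly the desired conclusion. The whole argument is a bookkeeping reduction; no step presents a real obstacle. The mathematical content has already been absorbed into the three types theorem (Theorem~\ref{thm:threetypes}) and its clone-level packaging in Corollary~\ref{cor:generalcor}: in the $s \leq 2$ regime, any almost minimal operation above $\overline{\langle \mathrm{Aut}(B)\rangle}$ of arity $\geq 2$ cannot be an orbit\-/semiprojection (whose arity would then have to satisfy $3 \leq k \leq 2$), so an almost minimal operation supplied by Lemma~\ref{lem:existencealmost} applied to $\mathrm{Pol}(B)$ must be binary and, being almost minimal above an essentially unary clone, essential.
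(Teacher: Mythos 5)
Your proposal is correct and follows exactly the paper's own route: the paper's proof of Corollary~\ref{niceres} is precisely the one-line reduction to Corollary~\ref{cor:generalcor} via the definition of a model-complete core, with the non-freeness of oligomorphic actions supplied by the remark you cite. Your write-up simply makes the hypothesis-checking explicit.
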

\begin{proof} Follows from Corollary~\ref{cor:generalcor} and the definition of an $\omega$-categorical model-complete core.
\end{proof}

If $B$ is $\omega$-categorical in a finite relational language, the binary essential operation in Corollary~\ref{niceres} can also be chosen to be minimal due to the following fact:

\begin{fact}[Existence of minimal clones, oligomorphic case,~\cite{BodChen}]\label{existencemin} Let $B$ be a countable $\omega$-categorical structure in a finite relational language. Let $\mathcal{D}$ be a  closed clone properly containing $\overline{\langle \mathrm{Aut}(B)\rangle}$. Then, $\mathcal{D}$ contains a closed clone $\mathcal{C}$ which is minimal above $\overline{\langle \mathrm{Aut}(B)\rangle}$.
\end{fact}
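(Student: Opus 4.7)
The plan is to prove the existence of a minimal closed clone via a Zorn's Lemma argument, carried out in the Galois dual picture, where the assumptions on $B$ provide the needed compactness.

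First, I pass to the relational side: by Fact~\ref{fact:correspondence} and the standard Galois correspondence between closed clones on $B$ and sets of relations closed under primitive positive definitions, the interval $[\overline{\langle G\rangle}, \mathcal{D}]$ in the lattice of closed clones (with $G := \mathrm{Aut}(B)$) is anti-isomorphic under $\mathrm{Inv}$ to the interval $[\mathrm{Inv}(\mathcal{D}), \mathrm{Inv}(\overline{\langle G\rangle})]$ in the lattice of relational clones. Finding a minimal closed clone $\mathcal{C}$ above $\overline{\langle G\rangle}$ inside $\mathcal{D}$ is thus equivalent to finding a \emph{maximal} relational clone $R$ with $\mathrm{Inv}(\mathcal{D}) \subseteq R \subsetneq \mathrm{Inv}(\overline{\langle G\rangle})$. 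Let $P$ denote the poset of such $R$ ordered by inclusion; since $\mathcal{D} \supsetneq \overline{\langle G\rangle}$ implies $\mathrm{Inv}(\mathcal{D}) \subsetneq \mathrm{Inv}(\overline{\langle G\rangle})$, the poset $P$ is nonempty.

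Next, I would verify Zorn's hypothesis: for any chain $(R_\alpha)_\alpha$ in $P$, the union $R^* := \bigcup_\alpha R_\alpha$ is automatically a relational clone containing $\mathrm{Inv}(\mathcal{D})$, and the only thing to verify is that $R^* \subsetneq \mathrm{Inv}(\overline{\langle G\rangle})$, so that $R^* \in P$. The hypotheses on $B$ enter here. By $\omega$-categoricity, $G$ is oligomorphic, so for every arity $n$ there are only finitely many $G$-orbits on $B^n$, hence only finitely many $G$-invariant $n$-ary relations (each being a union of orbits); in particular $\mathrm{Inv}(\overline{\langle G\rangle})$ is countable and its arity-$n$ slice is a finite Boolean lattice, which forces $(R_\alpha \cap \text{arity-}n)$ to stabilize along the chain.

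The main obstacle is combining this arity-wise stabilization into a global obstruction ruling out $R^* = \mathrm{Inv}(\overline{\langle G\rangle})$; per-arity stabilization alone does not prevent the countable union from exhausting $\mathrm{Inv}(\overline{\langle G\rangle})$ across all arities. This is where the finite relational language of $B$ is crucial: it should yield that $\mathrm{Inv}(\overline{\langle G\rangle})$ is \emph{finitely generated} as a relational clone over $\mathrm{Inv}(\mathcal{D})$ (with the finite set of basic relations of $B$, together with the finitely many new ``defect'' relations explaining $\mathrm{Inv}(\mathcal{D}) \neq \mathrm{Inv}(\overline{\langle G\rangle})$, controlling pp-definability sufficiently). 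Once such finite generation is established, one argues by contradiction: if $R^* = \mathrm{Inv}(\overline{\langle G\rangle})$, the finitely many generators each lie in some $R_{\alpha_i}$, and by the chain property there is a single $R_\beta$ above all of them, forcing $R_\beta = \mathrm{Inv}(\overline{\langle G\rangle})$ and contradicting $R_\beta \in P$. Zorn then delivers a maximal element of $P$, which dualizes to the required minimal closed clone $\mathcal{C}$. Formalizing the finite-generation step precisely, tightly linking the finite-language hypothesis on $B$ to a compactness statement that survives chain unions, is the technical crux.
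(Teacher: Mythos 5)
The paper does not actually prove this statement: it is quoted as a Fact with a citation to~\cite{BodChen}, so there is no in-paper argument to compare against. Judged on its own terms, your proposal has a genuine gap, and it sits exactly where you yourself locate it. The Galois dualization and the Zorn set-up are fine: for closed clones containing an oligomorphic permutation group, $\mathrm{Pol}$--$\mathrm{Inv}$ restricts to an anti-isomorphism between the two intervals, and the union of a chain of pp-closed sets is pp-closed. But the entire content of the theorem is then concentrated in the claim that $\mathrm{Inv}(\overline{\langle G\rangle})$ is finitely generated as a relational clone over $\mathrm{Inv}(\mathcal{D})$, and for this you offer only the heuristic that the finitely many basic relations of $B$ plus finitely many ``defect'' relations should suffice. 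That is not a proof, and it does not follow from the finite-language hypothesis in any direct way: pp-definitions cannot express negation or disjunction, so the basic relations of $B$ pp-generate only a small fragment of $\mathrm{Inv}(\overline{\langle G\rangle})$, which consists of \emph{all} first-order definable relations (all finite unions of orbits, in all arities); and the existence of finitely many relations $S_1,\dots,S_t$ with $\mathcal{D}\cap\mathrm{Pol}(S_1,\dots,S_t)=\overline{\langle G\rangle}$ amounts to showing that every closed clone strictly between $\overline{\langle G\rangle}$ and $\mathcal{D}$ violates one of finitely many fixed relations --- a statement at least as strong as the one being proved. So the proposal reduces the theorem to an unproven claim of comparable or greater difficulty.

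For contrast, the cited proof of Bodirsky and Chen stays on the operational side. One first observes that every closed clone properly containing $\overline{\langle G\rangle}$ contains an operation of bounded arity outside $\overline{\langle G\rangle}$: an element of minimal arity is almost minimal (cf.\ Lemma~\ref{lem:existencealmost} and Remark~\ref{rem:onlyalmostmin}), hence falls under the classification of Theorem~\ref{thm:fourtypesbodchen}, whose arities are bounded by $\max(3,2r-s)$ in terms of the numbers of orbitals and orbits. This uniform arity bound is then combined with a compactness argument --- finitely many behaviours on a fixed finite set of orbit representatives, assembled by a K\"onig-type argument, with both oligomorphicity and the finite relational signature entering to produce a uniform finite witness for non-membership in $\overline{\langle G\rangle}$ --- to show that the intersection of a descending chain of closed clones properly containing $\overline{\langle G\rangle}$ still properly contains it. Zorn's lemma, applied downwards to clones rather than upwards to relational clones, then finishes the proof. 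If you want to rescue your dual approach, you would have to actually prove the finite-generation statement, and I do not see how to do that without essentially redoing this operational argument.
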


\begin{corollary}\label{cor:counterex} Let $B$ be a countable $\omega$-categorical structure such that $\mathrm{Aut}(B)$ has $s\geq 3$ orbits on $B$. Then, for each $3\leq k\leq s$ there is a model-complete core with the same automorphism group as $B$ and such that it has a $k$-ary essential polymorphism and no essential polymorphisms of lower arity.
\end{corollary}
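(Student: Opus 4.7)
The plan is to construct, for each $3 \leq k \leq s$, an almost minimal $k$-ary orbit-semiprojection above $\overline{\langle \mathrm{Aut}(B)\rangle}$ and use it to define a polymorphism clone with the desired properties. Setting $G := \mathrm{Aut}(B)$, the first step is to invoke Proposition~\ref{prop:exproj}: since $G$ is oligomorphic with $s$ orbits, it produces for every $2 \leq k \leq s$ an essential $k$-ary orbit-semiprojection $f_k$ that is almost minimal above $\overline{\langle G\rangle}$.

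Next I form the closed clone $\mathcal{C}_k := \overline{\langle G \cup \{f_k\}\rangle}$ and apply Remark~\ref{rem:mincor} to conclude that $\mathcal{C}_k$ is the polymorphism clone of a countable $\omega$-categorical model-complete core $B_k$ whose automorphism group equals $G = \mathrm{Aut}(B)$. Since $f_k \in \mathcal{C}_k$ is essential (this is built into Proposition~\ref{prop:exproj}), the structure $B_k$ admits a $k$-ary essential polymorphism.

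It remains to rule out essential polymorphisms of arity strictly less than $k$, but this is immediate from the definition of almost minimality: for every $r < k$, we have $\mathcal{C}_k \cap \mathcal{O}^{(r)} = \overline{\langle G\rangle} \cap \mathcal{O}^{(r)}$, and every operation in $\overline{\langle G\rangle}$ is essentially unary, being obtained from some element of $\overline{G}$ by adding dummy variables. Hence no operation of arity less than $k$ in $\mathcal{C}_k$ is essential, and $B_k$ has the claimed properties.

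I do not foresee any substantive obstacle: all of the heavy lifting has been packaged into Proposition~\ref{prop:exproj} (existence of an essential almost minimal orbit-semiprojection in each admissible arity) and Remark~\ref{rem:mincor} (closed clones obtained by adjoining such an $f$ to $\overline{\langle G\rangle}$ arise as polymorphism clones of model-complete cores with the prescribed automorphism group). The corollary then follows by combining these with the defining property of almost minimality.
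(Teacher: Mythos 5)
Your proposal is correct and follows essentially the same route as the paper: invoke Proposition~\ref{prop:exproj} to obtain an essential almost minimal $k$-ary orbit\-/semiprojection, form the closed clone it generates with $\overline{\langle \mathrm{Aut}(B)\rangle}$, and apply Remark~\ref{rem:mincor} to realise it as the polymorphism clone of a model-complete core; the absence of lower-arity essential polymorphisms is exactly the almost minimality, which you spell out slightly more explicitly than the paper does.
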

\begin{proof} By Proposition~\ref{prop:exproj}, for each $3\leq k\leq s$, there is a closed clone $\mathcal{C}$ above $\overline{\langle\mathrm{Aut}(B)\rangle}$ generated by an essential (by almost minimality) $k$-ary orbit\-/semiprojection and $\overline{\langle\mathrm{Aut}(B)\rangle}$. By Remark~\ref{rem:mincor}, $\mathcal{C}$ is the polymorphism clone of a model-complete core with automorphism group $\mathrm{Aut}(B)$.
\end{proof}

Corollary~\ref{cor:counterex} gives a negative answer to 
Question~\ref{Bodq}. However, Corollary~\ref{niceres} yields that Question~\ref{Bodq} has a positive answer whenever $B$ is an $\omega$-categorical model-complete core whose automorphism group has at most two orbits. Moreover we will now see  in Theorem~\ref{thm:findbin} of Section~\ref{whyeasybin} that for $B$ an $\omega$-categorical model-complete core whose CSP is not NP-hard we can always find a binary essential operation in $\mathrm{Pol}(B)$; hence, for what concerns applications to CSPs of $\omega$-categorical structures, one could say the answer to Question~\ref{Bodq} is positive as well.

\subsection{Why easy problems lie above binary polymorphisms}\label{whyeasybin}
In this subsection, we show Theorem~\ref{thm:findbin}, which states that as long as $\mathcal{C}$ is a core clone with respect to a suitable permutation group $G\acts B$ without a uniformly continuous clone homomorphism to the clone $\mathcal{P}_{\{0,1\}}$ of projections on a two-element set (in the sense of Definition~\ref{def:clone}), it will have a binary essential polymorphism. This is especially helpful for the study of $\mathrm{CSP}$s of $\omega$-categorical structures since $\mathrm{Pol}(B)$ having such a clone homomorphism implies hardness of the CSP~\cite{Topo-Birk}. Moreover, it is an interesting disanalogy with the case of idempotent clones (i.e. core clones above the trivial group), where on any domain there are clones with no essential binary operations but no clone homomorphism to $\mathcal{P}_{\{0,1\}}$  (e.g. any clone generated by a  ternary majority or minority operation).\\

We begin with Lemma~\ref{thm:trivialidentities}, proving that the closed clone generated by $G$ and its orbit semiprojections has a uniformly continuous clone homomorphism to $\mathcal{P}_{\{0,1\}}$. 

\begin{definition}\label{def:homomcont} Let $G\acts B$. Let $\mathcal{S}$ be the closed clone generated by all orbit\-/semiprojections:
\[\mathcal{S}:=\overline{\langle G\cup\{ f\; \vert\; f \text{ is an orbit\-/semiprojection for }G\acts B\}\rangle}.\]
\end{definition}


\begin{lemma}\label{thm:trivialidentities} Let $G\acts B$. There is a uniformly continuous clone homomorphism from 
 $\mathcal{S}$ 
  to $\mathcal{P}_{\{0,1\}}$.
\end{lemma}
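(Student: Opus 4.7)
The plan is to exploit that restriction to a single $G$-orbit collapses every orbit-semiprojection into an essentially unary operation, and then to read off a clone homomorphism to $\mathcal{P}_{\{0,1\}}$ from this collapse.

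First I would fix two distinct points $a_0, a_1 \in B$ lying in a common $G$-orbit $O$; such a pair exists because $G$ is non-trivial and acts faithfully, so some non-identity element of $G$ moves some point. The key structural step is then to show, by induction on terms in $\langle G \cup \{h : h \text{ is an orbit-semiprojection}\}\rangle$, that every $f$ in this clone preserves $O$ and satisfies $f|_{O^n}(x_1,\dots,x_n) = \gamma_f(x_{i(f)})$ for some $\gamma_f \in \overline{G}$ and a unique coordinate $i(f) \in \{1,\dots,n\}$. The composition case is immediate: if $h$ is an orbit-semiprojection onto variable $i(h)$ with witness $\alpha$, and each $f_p$ already satisfies the claim, then evaluating $h(f_1,\dots,f_k)$ at a point of $O^n$ produces inputs to $h$ lying in $O^k$ (because $\overline G$ preserves orbits); in particular any two such inputs are in the same $G$-orbit, so the orbit-semiprojection identity of $h$ fires and yields $\alpha\gamma_{f_{i(h)}}(x_{i(f_{i(h)})})$. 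Uniqueness of $i(f)$ follows from injectivity of elements of $\overline{G}$, so that $\gamma_f(a_0) \neq \gamma_f(a_1)$. Passing to the local closure $\mathcal{S}$ is then routine, since the index $i(f)$ is already detected on the finite test set $\{a_0,a_1\}^n$: for any $f \in \mathcal{S}$, the local generation relation provides a term representative agreeing with $f$ on $\{a_0,a_1\}^n$, which pins down $i(f)$.

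I would then define $\eta: \mathcal{S} \to \mathcal{P}_{\{0,1\}}$ by sending each $n$-ary $f$ to $\pi^n_{i(f)}$, where $i(f)$ is the essential coordinate of $f|_{\{a_0,a_1\}^n}$. Uniform continuity holds with witness $A = \{a_0,a_1\}$ directly by construction. Preservation of composition reduces to the identity $i(f(g_1,\dots,g_k)) = i(g_{i(f)})$, which is exactly what the structural computation on $O^n$ gives: composing $\gamma_f(x_{i(f)})$ with $\gamma_{g_1}(x_{i(g_1)}),\dots,\gamma_{g_k}(x_{i(g_k)})$ produces an essentially unary function in $x_{i(g_{i(f)})}$.

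The main obstacle is the structural step itself: verifying that on $O^n$ every element of $\mathcal{S}$ really collapses to an essentially unary function with a well-defined essential variable. Once this is in hand, both the clone-homomorphism property and the uniform continuity are formal consequences of the local triviality on $O^n$.
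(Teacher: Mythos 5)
Your proof is correct and follows essentially the same route as the paper's: restrict to a non-trivial $G$-orbit, show by induction on terms that every operation in $\mathcal{S}$ becomes essentially unary there (with the essential coordinate unique by injectivity of elements of $\overline{G}$), send each operation to the projection onto that coordinate, and witness uniform continuity by a two-element subset of the orbit. The only cosmetic difference is that the paper packages this as a composition of two clone homomorphisms (restriction to the orbit, then collapse of essentially unary operations to projections), whereas you define the map directly.
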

\begin{proof} Since $G\acts B$ is non-trivial, let $C\subseteq B$ be a non-trivial $G$-orbit (i.e. $|C|>1$). Clearly, the map $\rho:\mathcal{S}\to\mathcal{S}_{\upharpoonright C}$ sending each operation $f$ to its restriction $f_{\upharpoonright C}$ to $C$  is a clone homomorphism since each identity satisfied by operations in $\mathcal{S}$ on $B$ will also be satisfied on a restriction of the domain. Note next that any such  restriction $f_{\upharpoonright C}$  is essentially unary: this is clear for orbit\-/semiprojections and elements of $G$, and follows by an easy induction on terms for arbitrary operations in $\mathcal{S}$. Let $\tau:\mathcal{S}_{\upharpoonright C} \to\mathcal{P}_{\{0,1\}}$ send each $k$-ary operation $f_{\upharpoonright C}$ in $\mathcal{S}_{\upharpoonright C}$ to the $k$-ary projection to the $i$th coordinate $\pi_i^k$, where $i$ is the variable on which $f$ depends. This is again easily seen to be a clone homomorphism. Thus, $\xi:=\tau\circ\rho$ is a clone homomorphism $\xi:  \mathcal{S}\to \mathcal{P}_{\{0,1\}}$, which moreover is uniformly continuous: Let $B'=\{c,d\}$ for distinct $c,d\subseteq C$. For any $f,g\in\mathcal{S}$, $f_{\upharpoonright B'}=g_{\upharpoonright B'}$ implies that $f$ and $g$ depend on the same variable, and whence  $\xi(f)=\xi(g)$, yielding uniform continuity.
\end{proof}

\begin{definition} Let $\mathcal{C}$ be a function clone. Let $\xi:\mathcal{C}\cap \mathcal{O}^{\leq (3)}\to \mathcal{P}_{\{0,1\}}$ be a map preserving arities. The \textbf{minor extension of} $\xi$ is the map $\xi':\mathcal{C}\to \mathcal{P}_{\{0,1\}}$ defined as follows:\\

Let $f\in\mathcal{C}$ be an $n$-ary operation. Let $(a_1, \dots, a_n):=a\in\{0,1\}^n$. Write $f_a(x,y)$ for the binary operation induced by $f$ substituting the variable $x_i$ with $x$ if $a_i=0$ and with $y$ otherwise. We then define 
\[\xi'(f)(a):=\xi(f_a)(0,1).\]
\end{definition}

From~\cite[Proposition 6.8]{irrelevant} we know the following: 
\begin{fact}\label{ternarytheom} Let $\mathcal{C}$ be a  function clone. Suppose that $\xi:\mathcal{C}\cap \mathcal{O}^{\leq(3)}\to \mathcal{P}_{\{0,1\}}$ is a partial clone homomorphism (i.e., it preserves arities and identities). Then, the minor extension $\xi':\mathcal{C}\to \mathcal{P}_{\{0,1\}}$ is a clone homomorphism.
\end{fact}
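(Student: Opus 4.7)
The plan is to show that $\xi'$ maps into $\mathcal{P}_{\{0,1\}}$ and that it preserves composition; preservation of arities and of projections is immediate from the definition. For the first part, I would show that $\xi'(f)$ is always a projection by an ultrafilter-style argument on the ``winning coordinates'' of $f$; for the second, I would reduce each composition identity to a ternary one that is already respected by $\xi$.

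Step 1: Let $f \in \mathcal{C}$ have arity $n$. For $n \leq 3$, a short computation using identities preserved by $\xi$ shows $\xi'(f) = \xi(f)$, which lies in $\mathcal{P}_{\{0,1\}}$ by hypothesis. For $n \geq 4$, I call $S \subseteq \{1,\dots,n\}$ \emph{winning} when $\xi(f_{\chi_S}) = \pi_2^2$, where $\chi_S$ is the indicator vector. For any partition $\{1,\dots,n\} = P_1 \sqcup P_2 \sqcup P_3$, the ternary collapse $g_{P_1,P_2,P_3}(x,y,z) \in \mathcal{C}$ obtained by substituting $x, y, z$ for the respective parts satisfies $\xi(g_{P_1,P_2,P_3}) = \pi_j^3$ for a unique $j \in \{1,2,3\}$, and pairwise merging of $x,y,z$ shows that among $P_1, P_2, P_3$ exactly $P_j$ wins. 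From this I would derive: (a) $\emptyset$ loses and $\{1,\dots,n\}$ wins; (b) exactly one of $S, S^c$ wins (from $f_{\chi_{S^c}}(x,y) = f_{\chi_S}(y,x)$); (c) upward closure, via the partition $A, B\setminus A, B^c$; and (d) intersection closure --- if $A$ and $B$ both win but $A\cap B$ does not, combining the partitions $A\cap B, A\setminus B, A^c$ and $A\cap B, B\setminus A, B^c$ forces both $A\setminus B$ and $B\setminus A$ to win, contradicting the partition $A\setminus B, B\setminus A, (A\triangle B)^c$. These properties imply that the family of winning sets is a principal ultrafilter at some $i^* \in \{1,\dots,n\}$, giving $\xi'(f) = \pi_{i^*}^n$.

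Step 2: Let $F = f(g_1, \dots, g_n)$ with $\xi'(f) = \pi_{i^*}^n$ and $\xi'(g_\ell) = \pi_{j_\ell}^m$ from Step~1. For $a \in \{0,1\}^m$, unpacking gives $F_a = f((g_1)_a, \dots, (g_n)_a)$. Let $b_\ell := a_{j_\ell} = \xi((g_\ell)_a)(0,1)$, so $\xi((g_\ell)_a) = \pi_{b_\ell+1}^2$. The strategy is to replace each $(g_\ell)_a$ by the projection $\pi_{b_\ell+1}^2$ one index at a time: for each $\ell$, the ternary operation
\[
T_\ell(x,y,z) := f\bigl(\pi_{b_1+1}^2(x,y), \dots, \pi_{b_{\ell-1}+1}^2(x,y),\, z,\, (g_{\ell+1})_a(x,y), \dots, (g_n)_a(x,y)\bigr)
\]
lies in $\mathcal{C} \cap \mathcal{O}^{(3)}$, and for $u \in \{(g_\ell)_a,\, \pi_{b_\ell+1}^2\}$ the composition $T_\ell(x,y,u(x,y))$ is a binary operation in $\mathcal{C}$; since $\xi$ preserves this composition identity and both choices satisfy $\xi(u) = \pi_{b_\ell+1}^2$, the two resulting $\xi$-images coincide with $\xi(T_\ell)(\pi_1^2,\pi_2^2,\pi_{b_\ell+1}^2)$. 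Iterating over $\ell$ yields $\xi(F_a) = \xi(f_b)$ where $b = (b_1, \dots, b_n)$, and then Step~1 applied to $f$ gives $\xi'(F)(a) = \xi(f_b)(0,1) = b_{i^*} = a_{j_{i^*}}$, which matches $\xi'(f)(\xi'(g_1), \dots, \xi'(g_n))(a)$.

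The subtle part is proving intersection closure in Step~1, which requires juggling three partitions to obtain a contradiction and carefully ensuring that the ternary-projection analysis never forces appeal to an operation of arity greater than $3$. Step~2 is then essentially bookkeeping: once the ``projection index'' $i^*$ from Step~1 is in hand, one observes that $\xi$-images of binary operations depend only on their sort into $\pi_1^2$ versus $\pi_2^2$, and this information can be propagated one argument at a time through ternary compositions.
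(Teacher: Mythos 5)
The paper does not prove this statement itself: it is imported as a Fact from \cite[Proposition 6.8]{irrelevant}, so there is no in-paper proof to compare against, and your argument has to be judged on its own. It is correct, and it is essentially the standard proof of the cited result. Step 1 is the real content: defining $S$ to be winning when $\xi(f_{\chi_S})=\pi_2^2$, observing that for any $3$-partition the collapse $g_{P_1,P_2,P_3}$ lies in $\mathcal{C}\cap\mathcal{O}^{(3)}$ and is sent by $\xi$ to a single $\pi_j^3$, so that exactly the part $P_j$ wins, and then extracting (a)--(d) to conclude that the winning sets form a (necessarily principal, since the index set is finite) ultrafilter --- this all goes through, and every identity you invoke is between terms all of whose constituent operations have arity at most $3$, which is exactly what the hypothesis on $\xi$ licenses. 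The same is true of the one-argument-at-a-time replacement via the ternary operations $T_\ell$ in Step 2. Two small points you should write out in a full version: (i) for $n\le 3$ the claim that preservation of projections is ``immediate'' hides the observation that $\xi$ itself must fix the projections of arity $\le 3$ (e.g.\ the identity $\pi_1^2(x,y)\approx\pi_1^2(x,z)$ rules out $\xi(\pi_1^2)=\pi_2^2$), which is also what legitimises the computation $\xi(f_a)=\xi(f)(\pi_{a_1+1}^2,\dots,\pi_{a_n+1}^2)$ in your base case; (ii) item (a) does not follow from partitions into nonempty parts, so either argue it directly from $f_{\chi_\emptyset}(x,y)\approx f_{\chi_\emptyset}(x,z)$ or allow degenerate partitions such as $\emptyset,\emptyset,\{1,\dots,n\}$. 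Neither is a gap, just bookkeeping.
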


From this it follows, in particular, that  if a clone exhibits  any structure at all in the sense that it has no clone homomorphism to $\mathcal{P}_{\{0,1\}}$ (which is equivalent to the satisfaction of some non-trivial set of identities in the clone), then it contains an essential operation of arity $\leq 3$. As a consequence of our results, we now can prove that  the same assumption (in fact, a weakening thereof considering uniform continuous clone homomorphisms only) implies even the existence of a binary essential operation, provided the clone is a core clone with respect to a group which is not a Boolean group acting freely:

\begin{theorem}[Theorem~\ref{mainthmb}]\label{thm:findbin}
 Let $G\acts B$ be such that $G$ is not a Boolean group acting freely on $B$. Suppose that $\mathcal{C}$ is a core clone with respect to $G$, and that $\mathcal{C}$ has no uniformly continuous clone homomorphism to $\mathcal{P}_{\{0,1\}}$, the clone of projections on a two-element set. Then, $\mathcal{C}$ contains a binary essential operation almost minimal above $\overline{\langle G\rangle}$. 
\end{theorem}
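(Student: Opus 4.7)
The plan is to prove the contrapositive: assuming $\mathcal{C}$ contains no binary essential operation, I will construct a uniformly continuous clone homomorphism $\xi':\mathcal{C}\to\mathcal{P}_{\{0,1\}}$. This is enough because any binary essential operation $h\in\mathcal{C}$ yields in fact a binary essential \emph{almost minimal} one: in the closed clone $\overline{\langle G\cup\{h\}\rangle}\subseteq\mathcal{C}$, the minimum arity of an operation outside $\overline{\langle G\rangle}$ is $2$ (unary operations all lie in $\overline{G}$ by the core clone assumption), and by Lemma~\ref{lem:existencealmost} any such minimum-arity element is almost minimal; it is essential since binary essentially unary operations lie in $\overline{\langle G\rangle}$. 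By Fact~\ref{ternarytheom}, it will then suffice to exhibit a partial clone homomorphism $\xi$ on $\mathcal{C}\cap\mathcal{O}^{\leq(3)}$ and take $\xi'$ to be its minor extension.

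The first step is a classification of operations of arity $\leq 3$ in $\mathcal{C}$. Unary operations lie in $\overline{G}$, and binary operations are essentially unary by hypothesis. For a ternary essential $m\in\mathcal{C}$, I would argue that $m$ must be almost minimal above $\overline{\langle G\rangle}$: otherwise $\overline{\langle G\cup\{m\}\rangle}\subseteq\mathcal{C}$ would contain an operation of arity $<3$ outside $\overline{\langle G\rangle}$, which, not being unary, would be a binary essential operation, contradicting the hypothesis. Theorem~\ref{thm:threetypes}, applicable because $G$ is not a Boolean group acting freely on $B$, then forces $m$ to be a ternary orbit\-/semiprojection. So every operation of arity $\leq 3$ in $\mathcal{C}$ is either essentially unary or a ternary orbit\-/semiprojection.

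Next I would define $\xi$ by sending each such operation to the projection $\pi_i^n$, where $i$ is either the unique variable of dependence (for essentially unary operations) or the variable of semiprojection (for ternary orbit\-/semiprojections). The verification that $\xi$ respects composition rests on the fact that elements of $\overline{G}$ send any point into its own $G$-orbit, and that both types of operations in our classification return, on a tuple with two same-orbit entries, a value in the $G$-orbit of their distinguished coordinate. In a composition $f(g_1,\dots,g_k)$ of arity $m\leq 3$ with $f$ of arity $k\leq 3$ and all operations from our list, a pigeonhole argument on the $m$ input variables ensures that on every input at least two of the values $g_j(x_1,\dots,x_m)$ lie in the same $G$-orbit, after which the classification of $f$ picks out $g_i$ and the composite indeed depends on, or semi-projects onto, the coordinate predicted by $\xi(f)\bigl(\xi(g_1),\dots,\xi(g_k)\bigr)$.

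The main delicate point will be the composition $f(g_1,g_2,g_3)$ in which both $f$ and some of the $g_j$ are genuine ternary orbit\-/semiprojections and the composite is again ternary: there one must re-invoke the classification to place the composite back into our two types, and use the same-orbit analysis to verify that its distinguished coordinate equals $k(i)$, where $\xi(f)=\pi_i^3$ and $\xi(g_j)=\pi_{k(j)}^3$. Once the partial clone homomorphism is in place, Fact~\ref{ternarytheom} produces the clone homomorphism $\xi':\mathcal{C}\to\mathcal{P}_{\{0,1\}}$. Uniform continuity follows by taking $A\subseteq B$ to consist of two distinct elements of a single non-trivial $G$-orbit, which exists since $G$ is non-trivial: the restriction $f_{\upharpoonright A^n}$ for $n\leq 3$ determines the dependency or semiprojection coordinate of such $f$, and hence determines $\xi(f)$ and, through the minor extension formula applied to binary substitutions of higher-arity operations, also $\xi'(h)$ for every $h\in\mathcal{C}$.
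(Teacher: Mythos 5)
Your proposal is correct and follows essentially the same route as the paper: reduce to the case where all binary operations are essentially unary, use the three types theorem to conclude that every ternary operation of $\mathcal{C}$ is essentially unary or an orbit\-/semiprojection, map these to projections onto their distinguished coordinate, extend to all of $\mathcal{C}$ via the minor extension of Fact~\ref{ternarytheom}, and read off uniform continuity from a two-element subset of a non-trivial orbit. The only (cosmetic) difference is that the paper verifies identity-preservation of the partial map by restricting operations to a single non-trivial orbit, where they all become essentially unary (Lemma~\ref{thm:trivialidentities}), whereas you check composition directly via the same-orbit pigeonhole argument; both verifications are sound.
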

\begin{proof} Note that if $f\in \mathcal{C}\cap\mathcal{O}^{(2)}\supsetneq \overline{\langle G\rangle}\cap\mathcal{O}^{(2)}$, since $\mathcal{C}$ is a core clone with respect to $G$, the operation $f$ must be essential and almost minimal, and the desired conclusion follows. Hence, suppose that $\mathcal{C}\cap\mathcal{O}^{(2)}=\overline{\langle G\rangle}\cap\mathcal{O}^{(2)}$. Then, all ternary operations in $\mathcal{C}\setminus \overline{\langle G\rangle}$ are  almost minimal. In particular, $\mathcal{C}\cap\mathcal{O}^{(3)}$ consists entirely of essentially unary operations and orbit\-/semiprojections.\\

From Lemma~\ref{thm:trivialidentities}, there is a uniformly continuous clone homomorphism from $\mathcal{S}$ to $\mathcal{P}_{\{0,1\}}$ which, when restricted to $\mathcal{C}\cap\mathcal{O}^{\leq (3)}$ yields a map $\xi:\mathcal{C}\cap\mathcal{O}^{\leq (3)} \to\mathcal{P}_{\{0,1\}}$ preserving arities and identities. Hence, from Fact~\ref{ternarytheom}, the minor extension of $\xi$ yields a clone homomorphism $\xi':\mathcal{C}\to\mathcal{P}_{\{0,1\}}$. It is easy to see this is uniformly continuous by observing that given $B'$ a two-element set from a $G$-orbit, if $f$ and $g$ agree on $B'$, then they are sent to the same projection.  
\end{proof}

\begin{remark} Theorem~\ref{thm:findbin} is false whenever its assumption on $G$ fails. 
 In fact, will see that if $G\acts B$ is a Boolean group acting freely on $B$ (including if $G$ is the trivial group), then there is always a $G$-quasi-minority $m$ almost minimal above $\overline{\langle G\rangle}$ (Proposition~\ref{exmin}). Hence, $\overline{\langle G\cup\{m\}\rangle}$ is a core clone with respect to $G$ with no binary essential operation (by almost minimality of $m$), but with no clone homomorphism to $\mathcal{P}_{\{0,1\}}$, since $m$ satisfies a set of non-trivial identities. 
\end{remark}

Since a uniformly continuous  clone homomorphism to $\mathcal{P}_{\{0,1\}}$  implies NP-hardness in a finite or $\omega$-categorical setting~\cite[Theorem 28]{Topo-Birk}, we can deduce that if $\mathrm{CSP}(B)$ is not NP-hard, there must be some binary essential polymorphism $f\in\mathrm{Pol}(B)$ witnessing this:

\begin{corollary} Let $B$ be a finite or countable $\omega$-categorical  model-complete core. For finite $B$, suppose additionally  that   $\mathrm{Aut}(B)\acts B$ is not the free action of a Boolean group.  Then $\mathrm{CSP}(B)$ is NP-hard   or  $\mathrm{Pol}(B)$ contains a binary essential polymorphism. 
\end{corollary}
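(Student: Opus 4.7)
The plan is to set this up as a direct dichotomy using Theorem~\ref{thm:findbin} together with the Topological Birkhoff-style hardness criterion from~\cite{Topo-Birk} (Theorem 28). Since $B$ is a model-complete core, $\mathrm{Pol}(B)$ is a core clone with respect to $G:=\mathrm{Aut}(B)$ in the sense of Definition~\ref{def:coreclone}. I would first verify that the non-Boolean-free hypothesis required by Theorem~\ref{thm:findbin} is satisfied in both settings: in the finite case it is explicitly assumed, while in the countably infinite $\omega$-categorical case $G$ is oligomorphic, and (as observed in the remark following Corollary~\ref{cor:generalcor}) no oligomorphic permutation group can act freely, since for any $a\in B$ finiteness of the orbits of the stabiliser $G_a$ forces a non-identity element of $G$ to fix $a$.

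Next, I would split into two cases according to whether $\mathrm{Pol}(B)$ admits a uniformly continuous clone homomorphism to $\mathcal{P}_{\{0,1\}}$. If such a homomorphism exists, then the Topological Birkhoff theorem from~\cite{Topo-Birk} applies in either the finite or the $\omega$-categorical setting to conclude that $\mathrm{CSP}(B)$ is NP-hard, and we are done. If no such homomorphism exists, then the hypotheses of Theorem~\ref{thm:findbin} are met ($\mathrm{Pol}(B)$ is a core clone with respect to $G$, $G$ is not a Boolean group acting freely on $B$, and there is no uniformly continuous clone homomorphism from $\mathrm{Pol}(B)$ to $\mathcal{P}_{\{0,1\}}$), and the theorem yields a binary essential operation in $\mathrm{Pol}(B)$, which is the other alternative of the statement.

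No step here is substantial; the entire content is packaging two previously established results and checking a freeness side-condition. The only mild subtlety is making sure the hypothesis of Theorem~\ref{thm:findbin} is available also when $G$ is infinite: this is precisely where one uses that oligomorphic actions are never free, so that the explicit "not the free action of a Boolean group" assumption is only needed in the finite case, exactly as stated.
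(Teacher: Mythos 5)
Your proposal is correct and follows essentially the same route as the paper: the paper derives this corollary directly from Theorem~\ref{thm:findbin} together with the NP-hardness criterion of \cite{Topo-Birk}, splitting on whether a uniformly continuous clone homomorphism to $\mathcal{P}_{\{0,1\}}$ exists, and relying on the observation (recorded after Corollary~\ref{cor:generalcor}) that oligomorphic permutation groups never act freely to discharge the hypothesis of Theorem~\ref{thm:findbin} in the infinite case. Your verification of that freeness side-condition is exactly the point the paper also singles out, so there is nothing to add.
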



\subsection{The orbital extension property and its failure}\label{sub:OEP}

Finding binary essential operations given some essential operation in $\mathrm{Pol}(B)$ for $B$ $\omega$-categorical is useful for several arguments in the context of infinite-domain CSPs, meaning that previous authors already developed methods to do this~\cite{BodKara, Smoothapp, bodirsky2020complexity, minrandom}. The standard technique made use of the orbital extension property.

\begin{definition}\label{def:OEP} We say that $G\acts B$ has the \textbf{orbital extension property} if there is an orbital (i.e. an orbit of the componentwise action $G\acts B^2$) $O$ such that for any $u,v\in B$ there is $z\in B$ such that $(u,z), (v,z)\in O$. 
\end{definition}

In the following, we shall also say that a structure has the orbital extension property if its automorphism group does, and shall proceed likewise with other group-theoretic properties (in particular, transitivity and primitivity). 
Note that having the orbital extension property implies transitivity of $G\acts B$.  A lot of transitive $\omega$-categorical structures, such as the order of the rational numbers or the random graph, have the orbital extension property; see Remark~\ref{rem:examples:oep} below for further examples. 
An example of a transitive $\omega$-categorical structure which does not have the orbital extension property is $K_{\omega, \omega}$, the complete bipartite graph where both parts of the partition are countable. To see this, take $a,b\in K_{\omega, \omega}$ forming an edge (i.e. they are in different blocks of the bipartition). There are two orbitals for pairs of distinct vertices, one, $O_E$ for pairs forming an edge, and one, $O_{\neg E}$ for pairs not forming an edge. However, neither $O_E$ nor  $O_{\neg E}$ can witness the orbital extension property with respect to $(a,b)$ since, due to there being only two sides in the partition, forming an edge with respect to one of $a$ or $b$ implies not forming an edge with the other vertex, and vice-versa. Note, however, that for $k>2$ the  complete $k$-partite graph with every block in the partition being infinite does have the orbital extension property.\\

The following was first observed in a weaker form in~\cite[Lemma 5]{BodKara}, and then strengthened in~\cite[Lemma 40]{minrandom} and~\cite[Proposition 22]{Smoothapp}:

\begin{fact}\label{fact:OEPfact} Let $\mathcal{C}$ be a closed clone with an essential operation and containing a permutation group $G\acts B$ with the orbital extension property. Then, $\mathcal{C}$ contains a binary essential operation.
\end{fact}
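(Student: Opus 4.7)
My plan is to proceed by minimizing arity. I would take $f\in\mathcal C$ essential of minimal arity $n$ (which exists since $\mathcal C$ has an essential operation), with the aim of showing $n=2$; if $n\geq 3$ I will derive a contradiction. Since $f$ is essential it depends on at least two of its variables, which without loss of generality I take to be $x_1$ and $x_2$, and I fix witnessing tuples $(a_1,\ldots,a_n),(a_1',a_2,\ldots,a_n)$ with distinct $f$-values, plus analogous witnesses $(b_1,b_2,\ldots,b_n),(b_1,b_2',b_3,\ldots,b_n)$ for dependence on $x_2$.

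The first step is to build a binary candidate operation in $\mathcal C$ by substituting unary terms into the remaining coordinates:
\[
g(x,y) := f(x,\,y,\,\gamma_3(x),\ldots,\gamma_n(x)),
\]
with suitable $\gamma_3,\ldots,\gamma_n\in\overline{G}$. If $g$ is essential, we are done. Otherwise, minimality of $n$ forces $g$ to be essentially unary, of the form $g(x,y)=h(x)$ or $g(x,y)=h(y)$. The plan is then to exploit the orbital extension property to pick the $\gamma_i$ so that both cases lead to contradictions with the witnesses above. For the case $g(x,y)=h(x)$, I would use transitivity of $G$ (implied by OEP) to choose the $\gamma_i$ with $\gamma_i(b_1)=b_i$, so that $g(b_1,b_2)\neq g(b_1,b_2')$. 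For $g(x,y)=h(y)$, I would use OEP proper: the common orbital $O$ furnishes $z\in B$ and group elements $\alpha,\beta\in G$ mapping a fixed representative $(p,q)\in O$ to $(a_1,z)$ and $(a_1',z)$ respectively; this pair of group elements lets me transport the $x_1$-dependence of $f$ through the substitution by comparing $f$-values at tuples obtained from the original witnesses via $\alpha,\beta$, obtaining $g(a_1,a_2)\neq g(a_1',a_2)$ and ruling out this case too.

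The main obstacle is the coherent construction of the $\gamma_i\in\overline{G}$ satisfying the simultaneous requirements above. OEP gives, for each pair $u,v\in B$, only a single common $O$-extension $z$; to obtain $\gamma_i\in\overline{G}$ realizing the substitutions uniformly on arbitrarily large finite subsets of $B$, I would argue by compactness: on each finite $A\subseteq B$, iterate OEP together with transitivity of $G$ on the orbital $O$ to produce $\gamma_{i,A}\in G$ with the required behavior on $A$, and then take a limit using closedness of $\overline{G}$ in the pointwise convergence topology. This inductive-plus-compactness step, in the spirit of~\cite{BodKara,minrandom,Smoothapp}, is the technical heart; once it is in place, the verification that essentiality of $g$ follows from the witnesses of essentiality of $f$ is largely bookkeeping, and the resulting contradiction with the minimality of $n$ forces $n=2$.
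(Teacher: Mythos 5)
First, a point of comparison: the paper does not actually prove this statement — it is recorded as a Fact and attributed to~\cite{BodKara,minrandom,Smoothapp} — so there is no in-paper proof to measure you against. Your skeleton (take $f$ essential of minimal arity $n$, assume $n\geq 3$, substitute unary maps from $\overline{G}$ into all but two coordinates to obtain a binary $g$, and use minimality to force $g$ to be essentially unary) is indeed the standard shape of the argument in those references. The half of your case analysis that rules out $g(x,y)=h(x)$ is correct and needs only transitivity: the conditions $\gamma_i(b_1)=b_i$ are finitely many single-point conditions, each realized by an element of $G$ itself, so the compactness machinery you describe is not needed for that part (and over an arbitrary infinite domain $\overline{G}$ is not compact, so "take a limit" would itself require justification if infinitely many conditions really were being imposed).

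The genuine gap is in the case $g(x,y)=h(y)$. With $g(x,y)=f(x,y,\gamma_3(x),\ldots,\gamma_n(x))$, ruling out this case requires exhibiting $u\neq u'$ and $v$ with
\[
f(u,v,\gamma_3(u),\ldots,\gamma_n(u))\neq f(u',v,\gamma_3(u'),\ldots,\gamma_n(u')).
\]
Every $\gamma_i\in\overline{G}$ is injective, so the last $n-2$ arguments necessarily differ between the two sides, whereas your only witness of dependence on $x_1$, namely $f(a_1,a_2,\ldots,a_n)\neq f(a_1',a_2,\ldots,a_n)$, has those arguments equal on both sides; the two situations do not match. The orbital extension property as you invoke it produces $\alpha,\beta\in G$ with $\alpha(p)=a_1$, $\beta(p)=a_1'$ and $\alpha(q)=\beta(q)=z$, but applying $\alpha$ and $\beta$ to the \emph{inputs} of the witness tuples does not preserve the inequality of $f$-values (for a general $f$, $f(\alpha \overline{t})$ bears no relation to $\alpha f(\overline{t})$), so "transporting the dependence" this way does not go through. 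This is precisely where the cited proofs must do real work: one first exploits minimality again, for instance by observing that the essentially unary operation $f(x,y,\gamma_3(y),\ldots,\gamma_n(y))$ (with $\gamma_i(a_2)=a_i$) depends on $x$ and hence is independent of $y$, which allows one to replace $a_2$ in the witness by the common $O$-extension $z$ of $a_1$ and $a_1'$ and thereby obtain witnesses of dependence lying over pairs in the distinguished orbital; only then can the identification of variables be carried out consistently. As written, your argument does not close this case, so the contradiction with $n\geq 3$ is not established.
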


Note that for $K_{\omega, \omega}$, the aforementioned example of a transitive structure without the orbital extension property, there is an equivalence relation (given by the two blocks of $K_{\omega, \omega}$) which is invariant under the automorphisms of the structure. Primitivity, i.e.~the absence of an equivalence relation invariant under all automorphisms, clearly implies transitivity, as does the orbital extension property.  It was hitherto not clear whether for $\omega$-categorical structures, primitivity also   implies the orbital extension property:

\begin{customq}{\ref{qOEP}}(Question 14.2.1 (2) in {\cite{BodCSP}})
   Does every primitive oligomorphic permutation group have the orbital extension property?
\end{customq}

In this subsection, we give examples of  primitive $\omega$-categorical structures which fail the orbital extension property. This answers Question~\ref{qOEP}, and shows how Theorem~\ref{thm:findbin} improves on previous methods.  Namely, on one hand,  Theorem~\ref{thm:findbin} makes  Fact~\ref{fact:OEPfact} redundant and covers also non-primitive and even non-transitive structures:  as long as $B$ is an $\omega$-categorical core with $\mathrm{Aut}(B)$ having less than two orbits, if $\mathrm{Pol}(B)$ is essential then it must contain a binary essential operation. Indeed, Theorem~\ref{thm:findbin} even tells us that this will happen as long as $B$ is a model-complete core and $\mathrm{Pol}(B)$ does not have a uniformly continuous clone homomorphism to $\mathcal{P}_{\{0,1\}}$, which for the purposes of studying CSPs is everything one needs. On the other hand, our counterexample shows  that techniques using the orbital extension property cannot be generalised to all primitive $\omega$-categorical model-complete cores, unlike Theorem~\ref{thm:findbin}. We remark that our counterexample even falls within the scope of the infinite-domain CSP conjecture of Bodirsky and Pinsker referred to in the introduction.\\

In order to define our counterexample, we begin by reminding the reader of some basic definitions regarding homogeneous structures. We refer to~\cite{homogeneous} for a survey on the topic, and to~\cite{BodCSP, infinitesheep} for surveys on CSPs of homogeneous structures. 

\begin{definition}
A countable relational structure is \textbf{homogeneous} if every isomorphism between its finite substructures extends to an automorphism of the whole structure. Homogeneous structures in a finite relational language are $\omega$-categorical, and so their automorphism group is oligomophic. For $B$ a relational structure, its \textbf{age}, $\bm{\mathrm{Age}(B)}$, is the class of finite substructures of $B$. The ages of homogeneous structures correspond to classes of finite structures known as Fra\"{i}ss\'{e} classes which satisfy some combinatorial closure properties. In particular, given a Fra\"{i}ss\'{e} class $\mathcal{F}$, there is a (unique up to isomorphism) countable homogeneous structure $B$ whose age is $\mathcal{F}$, which we call its \textbf{Fra\"{i}ss\'{e} limit}. For a relational language $\mathcal{L}$ and a set of finite $\mathcal{L}$-structures $\mathcal{D}$, $\bm{\mathrm{Forb}^{\mathrm{emb}}(\mathcal{D})}$ denotes the class of finite $\mathcal{L}$-structures such that no structure in $\mathcal{D}$ embeds in them. A homogeneous $\mathcal{L}$-structure $B$ is \textbf{finitely bounded} if there is some finite set of $\mathcal{L}$-structures $\mathcal{D}$ such that $\mathrm{Age}(B)=\mathrm{Forb}^{\mathrm{emb}}(\mathcal{D})$. Note that $\mathcal{D}$ can be chosen to be \textbf{minimal} in the sense that for all $A\in\mathcal{D}$ no proper substructure of $A$ is in $\mathcal{D}$.
\end{definition}

\begin{remark}\label{rem:examples:oep} With these notions at hand, we can provide many more examples of $\omega$-categorical structures with the  orbital extension property:
\begin{itemize}
   \item any transitive homogeneous structure with free amalgamation\footnote{A more general statement for readers familiar with model theory: any transitive $\omega$-categorical structure $M$ with an invariant $1$-type over $M$ has the orbital extension property. By a $1$-type over $M$, we mean a maximally consistent set of formulas with parameters from $M$ in a single free variable, and we denote by $S_1(M)$ the space of all such types. The automorphism group $\mathrm{Aut}(M)$ has a natural action on $S_1(M)$ and we say that $p\in S_1(M)$ is invariant if it is fixed by this action. For $p\in S_1(M)$ invariant and $a\in M$, take the restriction $p_{\upharpoonright\{a\}}$ of $p$ to $a$. By $\omega$-categoricity, types over finitely many parameters are realised, and so we may take $b\in M$ realising $p_{\upharpoonright\{a\}}$. Take $O$ to be the orbital of the pair $(a,b)$. Now, given any two elements $c,d$, taking $e$ realising $p_{\upharpoonright\{c,d\}}$, by $\mathrm{Aut}(M)$-invariance of $p$, we get that $(c,e), (d,e)\in O$, meaning that $O$ witnesses the orbital extension property for $M$. All transitive homogeneous structures with free amalgamation have an invariant $1$-type (consisting of an element not related to any element in the model). Moreover, $(\mathbb{Q}, <)$ has two invariant $1$-types (one asserting $x$ is smaller than all elements in $\mathbb{Q}$, and the other asserting that it is larger).}~\cite[Example 20]{Smoothapp};
   \item any transitive homogeneous structure $B$ where the set of minimal bounds for $\mathrm{Age}(B)$ has no structures of size $3$~\cite[Example 20]{Smoothapp};
   \item any $2$-set-transitive $\omega$-categorical structure~\cite[Lemma 3.7]{bodirsky2020complexity}.
\end{itemize}
\end{remark}


The structures which will yield a negative answer to Question~\ref{qOEP} are homogeneous $2$\-/multitournaments:

\begin{definition} A $\bm{2}$-\textbf{multitournament} is a relational structure $\mathcal{M}$ in a language with two binary relations $\left\{\xrightarrow[]{1}, \xrightarrow[]{2}\right\}$ such that for any two elements $a,b\in M,$ exactly one of \[a\xrightarrow[]{1}b, b\xrightarrow[]{1}a, a\xrightarrow[]{2}b, \text{ or } b\xrightarrow[]{2}a\]
holds. We can think of a $2$-multitournament as a tournament where every arc is coloured of one of two colours. 
\end{definition}

Cherlin~\cite[Table 18.1]{Cherlinclass} recently classified the family of primitive $3$-constrained homogeneous $2$-multitournaments (i.e.~primitive homogeneous $2$\-/multitournaments whose age is of the form $\mathrm{Forb}^{\mathrm{emb}}(\mathcal{D})$, where every structure in $\mathcal{D}$ has size $3$). We shall see that two such multitournaments do not have the orbital extension property. In order to introduce them we need to set up some notation to denote which substructures we are omitting from the Fra\"{i}ss\'{e} classes of $2$-multitournaments yielding our desired counterexamples.

\begin{notation} For $i,j,k\in\{1,2\}$ we let  $C(i,j,k)$ denote the $2$-multitournament of size $3$ consisting of an oriented $3$-cycle given by three vertices $a,b,c$ such that 
\[a\xrightarrow[]{i}b, b\xrightarrow[]{j}c, \text{ and }c\xrightarrow[]{k}a.\]
Similarly, $L(i,j,k)$ denotes the $2$-multitournament of size $3$ where the three vertices $a,b,c$ are such that 
\[a\xrightarrow[]{i}b, b\xrightarrow[]{j}c, \text{ and }a\xrightarrow[]{k}c.\]
When drawing these graphs, such as in Figure~\ref{triangles}, we use the colour \textcolor{Blue}{blue} to denote the relation $\xrightarrow[]{1}$ and the colour \textcolor{orange}{orange} to denote the relation $\xrightarrow[]{2}$.
\end{notation}

\newcommand{\midarrow}{\tikz \draw[-triangle 90] (0,0) -- +(.1,0);}

\begin{figure}
    \begin{center}
   \begin{minipage}{0.4\textwidth}
\centering
         \begin{tikzpicture}
    \begin{scope}[very thick, every node/.style={sloped,allow upside down}]
        \draw[very thick, Blue] (2,0)--node{\midarrow} (0,0);
        \draw[very thick, Blue] (1,1.5)--node{\midarrow} (2,0);
        \draw[very thick, orange] (0,0)--node{\midarrow} (1,1.5);
      
\end{scope}

\filldraw (0,0) circle (2pt) node[anchor=north] {$a$};
\filldraw (1,1.5) circle (2pt) node[anchor=south] {$b$};
\filldraw (2,0) circle (2pt) node[anchor=north] {$c$};
 \node[below] at (1,-0.1) {$1$};
 \node[left] at (0.5,0.9) {$2$};
 \node[right] at (1.5,0.9) {$1$};
  \node[below] at (1,-0.5) {$C(2,1,1)$};
 
    \end{tikzpicture}
    \end{minipage}
    \begin{minipage}{0.4\textwidth}
\centering

         \begin{tikzpicture}
    \begin{scope}[very thick, every node/.style={sloped,allow upside down}]
        \draw[very thick, orange] (0,0)--node{\midarrow} (2,0);
        \draw[very thick, Blue] (1,1.5)--node{\midarrow} (2,0);
        \draw[very thick, orange] (0,0)--node{\midarrow} (1,1.5);

\end{scope}

\filldraw (0,0) circle (2pt) node[anchor=north] {$a$};
\filldraw (1,1.5) circle (2pt) node[anchor=south] {$b$};
\filldraw (2,0) circle (2pt) node[anchor=north] {$c$};
 \node[below] at (1,-0.1) {$2$};
 \node[left] at (0.5,0.9) {$2$};
 \node[right] at (1.5,0.9) {$1$};

 \node[below] at (1,-0.5) {$L(2,1,2)$};
 
    \end{tikzpicture}
\end{minipage}
    \end{center}
    \caption[]{An example of the $2$-multitournaments given by the relations $C(2,1,1)$ and $L(2,1,2)$. To make the illustration clearer, we use the colour \textcolor{Blue}{blue} to denote the relation $\xrightarrow[]{1}$ and the colour \textcolor{orange}{orange} to denote the relation $\xrightarrow[]{2}$. These would appear as dark-grey and light-grey respectively in black \& white print.}
\label{triangles}
\end{figure}

\begin{definition}
    The homogeneous $2$-multitournaments $\widetilde{\mathbb{S}(3)}$ and $\mathbb{S}(4)$ are given by the Fra\"{i}ss\'e limit of the classes of $2$\-/multitournaments omitting embeddings of the $2$\-/multitournaments on three vertices described by Table~\ref{S3S4}.

\setlength{\tabcolsep}{18pt}
\renewcommand{\arraystretch}{2}
\begin{table}
    \begin{center}
\begin{tabular}{|c ||c | c||} 
 \hline
 \textbf{Structure} & omitted instances of $C(i,j,k)$ & omitted instances of $L(i,j,k)$\\  
 \hline \hline 
 $\widetilde{\mathbb{S}(3)}$ & 111, 222 & 111, 122, 212 \\
 \hline
 $\mathbb{S}(4)$ & 111, 112 & 121, 211, 221, 222 \\
  \hline

\end{tabular}
\end{center}
  \caption{In the row of a given structure $S$ we list the $2$-multitournaments of size $3$ such that the class of $2$-multitournaments omitting them yields $S$ as a Fra\"{i}ss\'{e} limit. By having "111" written in the "omitted instances of $C(i,j,k)$" column on the $\widetilde{\mathbb{S}(3)}$-row, we mean that the $2$-multitournament of size $3$ given by $C(1,1,1)$ is omitted from the class of finite $2$-multitournaments yielding $\widetilde{\mathbb{S}(3)}$ as a Fra\"{i}ss\'{e} limit. In particular, in Figure~\ref{S3} we represent the omitted triangles in $\widetilde{\mathbb{S}(3)}$ of the form $L(i,j,k)$.}
  \label{S3S4}
\end{table}

\end{definition}

\begin{figure}
    \begin{center}
 \begin{minipage}{0.3\textwidth}
\centering

         \begin{tikzpicture}
    \begin{scope}[very thick, every node/.style={sloped,allow upside down}]
        \draw[very thick, Blue] (0,0)--node{\midarrow} (2,0);
        \draw[very thick, Blue] (1,1.5)--node{\midarrow} (2,0);
        \draw[very thick, Blue] (0,0)--node{\midarrow} (1,1.5);
       
\end{scope}

\filldraw (0,0) circle (2pt) node[anchor=north] {$a$};
\filldraw (1,1.5) circle (2pt) node[anchor=south] {$b$};
\filldraw (2,0) circle (2pt) node[anchor=north] {$c$};
 \node[below] at (1,-0.1) {$1$};
 \node[left] at (0.5,0.9) {$1$};
 \node[right] at (1.5,0.9) {$1$};

 \node[below] at (1,-0.5) {$L(1,1,1)$};
 
    \end{tikzpicture}
\end{minipage}
 \begin{minipage}{0.3\textwidth}
\centering

         \begin{tikzpicture}
    \begin{scope}[very thick, every node/.style={sloped,allow upside down}]
        \draw[very thick, orange] (0,0)--node{\midarrow} (2,0);
        \draw[very thick, orange] (1,1.5)--node{\midarrow} (2,0);
        \draw[very thick, Blue] (0,0)--node{\midarrow} (1,1.5);

\end{scope}

\filldraw (0,0) circle (2pt) node[anchor=north] {$a$};
\filldraw (1,1.5) circle (2pt) node[anchor=south] {$b$};
\filldraw (2,0) circle (2pt) node[anchor=north] {$c$};
 \node[below] at (1,-0.1) {$2$};
 \node[left] at (0.5,0.9) {$1$};
 \node[right] at (1.5,0.9) {$2$};

 \node[below] at (1,-0.5) {$L(1,2,2)$};
 
    \end{tikzpicture}
\end{minipage}
    \begin{minipage}{0.3\textwidth}
\centering

         \begin{tikzpicture}
    \begin{scope}[very thick, every node/.style={sloped,allow upside down}]
        \draw[very thick, orange] (0,0)--node{\midarrow} (2,0);
        \draw[very thick, Blue] (1,1.5)--node{\midarrow} (2,0);
        \draw[very thick, orange] (0,0)--node{\midarrow} (1,1.5);

\end{scope}

\filldraw (0,0) circle (2pt) node[anchor=north] {$a$};
\filldraw (1,1.5) circle (2pt) node[anchor=south] {$b$};
\filldraw (2,0) circle (2pt) node[anchor=north] {$c$};
 \node[below] at (1,-0.1) {$2$};
 \node[left] at (0.5,0.9) {$2$};
 \node[right] at (1.5,0.9) {$1$};

 \node[below] at (1,-0.5) {$L(2,1,2)$};
 
    \end{tikzpicture}
\end{minipage}
    \end{center}
    \caption[]{pictorial representation of the $L(i,j,k)$-omitted triangles in $\widetilde{\mathbb{S}(3)}$.}
\label{S3}
\end{figure}

\begin{prop}\label{prop:noOEP} The automorphism groups of the homogeneous $2$-multitournaments  $\widetilde{\mathbb{S}(3)}$ and  $\mathbb{S}(4)$ do not have the orbital extension property.
\end{prop}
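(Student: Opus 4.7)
The plan is to use homogeneity to identify the orbitals of $\mathrm{Aut}(B)$ on pairs of distinct elements with the four edge-types $\{\to_1,\leftarrow_1,\to_2,\leftarrow_2\}$: since the language is binary and $B$ is homogeneous, two ordered pairs lie in the same orbital if and only if they satisfy the same binary relation. For each of the four candidate orbitals $O_R$ (where $R\in\{\to_1,\leftarrow_1,\to_2,\leftarrow_2\}$), I must exhibit $u,v\in B$ for which no $z$ satisfies both $(u,z),(v,z)\in O_R$. Together with the diagonal orbital (which trivially fails the property since $|B|>1$), this exhausts all orbitals and refutes the orbital extension property.

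The key observation is that the existence of a common extension $z$ reduces to a question about substructures of size $3$: fixing a relation $R'$ between $u$ and $v$, the $2$-multitournament induced on $\{u,v,z\}$ must realise a particular $L$-triangle $L(i,j,k)$, where $i,j,k$ are determined by $R'$ and $R$. By homogeneity and the Fra\"iss\'e construction, such a $z$ exists if and only if that $L(i,j,k)$ lies in the age of $B$. Hence the task reduces to inspecting Table~\ref{S3S4}: for each choice of $R$, I pick a relation $R'$ between $u$ and $v$ (any such pair exists by transitivity together with the fact that both colours are realised) such that the resulting $L$-triangle is forbidden.

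For $\widetilde{\mathbb{S}(3)}$ I would take $u,v$ with $u\to_1 v$. A brief check against the omitted list $\{L(1,1,1),L(1,2,2),L(2,1,2)\}$ shows:
\begin{itemize}
\item $R=\to_1$ forces the triangle $L(1,1,1)$;
\item $R=\leftarrow_1$ forces (on the reordered vertices $z,u,v$) the triangle $L(1,1,1)$;
\item $R=\to_2$ forces $L(1,2,2)$;
\item $R=\leftarrow_2$ forces (on $z,u,v$) the triangle $L(2,1,2)$.
\end{itemize}
All four are forbidden, so no orbital witnesses the property. For $\mathbb{S}(4)$, the same strategy works with $u,v$ satisfying $u\to_2 v$ checked against the omitted $L$-triangles $\{L(1,2,1),L(2,1,1),L(2,2,1),L(2,2,2)\}$: the candidates $R=\to_1,\leftarrow_1,\to_2,\leftarrow_2$ force respectively $L(2,1,1), L(1,2,1), L(2,2,2), L(2,2,2)$, all forbidden.

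There is no real obstacle beyond the bookkeeping of the twelve cases; the mild surprise is that a single pair $(u,v)$ defeats all four orbitals simultaneously in each structure, which is a consequence of how asymmetrically the omitted $L$-triangles distribute the two colours. Had this not happened, one could simply choose different pairs for different orbitals, but the clean uniform witness makes the verification essentially a one-line table lookup per orbital.
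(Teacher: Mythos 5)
Your proof is correct and follows essentially the same route as the paper: fix a single pair realising one edge-colour and check that each of the four orbitals would force a forbidden $L$-triangle (the paper uses $u\xrightarrow{1}v$ for $\widetilde{\mathbb{S}(3)}$ and dismisses $\mathbb{S}(4)$ as ``virtually identical'', whereas you carry out the $\mathbb{S}(4)$ case explicitly with $u\xrightarrow{2}v$). All twelve table look-ups in your case analysis check out against Table~\ref{S3S4}.
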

\begin{proof} We run the proof for $\widetilde{\mathbb{S}(3)}$ since the proof for $\mathbb{S}(4)$ is virtually identical. There are four possible orbits of pairs for which the orbital extension property could hold. These orbitals of $\widetilde{\mathbb{S}(3)}$ are described by the formulas:
\[x\xrightarrow[]{1}y, y\xrightarrow[]{1}x, x\xrightarrow[]{2}y, \text{ and } y\xrightarrow[]{2}x.\]
Suppose the orbital extension property holds with respect to the orbital $O$. Let $u,v\in \widetilde{\mathbb{S}(3)}$ be such that $u\xrightarrow[]{1}v$. Then, $O$ can be neither of the form $x\xrightarrow[]{1}y$ nor $y\xrightarrow[]{1}x$ since then there would be some $z\in \widetilde{\mathbb{S}(3)}$ such that $u,v,z$ forms a copy of $L(1,1,1)$, which is forbidden. This can be seen by inspecting Figure~\ref{S3}. If $O$ is of the form $x\xrightarrow[]{2}y$, then $\widetilde{\mathbb{S}(3)}$ would contain a copy of $L(1,2,2)$, which is also forbidden. Finally, if $O$ is of the form $y\xrightarrow[]{2}x$, $\widetilde{\mathbb{S}(3)}$ would contain a copy of $L(2, 1, 2)$. Hence, there is no orbital for which the orbital extension property holds.
\end{proof}
As pointed out earlier, $\widetilde{\mathbb{S}(3)}$ and  $\mathbb{S}(4)$ are primitive binary finitely bounded homogeneous structures~\cite{Cherlinclass}. This implies that they are $\omega$-categorical and thus yield a counterexample to Question~\ref{qOEP}; moreover, they fall into the scope of the Bodirsky-Pinsker conjecture.

\section{Almost minimal operations above a Boolean group}\label{sec:boolalmost}
The main result of this section will be Theorem~\ref{Boolcase}, in which we classify almost minimal operations above a Boolean group $G\acts B$ of size $>2$. Compared to the three types Theorem~\ref{thm:threetypes}, we have an extra kind of almost minimal operation: $G$-quasi-minorities (Definition~\ref{deforbitmin}). The case of $|G|=2$ will be treated later in Section~\ref{sec:z2}.

\begin{remark}\label{rem:free} Note that if $G\acts B$ freely, then $G$ is topologically closed in $\mathcal{O}^{(1)}$; that is, in our notation we have  $\overline{G}=G$. This is  because by freeness of the action, for any $\alpha, \beta\in G, a\in B$, we have that  $\alpha a=\beta a$ implies that $\alpha=\beta$, so the only convergent sequences from $G$ are those which are eventually constant. An easy induction on terms then shows that moreover,  $\langle G\rangle=\overline{\langle G\rangle}$.
\end{remark}

\begin{lemma} \label{Bnomaj} Let $G\acts B$ where $|G|>2$. Then, there is no quasi-majority almost minimal above $\overline{\langle G\rangle}$. 
\end{lemma}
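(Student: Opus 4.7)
The plan is to reduce to the case where $G$ is Boolean, acts freely on $B$, and $|G|>2$, using the previously established lemmas, and then to produce a contradiction in that remaining case by writing down two incompatible formulas for a certain binary substitution of the quasi-Malcev operation associated to the putative almost-minimal quasi-majority. For the reduction: if $G$ is not Boolean, then Lemma~\ref{notBool} together with Lemma~\ref{nomaj} rules out any almost-minimal quasi-majority; similarly, if $G$ does not act freely on $B$, Lemma~\ref{notfree} combined with Lemma~\ref{nomaj} suffices.

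So suppose that $G$ is Boolean, acts freely on $B$, and $|G|>2$. By Remark~\ref{rem:free}, $\overline{\langle G\rangle}=\langle G\rangle$, and the unary part of $\langle G\rangle$ is exactly $G$. Assume for contradiction that $m$ is a quasi-majority almost minimal above $\langle G\rangle$, and write $m_\Delta(x):=m(x,x,x)$; by almost minimality $m_\Delta\in G$ and is thus a bijection. I will fix distinct non-identity elements $\alpha,\beta\in G$, available precisely because $|G|>2$. Applying Lemma~\ref{majtoMal} with $\beta$ yields a quasi-Malcev operation $h(x,y,z):=m(x,\beta y,z)$ that is almost minimal above $\langle G\rangle$ and satisfies $h(a,a,b)\approx m_\Delta(b)$ and $h(a,b,b)\approx m_\Delta(a)$.

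The core of the argument will be to consider the two binary operations $p(x,y):=h(x,y,\alpha x)$ and $s(x,y):=h(\alpha x,y,x)$. By almost minimality, each lies in $\langle G\rangle\cap\mathcal{O}^{(2)}$ and so depends on a single variable, corresponding to an element of $G$. I will show that $p$ must depend on $y$ and $s$ on $x$, obtaining
\[
h(x,y,\alpha x)=m_\Delta(\alpha y) \qquad\text{and}\qquad h(\alpha x,y,x)=m_\Delta(\alpha x).
\]
The first formula follows from evaluating $p(x,x)=m_\Delta(\alpha x)$ and $p(x,\alpha x)=m_\Delta(x)$ via the identities for $h$: the alternative that $p$ depends on $x$ would force $m_\Delta\circ\alpha=m_\Delta$ and hence $\alpha=1$. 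The second follows from evaluating $s(x,x)=m_\Delta(\alpha x)$ and $s(x,\beta\alpha x)=m_\Delta(\alpha x)$, the latter using the Boolean simplification $\beta\cdot\beta\alpha=\alpha$ together with the quasi-majority identity $m(a,a,b)\approx m_\Delta(a)$; dependence of $s$ on $y$ would force $m_\Delta\circ\alpha=m_\Delta\circ\beta$, contradicting $\alpha\neq\beta$.

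Substituting $\alpha x$ for $x$ in the second formula and using $\alpha^{2}=1$ gives $h(x,y,\alpha x)=m_\Delta(x)$, which combined with the first forces $m_\Delta(\alpha y)=m_\Delta(x)$ for all $x,y\in B$, contradicting injectivity of $m_\Delta$. The main obstacle is precisely this Boolean free case, which the earlier arguments of Lemmas~\ref{notfree} and~\ref{notBool} cannot reach: in a Boolean group every element has order at most $2$, and under a free action every non-identity element moves every point, so the tricks used there fail. The key ingredient is the existence of two distinct non-identity elements, which is exactly what $|G|>2$ furnishes; this is what allows the constraints on $p$ and $s$ to pull in opposite directions and produce the contradiction.
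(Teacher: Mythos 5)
Your proof is correct, but it takes a genuinely different route from the paper's. The paper does not reduce to the Boolean free case at all: it works with an arbitrary $G$ with $|G|>2$, extracts from Lemma~\ref{majtoMal} the two-sided identity $m(x,\gamma x,z)\approx m(z,z,z)\approx m(z,\gamma x,x)$ for every $\gamma\in G\setminus\{1\}$, and then makes two substitutions (first $x=u$, $\gamma=\alpha$, $z=\beta u$; then $x=\beta u$, $z=u$, $\gamma=\alpha\beta^{-1}$, the latter legitimate because $\alpha\neq\beta$) to evaluate $m(u,\alpha u,\beta u)$ in two incompatible ways, namely as $m(\beta u,\beta u,\beta u)$ and as $m(u,u,u)$, contradicting injectivity of $x\mapsto m(x,x,x)$. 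That is a three-line computation directly on the quasi-majority. Your argument instead first disposes of the non-Boolean and non-free cases via Lemmas~\ref{notBool}, \ref{notfree} and~\ref{nomaj} (a valid reduction, and in fact exactly how the surrounding Three Types Theorem is assembled), and then, in the remaining Boolean free case, analyses which variable the binary specializations $p(x,y)=h(x,y,\alpha x)$ and $s(x,y)=h(\alpha x,y,x)$ of the derived quasi-Malcev operation depend on, before colliding the two resulting formulas. I checked the individual evaluations ($p(x,x)=m_\Delta(\alpha x)$, $p(x,\alpha x)=m_\Delta(x)$, $s(x,x)=m_\Delta(\alpha x)$, $s(x,\beta\alpha x)=m_\Delta(\alpha x)$ via $\beta^2=1$ and the quasi-majority identity) and the two exclusions of dependence; they are all sound, using faithfulness and injectivity of $m_\Delta$ exactly where needed. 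Both proofs ultimately hinge on Lemma~\ref{majtoMal} and on $|G|>2$ supplying two distinct non-identity elements; the paper's version buys brevity and uniformity over all $G$, while yours isolates where the genuine obstruction lives (the free Boolean action, which Lemmas~\ref{notfree} and~\ref{notBool} cannot touch) at the cost of invoking more of the surrounding machinery.
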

\begin{proof}
Suppose by contradiction that $m$ is a quasi-majority almost minimal above $\overline{\langle G\rangle}$. Since $|G|>2$, take $\alpha, \beta\in G\setminus \{1\}$ distinct. Lemma~\ref{majtoMal} implies $m(x,\gamma x,z)\approx m(z,z,z)\approx m(z,\gamma x,x)$ for any $\gamma \in G\setminus\{1\}$. Let $u$ be a new variable. Setting $x=u, \gamma=\alpha, z=\beta u$ in the first equation yields $m(u,\alpha u,\beta u)\approx m(\beta u,\beta u,\beta u)$; on the other hand, setting $x=\beta u, z=u, \gamma=\alpha\circ \beta^{-1}$ in the second equation yields $m(u,\alpha u,\beta u)\approx m(u,u,u)$, contradicting injectivity of the map  $x\mapsto m(x,x,x)$.
\end{proof}

\ignore{
\begin{proof} Suppose by contradiction that $m$ is a quasi-majority almost minimal above $\overline{\langle G\rangle}$. Since $|G|>2$, take $\alpha, \beta\in G\setminus \{1\}$ distinct. Since $\alpha$ and $\beta$ are distinct and the action of $G$ is faithful, there is $a\in B$ such that $\alpha a=b\neq c=\beta a$. Let $h(x,y,z)=m(x,\alpha y, z)$ and $l(x,y,z)=m(x,z,\beta y)$.\\

From Lemma~\ref{majtoMal}, we can deduce that
\begin{equation}\label{fun1}
    m(x,\alpha x, y)\approx m(y,y,y)
\end{equation}
 since
\[  m(x,\alpha x, y)\approx h(x,x,y)\approx h(y,y,y)\approx m(y,y,y).\]
We can also deduce that \begin{equation}\label{fun2}
    m(x, z, \beta x)\approx m(z,z,z)
\end{equation}
from Remark~\ref{alsol} since
\[m(x,z,\beta x)\approx l(x,x,z)\approx l(z,z,z)\approx m(z,z,z).\]

Consider $a,b,c$ as above. We have that
\[m(c,c,c)=m(a, \alpha a, c)= m(a,b,c)= m(a, b, \beta a)=m(b,b,b),\]
where the first identity holds by~\ref{fun1} and the last by~\ref{fun2}. However, since $b\neq c$, this contradicts injectivity of $m(x,x,x)$ and thus almost minimality of $m$.
\end{proof}
}

\begin{lemma}\label{Malnoproj} Let $G\acts B$ where $G$ is a Boolean group.  
Let $\alpha\in G\setminus\{1\}$. Suppose that $M(x,y,z)$ is quasi-Malcev and almost minimal above ${\langle G \rangle}$. Then,  $h(x,y,z):=M(x,\alpha y, z)$ is not a quasi-semiprojection. 
\end{lemma}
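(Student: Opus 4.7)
The plan is to assume for contradiction that $h(x,y,z):=M(x,\alpha y,z)$ is a quasi-semiprojection onto some variable $i\in\{1,2,3\}$, with unary witness $g$, and to derive the identity $e(x)\approx e(\alpha x)$ for the unary operation $e(x):=M(x,x,x)$. Since $M$ is almost minimal above $\langle G\rangle$, the unary operation $e$ lies in $\langle G\rangle\cap \mathcal{O}^{(1)}$. The existence of an almost minimal quasi-Malcev operation already forces $G$ to act freely on $B$ by Lemma~\ref{notfree}, so by Remark~\ref{rem:free} we have $\langle G\rangle\cap\mathcal{O}^{(1)}=G$, and $e$ is therefore a bijection of $B$. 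Consequently $e(x)=e(\alpha x)$ for all $x$ would force $x=\alpha x$ for all $x$, contradicting $\alpha\neq 1$ together with faithfulness of the action.

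The derivation in each of the three cases uses only three ingredients: the identities arising from the quasi-semiprojection property of $h$ on triples with two equal entries, the quasi-Malcev identities $M(x,y,y)\approx M(y,y,x)\approx e(x)$, and the Boolean identity $\alpha^2=1$. The recurring trick is to specialise a quasi-semiprojection identity at a substitution of the form $z=\alpha x$ (or $y=\alpha x$, etc.) so that, after applying a quasi-Malcev reduction, a $g$-value becomes equated with an $e$-value. Doing this twice with different quasi-semiprojection identities yields two different expressions for $g$, whose comparison produces the equation $e(y)=e(\alpha y)$.

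Concretely, in case $i=2$: $h(x,x,z)=g(x)$ at $z=\alpha x$ gives $M(x,\alpha x,\alpha x)=g(x)$, and quasi-Malcev collapses the left-hand side to $e(x)$, so $g=e$; then $h(x,y,x)=g(y)$, rewritten with $w:=\alpha y$, reads $M(x,w,x)=e(\alpha w)$, and specialising $x=w$ yields $e(w)=e(\alpha w)$. Case $i=1$ is analogous: $h(x,x,z)=g(x)$ at $z=\alpha x$ again gives $g=e$, while $h(y,x,x)=g(y)$ at $x=\alpha y$ combined with $M(y,y,\alpha y)=e(\alpha y)$ delivers $e(y)=e(\alpha y)$. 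Case $i=3$ follows the same pattern: $h(x,x,z)=g(z)$ at $z=\alpha x$ together with quasi-Malcev gives $g(\alpha x)=e(x)$, whereas $h(y,x,x)=g(x)$ at $y=\alpha x$ gives $g(x)=e(x)$, and comparing these two expressions of $g$ yields $e(\alpha x)=e(x)$. The only real obstacle is tracking the right substitutions case by case; no deeper structural input is needed beyond $\alpha^2=1$ and the injectivity of $e$.
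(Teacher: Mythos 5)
Your proposal is correct and follows essentially the same route as the paper: in each of the three cases you evaluate the quasi-semiprojection identities on non-injective triples built with $\alpha$, collapse via the quasi-Malcev identities and $\alpha^2=1$, and contradict the injectivity of a unary operation in the group closure (the paper contradicts injectivity of $g$ directly, you of $e(x)=M(x,x,x)$, which is the same mechanism). Your detour through Lemma~\ref{notfree} and freeness to get that $e$ is a bijection is harmless but unnecessary, since injectivity of $e\in\overline{G}$ already suffices.
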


\begin{proof} The operation $h$ is also almost minimal above ${\langle G\rangle}$. Suppose by contradiction $h(x,y,z)$ is a quasi-semiprojection. Suppose that when two variables are the same $h$ depends on the first coordinate and equals the unary function $g\in{\langle G \rangle}$. Then, 
\[g(y)\approx h(y, \alpha y, y)\approx M(y,y,y)\approx M(\alpha y, \alpha y, y)\approx h(\alpha y, y, y)\approx g(\alpha y)\]
By injectivity of $g$, this cannot happen. The same argument works for showing that $h$ cannot depend on the third coordinate. So, suppose that $h(x,y,z)$ depends on the second coordinate when two variables are identified. Using the fact that $G$ is Boolean, 
we get that 
\[g(y)\approx h(\alpha y, y, \alpha y)\approx M(\alpha y, \alpha y, \alpha y)\approx M(\alpha y, y, y)\approx h(\alpha y, \alpha y, y)\approx g(\alpha y),\]
again contradicting injectivity of $g\in{\langle G\rangle}$. Hence, $h$ cannot be a quasi-semiprojection.

\end{proof}

\ignore{
\begin{lemma}\label{lotsMal} Let $G\acts B$ be Boolean.  
Let  $\alpha\in G\setminus\{1\}$. Suppose that $M(x,y,z)$ is quasi-Malcev and almost minimal above ${\langle G \rangle}$. If  $h(x,y,z)=M(x,\alpha y, z)$  is not a quasi-majority, then it is quasi-Malcev.
\end{lemma}
}
\begin{lemma}\label{lotsMal} 
Let $G\acts B$ and  
let  $\alpha\in G\setminus\{1\}$. Suppose that $M(x,y,z)$ is quasi-Malcev and almost minimal above ${\langle G \rangle}$. If some permutation of the variables of  $h(x,y,z)=M(x,\alpha y, z)$  is quasi-Malcev, then so is $h$ itself.
\end{lemma}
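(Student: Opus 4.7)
The plan is to reduce the claim to a tractable case analysis on the three binary operations obtained from $h$ by identifying two of its three variables. First, note that $h$ and $M$ generate each other modulo $\langle G\rangle$ via composition with $\alpha^{\pm 1}$ on the middle coordinate, so $h$ is also almost minimal above $\langle G\rangle$. Set
\[u_1(x):=M(x,x,x),\qquad g(x):=h(x,x,x)=M(x,\alpha x,x).\]
By almost minimality, both $u_1$ and $g$ lie in $\overline{\langle G\rangle}\cap\mathcal{O}^{(1)}=\overline{G}$, and in particular they are injective.

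For $i\in\{1,2,3\}$, denote by $P_i(x,y)$ the binary operation obtained from $h$ by substituting $x$ at position $i$ and $y$ at the other two positions:
\[P_1(x,y)=M(x,\alpha y,y),\quad P_2(x,y)=M(y,\alpha x,y),\quad P_3(x,y)=M(y,\alpha y,x).\]
Each $P_i$ is a binary element of $\overline{\langle G\rangle}$, which at arity two consists only of essentially unary operations; since $P_i(x,x)=g(x)$, we conclude that each $P_i$ is identically $g(x)$ or identically $g(y)$. A direct computation then shows that for $\sigma\in S_3$, the permuted operation $h^\sigma$ defined by $h^\sigma(x_1,x_2,x_3):=h(x_{\sigma(1)},x_{\sigma(2)},x_{\sigma(3)})$ is quasi-Malcev if and only if $P_{\sigma^{-1}(1)}\equiv g(x)$ and $P_{\sigma^{-1}(3)}\equiv g(x)$. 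Hence the hypothesis becomes: at least two of $P_1, P_2, P_3$ are identically $g(x)$; and the desired conclusion becomes: both $P_1$ and $P_3$ are identically $g(x)$.

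It therefore suffices to rule out the two configurations in which one of $P_1,P_3$ equals $g(y)$. Suppose first that $P_3\equiv g(y)$ (forcing $P_1\equiv P_2\equiv g(x)$ by the hypothesis). Specializing the identity $P_3(x,y)=g(y)$ at $x=\alpha y$ and applying the quasi-Malcev identity $M(y,\alpha y,\alpha y)=u_1(y)$ yields $g=u_1$. Then $P_2(x,y)=u_1(x)$, and specializing at $y=\alpha x$ gives $u_1(\alpha x)=M(\alpha x,\alpha x,\alpha x)=u_1(x)$, so $u_1\circ\alpha=u_1$. Injectivity of $u_1$ combined with faithfulness of $G\acts B$ then forces $\alpha=1$, contradicting $\alpha\neq 1$. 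The configuration $P_1\equiv g(y)$ is excluded symmetrically, using the specialization $x=\alpha y$ in $P_1$ and the quasi-Malcev identity $M(\alpha y,\alpha y,y)=u_1(y)$ to obtain $g=u_1$, and then using $P_3\equiv u_1(x)$ at $x=\alpha y$ to again contradict faithfulness.

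The main conceptual step is the equivalence between quasi-Malcevness of a permuted $h$ and the condition on a pair of the $P_i$'s; the case analysis afterwards is short, since the quasi-Malcev symmetry under exchanging the first and third arguments groups the six permutations into three equivalence classes and leaves only the two bad configurations above to exclude. The minor pitfall is selecting the correct quasi-Malcev specialization ($M(y,\alpha y,\alpha y)=u_1(y)$ versus $M(\alpha y,\alpha y,y)=u_1(y)$) to derive $g=u_1$ in each of the two configurations.
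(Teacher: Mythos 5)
Your proof is correct, and its core mechanism is the same as the paper's: use almost minimality to force the binary identification minors of $h$ into $\overline{\langle G\rangle}\cap\mathcal{O}^{(2)}$, hence each is identically $g(x)$ or $g(y)$, and then combine the quasi-Malcev law of $M$ with injectivity of unary operations in $\overline{G}$ and faithfulness of the action to pin down which minor projects where. Your reformulation of both the hypothesis and the conclusion as conditions on the set $\{i : P_i\equiv g(x)\}$ is a clean organizational improvement that reduces everything to excluding exactly two configurations. There is one genuine difference worth recording: the paper's own chain of identities ends with $M(\alpha y,y,y)\approx h(\alpha y,\alpha y,y)$, and since $h(\alpha y,\alpha y,y)=M(\alpha y,\alpha^2 y,y)$, that step silently uses $\alpha^2=1$; this is harmless in the paper because the lemma is only ever invoked for Boolean groups, but it means the written proof does not quite cover the statement at the stated level of generality. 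Your derivation ($g=u_1$ followed by $u_1\circ\alpha=u_1$, contradicting injectivity and faithfulness) never uses the order of $\alpha$, so you establish the lemma for an arbitrary faithful action $G\acts B$ and arbitrary $\alpha\in G\setminus\{1\}$, exactly as the statement claims.
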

\begin{proof}
Suppose $h$ is not quasi-Malcev itself, but some function obtained by permuting its variables is.   Then  $h(x,y,x)\approx h(y,y,y)$, and hence
\[h(y,y,y)\approx h(\alpha y, y, \alpha y)\approx M(\alpha y, \alpha y, \alpha y)\approx M(\alpha y, y, y)\approx h(\alpha y, \alpha y, y)\; .\]
By almost minimality,  the function $h(x,x,y)$ is essentially unary, and depends on one of its variables injectively. By the above, this variable has to be $y$, and we moreover see $h(x,x,y)\approx h(y,y,y)$. The same argument shows that $h(y,x,x)\approx h(y,y,y)$, so $h$ is quasi-Malcev, a contradiction.
\end{proof}

\ignore{
\begin{proof} 
Since $h$ is almost minimal above  ${\langle G \rangle}$, by the five-types theorem~\ref{fivetypes}, it must be either a quasi-semiprojection,  or a quasi-majority, or quasi-Malcev (up to permutation of variables). From Lemma~\ref{Malnoproj}, it cannot be a quasi-semiprojection. It cannot be a quasi-majority either by hypothesis. Hence, it has to be quasi-Malcev up to permutation of variables.

Suppose $h$ is not quasi-Malcev itself, but some function obtained by permuting its variables is.   Then  $h(x,y,x)\approx h(y,y,y)$, and hence
\[h(y,y,y)\approx h(\alpha y, y, \alpha y)\approx M(\alpha y, \alpha y, \alpha y)\approx M(\alpha y, y, y)\approx h(\alpha y, \alpha y, y)\; .\]
By almost minimality,  the function $h(x,x,y)$ is essentially unary, and depends on one of its variables injectively. By the above, this variable has to be $y$, and we moreover see $h(x,x,y)\approx h(y,y,y)$. The same argument shows that $h(y,x,x)\approx h(y,y,y)$, so $h$ is quasi-Malcev, a contradiction.

\end{proof}
 }

Recall the notion of a $G$-quasi-minority from Definition~\ref{deforbitmin}.

\begin{lemma}\label{orbitmin} 
Let $G\acts B$ where $G$ is a Boolean group. Let $M(x,y,z)$ be a quasi-Malcev operation almost minimal above ${\langle G \rangle}$. Let $\beta\in G\setminus\{1\}$, and suppose that $M(x, \beta y, z)$ is quasi-Malcev. Then,
\begin{align}
    M(x,\beta x, y) & \approx M(\beta y, \beta y, \beta y)\approx M(y,\beta x, x) & \label{eq1}\\
    M(x,y,\beta x)& \approx M(\beta y, \beta y, \beta y) \label{eq2}
\end{align}
In particular, $M(x,y,z)$ is a $G$-quasi-minority.
\end{lemma}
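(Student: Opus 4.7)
The plan is to exploit the almost minimality of $M$ to force certain binary functions obtained from $M$ by variable identification to lie in $\langle G\rangle$ (recall that $\overline{\langle G\rangle}=\langle G\rangle$ in the free Boolean setting, by Remark~\ref{rem:free}), and hence to be essentially unary; their unary part must then be an element of $G$, and comparing values at strategic points will pin down both the variable on which they depend and the precise element of $G$. I will apply this recipe to three binary functions: $k(x,y):=M(x,\beta x,y)$, $k'(x,y):=M(y,\beta x,x)$, and $l(x,y):=M(x,y,\beta x)$.

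For the first identity $M(x,\beta x,y)\approx M(\beta y,\beta y,\beta y)$, I evaluate $k$ at $y=\beta x$ and at $y=x$: quasi-Malcev of $M$ gives $k(x,\beta x)=M(x,\beta x,\beta x)\approx M(x,x,x)$, while $k(x,x)=M(x,\beta x,x)=h(x,x,x)$, where $h(x,y,z):=M(x,\beta y,z)$. Suppose $k$ depended on its first argument; then $M(x,\beta x,x)\approx M(x,x,x)$, and combining with the quasi-Malcev identity of $h$ in the form $M(y,\beta y,x)\approx M(x,\beta x,x)$, then specialising $y=\beta x$ and invoking quasi-Malcev of $M$ once more to rewrite $M(\beta x,x,x)\approx M(\beta x,\beta x,\beta x)$, one would collapse $M(\beta x,\beta x,\beta x)\approx M(x,x,x)$. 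This contradicts injectivity of the diagonal $x\mapsto M(x,x,x)$, which lies in $G$ by almost minimality and is hence a permutation, while $\beta\neq 1$ acts faithfully. Thus $k$ depends on $y$, and from $g(\beta x)=M(x,x,x)$ I recover $g(y)\approx M(\beta y,\beta y,\beta y)$, giving the first identity.

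For the remaining half of \eqref{eq1}, the same recipe applied to $k'$ works: $k'(x,\beta x)=M(\beta x,\beta x,x)\approx M(x,x,x)$ by the other half of quasi-Malcev of $M$, and $k'(x,x)=M(x,\beta x,x)\approx M(\beta x,\beta x,\beta x)$ by the identity just established (evaluated at $y=x$). Ruling out dependence on $x$ exactly as before delivers the desired conclusion. For \eqref{eq2}, I consider $l$: quasi-Malcev of $M$ yields $l(x,\beta x)=M(x,\beta x,\beta x)\approx M(x,x,x)$, while $l(x,x)=M(x,x,\beta x)$; applying the quasi-Malcev identity $h(x,y,y)\approx h(x,x,x)$ at $y=\beta x$ (using $\beta^2=1$) gives $M(x,x,\beta x)\approx M(x,\beta x,x)$, which by \eqref{eq1} equals $M(\beta x,\beta x,\beta x)$. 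The injectivity argument then produces \eqref{eq2}.

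Finally, to conclude that $M$ is a $G$-quasi-minority (with respect to the given $\beta$), the third instance of the defining identity, $M(y,x,\beta x)\approx M(\beta y,\beta y,\beta y)$, is obtained from $M(y,\beta x,x)\approx M(\beta y,\beta y,\beta y)$ by substituting $\beta x$ for $x$ and using $\beta^2=1$. The principal obstacle throughout is pinning down the correct variable dependence of the three essentially unary functions $k,k',l$; Booleanness of $G$ enters decisively here, both in enabling the evaluations at $y=\beta x$ with $\beta^2=1$ cancellations and in producing the collapse $M(\beta x,\beta x,\beta x)\approx M(x,x,x)$ that rules out the wrong dependence via faithfulness.
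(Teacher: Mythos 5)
Your proof is correct and follows essentially the same route as the paper's: identify variables of $M$ to obtain binary operations that almost minimality forces into $\langle G\rangle$, then use injectivity of the diagonal $x\mapsto M(x,x,x)$ together with faithfulness of the action to pin down which variable each depends on and which group element it realises. The only (immaterial) difference is that for the identities in (\ref{eq1}) the paper avoids this case analysis entirely via a short direct equational chain using only that $M$ and $M(x,\beta y,z)$ are both quasi-Malcev, whereas you run for them the same almost-minimality argument that both proofs use for (\ref{eq2}).
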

\begin{proof}
By assumption, the function  $h(x,y,z)=M(x,\beta y, z)$ is quasi-Malcev. We thus  obtain the first equation in  (\ref{eq1}):
\[M(x,\beta x, y)\approx h(x,x,y)\approx h(y,y,y)\approx h(y, \beta y, \beta y)\approx M(y,y,\beta y)\approx M(\beta y, \beta y, \beta y).\]
By symmetry, the same argument also yields the second equation in (\ref{eq1}).

For equation (\ref{eq2}), note that by almost minimality of $M(x,y,z), \ l(x,y)=M(x,y,\beta x)$ is in ${\langle G \rangle}$ and so it can only depend on either the first or the second coordinate. We have
\[
l(x,x)\approx M(x,x,\beta x)\approx \beta x\qquad  \text{and} \qquad
l(x,\beta x)\approx M(x,\beta x, \beta x)\approx x\; ,
\]
hence that coordinate is the second one. This implies identity (\ref{eq2}) since 
\[
M(x,y,\beta x)\approx l(x,y)\approx l(y,y)\approx M(y,y,\beta y)\approx M(\beta y, \beta y, \beta y)\; .
\]
In particular, we can conclude that $M$ is a $G$-quasi-minority since it satisfies equation (\ref{orbitmindef}).
\end{proof}

\begin{corollary}\label{cororbitmin} Let $G\acts B$ where $G$ is a Boolean group 
 with $|G|>2$. Let $M(x,y,z)$ be a quasi-Malcev operation almost minimal above ${\langle G \rangle}$. Then, $M(x,y,z)$ is a $G$-quasi-minority.
\end{corollary}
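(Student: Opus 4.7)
The plan is to verify the hypothesis of Lemma~\ref{orbitmin} for some $\beta \in G \setminus \{1\}$, namely that $h(x,y,z) := M(x, \beta y, z)$ is a quasi-Malcev operation; the conclusion will then follow directly from that lemma.

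First, I would fix any $\beta \in G \setminus \{1\}$ and show that $h$ is almost minimal above ${\langle G \rangle}$. Here one crucially uses that $G$ is Boolean: since $\beta^2 = 1$ we have $M(x,y,z) = h(x, \beta y, z)$, so $h$ and $M$ locally generate each other modulo $G$ and hence determine the same closed clone. Having this, I would apply Theorem~\ref{fivetypes} (which classifies almost minimal operations by Remark~\ref{rem:onlyalmostmin}) to $h$ and eliminate all possibilities except type 4. Type 3 (quasi-majority) is ruled out by Lemma~\ref{Bnomaj}, using the assumption $|G|>2$, and type 5 (quasi-semiprojection) by Lemma~\ref{Malnoproj}. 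Types 1 and 2 can be handled uniformly: if $h$ is essentially unary or essentially binary, fixing the redundant variable(s) produces an operation of arity at most $2$ in $\overline{\langle G \cup \{h\}\rangle} = \overline{\langle G \cup \{M\}\rangle}$, which by almost minimality of $M$ lies in $\overline{\langle G\rangle}$ and is therefore essentially unary (since $\overline{\langle G\rangle}$ consists only of essentially unary operations). This forces $h$, and thus $M$, to be essentially unary, contradicting that $M \notin \overline{\langle G\rangle}$ which is part of being almost minimal.

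Hence $h$ is quasi-Malcev up to permutation of its variables, and Lemma~\ref{lotsMal} upgrades this to $h$ being quasi-Malcev itself. Lemma~\ref{orbitmin} applied with this $\beta$ then gives that $M$ is a $G$-quasi-minority. The substantive point of the argument is the five-types dichotomy for $h$, since all subsequent deductions are mere invocations of the preceding lemmas. The mildly delicate step is ensuring that the elimination of types 1 and 2 for $h$ transfers back to $M$ to yield a contradiction; this is where one needs almost minimality rather than only the quasi-Malcev identities.
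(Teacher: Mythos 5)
Your proposal is correct and follows essentially the same route as the paper: take $h(x,y,z)=M(x,\beta y,z)$, note it is almost minimal, exclude the quasi-majority and quasi-semiprojection cases via Lemmas~\ref{Bnomaj} and~\ref{Malnoproj}, upgrade to quasi-Malcev via Lemma~\ref{lotsMal}, and conclude with Lemma~\ref{orbitmin}. The only difference is that you spell out the (easy) elimination of the unary and binary types and the mutual generation of $h$ and $M$, which the paper leaves implicit.
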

\begin{proof}
Pick an arbitrary $\beta\in G\setminus\{1\}$. The function $M(x,\beta y, z)$ is almost minimal above $\langle G\rangle$. By Lemma~\ref{Bnomaj} and since $|G|>2$, this function cannot be a quasi-majority, and by Lemma~\ref{Malnoproj}, it cannot be a quasi-semiprojection. Hence, some permutation of its variables is quasi-Malcev, and so it is quasi-Malcev itself by Lemma~\ref{lotsMal}. Thus, Lemma~\ref{orbitmin} applies, and we see that $M(x,y,z)$ is a $G$-quasi-minority.
\end{proof}

\begin{theorem}[Theorem~\ref{thmd}, Boolean case]\label{Boolcase} Let $G\acts B$ be a Boolean group acting freely on $B$ with $s$-many orbits (where $s$ is possibly infinite) and assume that $|G|>2$. Let $f$ be an almost minimal operation above $\langle G \rangle$. Then,  $f$ is of one of the following types:
\begin{enumerate}
    \item a unary operation;
    \item a binary operation;
    \item a ternary $G$-quasi-minority;
    \item a $k$-ary orbit\-/semiprojection for $3\leq k \leq s$.
\end{enumerate} 
\end{theorem}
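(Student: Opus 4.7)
The plan is to start from the general five types theorem (Theorem~\ref{fivetypes}) and invoke the lemmas of this section to rule out or refine each case. Since $G$ acts freely, Remark~\ref{rem:free} yields $\overline{\langle G\rangle}=\langle G\rangle$, and since the permutation group $G$ contains no constant operations, Theorem~\ref{fivetypes} applies: up to permuting the variables of $f$, either $f$ is unary, binary, a ternary quasi-majority, a quasi-Malcev operation, or a $k$-ary quasi-semiprojection for some $k\geq 3$.

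Each of the resulting cases is then handled by a lemma from this section. The unary and binary cases are exactly cases (1) and (2) of the conclusion. For the quasi-majority case, Lemma~\ref{Bnomaj}, which crucially uses $|G|>2$, shows that no almost minimal quasi-majority above $\langle G\rangle$ can exist. For the quasi-Malcev case, Corollary~\ref{cororbitmin}---again using $|G|>2$---upgrades $f$ (after applying the permutation of variables provided by Theorem~\ref{fivetypes}) to a $G$-quasi-minority in the sense of Definition~\ref{deforbitmin}. A small extra observation is needed to ensure that $f$ itself, not just some permutation of its variables, is a $G$-quasi-minority: the identities in~(\ref{orbitmindef}) are preserved under variable permutations, which one verifies by substituting $x\mapsto\beta x$ and using the Boolean identity $\beta^2=1$. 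This gives case (3). Finally, for the quasi-semiprojection case, Lemma~\ref{semiorb} refines $f$ to an orbit-semiprojection, yielding case (4).

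The arity bound $3\leq k\leq s$ in case (4) is forced for free: if $k>s$ then every $k$-tuple in $B^k$ has two entries in the same $G$-orbit by pigeonhole, so $f$ would coincide with a function in $\langle G\rangle$ everywhere, contradicting almost minimality. The main technical content of the argument lies in the preparatory Lemmas~\ref{Malnoproj},~\ref{lotsMal}, and~\ref{orbitmin}, culminating in Corollary~\ref{cororbitmin}; the proof of the theorem itself is then a clean assembly of these results with Lemmas~\ref{Bnomaj} and~\ref{semiorb}, and I do not anticipate any additional obstacle beyond the permutation-invariance check sketched above.
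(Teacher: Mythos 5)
Your proof is correct and follows essentially the same route as the paper's: it starts from Theorem~\ref{fivetypes} and dispatches the cases via Lemma~\ref{Bnomaj}, Corollary~\ref{cororbitmin}, and Lemma~\ref{semiorb}. The only additions---the check that the $G$-quasi-minority identities~(\ref{orbitmindef}) are invariant under permuting variables, and the pigeonhole argument forcing $k\leq s$---are points the paper leaves implicit, and you handle both correctly.
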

\begin{proof} It suffices to examine $f$ in the light of 
 Theorem~\ref{fivetypes} and apply our additional results for Boolean groups of order greater than two. Lemma~\ref{Bnomaj} tells us that $f$ cannot be a quasi-majority. Corollary~\ref{cororbitmin} tells us that if $f$ is quasi-Malcev, then it must be a $G$-quasi-minority.
\end{proof}

We conclude this section by showing that whenever $G\acts B$ is a Boolean group acting freely on $B$, then there exist $G$-quasi-minorities almost minimal above $\langle G\rangle$.

\begin{prop}\label{exmin} 
Let $G\acts B$ where $G$ is a Boolean group acting freely on $B$. Then, there exists a $G$-quasi-minority almost minimal above $\langle G\rangle$.
\end{prop}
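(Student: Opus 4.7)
The plan is to construct $\mathfrak{m}$ explicitly and verify that it is a $G$-quasi-minority, that it is essential, and then deduce almost minimality from \textbf{Lemma~\ref{lem:collapse}} and \textbf{Remark~\ref{allweak}}.

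Since $G\acts B$ freely, for any two elements $u,v\in B$ lying in the same $G$-orbit, there is a unique $\beta\in G$ with $v=\beta u$. I will use this to define $\mathfrak{m}$ piecewise, according to which pair of its arguments (if any) lies in a common orbit. Explicitly: if at least one pair $(x_i,x_j)$ with $i<j$ lies in the same orbit, pick such a pair, let $\beta\in G$ be the unique element with $x_j=\beta x_i$, let $k$ be the remaining index, and set $\mathfrak{m}(x_1,x_2,x_3):=\beta x_k$; if all three arguments lie in pairwise distinct orbits, set $\mathfrak{m}(x_1,x_2,x_3):=x_1$.

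The first routine check is that $\mathfrak{m}$ is well-defined when more than one pair of arguments lies in the same orbit (which forces all three arguments to be in the same orbit). Writing $x_2=\alpha x_1$ and $x_3=\gamma x_1$ with $\alpha,\gamma\in G$, the three pairs $(1,2),(2,3),(1,3)$ respectively yield the values $\alpha x_3=\alpha\gamma x_1$, $\beta x_1$ with $\beta=\gamma\alpha^{-1}=\gamma\alpha$ (using that $G$ is Boolean), and $\gamma x_2=\gamma\alpha x_1$. These coincide because $G$ is Abelian. The identities of Definition~\ref{deforbitmin} then follow directly: in each of $\mathfrak{m}(y,x,\beta x)$, $\mathfrak{m}(x,\beta x,y)$, $\mathfrak{m}(x,y,\beta x)$, exactly one pair of arguments is controlled by $\beta$, with the remaining index pointing at $y$, so the value is $\beta y$; and the right-hand side $\mathfrak{m}(\beta y,\beta y,\beta y)=\beta y$ by the case of all three arguments equal.

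Essentiality is likewise easy: pick any $a\in B$ and any $b=\gamma a$ with $\gamma\in G\setminus\{1\}$ (such $b$ exists since $G$ is non-trivial and the action is faithful). Then $\mathfrak{m}(a,a,a)=a$, whereas $\mathfrak{m}(a,a,b)=b$, $\mathfrak{m}(a,b,a)=b$, and $\mathfrak{m}(b,a,a)=b$, showing $\mathfrak{m}$ depends on all three coordinates. By Remark~\ref{allweak}, any $G$-quasi-minority is a weak orbit\-/semiprojection; by Lemma~\ref{lem:collapse}, essential weak orbit\-/semiprojections are almost minimal above $\overline{\langle G\rangle}=\langle G\rangle$ (the last equality being Remark~\ref{rem:free}), which finishes the proof. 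The main thing to be careful about is the coherence check across the three pairs when all arguments share an orbit, which is where both freeness and the Boolean (hence Abelian) nature of $G$ are used simultaneously.
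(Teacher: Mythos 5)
Your proof is correct and follows essentially the same route as the paper: the same piecewise construction (returning $\beta x_k$ where $\beta$ sends one same-orbit argument to the other), the same use of freeness for uniqueness of $\beta$ and of the Boolean/Abelian property for coherence when all three arguments share an orbit, and the same appeal to Remark~\ref{allweak} and Lemma~\ref{lem:collapse} for almost minimality. The only cosmetic differences are that the paper breaks ties by taking the lexicographically smallest same-orbit pair and extends $f$ arbitrarily off the same-orbit triples, whereas you fix the value $x_1$ there and verify well-definedness directly; you also spell out essentiality, which the paper leaves implicit.
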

\begin{proof}
For any triple $a=(a_1,a_2,a_3)$ of elements with at least two elements in the same orbit, we consider $(i,j)$  smallest lexicographically  such that  $a_i, a_j$ are in the same orbit, and let $k$ be the remaining coordinate. Since the action of $G$ is free, there exists a unique $\alpha\in G$ such that $a_j=\alpha a_i$; we set $f(a_1,a_2,a_3)=\alpha a_k$ and extend $f$ to $B^3$ arbitrarily. Note that by definition, $f(x,x,x)\approx x$. To see that $f$ is a $G$-quasi-minority, consider first a triple   $(a_1,a_2,a_3)$ such that precisely two of its  elements  $a_i, a_j$ belong to the same orbit, i.e.,  $a_i=\alpha a_j$ for a unique  $\alpha \in G$. Then also $a_j=\alpha a_i$ since $G$ is Boolean. By definition, for the remaining coordinate $k\notin\{i,j\}$ we have   $f(a_1,a_2,a_3)=\alpha a_k$  independently of whether $i<j$. Now consider a triple all three of whose  elements belong to the same orbit, i.e., $(a_1,a_2,a_3)=(\gamma_1 a, \gamma_2 a, \gamma_3 a)$ for some $\gamma_i \in G$ and $a\in B$. Then, using that $G$ is Boolean, we have  $f(a_1,a_2,a_3)=\gamma_1\gamma_2 a_3$ by definition. This equals $\gamma_1\gamma_2\gamma_3 a$, and since  $G$ is Boolean, in turn also $\gamma_1\gamma_3a_2$ and $\gamma_2\gamma_3 a_1$, showing that $f$ is indeed a $G$-quasi-minority. Almost minimality follows from Remark~\ref{allweak} and Lemma~\ref{lem:collapse}.
\end{proof}

\section{Minimal \texorpdfstring{$G$}{G}-quasi-minorities}\label{sec:mintwisted}
In this section, we characterise minimal $G$-quasi-minorities as $G$-invariant Boolean Steiner $3$-quasigroups (Definition~\ref{def:boolgtwisted}) in Proposition~\ref{prop:bts3g}. Subsection~\ref{sub:twist}  is dedicated to proving this,  whilst in Subsection~\ref{sub:chartwist} we further characterise the behaviour of these operations to obtain necessary and sufficient conditions for their existence (Corollary~\ref{cor:correspondence}) as well as an explicit understanding of their behaviour (Definitions~\ref{def:gweighted} and~\ref{def:boolgweighted}). This allows us, in Subsection~\ref{sub:counting}, to count the exact number of $G$-invariant Boolean Steiner $3$-quasigroups for a given free action of a Boolean group $G\acts B$ when $B$ is finite. As an intermediate step towards understanding minimal $G$-quasi-minorities we also classify strictly almost minimal $G$-quasi-minorities (Definition~\ref{def:semimin}), i.e., $G$-quasi-minorities which only generate ternary operations that generate them back. These can also be nicely characterised as $G$-invariant Steiner $3$-quasigroups (Definition~\ref{def:gtwistedsteiner}).

\subsection{\texorpdfstring{$G$}{G}-invariant Steiner \texorpdfstring{$3$}{3}-quasigroups}\label{sub:twist}

\begin{definition} A \textbf{Steiner $3$-quasigroup} is an idempotent symmetric minority $\mathfrak{q}$ satisfying the equation
\[\mathfrak{q}(x, y, \mathfrak{q}(x,y,z))\approx z. \tag{\ref{eq:sqslaw}}\]
\end{definition}

\begin{definition} A \textbf{Steiner quadruple system}, shortened to SQS, is a uniform $4$-hypergraph such that any three vertices are contained in a unique $4$-hyperedge. We call a $4$-hyperedge of a Steiner quadruple system a \textbf{block}.
\end{definition}

Steiner $3$-quasigroups on a given domain $B$ naturally correspond to Steiner quadruple systems on $B$: the $4$-ary relation 
\[(\mathfrak{q}(x_1,x_2,x_3)=x_4 )\wedge \bigwedge_{1\leq i< j\leq 4} x_i\neq x_j\]
yields a Steiner quadruple system on $B$ since one can prove by (\ref{eq:sqslaw}) this is a symmetric $4$-ary relation such that every three vertices are in a unique $4$-hyperedge (we essentially prove this again in Lemma~\ref{Boolsymm}). The converse procedure yields a Steiner $3$-quasigroup for any SQS on $B$. The connection between Steiner quadruple systems and Steiner $3$-quasigroups is heavily studied in universal algebra and design theory, starting with the work of~\cite{quackenbush1975algebraic, armanious1980algebraische}. We refer the reader to~\cite{quackenbush1999nilpotent} and~\cite[Section 4]{lindner1978steiner} for discussions of this connection. In earlier literature these operations are frequently called 'Steiner skeins'. \\

An important identity that a Steiner $3$-quasigroup might satisfy is the \textbf{Boolean law}:
\begin{equation}
    \mathfrak{q}(x, y, \mathfrak{q}(z, y, w))\approx \mathfrak{q}(x, z, w). \tag{\ref{boolaw}}
\end{equation}

The Boolean law says that in the Steiner quadruple system associated with $\mathfrak{q}$, whenever two blocks $H_1$ and $H_2$ intersect in two vertices, the remaining four vertices in $H_1\cup H_2$ form another block. 

\begin{definition} A Steiner $3$-quasigroup satisfying the Boolean law (\ref{boolaw}) is called Boolean. 
\end{definition}

\begin{remark}\label{rem:boolrem}
    It is easy to see that given a Boolean Steiner $3$-quasigroup, fixing any vertex $e\in B$, the structure $(B, +, e)$ with operation $a+b:=\mathfrak{q}(a, b, e)$ is a Boolean group and for all $a, b, c\in B$, 
    \[\mathfrak{q}(a, b, c)=a+b+c\]
    with respect to this operation~\cite{quackenbush1999nilpotent}. The associated SQSs are precisely Kirkman's Steiner quadruple systems~\cite{kirkman1847problem}, and we call them \textbf{Boolean Steiner quadruple systems}.
\end{remark}

\begin{definition}\label{def:gtwistedsteiner} Let $G\acts B$ be a Boolean group acting freely on $B$. A $\bm{G}$\textbf{-invariant Steiner} $\bm{3}$-\textbf{quasigroup} is a Steiner $3$-quasigroup satisfying the $\bm{G}$-\textbf{invariance identity}: for all $\alpha, \beta, \gamma\in G$
\begin{equation}
    \mathfrak{q}(\alpha x, \beta y, \gamma z)\approx \alpha\beta\gamma \mathfrak{q}(x, y, z). \tag{\ref{eq:twistid}}
\end{equation}
\end{definition}

\begin{lemma}\label{obstacle1}  Let $G\acts B$ be a Boolean group acting freely on $B$. Suppose that $\mathfrak{m}$ is a $G$-quasi-minority. Let $\alpha, \beta, \gamma\in G$. The operation $\mathfrak{m}'$ given by
\[\mathfrak{m}'(x,y,z):=\mathfrak{m}(x,y,\mathfrak{m}(\alpha x,\beta y,\gamma z))\]
is an orbit\-/semiprojection (not necessarily essential) with respect to its last coordinate.
\end{lemma}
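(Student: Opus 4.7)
The plan is to verify directly that for every triple $(a_1,a_2,a_3)$ with two entries in the same $G$-orbit, $\mathfrak{m}'(a_1,a_2,a_3) = \alpha\beta\gamma\cdot a_3$; this yields the orbit-semiprojection property with respect to the third coordinate, witnessed by $g:=\alpha\beta\gamma\in G\subseteq\overline{G}$. The key reformulation of the $G$-quasi-minority identities is the following: writing $\xi(w):=\mathfrak{m}(w,w,w)$ for the diagonal, if in a triple $(a_1,a_2,a_3)$ positions $i<j$ lie in the same orbit with $a_j=\eta a_i$ (a unique $\eta\in G$ by freeness of the action), and $k$ denotes the remaining position, then
\[
\mathfrak{m}(a_1,a_2,a_3)=\xi(\eta\cdot a_k)\; .
\]
In the almost-minimal context in which this lemma is used, $\xi\in\overline{\langle G\rangle}\cap\mathcal{O}^{(1)}=\overline{G}=G$ (recall Remark~\ref{rem:free}), which is the only interpretation making $\xi$ a $G$-element; I treat $\xi$ as an element of $G$ below, so $\xi$ preserves $G$-orbits and satisfies $\xi^2=1$ by $G$ being Boolean. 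I will use this implicitly together with the fact that $G$ is abelian of exponent two throughout.

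Case analysis based on which pair of $\{x,y,z\}$ shares an orbit: write the shared-orbit relation as $y=\delta x$, or $z=\delta x$, or $z=\delta y$ for some $\delta\in G$. In the first case, the inner call $\mathfrak{m}(\alpha x,\beta\delta x,\gamma z)$ has its first two arguments in one orbit with ratio $\alpha\beta\delta$, so it evaluates to $\xi\alpha\beta\gamma\delta z$. The outer call $\mathfrak{m}(x,\delta x,\xi\alpha\beta\gamma\delta z)$ then has its first two arguments in one orbit with ratio $\delta$, so it evaluates to $\xi\delta\cdot(\xi\alpha\beta\gamma\delta z)=\xi^2\delta^2\alpha\beta\gamma\cdot z=\alpha\beta\gamma z$. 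In the second case, the inner call has its first and third arguments in one orbit (ratio $\alpha\gamma\delta$), so it equals $\xi\alpha\beta\gamma\delta y$; since $\xi\in G$ preserves orbits, this value lies in the orbit of $y$, and the outer call $\mathfrak{m}(x,y,\xi\alpha\beta\gamma\delta y)$ can itself be simplified by the $G$-quasi-minority identity (positions two and three in one orbit, ratio $\xi\alpha\beta\gamma\delta$), yielding $\xi\cdot\xi\alpha\beta\gamma\delta\cdot x=\alpha\beta\gamma\delta x=\alpha\beta\gamma z$. The third case is symmetric to the second.

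In every case the output equals $\alpha\beta\gamma\cdot a_3$, which establishes that $\mathfrak{m}'$ is an orbit-semiprojection with respect to its last coordinate, with witness $g=\alpha\beta\gamma\in G$; note that $\mathfrak{m}'$ may or may not be essential (e.g.\ when $g=1_G$ the operation $\mathfrak{m}'$ could in principle collapse on all tuples). The main obstacle is the outer call in Cases 2 and 3: a priori the inner value $W$ sits in some orbit not obviously related to that of $x$ or $y$, so the $G$-quasi-minority identity would not be applicable to the outer call. The decisive observation is that because $\xi\in G$ preserves orbits, $W$ necessarily lies in the orbit of the one variable among $x,y$ which does not equal $a_3$'s partner in the shared orbit; this makes the outer call degenerate in the required way. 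The Boolean condition on $G$ then collapses all squares, making the three cases produce the uniform answer $\alpha\beta\gamma\cdot a_3$.
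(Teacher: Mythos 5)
Your proof is correct and follows essentially the same route as the paper's: a direct case analysis over which pair of coordinates shares an orbit, applying the $G$-quasi-minority identity first to the inner and then to the outer occurrence of $\mathfrak{m}$ and cancelling squares via Booleanness to obtain $\alpha\beta\gamma\cdot a_3$ in every case. Your explicit caveat that the diagonal $\xi(w)=\mathfrak{m}(w,w,w)$ must be (induced by) an element of $G$ is well taken --- the paper's proof makes the same assumption silently by writing $\mathfrak{m}(x,x,x)\approx\eta x$ with $\eta\in G$ and invoking almost minimality, which is likewise not among the lemma's stated hypotheses.
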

\begin{proof} Say that $\mathfrak \mathfrak{q}(x,x,x)\approx \eta x$ for $\eta\in G$. Since $\mathfrak m$ is almost minimal, $\mathfrak m'$ is either almost minimal or in $\langle G\rangle$. For any $\delta\in G$, we have the following:
\begin{align*}
    \mathfrak{m}'(x,y,\delta x) &:=\mathfrak{m}(x, y, \mathfrak{m}(\alpha x,\beta y, \delta \gamma x))\approx 
    \mathfrak{m}(x, y, \eta  \alpha\beta \gamma \delta y)\approx \alpha\beta \gamma\delta  x;\\
    \mathfrak{m}'(x,\delta x, y) &:=\mathfrak{m}(x, \delta x, \mathfrak{m}(\alpha x, \beta \delta x, \gamma y))\approx
    \mathfrak{m}(x, \delta x, \eta\alpha\beta \gamma \delta y) \approx \alpha\beta \gamma y;
  \\
    \mathfrak{m}'(y, x, \delta x) &:=\mathfrak{m}(y, x, \mathfrak{m}(\alpha y,\beta x, \delta \gamma x))\approx \mathfrak{m}(y, x, \eta\alpha\beta \gamma \delta y)\approx \alpha\beta \gamma \delta x.
\end{align*}
In particular, whenever $(a_1,a_2, a_3)\in B^3$ are such that at least two of them are in the same orbit, $\mathfrak{m}'(a_1, a_2, a_3)=\alpha\beta\gamma a_3$, meaning that $\mathfrak m'$ is an orbit\-/semiprojection with respect to its last coordinate.
\end{proof}

We study a notion which is strictly weaker than minimality and strictly stronger than almost minimality:

\begin{definition}\label{def:semimin} The $k$-ary operation $f:B^k\to B$ is \textbf{strictly almost minimal} above $\overline{\langle G\rangle}$ if it is almost minimal above $\overline{\langle G\rangle}$ and for every essential operation $h\in\overline{\langle G \cup\{f\}\rangle}\cap \mathcal{O}^{(k)}$, we have that $h$  generates back $f$ (together with $G$) in the sense that $f\in\overline{\langle G \cup\{h\}\rangle}\cap \mathcal{O}^{(k)}$.
\end{definition}

Strict almost minimality is a weaker condition than minimality since it is possible that a strictly  almost minimal $f$ generates a minimal operation of strictly higher arity than itself. That this situation can occur  will be seen  when comparing strictly almost minimal $G$-quasi-minorities to minimal $G$-quasi-minorities. 

\begin{remark}\label{rem:emprime} Lemma~\ref{obstacle1} implies that if $\mathfrak{m}$ is a strictly almost minimal $G$-quasi-minority, then $\mathfrak{m}'\in\langle G\rangle$. This is because from Proposition~\ref{orbitgen} an orbit\-/semiprojection only generates more orbit\-/semiprojections in its same arity. 
This yields some very strong constraints on the behaviour of $\mathfrak{m}$ itself, which we will exploit in the rest of this section.
\end{remark}

\begin{lemma}\label{injectivity} Let $G\acts B$ where $G$ is a Boolean group acting freely on $B$. Suppose that $\mathfrak{m}$ is a $G$-quasi-minority strictly almost minimal above $\langle G\rangle$. Fix $a,b\in B$. Then, the function $\mathfrak{m}_{ab}:B\to B$ given by $c\mapsto \mathfrak{m}(a,b,c)$ is injective.
\end{lemma}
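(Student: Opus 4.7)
The plan is to derive the identity $\mathfrak{m}(x,y,\mathfrak{m}(x,y,z)) \approx z$ (the \eqref{eq:sqslaw} law) from the hypotheses; once this is established, injectivity of $\mathfrak{m}_{ab}$ is immediate, since $\mathfrak{m}(a,b,c) = \mathfrak{m}(a,b,d)$ implies $c = \mathfrak{m}(a,b,\mathfrak{m}(a,b,c)) = \mathfrak{m}(a,b,\mathfrak{m}(a,b,d)) = d$ upon applying $\mathfrak{m}(a,b,\cdot)$ to both sides.

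To derive \eqref{eq:sqslaw}, I apply Remark~\ref{rem:emprime} with $\alpha = \beta = \gamma = 1_G$: the operation $\mathfrak{m}'(x,y,z) := \mathfrak{m}(x,y,\mathfrak{m}(x,y,z))$ belongs to $\langle G\rangle$. Since $G$ acts freely on $B$, by Remark~\ref{rem:free} we have $\overline{\langle G\rangle} = \langle G\rangle$, and every operation in this clone is essentially unary, obtained from some element of $G$ by adding dummy variables. Consequently, $\mathfrak{m}'(x,y,z) \approx \delta x_i$ for some $i \in \{1,2,3\}$ and some $\delta \in G$.

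To pin down $i$ and $\delta$, I evaluate $\mathfrak{m}'$ on tuples with repeated entries, using the $G$-quasi-minority identities from Definition~\ref{deforbitmin}. Since $\mathfrak{m}(x,x,x)$ is a unary function in $\langle G\rangle$, there exists $\eta \in G$ with $\mathfrak{m}(x,x,x) \approx \eta x$; because $G$ is Boolean, $\eta^2 = 1_G$. Applying the quasi-minority identities twice (essentially the specialisation of the computations in the proof of Lemma~\ref{obstacle1} to $\alpha = \beta = \gamma = 1_G$) gives $\mathfrak{m}'(x,x,y) \approx y$, $\mathfrak{m}'(x,y,x) \approx x$, and $\mathfrak{m}'(y,x,x) \approx x$. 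Matching against $\mathfrak{m}'(x,y,z) \approx \delta x_i$ forces $i = 3$ and $\delta = 1_G$, i.e.\ $\mathfrak{m}'(x,y,z) \approx z$, which is \eqref{eq:sqslaw}.

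No serious obstacle is expected here; the only care required is in the bookkeeping of the $G$-quasi-minority identities when collapsing $\mathfrak{m}'$, but the Boolean condition on $G$ causes every squared coefficient that arises to disappear, so the computation closes cleanly and \eqref{eq:sqslaw} -- hence injectivity -- follows.
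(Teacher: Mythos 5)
Your proposal is correct and follows essentially the same route as the paper: both invoke Remark~\ref{rem:emprime} to conclude that $\mathfrak{m}(x,y,\mathfrak{m}(x,y,z))$ lies in $\langle G\rangle$, identify it as the third projection using the $G$-quasi-minority identities and the Boolean condition (the paper reads off the dependence on $z$ from Lemma~\ref{obstacle1}, while you re-derive it by evaluating on diagonal tuples), and then deduce injectivity from the resulting identity $\mathfrak{m}(x,y,\mathfrak{m}(x,y,z))\approx z$. The computations you outline check out, so there is no gap.
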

\begin{proof} From Remark~\ref{rem:emprime}, the operation
\[\mathfrak{m}(x,y,\mathfrak{m}(x,y,z))\]
is essentially unary depending on the variable $z$. Hence, we have that
\begin{equation}\label{twistedcalc1}
    \mathfrak{m}(x,y,\mathfrak{m}(x,y,z))=\mathfrak{m}(x,x,\mathfrak{m}(x,x,z)) =\mathfrak{m}(x,x,\mathfrak{m}(z,z,z))=z.
\end{equation}
The first equality uses the fact that this operation is essentially unary depending on its last coordinate. The second equality uses the fact that $\mathfrak{m}$ is a quasi-minority. The final equality uses the fact that $\mathfrak{m}(z,z,z)\in\langle G\rangle$ and that $G$ is Boolean.

In particular, from the identity (\ref{twistedcalc1}) we get
\begin{equation}\label{gettinginj}
\mathfrak{m}(x,y,z)=w \Rightarrow \mathfrak{m}(x,y,w)=z.
\end{equation}
Suppose that 
\[\mathfrak m_{ab}(c)=\mathfrak m_{ab}(c')=d.\] 
From (\ref{gettinginj}), that implies 
\[\mathfrak{m}(a,b,d)=c \text{ and } \mathfrak{m}(a,b,d)=c',\]
yielding that $c=c'$. Hence, $\mathfrak m_{ab}$
 is injective.    
\end{proof}

\begin{lemma}\label{Boolsymm} Let $G\acts B$ be a Boolean group acting freely on $B$. Suppose that $\mathfrak{m}$ is a $G$-quasi-minority strictly almost minimal above $\langle G\rangle$. Then, the $4$-ary relation given by
\[\mathfrak{m}(x,y,z)=w\]
is symmetric.
\end{lemma}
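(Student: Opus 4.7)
The goal is to prove that the $4$-ary relation $R(x,y,z,w):=[\mathfrak{m}(x,y,z)=w]$ is invariant under every permutation in $S_4$. My plan is to first establish full ternary symmetry of $\mathfrak{m}$ as an operation and then combine it with the $(3,4)$-swap already encoded by identity~(\ref{twistedcalc1}) from Lemma~\ref{injectivity} (namely $\mathfrak{m}(x,y,z)=w \iff \mathfrak{m}(x,y,w)=z$). Since $S_3$ on the first three coordinates together with the transposition $(3,4)$ generates $S_4$, this will suffice.

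To get ternary symmetry I would consider the two compositions
\[
h(x,y,z):=\mathfrak{m}(y,x,\mathfrak{m}(x,y,z)) \quad\text{and}\quad k(x,y,z):=\mathfrak{m}(x,z,\mathfrak{m}(x,y,z))\; .
\]
Imitating the case analysis of Lemma~\ref{obstacle1}, I would check on every tuple $(a,b,c)$ with at least two coordinates in the same $G$-orbit that $h$ returns $c$ and $k$ returns $b$. Each such evaluation unfolds by applying a $G$-quasi-minority identity to the inner $\mathfrak{m}$ (reducing it to a group element times a single element), and then a second $G$-quasi-minority identity to the outer $\mathfrak{m}$, using that $G$ is Boolean (so $\eta^2=\delta^2=1$) to make the group-theoretic factors cancel. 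The outcome is that $h$ is an orbit\-/semiprojection with respect to its third coordinate (with $g=\mathrm{id}$), and $k$ is an orbit\-/semiprojection with respect to its second coordinate (also with $g=\mathrm{id}$).

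Now, exactly as in Remark~\ref{rem:emprime}, strict almost minimality combined with Proposition~\ref{orbitgen} forces any essential orbit\-/semiprojection in $\overline{\langle G\cup\{\mathfrak{m}\}\rangle}\cap\mathcal{O}^{(3)}$ to generate $\mathfrak{m}$ back, and hence $\mathfrak{m}$ itself would have to be an orbit\-/semiprojection --- which contradicts that $\mathfrak{m}$ is a $G$-quasi-minority (its value on orbit-related inputs depends genuinely on the group ratio $\beta$ as well as on the odd coordinate). Therefore $h,k\in\langle G\rangle$, and comparing with the computed values on orbit-related inputs forces $h(x,y,z)\equiv z$ and $k(x,y,z)\equiv y$ identically. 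Using the involution identity $\mathfrak{m}(x,y,\mathfrak{m}(x,y,z))=z$ from (\ref{twistedcalc1}), together with the injectivity of $w\mapsto\mathfrak{m}(a,b,w)$ from Lemma~\ref{injectivity} (and the fact that this map is a bijection on $B$, being an involution), I obtain $\mathfrak{m}(y,x,w)=\mathfrak{m}(x,y,w)$ and $\mathfrak{m}(x,z,y)=\mathfrak{m}(x,y,z)$ for all $x,y,z,w\in B$. These are the transpositions $(1,2)$ and $(2,3)$ of $S_3$, giving full ternary symmetry and hence, with (\ref{twistedcalc1}), the desired $S_4$-invariance of $R$.

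The main obstacle is the orbit-semiprojection verification for $h$ and $k$: three sub-cases each (the pair of orbit-related coordinates can be any of $\{1,2\}$, $\{1,3\}$, $\{2,3\}$ in the input), and in each sub-case both the inner and the outer application of $\mathfrak{m}$ have to be rewritten via the $G$-quasi-minority identity so that the accumulated group elements cancel. Once that bookkeeping is done, the remainder of the argument is formal.
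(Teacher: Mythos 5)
Your proof is correct and follows essentially the same route as the paper's: the paper composes $\mathfrak{m}(x_1,x_2,\cdot)$ with a permuted inner copy $\mathfrak{m}(x_{\sigma 1},x_{\sigma 2},x_{\sigma 3})$, shows via the Lemma~\ref{obstacle1}-style case analysis and strict almost minimality (Remark~\ref{rem:emprime} together with Proposition~\ref{orbitgen}) that the composite is the projection onto $x_3$, and then cancels using injectivity of $\mathfrak{m}_{ab}$ --- you carry out the mirror-image version, permuting the outer arguments instead of the inner ones and restricting to the two transpositions generating $S_3$. Both arguments then conclude identically by combining ternary symmetry of the operation with the involution identity (\ref{gettinginj}) to obtain full $S_4$-symmetry of the relation.
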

\begin{proof} 
From Lemma~\ref{obstacle1} and the same calculations as equation (\ref{twistedcalc1}), for any permutation of $\{1,2,3\}, \sigma\in S_3$,
\[\mathfrak{m}(x_1, x_2,  \mathfrak{m}(x_{\sigma 1}, x_{\sigma 2}, x_{\sigma 3}))=x_3.\]
Hence, by injectivity of $\mathfrak{m}_{ab}$, $\mathfrak{m}(x,y,z)$ is a symmetric operation. Finally, from this and (\ref{gettinginj}), we get that the relation 
$\mathfrak{m}(x,y,z)=w$ is symmetric.
\end{proof}

\begin{remark} From the above two lemmas one can also deduce that minorities strictly almost minimal above the trivial group are Steiner $3$-quasigroups and the well-known fact~\cite{lindner1978steiner} that the latter induce a Steiner quadruple system on the domain.
\end{remark}

\begin{lemma}\label{twistingaround} Let $G\acts B$ be a Boolean group acting freely on $B$. Suppose that $\mathfrak{m}$ is a $G$-quasi-minority strictly almost minimal above $\langle G\rangle$. Then for all $\alpha, \beta, \gamma\in G$,
\[\mathfrak{m}(\alpha x, \beta y, \gamma z)=\alpha\beta\gamma \mathfrak{m}(x,y,z).\]    
\end{lemma}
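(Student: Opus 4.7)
My plan is to combine two applications of a Lemma~\ref{obstacle1}-style argument, each using strict almost minimality to eliminate a candidate auxiliary operation, in order to first transfer group elements between the inputs of $\mathfrak{m}$ and then across $\mathfrak{m}$ from input to output.

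First I would extract from Lemma~\ref{obstacle1} and strict almost minimality the intermediate position-transfer identity
\[
\mathfrak{m}(\alpha x, \beta y, \gamma z) \approx \mathfrak{m}(x, y, \alpha\beta\gamma z),
\]
which I shall call $(\star)$. Indeed, Lemma~\ref{obstacle1} says that $\mathfrak{m}'(x,y,z) := \mathfrak{m}(x, y, \mathfrak{m}(\alpha x, \beta y, \gamma z))$ is an orbit-semiprojection onto its third coordinate. By Proposition~\ref{orbitgen}, orbit-semiprojections in arity~$3$ only generate orbit-semiprojections in arity~$3$; on the other hand $\mathfrak{m}$ itself is not an orbit-semiprojection, since the group element appearing in $\mathfrak{m}(y, x, \beta x) = \eta\beta y$ varies with $\beta$, i.e.\ with the relationship between the two equal-orbit arguments rather than being fixed. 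Hence $\mathfrak{m}'$ cannot generate $\mathfrak{m}$ back, and strict almost minimality forces $\mathfrak{m}' \in \langle G\rangle$, so $\mathfrak{m}'$ is essentially unary. The diagonal computations already carried out in the proof of Lemma~\ref{obstacle1} then pin it down to $\mathfrak{m}'(x,y,z) \approx \alpha\beta\gamma z$. Applying $\mathfrak{m}(x, y, \cdot)$ to both sides and invoking equation~(\ref{twistedcalc1}) yields $(\star)$.

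Identity $(\star)$ reduces the problem to showing $\mathfrak{m}(x, y, \delta z) \approx \delta\mathfrak{m}(x, y, z)$ for every $\delta \in G$. For this I would introduce a second auxiliary operation
\[
p(x,y,z) := \mathfrak{m}(\delta x,\, y,\, \mathfrak{m}(x, y, z))
\]
and verify, by a three-case analysis according to which pair of $x,y,z$ lies in the same orbit, that $p(x,y,z) = \delta z$ whenever two of its arguments are equal-orbit. Each case combines one application of a $G$-quasi-minority identity to evaluate the inner $\mathfrak{m}$ with another application to evaluate the outer one, together with the Boolean property $\delta^{-1}=\delta$; the three outputs all equal $\delta z$. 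Thus $p$ is an orbit-semiprojection onto its third coordinate, and the same reasoning as in the preceding paragraph—Proposition~\ref{orbitgen} together with strict almost minimality—forces $p \in \langle G\rangle$, whence $p(x,y,z) \approx \delta z$ universally.

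Finally, unwinding $p(x,y,z) = \delta z$ with $w := \mathfrak{m}(x, y, z)$ gives $\mathfrak{m}(\delta x, y, w) = \delta z$; applying $(\star)$ to the left-hand side rewrites this as $\mathfrak{m}(x, y, \delta w) = \delta z$, i.e.\ $\mathfrak{m}(x, y, \delta\mathfrak{m}(x, y, z)) = \delta z$. One further application of $\mathfrak{m}(x, y, \cdot)$ and equation~(\ref{twistedcalc1}) then yields $\mathfrak{m}(x, y, \delta z) \approx \delta\mathfrak{m}(x, y, z)$; combining this with $(\star)$ gives the desired $G$-invariance identity. The main obstacle is conceptual rather than computational: Lemma~\ref{obstacle1} alone only lets one transfer group elements between input positions of $\mathfrak{m}$, and to move a group element from an input of $\mathfrak{m}$ onto its output one needs the separate auxiliary operation $p$, whose orbit-semiprojection behaviour places $\delta$ on the outer copy of $\mathfrak{m}$.
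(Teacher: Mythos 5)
Your proof is correct. It runs on the same engine as the paper's --- compose $\mathfrak{m}$ with group elements and with itself to obtain a ternary orbit-semiprojection, then invoke Proposition~\ref{orbitgen} and strict almost minimality (the content of Remark~\ref{rem:emprime}) to force that composite into $\langle G\rangle$ --- but the execution differs in both halves. For the first half, the paper collapses $\mathfrak{m}(x,y,\mathfrak{m}(\alpha x,\alpha y,z))$ only for a repeated $\alpha$ and concludes, via the injectivity of $\mathfrak{m}_{ab}$ from Lemma~\ref{injectivity}, the ``doubling'' identities (\ref{applysym}) and (\ref{doubling}); you collapse the fully general $\mathfrak{m}(x,y,\mathfrak{m}(\alpha x,\beta y,\gamma z))$ and extract the stronger transfer identity $\mathfrak{m}(\alpha x,\beta y,\gamma z)\approx\mathfrak{m}(x,y,\alpha\beta\gamma z)$ in one stroke. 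For the second half --- moving a group element from an input of $\mathfrak{m}$ to its output --- the paper reuses the symmetry of the quaternary relation $\mathfrak{m}(x,y,z)=w$ (Lemma~\ref{Boolsymm}) twice, whereas you introduce the fresh composite $p(x,y,z)=\mathfrak{m}(\delta x,y,\mathfrak{m}(x,y,z))$, verify directly that it is an orbit-semiprojection onto its third coordinate (I checked all three orbit cases; commutativity and $2$-torsion of $G$ make each return $\delta$ applied to the third argument), and collapse it as well. Your route is self-contained modulo Lemma~\ref{obstacle1} and the SQS law (\ref{twistedcalc1}) and avoids Lemma~\ref{Boolsymm} entirely, at the cost of one extra explicit case analysis; the paper's is shorter on the page because it leans on the symmetry it has already established. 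One point you handle more explicitly than the paper: Proposition~\ref{orbitgen} only bites because $\mathfrak{m}$ itself is not an orbit-semiprojection, and your observation that $\mathfrak{m}(y,x,\beta x)=\eta\beta y$ varies with $\beta$ is exactly the right justification for that.
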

\begin{proof} From Remark~\ref{rem:emprime}, we know that for $\alpha\in G$,
\[\mathfrak{m}(x,y,\mathfrak{m}(\alpha x, \alpha y, z))\]
is an essentially unary operation depending on its last coordinate. In particular, for $\mathfrak{m}(x,x,x)=\gamma x$ for $\gamma\in G$
\begin{align*}
\mathfrak{m}(x,y,\mathfrak{m}(\alpha x, \alpha y, z)) &\approx \mathfrak{m}(x,x,\mathfrak{m}(\alpha x, \alpha x, z))\\
& \approx \mathfrak{m}(x,x,\gamma z)\\
& \approx z\\
&\approx  \mathfrak{m}(x,y,\mathfrak{m}(x, y, z)). 
\end{align*}
Here the first equality follows from the operation being essentially unary. The second and third equalities follow from $\mathfrak{m}$ being a quasi-minority with $\mathfrak{m}(x,x,x)=\gamma x$ and $\gamma^2=1$. The final equality is obtained by performing the previous steps in reverse order. 
From Lemma~\ref{injectivity} and the above identity we can deduce that
\begin{equation}\label{applysym}
    \mathfrak{m}(\alpha x, \alpha y, z)\approx \mathfrak{m}(x, y, z).
\end{equation}
More generally, by the same argument, for $\tau_1, \tau_2, \tau_3\in G$ such that for $i, j, k\in\{1,2,3\}$ distinct $\tau_i=\tau_j=\alpha$ and $\tau_k=1$,
\begin{equation}\label{doubling}
   \mathfrak{m}(\tau_1 x, \tau_2 y, \tau_3 z)\approx \mathfrak{m}(x, y, z).
\end{equation}

By symmetry (i.e. Lemma~\ref{Boolsymm}) applied to equation (\ref{applysym}), 
\[\mathfrak{m}(\alpha x, \alpha y, \mathfrak{m}(x, y, z))\approx z.\]
However, by equation (\ref{doubling}),
\[\mathfrak{m}(\alpha x, y, \alpha \mathfrak{m}(x, y, z))\approx z.\]
and so, using symmetry again,
\[\mathfrak{m}(\alpha x, y, z)\approx \alpha \mathfrak{m}(x, y, z).\]
Applying the same reasoning for each variable we obtain the desired equation.  
\end{proof}

From the preceding lemmas we obtain:

\begin{corollary}\label{semimistein} Let $G\acts B$ be a Boolean group acting freely on $B$. Let $\mathfrak{m}$ be a $G$-minority strictly almost minimal above $\langle G\rangle$. Then, $\mathfrak{m}$ is a $G$-invariant Steiner $3$-quasigroup.
\end{corollary}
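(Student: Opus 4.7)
The plan is to assemble the corollary from the lemmas that have already been proved, with only minor bookkeeping. Recall that a $G$-invariant Steiner $3$-quasigroup (Definition~\ref{def:gtwistedsteiner}) is an idempotent symmetric minority operation that satisfies both the SQS law (\ref{eq:sqslaw}) and the $G$-invariance identity (\ref{eq:twistid}). I need to check each of these five properties for $\mathfrak{m}$.

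Idempotency is immediate, since by hypothesis $\mathfrak{m}$ is a $G$-\emph{minority} (rather than only a $G$-quasi-minority), which by Definition~\ref{deforbitmin} means $\mathfrak{m}(x,x,x)\approx x$. Specialising the $G$-quasi-minority identities (\ref{orbitmindef}) to $\beta=1$ then yields $\mathfrak{m}(y,x,x)\approx \mathfrak{m}(x,y,x)\approx \mathfrak{m}(x,x,y)\approx \mathfrak{m}(y,y,y)\approx y$, which is precisely the minority law.

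For symmetry, I would invoke Lemma~\ref{Boolsymm}: strict almost minimality of the $G$-quasi-minority $\mathfrak{m}$ forces the $4$-ary relation $\mathfrak{m}(x,y,z)=w$ to be symmetric in all four arguments, and in particular $\mathfrak{m}(x,y,z)$ is symmetric in its three inputs. The SQS law $\mathfrak{m}(x,y,\mathfrak{m}(x,y,z))\approx z$ is precisely equation (\ref{twistedcalc1}), established at the start of the proof of Lemma~\ref{injectivity}. Finally, the $G$-invariance identity (\ref{eq:twistid}) is exactly the conclusion of Lemma~\ref{twistingaround}.

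There is no genuine obstacle here: the corollary is really a recapitulation of the results proved in Subsection~\ref{sub:twist}, and its role is to record that these identities together amount to the structural notion of Definition~\ref{def:gtwistedsteiner}. The only subtlety worth mentioning explicitly is that idempotency is what promotes the quasi-minority equations to true minority equations, and that symmetry of the operation is a direct consequence of the stronger four-variable symmetry of the associated relation proved in Lemma~\ref{Boolsymm}.
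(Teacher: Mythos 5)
Your proposal is correct and follows essentially the same route as the paper, which likewise assembles the corollary from Lemma~\ref{Boolsymm} (symmetry), the SQS identity derived via Lemma~\ref{obstacle1} and equation~(\ref{twistedcalc1}), and Lemma~\ref{twistingaround} ($G$-invariance). Your explicit remark that idempotency upgrades the quasi-minority identities to the minority law is a small but accurate clarification that the paper leaves implicit.
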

\begin{proof}
    By assumption, $\mathfrak{m}$ is an idempotent minority. By Lemma~\ref{Boolsymm}, it is symmetric. By Lemma~\ref{obstacle1}, $\mathfrak{m}$ satisfies (\ref{eq:sqslaw}). By Lemma~\ref{twistingaround} it satisfies the $G$-invariance identity (\ref{eq:twistid}). Hence, it is a $G$-invariant Steiner $3$-quasigroup.
\end{proof}

Conversely, below we show that $G$-invariant Steiner $3$-quasigroups are strictly almost minimal above $\langle G\rangle$. Indeed, the space of ternary operations they generate together with $G$ is particularly trivial.

\begin{lemma}\label{steinsemi} Let $G\acts B$ be a Boolean group acting freely on $B$. Let $\mathfrak{q}$ be a $G$-invariant Steiner $3$-quasigroup. Then, 
\[\left((\overline{\langle G\cup\{\mathfrak{q}\}\rangle})\setminus \langle G\rangle \right)\cap\mathcal{O}^{(3)}=\{\eta \mathfrak{q} \vert \eta\in G\}.\]
In particular, $\mathfrak{q}$ is strictly almost minimal above $\langle G\rangle$.
\end{lemma}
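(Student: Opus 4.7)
The plan is to prove by induction on term complexity that every operation in $\langle G\cup\{\mathfrak{q}\}\rangle\cap\mathcal{O}^{(3)}$ belongs to the set $S := (\langle G\rangle\cap\mathcal{O}^{(3)}) \cup \{\eta\mathfrak{q} : \eta\in G\}$, then check that $S$ is already closed in $\mathcal{O}^{(3)}$, and finally extract strict almost minimality from the classification.

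For the induction, the base cases --- ternary projections, group elements applied to projections, and $\mathfrak{q}(x_1,x_2,x_3)$ itself --- all lie in $S$, and composition with a unary element of $G$ preserves $S$ trivially via $\alpha(\eta\mathfrak{q})=(\alpha\eta)\mathfrak{q}$ and $\alpha(\sigma x_j)=(\alpha\sigma)x_j$. The nontrivial step is $\mathfrak{q}(h_1,h_2,h_3)$ with $h_1,h_2,h_3\in S$. Using the identity (\ref{eq:twistid}), every group coefficient occurring in an $h_i$ can be pulled outside the outer $\mathfrak{q}$, reducing the analysis to $\mathfrak{q}(u_1,u_2,u_3)$ where each $u_i$ is either $\mathfrak{q}(x_1,x_2,x_3)$ or a projection. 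I would split into cases by how many $u_i$ equal $\mathfrak{q}(x_1,x_2,x_3)$: if three, idempotency collapses the expression to $\mathfrak{q}(x_1,x_2,x_3)$; if exactly two, symmetry puts the remaining projection $x_j$ in the first slot and the minority identity gives $\mathfrak{q}(x_j,\mathfrak{q},\mathfrak{q})=x_j$; if exactly one, symmetry rewrites the expression as $\mathfrak{q}(x_i,x_j,\mathfrak{q}(x_1,x_2,x_3))$, which collapses by the minority identity when $i=j$ and, when $i\neq j$ and $\{i,j,k\}=\{1,2,3\}$, by symmetry together with (\ref{eq:sqslaw}) to $x_k$; if none, $\mathfrak{q}(x_{i_1},x_{i_2},x_{i_3})$ reduces by minority to a projection if two indices agree, and by symmetry to $\mathfrak{q}(x_1,x_2,x_3)$ if all indices are distinct. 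In every case, reintroducing the group coefficient pulled out at the start keeps the result inside $S$.

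Next, $S$ is closed in the pointwise convergence topology on $\mathcal{O}^{(3)}$: by Remark~\ref{rem:free}, $\langle G\rangle\cap\mathcal{O}^{(3)}$ is already closed; and any pointwise-convergent sequence of the form $\eta_n\mathfrak{q}$ forces $\eta_n$ to be eventually constant, since freeness of $G\acts B$ means that the value of $\eta_n\mathfrak{q}$ at a single triple $(a,b,c)$ uniquely determines $\eta_n$. A mixed sequence can be split into two subsequences, each converging inside one of the two closed pieces. Hence $\overline{\langle G\cup\{\mathfrak{q}\}\rangle}\cap\mathcal{O}^{(3)} = \overline{\langle G\cup\{\mathfrak{q}\}\rangle\cap\mathcal{O}^{(3)}}\subseteq \overline{S}= S$, and since each $\eta\mathfrak{q}$ is an essential ternary minority and therefore not in $\langle G\rangle$, this yields the displayed equality.

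Finally, strict almost minimality follows quickly: $\mathfrak{q}$ is almost minimal by Remark~\ref{allweak} and Lemma~\ref{lem:collapse}, and any essential $h\in \overline{\langle G\cup\{\mathfrak{q}\}\rangle}\cap\mathcal{O}^{(3)}$ must equal $\eta\mathfrak{q}$ for some $\eta\in G$ by what was proved, so $\mathfrak{q}=\eta^{-1}h\in\langle G\cup\{h\}\rangle$. The main obstacle is the inductive case analysis: although elementary, it requires applying all four identities defining a $G$-invariant Steiner $3$-quasigroup --- idempotency, symmetry, (\ref{eq:sqslaw}), and (\ref{eq:twistid}) --- and one must carefully track the group coefficients being pulled in and out by (\ref{eq:twistid}).
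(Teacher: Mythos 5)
Your proposal is correct and follows essentially the same route as the paper: an induction on terms using idempotency, symmetry, (\ref{eq:sqslaw}) and (\ref{eq:twistid}) to show every ternary term collapses to $\langle G\rangle$ or to some $\eta\mathfrak{q}$, followed by the observation that freeness makes this set topologically closed (distinct $\eta\mathfrak{q}$ already differ on the diagonal), and then the strict-almost-minimality conclusion. The only step you elide is the one-line verification that a $G$-invariant Steiner $3$-quasigroup is in fact a $G$-quasi-minority (e.g.\ $\mathfrak{q}(x,\beta x,y)\approx\beta\mathfrak{q}(x,x,y)\approx\beta y\approx\mathfrak{q}(\beta y,\beta y,\beta y)$), which is needed before Remark~\ref{allweak} and Lemma~\ref{lem:collapse} can be invoked for almost minimality; the paper carries out exactly this check at the start of its proof.
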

\begin{proof}
Firstly, note that a $G$-invariant Steiner $3$-quasigroup is always a $G$-minority. This is easy to see since $\mathfrak{q}$ is a minority and for $\beta\in G$,
\[\mathfrak{q}(x, \beta x, y)\approx \beta \mathfrak{q}(x,x,y)\approx \beta y\approx \beta^3 \mathfrak{q}(y,y,y)\approx \mathfrak{q}(\beta y,\beta y,\beta y).\]
In particular, from the equation above and symmetry, we can deduce that $\mathfrak{q}$ satisfies (\ref{orbitmindef}) and so is a $G$-minority.\\
We claim that any operation in 
$\langle G\cup\{\mathfrak{q}\}\rangle \cap\mathcal{O}^{(3)}$ is either essentially unary or of the form $\eta \mathfrak{q}(x,y,z)$ for $\eta\in G$. We prove this by induction on complexity of terms. The base case is trivial since the statement holds for projections and $\mathfrak{q}$. Suppose that $g_1, g_2, g_3\in \langle G\cup\{\mathfrak{q}\}\rangle \cap\mathcal{O}^{(3)}$ satisfy the inductive hypothesis (i.e. are all essentially unary or of the form $\eta\mathfrak{q}$). The only interesting case to consider for the inductive step is that of
\[\mathfrak{q}(g_1(x,y,z), g_2(x,y,z), g_3(x,y,z)).\]
If $g_1=\eta_1\mathfrak{q}$ and $g_2=\eta_2\mathfrak{q}$, 
\begin{align*}
    \mathfrak{q}(\eta_1\mathfrak{q}(x,y,z), \eta_2\mathfrak{q}(x,y,z), g_3(x,y,z))& \approx \eta_1\eta_2\mathfrak{q}(g_3(x,y,z), g_3(x,y,z), g_3(x,y,z))\\
    & \approx \eta_1\eta_2\beta g_3(x,y,z),
\end{align*}

yielding that $\mathfrak{q}(g_1(x,y,z), g_2(x,y,z), g_3(x,y,z))$ is either essentially unary or again of the form $\eta\mathfrak{q}$. Hence, the only non-trivial case that remains to be considered is that of only one of the $g_i$ being of the form $\eta\mathfrak{q}$ whilst the others are essentially unary. For example, say $g_1=\alpha x, g_2=\beta y$ and $g_3=\eta\mathfrak{q}$. Then,
\[\mathfrak{q}(\alpha x, \beta y, \eta\mathfrak{q}(x,y,z))\approx \alpha\beta\eta \mathfrak{q}(x,y,\mathfrak{q}(x,y,z))\approx \alpha\beta\eta z,\]
which is again essentially unary.\\

This completes the proof that every operation in 
$\langle G\cup\{\mathfrak{q}\}\rangle \cap\mathcal{O}^{(3)}$ is either essentially unary or of the form $\eta \mathfrak{q}( x, y, z)$. This also implies that
\[\langle G\cup\{\mathfrak{q}\}\rangle\cap\mathcal{O}^{(3)}=\overline{\langle G\cup\{\mathfrak{q}\}\rangle}\cap\mathcal{O}^{(3)},\]
since $\overline{\langle G\rangle}=\langle G\rangle$ and for $\eta_1\neq\eta_2$, $\eta_1\mathfrak{q}(x,x,x)$ and $\eta_2\mathfrak{q}(x,x,x)$ must disagree on every point, by freeness of $G\acts B$.\\

Finally, we just proved that all operations in $\overline{\langle G\cup\{\mathfrak{q}\}\rangle}\cap\mathcal{O}^{(3)}$ also generate $\mathfrak{q}$, and so $\mathfrak{q}$ is strictly almost minimal as desired. 
\end{proof}

The result that a $G$-quasi-minority $\mathfrak{m}$ strictly almost minimal above $\langle G \rangle$ only generates essentially unary operations and its $G$-conjugates $\eta \mathfrak{m}$ will be important later when considering strictly almost minimal operations above $\langle \mathbb{Z}_2\rangle$.\\

From Corollary~\ref{semimistein} and Lemma~\ref{steinsemi} we obtain the following:

\begin{prop}\label{semichar} Let $G\acts B$ be a Boolean group acting freely on $B$. Then, a ternary operation $\mathfrak{m}:B^3\to B$ is a $G$-minority strictly almost minimal above $\langle G \rangle$ if and only if it is a $G$-invariant Steiner $3$-quasigroup.
\end{prop}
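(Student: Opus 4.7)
The plan is short: this proposition is precisely the conjunction of the two directions already established in Corollary~\ref{semimistein} and Lemma~\ref{steinsemi}, so the proof amounts to assembling these two results.

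For the forward direction, suppose $\mathfrak{m}$ is a $G$-minority that is strictly almost minimal above $\langle G\rangle$. I would simply cite Corollary~\ref{semimistein}, which states exactly that such an $\mathfrak{m}$ is a $G$-invariant Steiner $3$-quasigroup. No additional work is needed here, since Corollary~\ref{semimistein} packages together the identities coming from Lemmas~\ref{obstacle1},~\ref{Boolsymm}, and~\ref{twistingaround} (namely the symmetry, the identity~(\ref{eq:sqslaw}), and the $G$-invariance identity~(\ref{eq:twistid})), plus the hypothesis that $\mathfrak{m}$ is an idempotent minority.

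For the backward direction, suppose $\mathfrak{m}$ is a $G$-invariant Steiner $3$-quasigroup. I would first observe that $\mathfrak{m}$ is a $G$-minority: this is idempotency together with the computation
\[
\mathfrak{m}(x,\beta x, y)\approx \beta\mathfrak{m}(x,x,y)\approx \beta y\approx \mathfrak{m}(\beta y,\beta y,\beta y),
\]
which uses the $G$-invariance identity~(\ref{eq:twistid}), the minority law, and the fact that $G$ is Boolean (so $\beta^{3}=\beta$). Combined with symmetry, this is exactly the definition~(\ref{orbitmindef}) of a $G$-quasi-minority; idempotency upgrades this to a $G$-minority. This calculation is already performed at the start of the proof of Lemma~\ref{steinsemi}, so I would simply cite it.

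Finally, strict almost minimality follows from Lemma~\ref{steinsemi} itself: that lemma shows that every ternary operation in $\overline{\langle G\cup\{\mathfrak{m}\}\rangle}$ which is not in $\langle G\rangle$ is of the form $\eta\mathfrak{m}$ for some $\eta\in G$, and each such operation trivially generates $\mathfrak{m}$ back together with $G$ since $\eta^{-1}\in G$. This verifies both almost minimality (no new operations of arity $<3$) and the strict condition (every essential ternary operation generated with $G$ generates $\mathfrak{m}$ back). There is no real obstacle: the substantive content of the proposition lies entirely in Lemmas~\ref{obstacle1}--\ref{twistingaround} and Lemma~\ref{steinsemi}, and the present statement is a clean packaging of those results.
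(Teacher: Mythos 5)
Your proposal is correct and matches the paper exactly: the paper derives Proposition~\ref{semichar} as an immediate consequence of Corollary~\ref{semimistein} (forward direction) and Lemma~\ref{steinsemi} (backward direction, including the observation at the start of that lemma's proof that a $G$-invariant Steiner $3$-quasigroup is a $G$-minority). No further comment is needed.
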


Now that we have characterised strictly almost minimal $G$-minorities, we can proceed to describe minimal $G$-minorities. We just proved these are $G$-invariant Steiner $3$-quasigroups. We will now show that they correspond to the Boolean $G$-invariant Steiner $3$-quasigroups.

\begin{lemma}\label{lem:minBool} Let $G\acts B$ be a Boolean group acting freely on $B$. Let $\mathfrak{m}$ be a $G$-minority minimal above $\langle G \rangle$. Then, $\mathfrak{m}$ is a $G$-invariant Boolean Steiner $3$-quasigroup. 
\end{lemma}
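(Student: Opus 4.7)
The plan is as follows. Since minimality implies strict almost minimality, Proposition~\ref{semichar} yields that $\mathfrak{m}$ is a $G$-invariant Steiner $3$-quasigroup, so it already satisfies symmetry, idempotency, (SQS), and (Inv); only the Boolean law (Bool) remains to be established. To this end I would consider the $4$-ary operations
\[
\Phi(x,y,z,w) := \mathfrak{m}(x, y, \mathfrak{m}(z, y, w)) \quad\text{and}\quad \Xi(x,y,z,w) := \mathfrak{m}\bigl(\Phi(x,y,z,w),\, \mathfrak{m}(x,z,w),\, y\bigr),
\]
both lying in $\overline{\langle G\cup \{\mathfrak{m}\}\rangle}$. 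Note that $\Phi(x, x, z, w) = \mathfrak{m}(x, z, w)$ by minority and symmetry, and that symmetry, (SQS), and minority together force $\mathfrak{m}(a, b, y) = y$ (universally in $y$) to be equivalent to $a = b$. Consequently, the Boolean law, i.e.~$\Phi(x, y, z, w) \approx \mathfrak{m}(x, z, w)$, is equivalent to the identity $\Xi(x, y, z, w) \approx y$, so the task reduces to proving that $\Xi$ is the projection to its second coordinate.

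A direct computation using symmetry, minority, and (SQS) verifies that $\Xi(x, y, z, w) = y$ on each of the six diagonals where two of the arguments coincide (for instance $\Xi(x, y, z, x) = \mathfrak{m}(z, z, y) = y$, exploiting $\Phi(x,y,z,x)=z$ via (SQS)). Moreover, applying (Inv) and using that $G$ is Boolean (so that $\alpha^{2} = 1$ for all $\alpha \in G$), one computes
\[
\Xi(\alpha_1 x, \alpha_2 y, \alpha_3 z, \alpha_4 w) = \alpha_2\, \Xi(x, y, z, w) \quad \text{for all } \alpha_1,\alpha_2,\alpha_3,\alpha_4 \in G,
\]
so $\Xi$ is invariant under the $G$-action on its $1$st, $3$rd, and $4$th coordinates and equivariant in the $2$nd.

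The central step is to show, by induction on term complexity, that every ternary operation in $\overline{\langle G \cup \{\Xi\}\rangle}$ is essentially unary. By Remark~\ref{rem:free} the essentially unary ternary members of this clone are exactly the operations of the form $\beta x_j$ with $\beta \in G$ and $j \in \{1,2,3\}$; given such $g_1, \dots, g_4$ with $g_i = \alpha_i x_{j_i}$, the $G$-invariance of $\Xi$ reduces $\Xi(g_1, g_2, g_3, g_4)$ to $\alpha_2 \,\Xi(x_{j_1}, x_{j_2}, x_{j_3}, x_{j_4})$, and the pigeonhole principle forces two of $j_1, \dots, j_4 \in \{1,2,3\}$ to coincide, whence the diagonal identity collapses the composition to $\alpha_2 x_{j_2} = g_2$. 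Essential unariness is preserved by topological closure, so $\mathfrak{m} \notin \overline{\langle G \cup \{\Xi\}\rangle}$; since $\overline{\langle G \cup \{\Xi\}\rangle}$ lies between $\langle G \rangle$ and $\overline{\langle G \cup \{\mathfrak{m}\}\rangle}$ and fails to contain $\mathfrak{m}$, minimality of $\mathfrak{m}$ above $\langle G\rangle$ forces $\overline{\langle G \cup \{\Xi\}\rangle} = \langle G \rangle$. Thus $\Xi$ is itself essentially unary, and $\Xi(x, y, z, z) = y$ then pins down $\Xi \equiv y$, yielding (Bool). The main obstacle will be to execute rigorously the inductive collapse: it depends crucially on the interplay between the Boolean structure of $G$ (providing the cancellation $\alpha_1 \alpha_3 \alpha_4 \cdot \alpha_1 \alpha_3 \alpha_4 = 1$ needed for the invariance of $\Xi$) and the fact that all six diagonal identifications of $\Xi$ reduce to the same projection $\pi_2$.
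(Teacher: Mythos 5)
Your proposal is correct and follows essentially the same route as the paper: after reducing to the $G$-invariant Steiner $3$-quasigroup case via strict almost minimality, both arguments manufacture a $4$-ary term over $\mathfrak{m}$ that is an orbit-semiprojection onto its second coordinate (the paper uses $C(x,y,z,w)=\mathfrak{m}(x,\mathfrak{m}(x,z,w),\mathfrak{m}(z,y,w))$ and invokes Proposition~\ref{orbitgen}, you use $\Xi$ and re-derive the collapse by hand), use minimality to force that term to be the genuine second projection, and read off the Boolean law. The only imprecision is your justification of the equivalence between (Bool) and $\Xi\approx y$ via ``$\mathfrak{m}(a,b,y)=y$ universally in $y$'': since $\Phi$ also depends on $y$ you cannot quantify over $y$ independently, but the pointwise implication $\mathfrak{m}(a,b,c)=c\Rightarrow a=b$ (immediate from (SQS), symmetry and minority) is all that is needed and repairs this at no cost.
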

\begin{proof} Since $\mathfrak{m}$ is minimal, and so strictly almost minimal, from Corollary~\ref{semimistein}, it is a $G$-invariant Steiner $3$-quasigroup.  Hence, we just need to show that $\mathfrak{m}$ satisfies the Boolean law (\ref{boolaw}). To prove this, consider 
\[C(x,y,z,w):=\mathfrak{m}(x, \mathfrak{m}(x,z,w), \mathfrak{m}(z,y,w)).\]
One can compute that $C(x,y, z, w)$ is a $4$-ary orbit\-/semiprojection such that for all $a, b, c, d\in B$ with at least two of them in the same orbit,
\[C(a, b, c, d)=b.\]
We show this computation below. We use the basic properties of $G$-invariant Steiner $3$-quasigroups and the fact that $G$ is Boolean. For $\beta\in G$,
\begin{align*}
C(x,\beta x, z,w) := & \mathfrak{m}(x, \mathfrak{m}(x,z,w), \mathfrak{m}(z,\beta x,w)) \approx 
 \beta \mathfrak{m}(x, \mathfrak{m}(x,z,w), \mathfrak{m}(x, z, w))  \approx  \beta x \; ;\\
C(x, y, \beta x, w) :=& \mathfrak{m}(x, \mathfrak{m}(x,\beta x,w), \mathfrak{m}(\beta x,y,w)) \approx \mathfrak{m}(x, \mathfrak{m}(x,x,w), \mathfrak{m}(x,y,w)) \\
\approx & \mathfrak{m}(x, w, \mathfrak{m}(x,y,w)) \approx  y \; ;\\
C(x,y, \beta y, w) :=& \mathfrak{m}(x, \mathfrak{m}(x,\beta y,w), \mathfrak{m}(\beta y,y,w))  \approx  \mathfrak{m}(x, \mathfrak{m}(x,y,w), \mathfrak{m}(y,y,w)) \\
\approx & \mathfrak{m}(x, \mathfrak{m}(x,y,w), w) \approx y \; ;\\
C(x,y, z, \beta z) :=& \mathfrak{m}(x, \mathfrak{m}(x,z,\beta z), \mathfrak{m}(z,y,\beta z)) \approx \mathfrak{m}(x, x, y) \approx  y.
\end{align*}
All calculations used (\ref{eq:twistid}) and the fact that $\mathfrak{m}$ is a symmetric minority. The second and third equation used (\ref{eq:sqslaw}) for the final identity. Note that by symmetry of $\mathfrak{m}$ and the definition of $C(x,y,z,w)$, for $u\in\{x,y\}$, the case of $u$ and $w$ being in the same orbit is identical to that of $u$ and $z$ being in the same orbit both in computation and in output. Hence, we can conclude that for any $G$-invariant Steiner $3$-quasigroup $\mathfrak{m}$, $C$ is an orbit-semiprojection with respect to its second coordinate. By minimality of $\mathfrak{m}$, it must be an actual projection onto its second coordinate. That is, 
\begin{equation}\label{eq:minswitchstein}
    \mathfrak{m}(x, \mathfrak{m}(x,z,w), \mathfrak{m}(z,y,w))\approx y.
\end{equation}
Since $\mathfrak{m}(x,y,z)=w$ is a symmetric $4$-ary relation (Lemma~\ref{Boolsymm}), we can switch $y$ and $\mathfrak{m}(x,z,w)$ in the identity (\ref{eq:minswitchstein}) to obtain the Boolean law.
\end{proof}

\begin{lemma}\label{lem:whatgislike} Let $G\acts B$ be a Boolean group acting freely on $B$. Suppose that $\mathfrak{q}$ is a $G$-invariant Boolean Steiner $3$-quasigroup. Let $(B, +)$ be the Boolean group such that $\mathfrak{q}(x,y,z)=x+y+z$. Then, 
every operation $g\in \overline{\langle\{\mathfrak{q}\}\cup G\rangle}$ depending on all variables is of the form
\begin{equation}\label{eq:formofg}
    g(x_1, \dots, x_r)=\alpha \sum_{i=1}^{r} x_i,
\end{equation}
where $\alpha\in G$ and $r$ is an odd number. In particular,
\[\overline{\langle\{\mathfrak{q}\}\cup G\rangle}=\langle\{\mathfrak{q}\}\cup G\rangle.\]
\end{lemma}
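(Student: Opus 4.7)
My plan is to exploit that $(B, +)$ is a Boolean group with $\mathfrak{q}(x,y,z) = x+y+z$, and to show that, up to a $G$-shift, the entire clone $\langle \{\mathfrak{q}\} \cup G\rangle$ consists of Boolean linear combinations of an odd number of variable occurrences.

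The first step is to identify the $G$-action on $(B,+)$. Specialising the invariance identity (\ref{eq:twistid}) to $\mathfrak{q}(\alpha x, y, z) = \alpha\mathfrak{q}(x,y,z)$ gives $\alpha(a+b) = \alpha a + b = a + \alpha b$, so $\alpha a + a$ is independent of $a$; setting $t_\alpha := \alpha a + a$, every $\alpha \in G$ acts by translation $x \mapsto x + t_\alpha$, and $\alpha \mapsto t_\alpha$ defines an injective group homomorphism $G \hookrightarrow (B,+)$ (injectivity from freeness).

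Next, I would prove by induction on term complexity that every $g \in \langle \{\mathfrak{q}\} \cup G \rangle$ of arity $n$ has the form $g(x_1,\ldots,x_n) = \alpha \cdot \sum_{i \in J} x_i$ for some $\alpha \in G$ and some $J \subseteq \{1, \ldots, n\}$ with $|J|$ odd. The base cases are immediate: projections and unary translations give $|J| = 1$, and $\mathfrak{q}$ itself gives $|J| = 3$. For the composition step, plugging $h_i(\vec x) = \sum_{k \in J_i} x_k + t_{\alpha_i}$ into $\alpha \cdot \sum_{i \in J} h_i(\vec x)$ and expanding in $(B,+)$ yields $\sum_{i \in J}\sum_{k \in J_i} x_k + \sum_{i \in J} t_{\alpha_i} + t_\alpha$; modulo $2$ the variable part reduces to $\sum_{k \in J'} x_k$ where $J'$ collects the indices of odd multiplicity, and the constant term lies in the image of $G$ under the translation homomorphism, so it equals $t_{\alpha'}$ for a unique $\alpha' \in G$. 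The parity of $|J'|$ matches that of $\sum_{i \in J} |J_i|$, which is a sum of an odd number of odd integers, hence odd.

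The main obstacle is the passage to the topological closure. Given $f \in \overline{\langle \{\mathfrak{q}\} \cup G\rangle}$ of arity $r$, I would fix two distinct points $a_0, a_1 \in B$ and, for each finite $A \supseteq \{a_0, a_1\}$, pick $g_A = \alpha_A \cdot \sum_{i \in J_A} x_i$ agreeing with $f$ on $A^r$. Since $|J_A|$ is odd, $g_A(a_0, \ldots, a_0) = \alpha_A \cdot a_0$, so freeness identifies $\alpha_A$ uniquely from the single value $f(a_0, \ldots, a_0)$. Likewise, $k \in J_A$ if and only if toggling the $k$-th coordinate of $(a_0, \ldots, a_0)$ to $a_1$ changes the value of $f$, so $J_A$ is determined by $f|_{\{a_0, a_1\}^r}$. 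Hence $(\alpha_A, J_A)$ is a common pair $(\alpha, J)$ independent of $A$, and the single operation $\alpha \cdot \sum_{i \in J} x_i$ agrees with $f$ on every finite cube and therefore coincides with $f$ globally. If $f$ depends on all variables then $J = \{1, \ldots, r\}$ and $r$ is odd by the parity result; since this expression already lies in the non-closed clone, the two clones coincide.
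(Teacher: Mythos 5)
Your proof is correct and follows essentially the same route as the paper's: an induction on terms showing that every operation in $\langle\{\mathfrak{q}\}\cup G\rangle$ is of the form $\alpha\sum_{i\in J}x_i$ with $|J|$ odd, followed by the observation that such an operation is completely determined by its restriction to a two-element subset of $B$, which collapses the topological closure. Your explicit identification of $G$ with a group of translations inside $(B,+)$ is a clean way to organise the computation that the paper carries out directly via the invariance identity (Inv), but it does not change the substance of the argument.
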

\begin{proof} By Remark~\ref{rem:boolrem}, $\mathfrak{q}(x,y,z)=x+y+z$ for a Boolean group $(B, +)$. We claim that any $g\in\langle\{\mathfrak{q}\}\cup G\rangle$ is (up to permuting the $x_i$) of the form 
\[g(x_1, \dots, x_k)=\alpha \sum_{i=1}^{r} x_i,\]
where $\alpha\in G$, and $r\leq k$ is odd. As usual, we prove this by induction on terms where the base case is trivial. For the inductive step, we need to consider
\[g(x_1, \dots, x_k):=\mathfrak{q}\left(\alpha\sum_{i\in I} x_i, \beta \sum_{j\in J} x_j, \gamma \sum_{l\in L} x_l\right)\; ,\]
where $I, J, L\subseteq\{1, \dots, k\}$ all have odd size. Let $S\subseteq\{1,\dots, k\}$ be the set of $i\leq k$ which appear in either exactly one, or all three of $I, J,$ and $L$. Note that $S$ must have odd size by the inclusion-exclusion principle. By $G$-invariance of $\mathfrak{q}$ and since $(B, +)$ is a Boolean group, we get that
\[g(x_1, \dots, x_k):=\alpha\beta\gamma \sum_{i\in S} x_i\;,\]
proving our inductive hypothesis. Moreover, it is easy to see that each operation of the form (\ref{eq:formofg}) belongs to $\langle\{\mathfrak{q}\}\cup G\rangle$ by noting that for $r\geq 3$ odd
\[\alpha \sum_{i\leq r} x_i=\alpha \mathfrak{q}(x_1, x_1, \mathfrak{q}(x_1, x_2, \mathfrak{q}(\dots,\mathfrak{q}(x_1, x_{r-1}, x_r)\dots))),\]
observing that $x_1$ appears an odd number of times since $r$ is odd.

Now, we prove that $\overline{\langle\{\mathfrak{q}\}\cup G\rangle}=\langle\{\mathfrak{q}\}\cup G\rangle$. Consider $k$-ary $g_1, g_2\in\langle\{\mathfrak{q}\}\cup G\rangle$ of the form
\[g_1(x_1, \dots, x_k)=\alpha \sum_{i\in R} x_i \text{ and } g_2(x_1, \dots, x_k)=\beta \sum_{i\in S} x_i,\]
for $S, R\subseteq \{1, \dots, k\}$ both of odd size. We show that if $g_1$ and $g_2$ agree on a set of two elements, then they are the same operation. Take $a,b\in B$ distinct and suppose that $g_1$ and $g_2$ agree on $\{a,b\}$. Firstly, since $g_1$ and $g_2$ agree on $\{a\}$, 
\[g_1(a, \dots, a)=\alpha a=\beta a=g_2(a, \dots, a).\]
This implies that $\alpha=\beta$ since $G\acts B$ is free. Now, since $g_1$ and $g_2$ agree on $\{a,b\}$, we can show that $R=S$. In fact, suppose that $j\in R$, but $j\not\in S$ for some $j\in \{1, \dots, k\}$. Take $\overline{c}$ to be the $k$-tuple consisting of $a$ in every coordinate except for a $b$ in the $j$th coordinate. Then, since $(B, +)$ is Boolean, $g_1(\overline{c})=b$, but $g_2(\overline{c})=a$. Hence, if $g_1$ and $g_2$ agree on $\{a,b\}$, we have $\alpha=\beta$ and $R=S$, and so $g_1=g_2$. We can conclude that $\overline{\langle\{\mathfrak{q}\}\cup G\rangle}=\langle\{\mathfrak{q}\}\cup G\rangle$, as desired, and the lemma follows. 
\end{proof}

\begin{lemma}\label{lem:boolmin} Let $G\acts B$ be a Boolean group acting freely on $B$. Suppose that $\mathfrak{q}$ is a $G$-invariant Boolean Steiner $3$-quasigroup. Then, $\mathfrak{q}$ is minimal above $\langle G \rangle$.
\end{lemma}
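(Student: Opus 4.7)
My plan is to show that $\mathfrak{q}$ satisfies both requirements in Definition~\ref{def:minimalop}: the closed clone $\overline{\langle G\cup\{\mathfrak{q}\}\rangle}$ is minimal above $\langle G\rangle$, and every operation in $\overline{\langle G\cup\{\mathfrak{q}\}\rangle}\setminus\langle G\rangle$ has arity at least $3$. Both facts will be read off directly from Lemma~\ref{lem:whatgislike}, which asserts that $\overline{\langle G\cup\{\mathfrak{q}\}\rangle}=\langle G\cup\{\mathfrak{q}\}\rangle$ and that every non\-/essentially\-/unary element of this clone has the form $\alpha\sum_{i\in S}x_i$ in the Boolean group $(B,+)$ (for which $\mathfrak{q}(x,y,z)=x+y+z$ by Remark~\ref{rem:boolrem}), with $\alpha\in G$, $|S|$ odd and $|S|\geq 3$ whenever the operation is essential.

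First, the arity condition is immediate: any $h\in\overline{\langle G\cup\{\mathfrak{q}\}\rangle}\setminus\langle G\rangle$ is not essentially unary, hence by the above description it essentially depends on at least $3$ variables, so its arity is $\geq 3$. For minimality of the clone, it suffices to show that for every such $h$ one has $\mathfrak{q}\in\overline{\langle G\cup\{h\}\rangle}$. Write $h(x_1,\ldots,x_k)=\alpha\sum_{i\in S}x_i$ with $S\subseteq\{1,\ldots,k\}$, $|S|=r\geq 3$ odd and $\alpha\in G$. Setting the variables outside $S$ to any fixed element of $B$ eliminates them; after renaming we may assume $h$ is $r$-ary and equals $\alpha(x_1+\cdots+x_r)$.

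The key manipulation is to cancel the extra summands using that $(B,+)$ is Boolean. Since $r-3$ is even, partition $\{x_4,\ldots,x_r\}$ into $(r-3)/2$ pairs and identify the two variables in each pair. The resulting term in the clone substitutes $x_i=x_i$ among the first three coordinates and, for each pair $(x_{2j},x_{2j+1})$ with $j\geq 2$, replaces both by a single new variable $y_j$; since $y_j+y_j=0$, all such pairs vanish from the sum, leaving $\alpha(x_1+x_2+x_3)=\alpha\,\mathfrak{q}(x_1,x_2,x_3)$. Composing with $\alpha^{-1}\in G$ then produces $\mathfrak{q}$, so indeed $\mathfrak{q}\in\langle G\cup\{h\}\rangle\subseteq\overline{\langle G\cup\{h\}\rangle}$.

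There is no real obstacle here beyond correctly book\-/keeping the parity of $r$ and the cancellation $x+x=0$ in the Boolean group; Lemma~\ref{lem:whatgislike} does all the heavy lifting by restricting in advance the possible shapes of operations in $\overline{\langle G\cup\{\mathfrak{q}\}\rangle}$. Putting the two paragraphs together yields that $\overline{\langle G\cup\{\mathfrak{q}\}\rangle}$ is minimal above $\langle G\rangle$ and that $\mathfrak{q}$ is an operation of minimal arity in this clone outside $\langle G\rangle$, as required.
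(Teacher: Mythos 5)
Your proposal is correct and follows essentially the same route as the paper: both rest entirely on Lemma~\ref{lem:whatgislike} (every essential operation in the clone is $\alpha\sum_{i\in S}x_i$ with $|S|$ odd) and then recover $\mathfrak{q}$ by identifying the surplus variables so that they cancel in the Boolean group, the paper doing this by substituting $x_1$ for all extra coordinates (so that $x_1$ occurs an odd number of times) where you pair them off. One small caveat: ``setting the variables outside $S$ to a fixed element of $B$'' is not a clone operation, since $\langle G\rangle$ contains no constants; but as those variables are dummy, identifying each of them with an essential variable yields the same $r$-ary function inside $\langle G\cup\{h\}\rangle$, so the slip is harmless and immediately repaired.
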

\begin{proof} Consider $g\in\langle\{\mathfrak{q}\}\cup G\rangle$ of arity $k\geq 4$ depending on all variables. By Lemma~\ref{lem:whatgislike}, we know that $k$ is odd and
\[g(x_1, \dots, x_k)=\alpha \sum_{i\leq k} x_i.\]
Then, since $G$ and $(B, +)$ are Boolean and $k$ is odd,
\[\alpha g(x_1, x_2, x_3, x_1, \dots, x_1)\approx \mathfrak{q}(x_1, x_2, x_3),\]
yielding that $\mathfrak{q}\in \langle \{g\} \cup G \rangle$. Hence, by Lemma~\ref{lem:whatgislike}, strict almost minimality and the fact that any operation  $g\in\langle\{\mathfrak{q}\}\cup G\rangle$ of arity $k\geq 4$ generates $\mathfrak{q}$ back together with $G$, we can deduce that $\mathfrak{q}$ is indeed minimal above $\langle G \rangle$.
\end{proof}

\begin{remark} Note that essentially the same proofs also yields that a Boolean Steiner $3$-quasigroup is minimal above $\langle \mathrm{Id} \rangle$ and that any minority minimal above $\langle \mathrm{Id} \rangle$ is a Boolean Steiner $3$-quasigroup, thus recovering the original description of idempotent minimal minorities of Rosenberg~\cite{fivetypes}. 
\end{remark}

Hence, from Lemma~\ref{lem:minBool} and Lemma~\ref{lem:boolmin}, we can deduce that minimal $G$-minorities correspond to $G$-invariant Boolean Steiner $3$-quasigroups:

\begin{prop}\label{prop:bts3g} Let $G\acts B$ be a Boolean group acting freely on $B$. Then, a ternary operation $\mathfrak{m}:B^3\to B$ is a $G$-minority minimal above $\langle G \rangle$ if and only if it is a $G$-invariant Boolean Steiner $3$-quasigroup.
\end{prop}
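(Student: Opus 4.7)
The plan is to obtain the proposition as an immediate corollary of the two preceding lemmas, which together already cover both directions of the biconditional.

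For the forward implication, if $\mathfrak{m}$ is a $G$-minority minimal above $\langle G\rangle$, then Lemma~\ref{lem:minBool} applies directly and yields that $\mathfrak{m}$ is a $G$-invariant Boolean Steiner $3$-quasigroup. The only work Lemma~\ref{lem:minBool} required beyond strict almost minimality (handled by Corollary~\ref{semimistein}) was establishing the Boolean law~(\ref{boolaw}) via the auxiliary operation $C(x,y,z,w):=\mathfrak{m}(x,\mathfrak{m}(x,z,w),\mathfrak{m}(z,y,w))$, which collapses to an orbit\-/semiprojection onto its second coordinate and thus, by minimality and the symmetry of the $4$-ary relation $\mathfrak{m}(x,y,z)=w$ (Lemma~\ref{Boolsymm}), forces~(\ref{boolaw}).

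For the reverse implication, I need two things: that every $G$-invariant Boolean Steiner $3$-quasigroup $\mathfrak{q}$ is a $G$-minority, and that it is minimal above $\langle G\rangle$. The former was observed at the start of the proof of Lemma~\ref{steinsemi}: from symmetry, idempotency of the Steiner $3$-quasigroup, and the $G$-invariance identity~(\ref{eq:twistid}), one verifies~(\ref{orbitmindef}). The latter is exactly Lemma~\ref{lem:boolmin}, which uses the explicit description of $\overline{\langle\{\mathfrak{q}\}\cup G\rangle}$ from Lemma~\ref{lem:whatgislike} — every operation depending on all its variables has the form $\alpha\sum_{i=1}^r x_i$ with $r$ odd — combined with the observation that any such operation of arity $\geq 4$ regenerates $\mathfrak{q}$ modulo $G$ by the substitution $\alpha g(x_1,x_2,x_3,x_1,\ldots,x_1)\approx \mathfrak{q}(x_1,x_2,x_3)$.

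Since there is essentially no new content beyond quoting these results, the only point worth flagging is the observation that $G$-invariance plus being a symmetric idempotent minority automatically gives the $G$-quasi-minority identities; this justifies that Lemma~\ref{lem:boolmin} does produce a $G$-minority and not merely a minimal ternary operation of unspecified shape. No step here is technically hard: the difficulty has already been absorbed into the lemmas of this subsection.
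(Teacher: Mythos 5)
Your proposal is correct and matches the paper's own (very brief) proof, which likewise derives the proposition directly from Lemma~\ref{lem:minBool} and Lemma~\ref{lem:boolmin}. Your extra remark that $G$-invariance plus being a symmetric idempotent Steiner $3$-quasigroup yields the $G$-minority identities (as verified at the start of the proof of Lemma~\ref{steinsemi}) is a point the paper leaves implicit, and it is handled correctly.
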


\subsection{Characterising \texorpdfstring{$G$}{G}-invariant Steiner \texorpdfstring{$3$}{3}-quasigroups} \label{sub:chartwist}
In the previous subsection we proved correspondences between $G$-invariant Steiner $3$-quasigroups and strictly almost minimal $G$-quasi-minorities, and between Boolean $G$\-/invariant Steiner $3$-quasigroups and minimal $G$-quasi-minorities. In this subsection, we try and give a more accurate description of these operations. In particular, we want to obtain necessary and sufficient conditions for their existence. We do so in Remark~\ref{rem:exginvsteiner} and Corollary~\ref{cor:correspondence}, in which we give necessary and sufficient conditions for the existence of $G$-invariant Steiner $3$-quasigroups, and of Boolean $G$-invariant Steiner $3$-quasigroups respectively.\\

We begin by defining $G$-weighted Steiner $3$-quasigroups. We will see that these correspond to $G$-invariant Steiner $3$-quasigroups in the following two lemmas.

\begin{definition}\label{def:gweighted} Let $G\acts B$ be a Boolean group acting freely on $B$. Let $X=\{ x_i \;\vert\; i\in \mathrm{Orb}(G)\}$ be a set of representatives for the orbits of $G$. 
A $\bm{G}$\textbf{-weighted Steiner} $\bm{3}$\textbf{-quasigroup} (with respect to $X$) consists of a triplet 
$(\mathfrak{q}, \mathfrak{q}', f)$, where 
\begin{itemize}
\item The ternary operation $\mathfrak{q}': \mathrm{Orb}(G)^3\to \mathrm{Orb}(G)$ is a Steiner $3$-quasigroup;
\item The set $E$ is given by
\[E:=\{(\Tilde{x}, \Tilde{y}, \Tilde{z}, \Tilde{w})\in\mathrm{Orb}(G)^4\ \vert \ \mathfrak{q}'(\Tilde{x}, \Tilde{y}, \Tilde{z})= \Tilde{w}\}.\]
    \item The function $f:E\to G$ is such that
    \begin{equation}\label{eq:fis0}
        f(\Tilde{a}, \Tilde{b}, \Tilde{c},\Tilde{d})=0
    \end{equation}
whenever $|\{\Tilde{a}, \Tilde{b}, \Tilde{c},\Tilde{d}\}|<4$;
\item The ternary operation $\mathfrak{q}:B^3\to B$ is defined as follows: given $a,b,c\in B$, 
\begin{equation}\label{eq:gweighted}
    \mathfrak{q}(a, b, c)=\alpha \beta \gamma f(\Tilde{a},\Tilde{b}, \Tilde{c}, \mathfrak{q}'(\Tilde{a},\Tilde{b}, \Tilde{c})) x_{\mathfrak{q}'(\Tilde{a},\Tilde{b}, \Tilde{c})} \;, 
\end{equation}
    where $a=\alpha x_{\Tilde{a}}, b=\beta x_{\Tilde{b}}, c=\gamma x_{\Tilde{c}}$.
\end{itemize}
    Note that this is well-defined since the action of $G$ on $B$ is free and so there is a unique $\alpha$ such that  $a=\alpha x_{\Tilde{a}}$.  
\end{definition}

\begin{lemma}\label{misgtwisted} Let $G\acts B$ be a Boolean group acting freely on $B$. Let $X=\{ x_i \vert i\in \mathrm{Orb}(G)\}$ be a set of representatives for the orbits of $G$. Consider a $G$-weighted Steiner $3$-quasigroup $(\mathfrak{q}, \mathfrak{q}', f)$. Then, $\mathfrak{q}$ is a $G$-invariant Steiner $3$-quasigroup.
\end{lemma}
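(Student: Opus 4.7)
The plan is to verify in turn each defining property of a $G$-invariant Steiner $3$-quasigroup (Definition~\ref{def:gtwistedsteiner}) for the operation $\mathfrak{q}$ given by (\ref{eq:gweighted}): the $G$-invariance identity (\ref{eq:twistid}), idempotency, the symmetric minority law, and the Steiner quadruple identity (\ref{eq:sqslaw}). Throughout, the freeness of $G\acts B$ ensures that, for each $a \in B$, the decomposition $a = \alpha x_{\Tilde{a}}$ with $\alpha \in G$ and $\Tilde{a} \in \mathrm{Orb}(G)$ is unique, so $\mathfrak{q}$ is unambiguously defined by (\ref{eq:gweighted}).

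The identity (\ref{eq:twistid}) is essentially built into the construction: replacing $a = \alpha x_{\Tilde{a}}$ by $\delta a = (\delta\alpha) x_{\Tilde{a}}$ leaves $\Tilde{a}$, $\mathfrak{q}'(\Tilde{a},\Tilde{b},\Tilde{c})$ and $f(\Tilde{a},\Tilde{b},\Tilde{c},\mathfrak{q}'(\Tilde{a},\Tilde{b},\Tilde{c}))$ unchanged and only multiplies the prefactor $\alpha\beta\gamma$ by $\delta$; doing this in each of the three arguments yields (\ref{eq:twistid}). Idempotency follows because $\mathfrak{q}'(\Tilde{a},\Tilde{a},\Tilde{a}) = \Tilde{a}$, the $4$-tuple $(\Tilde{a},\Tilde{a},\Tilde{a},\Tilde{a})$ is degenerate so $f$ takes the identity value by (\ref{eq:fis0}), and $\alpha^3 = \alpha$ in the Boolean group $G$. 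The minority identity $\mathfrak{q}(a,a,c) \approx c$ is analogous: $\mathfrak{q}'(\Tilde{a},\Tilde{a},\Tilde{c}) = \Tilde{c}$ because $\mathfrak{q}'$ is a minority, the $4$-tuple $(\Tilde{a},\Tilde{a},\Tilde{c},\Tilde{c})$ is still degenerate, and $\alpha^2 = 1$, leaving $\gamma x_{\Tilde{c}} = c$. Symmetry of $\mathfrak{q}$ then follows from commutativity of $G$, symmetry of $\mathfrak{q}'$, and the convention that $f$ is well-defined on the unordered blocks of the SQS attached to $\mathfrak{q}'$; the remaining minority identities are automatic from symmetry.

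The only substantive calculation is (\ref{eq:sqslaw}). Writing $x = \alpha x_{\Tilde{a}}$, $y = \beta x_{\Tilde{b}}$, $z = \gamma x_{\Tilde{c}}$ and $\Tilde{d} := \mathfrak{q}'(\Tilde{a},\Tilde{b},\Tilde{c})$, by construction $w := \mathfrak{q}(x,y,z) = \alpha\beta\gamma\, f(\Tilde{a},\Tilde{b},\Tilde{c},\Tilde{d})\, x_{\Tilde{d}}$. The (SQS) law for $\mathfrak{q}'$ gives $\mathfrak{q}'(\Tilde{a},\Tilde{b},\Tilde{d}) = \Tilde{c}$, so unwinding $\mathfrak{q}(x,y,w)$ produces
\[
\mathfrak{q}(x,y,w) = \alpha\beta \cdot \bigl(\alpha\beta\gamma\, f(\Tilde{a},\Tilde{b},\Tilde{c},\Tilde{d})\bigr) \cdot f(\Tilde{a},\Tilde{b},\Tilde{d},\Tilde{c})\, x_{\Tilde{c}}.
\]
Using $\alpha^2 = \beta^2 = 1$ and commutativity, this simplifies to $\gamma \cdot f(\Tilde{a},\Tilde{b},\Tilde{c},\Tilde{d}) \cdot f(\Tilde{a},\Tilde{b},\Tilde{d},\Tilde{c})\, x_{\Tilde{c}}$. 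Because the two $4$-tuples $(\Tilde{a},\Tilde{b},\Tilde{c},\Tilde{d})$ and $(\Tilde{a},\Tilde{b},\Tilde{d},\Tilde{c})$ describe the same SQS-block, $f$ takes equal values on them, and since $G$ has exponent two this product of two equal factors is the identity of $G$; the expression therefore collapses to $\gamma x_{\Tilde{c}} = z$, as required. The only genuine point requiring care---and the only place beyond bookkeeping where I expect real attention to be needed---is the reordering invariance of $f$ used both in the symmetry argument and in the final cancellation; this is exactly the reason (\ref{eq:fis0}) is phrased in terms of the set of coordinates and why $f$ is treated as a function on unordered blocks of the SQS associated with $\mathfrak{q}'$.
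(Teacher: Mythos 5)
Your proof is correct and follows essentially the same route as the paper's: idempotency from (\ref{eq:fis0}), the symmetric-minority and $G$-invariance properties read off directly from (\ref{eq:gweighted}), and the law (\ref{eq:sqslaw}) by the same two-step unwinding using $\mathfrak{q}'(\Tilde{a},\Tilde{b},\mathfrak{q}'(\Tilde{a},\Tilde{b},\Tilde{c}))=\Tilde{c}$ followed by cancellation in the Boolean group. You are in fact slightly more careful than the paper in isolating the one hidden hypothesis --- that $f$ must be invariant under reordering its arguments within a block, which is needed both for the symmetry of $\mathfrak{q}$ and for the final cancellation $f(\Tilde{a},\Tilde{b},\Tilde{c},\Tilde{d})\,f(\Tilde{a},\Tilde{b},\Tilde{d},\Tilde{c})=1$ --- a point the paper's displayed computation passes over silently.
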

\begin{proof} From the condition on $f$ given by (\ref{eq:fis0}), we can see that $\mathfrak{q}$ is idempotent. In fact, for $a=\alpha x_{\Tilde{a}}$,
\begin{equation*}
    \mathfrak{q}(a, a, a)=\alpha f(\Tilde{a}, \Tilde{a}, \Tilde{a}, \mathfrak{q}'(\Tilde{a}, \Tilde{a}, \Tilde{a})) x_{\Tilde{a}}=\alpha x_{\Tilde{a}}=a,
\end{equation*}
where the first equality is the definition of $\mathfrak{q}$ in  (\ref{eq:gweighted}) and the second is (\ref{eq:fis0}). From the definition (\ref{eq:gweighted}), we can see that $\mathfrak{q}$ is a symmetric minority satisfying the $G$-invariance identity (\ref{eq:twistid}). Hence, we only need to check that $\mathfrak{q}$ is indeed a Steiner $3$-quasigroup. We verify this below. Let $a=\alpha x_{\Tilde{a}}, b=\beta x_{\Tilde{b}}, c=\gamma x_{\Tilde{c}}$
\begin{align*}
    \mathfrak{q}(a, b, \mathfrak{q}(a, b, c))&=\mathfrak{q}(a, b, \alpha \beta \gamma f(\Tilde{a},\Tilde{b}, \Tilde{c}, \mathfrak{q}'(\Tilde{a},\Tilde{b}, \Tilde{c})) x_{\mathfrak{q}'(\Tilde{a},\Tilde{b}, \Tilde{c})})\\
    &=\gamma  f(\Tilde{a},\Tilde{b}, \Tilde{c}, \mathfrak{q}'(\Tilde{a},\Tilde{b}, \Tilde{c})) f(\Tilde{a},\Tilde{b}, \Tilde{c}, \mathfrak{q}'(\Tilde{a},\Tilde{b}, \Tilde{c})) x_{\Tilde{c}}=c.
\end{align*}
The first equality is just (\ref{def:gweighted}). The second is also  (\ref{eq:gweighted}) noting that since $\mathfrak{q}'$ is a Steiner $3$-quasigroup,  $\mathfrak{q}'(\Tilde{a},\Tilde{b},\mathfrak{q}'(\Tilde{a},\Tilde{b}, \Tilde{c}))=\Tilde{c}$. The final equality follows from $G$ being Boolean and the definition of $c$. 
\end{proof}

\begin{definition}\label{def:inducedmin} Let $G\acts B$ be a Boolean group acting freely on $B$. Let $\mathfrak{q}$ be a $G$-invariant Steiner $3$-quasigroup. Then, $\mathfrak{q}$ induces a Steiner $3$-quasigroup on the orbits of $G$, $\Tilde{\mathfrak{q}}:\mathrm{Orb}(G)\to \mathrm{Orb}(G)$, where for $\Tilde{a}, \Tilde{b}, \Tilde{c}\in\mathrm{Orb}(G)$, 
\[\Tilde{\mathfrak{q}}(\Tilde{a}, \Tilde{b}, \Tilde{c})=\Tilde{d},\]
where $\Tilde{d}$ is the unique orbit of $G$ such that for some/all $a\in\Tilde{a}, b\in\Tilde{b}, c\in\Tilde{c}$, 
\[\mathfrak{q}(a, b, c)\in \Tilde{d}.\]
This is a well-defined operation due to the $G$-invariance identity (\ref{eq:twistid}). It is then easy to show that $\Tilde{\mathfrak{q}}$ is a symmetric idempotent minority satisfying the condition to be a Steiner $3$-quasigroup. Moreover, if $\mathfrak{q}$ is Boolean, then so is $\Tilde{\mathfrak{q}}$.
\end{definition}

\begin{lemma}\label{lem:inducedf} Let $G\acts B$ be a Boolean group acting freely on $B$. and let $X=\{ x_i \vert i\in \mathrm{Orb}(G)\}$ be a set of representatives for the orbits of $G$. Let $\mathfrak{q}$ be a $G$-invariant Steiner $3$-quasigroup. Then, $(\mathfrak{q}, \Tilde{\mathfrak{q}}, f)$ is a $G$-weighted Steiner $3$-quasigroup with respect to $X$, where $\Tilde{\mathfrak{q}}$ is the Steiner $3$-quasigroup on the orbits of $G$ induced by $\mathfrak{q}$ defined in Definition~\ref{def:inducedmin}, and $f$ is the function $f:E\to G$ where, 
\[E:=\{(\Tilde{x}, \Tilde{y}, \Tilde{z}, \Tilde{w})\in\mathrm{Orb}(G)^4\ \vert \ \Tilde{\mathfrak{q}}(\Tilde{x}, \Tilde{y}, \Tilde{z})= \Tilde{w}\},\]
and for $\mathfrak{q}(x_{\Tilde{a}}, x_{\Tilde{b}}, x_{\Tilde{c}})=\delta x_{\Tilde{\mathfrak{q}}(\Tilde{a},\Tilde{b}, \Tilde{c})}$,
\[f(\Tilde{a},\Tilde{b}, \Tilde{c}, \Tilde{\mathfrak{q}}(\Tilde{a},\Tilde{b}, \Tilde{c}))=\delta.\] 
\end{lemma}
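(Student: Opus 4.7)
The plan is to unpack Definition~\ref{def:gweighted} and check each of its clauses for the triple $(\mathfrak{q}, \tilde{\mathfrak{q}}, f)$ described in the statement. The clause that $\tilde{\mathfrak{q}}$ is a Steiner $3$-quasigroup on $\mathrm{Orb}(G)$ is already noted in Definition~\ref{def:inducedmin}, and the description of the domain $E$ of $f$ is a definition. So three items remain to verify: that $f$ is well-defined on $E$; that the recovery formula~(\ref{eq:gweighted}) holds; and that $f$ vanishes in the degenerate cases required by~(\ref{eq:fis0}).

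Well-definedness is immediate: if $\tilde{d} = \tilde{\mathfrak{q}}(\tilde{a}, \tilde{b}, \tilde{c})$, then by Definition~\ref{def:inducedmin} the element $\mathfrak{q}(x_{\tilde{a}}, x_{\tilde{b}}, x_{\tilde{c}})$ lies in the orbit $\tilde{d}$, so there exists $\delta \in G$ with $\mathfrak{q}(x_{\tilde{a}}, x_{\tilde{b}}, x_{\tilde{c}}) = \delta x_{\tilde{d}}$, and $\delta$ is unique because the action of $G$ on $B$ is free. For the recovery formula, writing $a = \alpha x_{\tilde{a}}$, $b = \beta x_{\tilde{b}}$, $c = \gamma x_{\tilde{c}}$, the $G$-invariance identity~(\ref{eq:twistid}) yields
\[
\mathfrak{q}(a, b, c) \;=\; \alpha\beta\gamma\, \mathfrak{q}(x_{\tilde{a}}, x_{\tilde{b}}, x_{\tilde{c}}) \;=\; \alpha\beta\gamma\, f\bigl(\tilde{a}, \tilde{b}, \tilde{c}, \tilde{\mathfrak{q}}(\tilde{a}, \tilde{b}, \tilde{c})\bigr)\, x_{\tilde{\mathfrak{q}}(\tilde{a}, \tilde{b}, \tilde{c})},
\]
which matches~(\ref{eq:gweighted}) on the nose.

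For condition~(\ref{eq:fis0}), the key observation is that $\tilde{\mathfrak{q}}$, being a Steiner $3$-quasigroup, forces $\tilde{d} := \tilde{\mathfrak{q}}(\tilde{a}, \tilde{b}, \tilde{c})$ to differ from each of $\tilde{a}, \tilde{b}, \tilde{c}$ whenever these three are pairwise distinct (via the correspondence with Steiner quadruple systems discussed after Definition~\ref{def:gtwistedsteiner}, or directly from the minority law together with~(\ref{eq:sqslaw})). Hence $|\{\tilde{a}, \tilde{b}, \tilde{c}, \tilde{d}\}| < 4$ can only happen when two of $\tilde{a}, \tilde{b}, \tilde{c}$ already coincide, and by the symmetry of $\mathfrak{q}$ (and hence of $\tilde{\mathfrak{q}}$) it suffices to treat the case $\tilde{a} = \tilde{b}$. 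Then $\tilde{d} = \tilde{c}$ by the minority identity for $\tilde{\mathfrak{q}}$, and $\mathfrak{q}(x_{\tilde{a}}, x_{\tilde{a}}, x_{\tilde{c}}) = x_{\tilde{c}}$ by the minority identity for $\mathfrak{q}$, which forces $\delta$ to be the identity of $G$ by freeness. I do not anticipate any substantial obstacle: the lemma amounts to a bookkeeping exercise that reads off, from $\mathfrak{q}$ and a choice of orbit representatives, exactly the data that Definition~\ref{def:gweighted} demands.
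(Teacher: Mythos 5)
Your proof is correct and follows essentially the same route as the paper: the substantive content in both is the one-line computation $\mathfrak{q}(a,b,c)=\alpha\beta\gamma\,\mathfrak{q}(x_{\Tilde{a}},x_{\Tilde{b}},x_{\Tilde{c}})=\alpha\beta\gamma\, f(\Tilde{a},\Tilde{b},\Tilde{c},\Tilde{\mathfrak{q}}(\Tilde{a},\Tilde{b},\Tilde{c}))\, x_{\Tilde{\mathfrak{q}}(\Tilde{a},\Tilde{b},\Tilde{c})}$ via the $G$-invariance identity~(\ref{eq:twistid}). You additionally spell out well-definedness of $f$ and the degenerate condition~(\ref{eq:fis0}) (correctly observing that for pairwise distinct inputs a Steiner $3$-quasigroup never returns one of them, and handling the remaining case via the minority law, idempotency, and freeness), which the paper treats as immediate and omits.
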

\begin{proof} We only need to check the identity from (\ref{eq:gweighted}).   For $a=\alpha x_{\Tilde{a}}, b=\beta x_{\Tilde{b}}, c=\gamma x_{\Tilde{c}}$, and $\mathfrak{q}(x_{\Tilde{a}}, x_{\Tilde{b}}, x_{\Tilde{c}})=\delta x_{\Tilde{\mathfrak{q}}(\Tilde{a},\Tilde{b}, \Tilde{c})}$,
\begin{align*}
\mathfrak{q}(a, b, c)&=\alpha\beta\gamma \mathfrak{q}(x_{\Tilde{a}}, x_{\Tilde{b}}, x_{\Tilde{c}})\\
&=\alpha \beta \gamma \delta x_{\Tilde{\mathfrak{q}}(\Tilde{a},\Tilde{b}, \Tilde{c})}\\
&=\alpha \beta \gamma f(\Tilde{a},\Tilde{b}, \Tilde{c}, \Tilde{\mathfrak{q}}(\Tilde{a},\Tilde{b}, \Tilde{c})) x_{\Tilde{\mathfrak{q}}(\Tilde{a},\Tilde{b}, \Tilde{c})}\;. 
\end{align*}
Here the first equality follows from the $G$-invariance identity (\ref{eq:twistid}) and the last equality is just from the definition of $f$.
\end{proof}

\begin{notation} Since we just proved that $G$-invariant Steiner $3$-quasigroups correspond to $G$-weighed Steiner $3$-quasigroups, from now on we write the latter as $(\mathfrak{q}, \Tilde{\mathfrak{q}}, f)$, where $\Tilde{\mathfrak{q}}$ is given in Definition~\ref{def:inducedmin}. This is mainly to avoid overly dense notation.
\end{notation}

\begin{remark}\label{rem:exginvsteiner}
Lemmas~\ref{misgtwisted} and~\ref{lem:inducedf} imply necessary and sufficient conditions for the existence of a $G$-invariant Steiner $3$-quasigroup. In particular, from Hanani~\cite{hanani1960quadruple}, we know that Steiner quadruple systems on $s$ many vertices exist if and only if $s\equiv 2$ or $4 \ \mathrm{mod}\ 6$ (or is infinite). Hence, for $G\acts B$ a Boolean group acting freely on $B$ with $s$ many orbits, there is a $G$-invariant Steiner $3$-quasigroup if and only if
$s\equiv 2$ or $4 \ \mathrm{mod} \ 6$ (or is infinite). Indeed, given the lack of restrictions on $f$ in Definition~\ref{def:gweighted}, we can compute that for $s$-many $G$-orbits the number of $G$-invariant Steiner $3$-quasigroups is
\[\mathrm{nSQS}(s)|G|^{\frac{s(s-1)(s-2)}{24}},\]
where $\mathrm{nSQS}(s)$ is the number of Steiner quadruple systems on $s$ many vertices and $s(s-1)(s-2)/{24}$ is the number of blocks (i.e. $4$-hyperedges) in a Steiner quadruple system of size $s$.
\end{remark}

\begin{definition}\label{def:boolgweighted} A $G$-weighted Steiner $3$-quasigroup $(\mathfrak{q}, \Tilde{\mathfrak{q}}, f)$ is \textbf{Boolean} if $\Tilde{\mathfrak{q}}$ is Boolean and, for all $\Tilde{a}, \Tilde{b}, \Tilde{c}, \Tilde{d},$
\begin{equation}\label{eq:eqnfcond}
    f(\Tilde{a}, \Tilde{b}, \Tilde{c}, \Tilde{a}\oplus \Tilde{b}\oplus \Tilde{c}) f(\Tilde{a}, \Tilde{b}, \Tilde{d}, \Tilde{a}\oplus \Tilde{b}\oplus \Tilde{d})=f(\Tilde{c}, \Tilde{d}, \Tilde{a}\oplus \Tilde{b}\oplus \Tilde{c}, \Tilde{a}\oplus \Tilde{b}\oplus \Tilde{d}),
\end{equation}
  writing $\Tilde{\mathfrak{q}}(x, y, z)=x\oplus y \oplus z$, where $\oplus$ is addition in a Boolean group on $\mathrm{Orb}(G)$. 
\end{definition}

\begin{remark} The condition (\ref{eq:eqnfcond}) tells us that in the Steiner quadruple system induced by $\Tilde{\mathfrak{q}}$ on $\mathrm{Orb}(G)$, $f$ assigns blocks elements of $G$ so that whenever two blocks $H_1$ and $H_2$ intersect in two vertices, the block containing the remaining four vertices in $H_1\cup H_2$ is assigned value $\alpha\beta$ by $f$, were $H_1$ and $H_2$ were assigned values $\alpha$ and $\beta$ respectively. Note that if any two elements in $\{\Tilde{a}, \Tilde{b}, \Tilde{c}, \Tilde{d}\}$ are equal, then both sides of  (\ref{eq:eqnfcond}) are zero. 
\end{remark}

\begin{lemma}\label{lem:equationfcond} Let $G\acts B$ be a Boolean group acting freely on $B$. Let $X=\{ x_i \vert i\in \mathrm{Orb}(G)\}$ be a set of representatives for the orbits of $G$. Let $\mathfrak{q}$ be a $G$-invariant Boolean Steiner $3$-quasigroup and let $(\mathfrak{q}, \Tilde{\mathfrak{q}}, f)$ be the associated $G$-weighted Steiner $3$-quasigroup from Lemma~\ref{lem:inducedf}. Then, $(\mathfrak{q}, \Tilde{\mathfrak{q}}, f)$ is Boolean.
\end{lemma}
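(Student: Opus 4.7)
The plan is to verify the two conditions of Definition~\ref{def:boolgweighted} in turn. That $\Tilde{\mathfrak{q}}$ is Boolean is immediate from Definition~\ref{def:inducedmin} (passing the law (\ref{boolaw}) through representatives, since $\mathfrak{q}$ is Boolean). So the content of the lemma lies entirely in verifying the identity (\ref{eq:eqnfcond}) for $f$.

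I would fix $\Tilde{a},\Tilde{b},\Tilde{c},\Tilde{d}\in\mathrm{Orb}(G)$ and dispose first of the degenerate case where at least two of them coincide: then both sides of (\ref{eq:eqnfcond}) involve an argument with a repeated orbit-coordinate, so by (\ref{eq:fis0}) both sides equal $0_G$, and the identity holds trivially. This is the only place where the condition (\ref{eq:fis0}) for $f$ is needed.

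For the generic case, name the three values
\[
\alpha := f(\Tilde{a},\Tilde{b},\Tilde{c},\Tilde{a}\oplus\Tilde{b}\oplus\Tilde{c}),\quad
\beta := f(\Tilde{a},\Tilde{b},\Tilde{d},\Tilde{a}\oplus\Tilde{b}\oplus\Tilde{d}),\quad
\gamma := f(\Tilde{c},\Tilde{d},\Tilde{a}\oplus\Tilde{b}\oplus\Tilde{c},\Tilde{a}\oplus\Tilde{b}\oplus\Tilde{d}).
\]
By definition of $f$ via Lemma~\ref{lem:inducedf}, these are exactly the group elements arising when evaluating $\mathfrak{q}$ on the representatives: $\mathfrak{q}(x_{\Tilde{a}},x_{\Tilde{b}},x_{\Tilde{c}})=\alpha x_{\Tilde{a}\oplus\Tilde{b}\oplus\Tilde{c}}$, and similarly for $\beta,\gamma$. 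The task reduces to showing $\alpha\beta=\gamma$, at which point freeness of $G\acts B$ allows us to read this off from a single $\mathfrak{q}$-identity on representatives.

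The key computation is to evaluate $\mathfrak{q}(x_{\Tilde{c}},x_{\Tilde{d}},x_{\Tilde{a}\oplus\Tilde{b}\oplus\Tilde{c}})$ in two ways. On one hand, this equals $\gamma x_{\Tilde{a}\oplus\Tilde{b}\oplus\Tilde{d}}$ by definition. On the other hand, substituting $x_{\Tilde{a}\oplus\Tilde{b}\oplus\Tilde{c}}=\alpha\,\mathfrak{q}(x_{\Tilde{a}},x_{\Tilde{b}},x_{\Tilde{c}})$ (using $G$ Boolean), pulling out $\alpha$ via (\ref{eq:twistid}), and then applying the Boolean law (\ref{boolaw}) together with the symmetry of $\mathfrak{q}$ to rewrite
\[
\mathfrak{q}\bigl(x_{\Tilde{c}},x_{\Tilde{d}},\mathfrak{q}(x_{\Tilde{a}},x_{\Tilde{b}},x_{\Tilde{c}})\bigr)=\mathfrak{q}(x_{\Tilde{a}},x_{\Tilde{b}},x_{\Tilde{d}})=\beta x_{\Tilde{a}\oplus\Tilde{b}\oplus\Tilde{d}},
\]
gives the same expression as $\alpha\beta x_{\Tilde{a}\oplus\Tilde{b}\oplus\Tilde{d}}$. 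Equating the two and invoking freeness yields $\alpha\beta=\gamma$. The only tricky bookkeeping step is the last chain of $\mathfrak{q}$-manipulations, since the Boolean law as stated in (\ref{boolaw}) has a specific middle-argument alignment that must be obtained by symmetric rearrangements of the inner and outer triples; but this is a finite juggling of variables and should go through directly.
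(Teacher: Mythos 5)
Your proof is correct and follows essentially the same route as the paper's: both rest on applying the Boolean law (\ref{boolaw}) to the orbit representatives, pulling the $f$-values out via the $G$-invariance identity (\ref{eq:twistid}), and concluding $\alpha\beta=\gamma$ from freeness; your two-way evaluation of $\mathfrak{q}(x_{\Tilde{c}},x_{\Tilde{d}},x_{\Tilde{a}\oplus\Tilde{b}\oplus\Tilde{c}})$ is just an algebraic rearrangement of the paper's comparison of the two sides of that law. One tiny inaccuracy: in the degenerate case it is not true that both sides of (\ref{eq:eqnfcond}) are $0_G$ when, say, $\Tilde{a}=\Tilde{c}$ with $\Tilde{a},\Tilde{b},\Tilde{d}$ otherwise generic (both sides then equal $f(\Tilde{a},\Tilde{b},\Tilde{d},\cdot)$), but this is harmless since your main computation nowhere uses distinctness and hence covers all cases.
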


\begin{proof} Consider $x_{\Tilde{d}}, x_{\Tilde{c}}, x_{\Tilde{a}}, x_{\Tilde{b}}$. By the Boolean law, 
\begin{equation}\label{eq:fequality}
    \mathfrak{q}(x_{\Tilde{d}}, x_{\Tilde{c}}, \mathfrak{q}(x_{\Tilde{a}}, x_{\Tilde{c}},x_{\Tilde{b}}))=\mathfrak{q}(x_{\Tilde{a}}, x_{\Tilde{b}}, x_{\Tilde{d}}).
\end{equation}
But being a $G$-weighted Steiner $3$-quasigroup, we also have the following for the left-hand side of (\ref{eq:fequality}):
\begin{align*}
    \mathfrak{q}(x_{\Tilde{c}}, x_{\Tilde{d}}, \mathfrak{q}(x_{\Tilde{a}}, x_{\Tilde{b}}, x_{\Tilde{c}}))&= f(\Tilde{a}, \Tilde{b}, \Tilde{c}, \Tilde{a}\oplus\Tilde{c}\oplus\Tilde{b})\mathfrak{q}(x_{\Tilde{d}}, x_{\Tilde{c}}, x_{\Tilde{a}\oplus\Tilde{c}\oplus\Tilde{b}})\\
    &=f(\Tilde{a}, \Tilde{b}, \Tilde{c}, \Tilde{a}\oplus\Tilde{c}\oplus\Tilde{b})f(\Tilde{c}, \Tilde{d}, \Tilde{a}\oplus\Tilde{b}\oplus\Tilde{d}, \Tilde{a}\oplus\Tilde{b}\oplus\Tilde{c}) x_{\Tilde{a}\oplus\Tilde{b}\oplus\Tilde{d}}.
\end{align*}
Meanwhile, for the right-hand side of (\ref{eq:fequality}), we have
\[\mathfrak{q}( x_{\Tilde{a}}, x_{\Tilde{b}}, x_{\Tilde{d}})=f({\Tilde{a}}, {\Tilde{b}}, {\Tilde{d}},\Tilde{d}\oplus\Tilde{a}\oplus\Tilde{b}) x_{\Tilde{a}\oplus\Tilde{b}\oplus\Tilde{d}}.\]
Hence, comparing the two sides of equation (\ref{eq:fequality}) we deduce that
\[f(\Tilde{a}, \Tilde{b}, \Tilde{c}, \Tilde{a}\oplus\Tilde{c}\oplus\Tilde{b})f(\Tilde{d}, \Tilde{c}, \Tilde{d}\oplus\Tilde{a}\oplus\Tilde{b}, \Tilde{c}\oplus\Tilde{a}\oplus\Tilde{b}) = f({\Tilde{a}}, {\Tilde{b}}, {\Tilde{d}}, \Tilde{d}\oplus\Tilde{a}\oplus\Tilde{b}), \]
and since $G$ is Abelian we also get the desired equation.
\end{proof}

\begin{lemma} Let $G\acts B$ be a Boolean group acting freely on $B$. Let $X=\{ x_i \vert i\in \mathrm{Orb}(G)\}$ be a set of representatives for the orbits of $G$. Let $(\mathfrak{q}, \Tilde{\mathfrak{q}}, f)$ be a  Boolean $G$-weighted Steiner $3$-quasigroup. Then, $\mathfrak{q}$ is a $G$-invariant Boolean Steiner $3$-quasigroup.
\end{lemma}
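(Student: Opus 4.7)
The plan is to leverage what is already established. By Lemma~\ref{misgtwisted}, the operation $\mathfrak{q}$ is already a $G$-invariant Steiner $3$-quasigroup (since the hypothesis of that lemma only requires $(\mathfrak{q},\tilde{\mathfrak{q}},f)$ to be a $G$-weighted Steiner $3$-quasigroup). Hence only the Boolean law (\ref{boolaw}) remains to be verified. Since $\mathfrak{q}$ satisfies the $G$-invariance identity (\ref{eq:twistid}) and $G$ is Boolean, multiplying each input by a group element multiplies both sides of (\ref{boolaw}) by the same element of $G$. So it suffices to check (\ref{boolaw}) for tuples of orbit representatives $x_{\tilde a}, x_{\tilde b}, x_{\tilde c}, x_{\tilde d}$.

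Next, I would expand both sides of
\[\mathfrak{q}\bigl(x_{\tilde a}, x_{\tilde b}, \mathfrak{q}(x_{\tilde c}, x_{\tilde b}, x_{\tilde d})\bigr) = \mathfrak{q}(x_{\tilde a}, x_{\tilde c}, x_{\tilde d})\]
using the defining formula (\ref{eq:gweighted}) together with the assumption that $\tilde{\mathfrak{q}}$ is Boolean. Writing $\tilde e = \tilde b \oplus \tilde c \oplus \tilde d$, the inner application gives $f(\tilde c, \tilde b, \tilde d, \tilde e)\, x_{\tilde e}$; applying (\ref{eq:twistid}) to pull this coefficient outside, the left-hand side becomes $f(\tilde c, \tilde b, \tilde d, \tilde e)\cdot f(\tilde a, \tilde b, \tilde e, \tilde a \oplus \tilde c \oplus \tilde d)\cdot x_{\tilde a \oplus \tilde c \oplus \tilde d}$, using $\tilde a \oplus \tilde b \oplus \tilde e = \tilde a \oplus \tilde c \oplus \tilde d$ (as $\tilde b\oplus\tilde b = 0$). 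The right-hand side is $f(\tilde a, \tilde c, \tilde d, \tilde a \oplus \tilde c \oplus \tilde d)\, x_{\tilde a \oplus \tilde c \oplus \tilde d}$. Thus (\ref{boolaw}) reduces to the $G$-identity
\[f(\tilde c, \tilde b, \tilde d, \tilde e)\cdot f(\tilde a, \tilde b, \tilde e, \tilde a\oplus\tilde c\oplus\tilde d) = f(\tilde a, \tilde c, \tilde d, \tilde a\oplus\tilde c\oplus\tilde d).\]

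To obtain this identity I would apply the Boolean condition (\ref{eq:eqnfcond}) with the substitution $\tilde A := \tilde b$, $\tilde B := \tilde e$, $\tilde C := \tilde a$, $\tilde D := \tilde c$. Since $\tilde b \oplus \tilde e = \tilde c \oplus \tilde d$, the three terms produced by (\ref{eq:eqnfcond}) under this substitution coincide, up to permutations of the four block-vertices, with the three $f$-values in the displayed identity. So one only needs to know that $f$ is invariant under permutations of the four arguments of a given block: the symmetry among the first three arguments is the one already used in the proof of Lemma~\ref{misgtwisted} to get $\mathfrak{q}$ symmetric, and the symmetry exchanging the fourth argument with any of the first three follows by applying the SQS identity (\ref{eq:sqslaw}) to representatives together with $G$-invariance, which forces $f(\tilde a,\tilde b,\tilde c,\tilde d)\,f(\tilde a,\tilde b,\tilde d,\tilde c) = 1_G$ and hence equality of these two values since $G$ is Boolean.

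The main obstacle is essentially bookkeeping: tracking the group coefficients as they pass through successive applications of (\ref{eq:twistid}), and cleanly articulating the full $S_4$-symmetry of $f$ on blocks. Once these are settled, the argument is the computation of Lemma~\ref{lem:equationfcond} read in reverse.
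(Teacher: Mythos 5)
Your proof is correct and follows essentially the same route as the paper's: invoke Lemma~\ref{misgtwisted} to reduce to the Boolean law, expand $\mathfrak{q}(x_{\tilde a},x_{\tilde b},\mathfrak{q}(x_{\tilde c},x_{\tilde b},x_{\tilde d}))$ on orbit representatives via (\ref{eq:gweighted}) and (\ref{eq:twistid}), and close the resulting identity among $f$-values with the substitution $\tilde A=\tilde b$, $\tilde B=\tilde b\oplus\tilde c\oplus\tilde d$ into (\ref{eq:eqnfcond}) — exactly the computation in the paper. Your only addition is to make explicit the block-symmetry of $f$ (which the paper uses tacitly, treating $f$ as a function on blocks), and your justification of it via (\ref{eq:sqslaw}) and the Boolean property of $G$ is sound.
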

\begin{proof}
We know from Lemma~\ref{misgtwisted} that $\mathfrak{q}$ is a $G$-invariant Steiner $3$-quasigroup, so we only need to prove the Boolean law. We have for $a=\alpha x_{\Tilde{a}}, b=\beta x_{\Tilde{b}}, c=\gamma x_{\Tilde{c}}, d=\delta  x_{\Tilde{d}}$,
\begin{align*}
\mathfrak{q}(a, b, \mathfrak{q}(c, b, d)) &=\gamma\beta\delta f(\Tilde{c}, \Tilde{b}, \Tilde{d}, \Tilde{c}\oplus \Tilde{b} \oplus \Tilde{d})\mathfrak{q}(a, b, x_{\Tilde{c}\oplus \Tilde{b} \oplus \Tilde{d}})\\
&=\alpha\gamma\delta f(\Tilde{c}, \Tilde{b}, \Tilde{d}, \Tilde{c}\oplus \Tilde{b} \oplus \Tilde{d})\mathfrak{q}(x_{\Tilde{a}}, x_{\Tilde{b}}, x_{\Tilde{c}\oplus \Tilde{b} \oplus \Tilde{d}})\\
    &=\alpha\gamma\delta f(\Tilde{c}, \Tilde{b}, \Tilde{d}, \Tilde{c}\oplus \Tilde{b} \oplus \Tilde{d}) f(\Tilde{a}, \Tilde{b}, \Tilde{c}\oplus \Tilde{b} \oplus \Tilde{d}, \Tilde{c}\oplus \Tilde{a} \oplus \Tilde{d})  x_{\Tilde{c}\oplus \Tilde{a} \oplus \Tilde{d}}\\
    &=\alpha\gamma\delta f(\Tilde{c}, \Tilde{a}, \Tilde{d}, \Tilde{c}\oplus \Tilde{a} \oplus \Tilde{d}) x_{\Tilde{c}\oplus \Tilde{a} \oplus \Tilde{d}}\\
    &=\mathfrak{q}(a,c, d).
\end{align*}
The first three identities and the last one are just unravelling the definition. Equation (\ref{eq:eqnfcond}) is used in the fourth equality.  
\end{proof}

\begin{remark}\label{rem:existenceBool} Note that whenever $\mathrm{Orb}(G)$ is the domain of a Boolean group (i.e. $|\mathrm{Orb}(G)|=2^n$ or is infinite), we can construct a Boolean $G$-weighted Steiner $3$-quasigroup $(\mathfrak{q}, \Tilde{\mathfrak{q}}, f)$. The construction is as follows: take $\Tilde{\mathfrak{q}}(x,y,z)=x\oplus y\oplus z$, where $\oplus$ is addition in a Boolean group on $\mathrm{Orb}(G)$. Assign to each point of $\Tilde{a}\in \mathrm{Orb}(G)$ an element of $G$, $g_{\Tilde{a}}$. Thus, for $\Tilde{a}, \Tilde{b}, \Tilde{c}$ in $\mathrm{Orb}(G)$, let
\[f(\Tilde{a}, \Tilde{b}, \Tilde{c}, \Tilde{a}\oplus\Tilde{b}\oplus\Tilde{c})=g_{\Tilde{a}}+g_{\Tilde{b}}+g_{\Tilde{c}}+g_{\Tilde{a}\oplus\Tilde{b}\oplus\Tilde{c}}.\]
It is easy to verify that $f$ satisfies the conditions of (\ref{eq:eqnfcond}) since $G$ is a Boolean group. For $|\mathrm{Orb}(G)|=2^n$ for $n\leq 3$ (with few calculations in the $n=3$ case), all Boolean $G$-weighted Steiner $3$-quasigroups are of this form, though it looks implausible this extends to larger numbers of orbits. We can thus conclude the following:
\end{remark}

\begin{corollary}\label{cor:correspondence} Let $G\acts B$ be a Boolean group acting freely on $B$. There is a one-to-one correspondence between $G$-invariant Boolean Steiner $3$-quasigroups $\mathfrak{q}$ and Boolean $G$-weighted Steiner $3$-quasigroups $(\mathfrak{q}, \Tilde{\mathfrak{q}}, f)$. In particular, there is a $G$-quasi-minority minimal above $\langle G \rangle$ if and only if $|\mathrm{Orb}(G)|=2^n$ for some $n\in\mathbb{N}$ or is infinite.  
\end{corollary}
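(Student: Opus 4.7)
The plan is to assemble the bijection directly from the four lemmas immediately preceding the corollary, and then leverage Proposition~\ref{prop:bts3g} together with the classical characterisation of Boolean Steiner $3$-quasigroups to derive the existence condition. Concretely, I would first define two maps. One sends a $G$-invariant Boolean Steiner $3$-quasigroup $\mathfrak{q}$ to the triple $(\mathfrak{q},\tilde{\mathfrak{q}},f)$ constructed in Lemma~\ref{lem:inducedf}; Lemma~\ref{lem:equationfcond} verifies that this triple satisfies (\ref{eq:eqnfcond}) and that $\tilde{\mathfrak{q}}$ is Boolean, so the triple is a Boolean $G$-weighted Steiner $3$-quasigroup. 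The other map sends a Boolean $G$-weighted Steiner $3$-quasigroup to its ternary operation $\mathfrak{q}$ via (\ref{eq:gweighted}); Lemma~\ref{misgtwisted} together with the lemma preceding the corollary shows this yields a $G$-invariant Boolean Steiner $3$-quasigroup.

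Next, I would verify that these two maps are mutually inverse. One direction is immediate from (\ref{eq:gweighted}) applied at representative points $x_{\tilde{a}},x_{\tilde{b}},x_{\tilde{c}}$: the value $f(\tilde{a},\tilde{b},\tilde{c},\tilde{\mathfrak{q}}(\tilde{a},\tilde{b},\tilde{c}))$ reconstructed from the resulting $\mathfrak{q}$ via Lemma~\ref{lem:inducedf} is precisely the original $f$, and the induced Steiner $3$-quasigroup on orbits is $\tilde{\mathfrak{q}}$ by construction. The opposite direction uses the $G$-invariance identity (\ref{eq:twistid}): once $\tilde{\mathfrak{q}}$ and $f$ are computed from $\mathfrak{q}$, formula (\ref{eq:gweighted}) rebuilds $\mathfrak{q}$ on every triple in $B^3$ by reducing to the representative orbit case. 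Condition (\ref{eq:fis0}) is automatic in this direction because $\mathfrak{q}$ is an idempotent symmetric minority, so $\mathfrak{q}(x_{\tilde{a}},x_{\tilde{a}},x_{\tilde{c}})=x_{\tilde{c}}$ and the ambient group element is trivial.

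With the bijection established, the existence statement reduces as follows. By Proposition~\ref{prop:bts3g}, a $G$-quasi-minority minimal above $\langle G\rangle$ is exactly a $G$-invariant Boolean Steiner $3$-quasigroup, hence by the bijection exactly a Boolean $G$-weighted Steiner $3$-quasigroup. In any such triple, the induced operation $\tilde{\mathfrak{q}}$ on $\mathrm{Orb}(G)$ is a Boolean Steiner $3$-quasigroup, and by Remark~\ref{rem:boolrem} this forces $\mathrm{Orb}(G)$ to carry the structure of a Boolean group, giving the necessary condition $|\mathrm{Orb}(G)|=2^n$ for some $n\in\mathbb{N}$ or $|\mathrm{Orb}(G)|$ infinite. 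Conversely, whenever this cardinality condition holds, Remark~\ref{rem:existenceBool} furnishes an explicit Boolean $G$-weighted Steiner $3$-quasigroup (take the $\mathbb{F}_2$-sum on the orbits and set $f(\tilde{a},\tilde{b},\tilde{c},\tilde{a}\oplus\tilde{b}\oplus\tilde{c})=g_{\tilde{a}}+g_{\tilde{b}}+g_{\tilde{c}}+g_{\tilde{a}\oplus\tilde{b}\oplus\tilde{c}}$ for an arbitrary labelling $\tilde{a}\mapsto g_{\tilde{a}}\in G$), completing the equivalence.

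I do not expect any serious obstacle here: the heavy lifting has already been done in Lemmas~\ref{misgtwisted} and~\ref{lem:inducedf} and the two lemmas on the Boolean case, and the existence criterion is a direct translation of the classical fact that a Boolean Steiner $3$-quasigroup exists on a set $S$ precisely when $|S|\in\{2^n:n\in\mathbb{N}\}\cup\{\infty\}$. The only mild point requiring care is checking that the two constructions of the previous paragraph are mutually inverse on the nose, which follows by unfolding the defining formulas on orbit representatives and propagating via (\ref{eq:twistid}).
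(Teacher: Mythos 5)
Your proposal is correct and follows essentially the same route the paper intends for this corollary: the bijection is assembled from Lemmas~\ref{misgtwisted}, \ref{lem:inducedf}, \ref{lem:equationfcond} and the lemma following it, and the existence criterion comes from combining Proposition~\ref{prop:bts3g} with Remarks~\ref{rem:boolrem} and~\ref{rem:existenceBool}. The only (harmless) imprecision is invoking Proposition~\ref{prop:bts3g} for minimal $G$-\emph{quasi}-minorities rather than minimal $G$-minorities; since any minimal $G$-quasi-minority becomes idempotent after composing with the group element $\mathfrak{m}(x,x,x)$, the existence equivalence is unaffected.
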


\subsection{Counting \texorpdfstring{$G$}{G}-invariant Boolean Steiner \texorpdfstring{$3$}{3}-quasigroups}\label{sub:counting}
In Corollary~\ref{cor:correspondence}, we found a correspondence between $G$-invariant Boolean Steiner $3$\-/quasigroups and Boolean $G$-weighted Steiner $3$\-/quasigroups. Whilst this correspondence is sufficient to obtain necessary and sufficient conditions on $G\acts B$ for the existence of a $G$\-/invariant Boolean Steiner $3$-quasigroup, it does not yet give us a precise characterisation of what the $G$\-/invariant Boolean Steiner $3$-quasigroups for $G\acts B$ look like. The careful reader might note that in order to do this, we would need to describe all functions satisfying condition (\ref{eq:eqnfcond}). Given the algebraic description of Boolean Steiner $3$-quasigroups as points and planes in an affine space over $\mathbb{Z}_2$ (Remark~\ref{rem:boolrem}), this boils down to solving a small problem in affine and projective geometry. In this subsection we find a way to describe all finite Boolean Steiner $3$-quasigroups. In particular, we show how any function of the form (\ref{eq:eqnfcond}) may be constructed (Lemmas~\ref{lem:constonpl},~\ref{lem:welldefined}, and~\ref{lem:WDformula}). This allows in Corollary~\ref{cor:counting} to count the exact number of $G$-invariant Boolean Steiner $3$-quasigroup for a given free action of a Boolean group $G\acts B$.\\

This subsection is slightly more technical than other parts of the paper, and the reader who is not interested in the precise behaviour of $G$-invariant Steiner $3$-quasigroups may skip it with no harm to their understanding of later sections. We also expect the reader of this section to have a basic understanding of projective and affine geometry, at least to the extent that projective spaces have well-defined notions of dimension and independence. The various facts we mention below on Steiner triple systems and Steiner quadruple systems can be found, for example, in \cite{lindner1978steiner} and~\cite{dixon1996permutation}.

\begin{definition} A \textbf{Steiner triple system} is a uniform $3$-hypergraph $(P, \mathcal{L})$ where every two vertices are contained in a unique $3$-hyperedge. We call the vertices of a Steiner triple system \textbf{points} and its $3$\-/hyperedges \textbf{lines}. We are interested in the \textbf{Boolean projective Steiner triple systems}. These Steiner triple system have as domain the $n$-dimensional projective space over $\mathbb{Z}_2$, $\mathrm{PG}(n, 2)$ and their lines are triplets of distinct points $\{a,b,c\}$ such that $a+b=c$. It is well-known that being a Boolean projective Steiner triple system is equivalent to every three \textbf{independent} vertices (i.e., vertices not forming a line) being contained in a copy of the Fano plane (see Figure~\ref{fig:Fanoplane}), i.e., the Boolean projective Steiner triple system on $\mathrm{PG}(2, 2)$. 
\end{definition}

\begin{definition} Let $(S, \mathcal{B})$ be a Steiner quadruple system. Fix $a\in S$. The \textbf{induced Steiner triple system} is the Steiner triple system $(S\setminus\{a\}, \mathcal{L})$, where the lines in $\mathcal{\mathcal{L}}$ consist of triplets of elements $b,c,d\in S\setminus\{a\}$ such that $\{a,b,c,d\}\in\mathcal{B}$. It is well-known that if $(S, \mathcal{B})$ is a Boolean Steiner quadruple system of size $2^n$, all of its induced Steiner triple systems are isomorphic the Boolean projective Steiner triple system on $\mathrm{PG}(n-1, 2)$.
\end{definition}

For a Boolean projective Steiner triple system $(P, \mathcal{L})$, given independent $a,b,c\in P$, let $\mathcal{F}(a,b,c)$ be the copy of the Fano plane containing $a,b,c$. 

\begin{definition} Let $G$ be a Boolean group. Let $(S, \mathcal{B})$ be a Boolean Steiner quadruple system. The function $g:\mathcal{B}\to G$ is \textbf{transparent} if for $H_1:=\{a,b,c,d\}, H_2:=\{a,b,e,f\}\in\mathcal{B}$, 
\begin{equation}\label{eq:transp}
   g(H_1)+g(H_2)=g(H_3), 
\end{equation}
where $H_3=\{c,d,e,f\}$. 
\end{definition}

Note that the condition (\ref{eq:eqnfcond}) from the definition of a $G$-weighted Boolean Steiner $3$\-/quasigroup $(\mathfrak{q}, \Tilde{\mathfrak{q}}, f)$ (Definition \ref{def:boolgweighted}) says precisely that the restriction of $f$ on injective $4$-tuples from $E$ is transparent. Hence, if we can give a description of all transparent functions on the Boolean Steiner quadruple system of size $2^n$, we can also understand all $G$-weighted Boolean Steiner $3$\-/quasigroups when $G\acts B$ has $2^n$ many orbits. In order to obtain this description, we first need to reduce the problem of describing transparent functions on a Boolean Steiner quadruple system to that of describing another class of functions on its induced Boolean projective Steiner triple systems. We do so below.

\begin{definition} Let $G$ be a Boolean group.
Let $(P, \mathcal{L})$ be a Boolean projective Steiner triple system. The function $h:\mathcal{L}\to G$ is \textbf{constant on planes} if for any three independent points $a,b,c\in P$, there is some $\theta_{\mathcal{F}(a,b,c)}\in G$ such that for any three lines $l_1, l_2, l_3\in\mathcal{L}\cap\mathcal{F}(a,b,c)$ passing through a common point, we have
\[h(l_1)+h(l_2)+h(l_3)=\theta_{\mathcal{F}(a,b,c)}.\]
\end{definition}

\begin{lemma}\label{lem:constonpl} Let $(S, \mathcal{B})$ be a Boolean Steiner quadruple system and for $a\in S$, let $(S\setminus\{a\}, \mathcal{L})$ be its induced Boolean Steiner triple System. Let $G$ be a Boolean group. Suppose that $g:\mathcal{B}\to G$ is transparent. Consider the function $h:\mathcal{L}\to G$ where for $\{b,c,d\}\in\mathcal{L}$, 
\[h(\{b,c,d\}):=g(\{a,b,c,d\}).\]
Then, $h$ is constant on planes. 
\end{lemma}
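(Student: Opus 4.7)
The plan is to fix three independent points $u, v, w \in S \setminus \{a\}$ and show that the value of $\sum_{l \ni p} h(l)$ is the same for every point $p$ of the Fano plane $\mathcal{F} = \mathcal{F}(u,v,w)$ they span; this common value then serves as the required $\theta_{\mathcal{F}(u,v,w)}$. The key structural fact is that $V := \{a\} \cup \mathcal{F}$ is a three-dimensional affine $\mathbb{Z}_2$-subspace of the ambient vector space in which the Boolean SQS is realised (placing $a$ at the origin), and the SQS restricted to $V$ consists of the seven blocks $\{a\} \cup l$ for $l$ a line of $\mathcal{F}$, together with the seven four-subsets $\mathcal{F} \setminus l$.

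For each $p \in \mathcal{F}$ set $\Sigma(p) := h(l_1) + h(l_2) + h(l_3)$, where $l_1, l_2, l_3$ are the three lines of $\mathcal{F}$ through $p$. I would prove $\Sigma(p) = \Sigma(p')$ for any two distinct $p, p' \in \mathcal{F}$. Let $l_0$ be the unique line through them in $\mathcal{F}$, and let $l_1, l_2$ (respectively $l_1', l_2'$) be the two lines through $p$ (respectively $p'$) distinct from $l_0$. Since $G$ has characteristic two, $\Sigma(p) + \Sigma(p')$ collapses to $g(B_1) + g(B_2) + g(B_1') + g(B_2')$, where $B_i := \{a\} \cup l_i$ and $B_j' := \{a\} \cup l_j'$.

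The geometric crux is that $C := \mathcal{F} \setminus l_0$ is itself a block of the SQS: the seven points of $\mathcal{F}$ sum to zero (they are all the nonzero vectors of $V \cong \mathbb{Z}_2^3$) and so do the three points of the line $l_0$, hence so do the remaining four points of $C$. Moreover, $l_1 \setminus \{p\}$ and $l_2 \setminus \{p\}$ are disjoint pairs (since two distinct lines of the Fano plane meet only at $p$) whose union exhausts $C$, so $B_1 \cap B_2 = \{a, p\}$ and transparency yields $g(B_1) + g(B_2) = g(C)$. The analogous argument at $p'$ gives $g(B_1') + g(B_2') = g(C)$. Adding these two equalities in the Boolean group $G$, the copies of $g(C)$ cancel and we conclude $\Sigma(p) + \Sigma(p') = 0$, i.e., $\Sigma(p) = \Sigma(p')$.

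The main obstacle, and the only non-routine step, is recognising that the four-element complement $\mathcal{F} \setminus l_0$ is again a block of the SQS; once this Boolean-geometric observation is in hand, two applications of the transparency hypothesis on $g$ close the argument with essentially no further computation.
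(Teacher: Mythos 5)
Your proof is correct and takes essentially the same route as the paper's: both arguments discard the common line through the two points and apply transparency twice to the blocks $\{a\}\cup l_1,\{a\}\cup l_2$ and $\{a\}\cup l_1',\{a\}\cup l_2'$, which produce the same intermediate block (the complement of the common line in the Fano plane; the paper obtains that this four-set is a block directly from the Boolean law, whereas you verify it via the vector-space realisation). No gaps.
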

\begin{proof} Consider three independent points, $b,c,d\in S\setminus\{a\}$ and the corresponding Fano plane $\mathcal{F}(b,c,d)$. Without loss of generality, we need to prove that $h$ assigns values in $G$ to the lines in $\mathcal{F}(b,c,d)$ so that the sum of the values it assigns to the lines passing through $b$ is the same as that of the values assigned to the lines passing through $c$. Firstly, there is a line passing through $b$ and $c$, which we can therefore ignore for this calculation. The remaining four lines are of the form
\[\{b,d,l\},\{b,f, e\}, \{c,f, l\}, \{c, d, e\}\;,\]
    for $f,e,l\in \mathcal{F}(b,c,d)\setminus\{b,c,d\}$. In particular, note that since in $(S, \mathcal{B})$, $\{a, b,d,l\},\{a, b,f, e\}\in\mathcal{B}$, by the Boolean law, we have that $\{d, e, f, l\}\in\mathcal{B}$ and by transparency of $g$, 
    \begin{align*}
    g(\{d, e, f, l\}) & =g(\{a, b,d,l\})+g(\{a, b,f, e\})\; \text{ and } \\
    g(\{d, e, f, l\})&=g(\{a,c,f,l\})+ g(\{a, c, d, e\})\; .
\end{align*}
The above also means, by definition of $h$, that
\[h(\{b,d,l\})+h(\{b,f, e\})=h(\{c,f, l\})+h(\{c, d, e\})\; ,\]
from which the desired statement that $h$ is constant on planes follows.
\end{proof}

\begin{lemma}\label{lem:welldefined} Let $(S, \mathcal{B})$ be a Boolean Steiner quadruple system and for $a\in S$, let $(S\setminus\{a\}, \mathcal{L})$ be its induced Boolean Steiner triple System. Let $h:S\setminus\{a\}\to G$ be constant on planes. Define $g:\mathcal{B}\to G$ as follows: for $\{a,b,c,d\}\in\mathcal{B}$, let
\[g(\{a,b,c,d\})=h(\{b,c,d\}).\]
For $\{b,c,d,e\}\in\mathcal{B}$ not containing $a$, let $f\in S$ be such that $\{a,b,c,f\}\in\mathcal{B}$. Then, by the Boolean law, $\{a,d,e,f\}\in\mathcal{B}$, and set
\[g(\{b,c,d,e\})=h(\{b,c,f\})+h(\{d,e,f\}).\]
We have that $g$ is well-defined and transparent.
\end{lemma}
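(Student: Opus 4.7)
The plan is to first establish well-definedness, then transparency by case analysis. I would begin by identifying $S$ with the Boolean group $(\mathbb{Z}_2^n,+)$ (via Remark~\ref{rem:boolrem}) placing $a$ at the origin; then blocks are exactly the $4$-element subsets summing to $0$, and the induced projective triple system at $a$ coincides with $\mathrm{PG}(n-1,2)$ with lines being triples summing to $0$.

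For well-definedness I need to show $g(\{b,c,d,e\})$ is independent of the choice of pair when the block does not contain $a$. The defining formula is symmetric within each of the two triples, and replacing $\{b,c\}$ by the complementary pair $\{d,e\}$ produces the same auxiliary element (both of $\{a,b,c,f\}$ and $\{a,d,e,f\}$ are forced to be blocks by the Boolean law, and any three elements extend uniquely to a block). After these symmetries I reduce to comparing the three pair-choices $\{b,c\}$, $\{b,d\}$, $\{b,e\}$, producing auxiliary elements $f=b+c$, $f'=b+d$, $f''=b+e=c+d$. The key observation is that the seven elements $b,c,d,e,f,f',f''$ are precisely the points of the Fano plane $\mathcal{F}(b,c,d)$ of the induced projective triple system. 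Within this Fano plane, the three relevant sums $h(\{b,c,f\})+h(\{d,e,f\})$, $h(\{b,d,f'\})+h(\{c,e,f'\})$, $h(\{b,e,f''\})+h(\{c,d,f''\})$ each pair up two of the three lines through one of the points $f,f',f''$ — the missing third line through each of them being always the common line $\{f,f',f''\}$. Applying the constant-on-planes hypothesis at each of $f,f',f''$ and using that $G$ has characteristic $2$, I deduce that each of the three sums equals $\theta_{\mathcal{F}(b,c,d)}+h(\{f,f',f''\})$, so they all agree.

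For transparency, let $H_1,H_2,H_3$ be blocks with pairwise intersections $\{a_1,a_2\}$, $\{a_3,a_4\}$, $\{a_5,a_6\}$. Each of the six elements $a_1,\dots,a_6$ lies in exactly two of the blocks, so $a$ belongs to either none or exactly two of them. If $a$ lies in none, I would compute $g(H_1),g(H_2)$ using the pair $\{a_1,a_2\}$ and $g(H_3)$ using the pair $\{a_3,a_4\}$; the auxiliary element in all three computations is then the same element $f = a_1+a_2 = a_3+a_4 = a_5+a_6$ (using that each block sums to $0$). Consequently, $g(H_1)=h(\{a_1,a_2,f\})+h(\{a_3,a_4,f\})$, $g(H_2)=h(\{a_1,a_2,f\})+h(\{a_5,a_6,f\})$, and $g(H_3)=h(\{a_3,a_4,f\})+h(\{a_5,a_6,f\})$, and $g(H_1)+g(H_2)=g(H_3)$ follows immediately in the Boolean group $G$ from the cancellation of the shared summand.

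If $a$ lies in two of the blocks, the transparency equation is symmetric in $H_1,H_2,H_3$ over a Boolean group, so I may assume $a \in H_1 \cap H_2$, say $a=a_1$. Then the first clause of the definition gives $g(H_1)=h(\{a_2,a_3,a_4\})$ and $g(H_2)=h(\{a_2,a_5,a_6\})$. For $g(H_3)=g(\{a_3,a_4,a_5,a_6\})$, choosing the pair $\{a_3,a_4\}$ forces the auxiliary element to be $a_2$, as the unique block containing $\{a,a_3,a_4\}$ is $H_1$. Hence $g(H_3)=h(\{a_3,a_4,a_2\})+h(\{a_5,a_6,a_2\})=g(H_1)+g(H_2)$, completing the proof. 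The main obstacle is the well-definedness step: one must recognize that the three inequivalent pair-choices all live in a common Fano subplane of the induced projective triple system, and that the three pairs of concurrent lines involved in the corresponding formulas all avoid the same third line $\{f,f',f''\}$, so that constant-on-planes can be invoked coordinately at $f,f',f''$. Once this Fano-plane geometry is in view, transparency is comparatively mechanical.
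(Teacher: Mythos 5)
Your proof is correct and follows essentially the same route as the paper's: well-definedness via the Fano plane spanned by the block together with the auxiliary points, invoking constant-on-planes at those points with the common third line, and transparency by splitting into the case where $a$ lies in two of the blocks (immediate from the definition) and the case where it lies in none (common auxiliary element plus cancellation in the Boolean group). The only cosmetic difference is that you coordinatize $S$ as $\mathbb{Z}_2^n$ with $a$ at the origin, whereas the paper argues combinatorially via the Boolean law and uniqueness of blocks.
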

\begin{proof} To prove that $g$ is well-defined, given $\{b,c,d,e\}\in\mathcal{B}$ consider $f, f'\in S$ such that $\{a,b,c,f\}, \{a, c, e, f'\}\in\mathcal{B}$. By the Boolean law, $\{a,d,e,f\}, \{a,b,d, f'\}\in\mathcal{B}$. Hence, we need to show that 
\begin{equation}\label{eq:constantonpasch}
   h(\{f, b,c\})+h(\{d,e,f\})=h(\{c,e,f'\})+h(\{d,b,f'\}).
\end{equation}
Since $h$ is constant on planes, in the Fano plane $\mathcal{F}(b, f, f')$ in $(S\setminus\{a\}, \mathcal{L})$, for $l$ such that $\{l, f, f'\}\in\mathcal{L}$, we have that 
\[h(\{f, b,c\})+h(\{d,e,f\})+h(\{l, f, f'\})=h(\{c,e,f'\})+h(\{d,b,f'\})+h(\{l, f, f'\}),\]
which implies (\ref{eq:constantonpasch}).\\

Now we need to prove that $g$ is transparent. From the definition of $g$ it is clear that $g$ satisfies (\ref{eq:transp}) for any triplet of blocks containing $a$. So, we need to consider 
\[\{b,c,d,e\}, \{c,d,f,l\}, \{b,c, f, l\}\in\mathcal{B},
\]
for $b,c,d,e,f\in S\setminus a$. Note that by definition of $g$ (and the fact it is well-defined), we have that
\[g(\{b,c,d,e\})+g(\{e,d,f,l\})= g(\{b,c, f, l\})\]
if and only if, for $r,s,t\in S$ such that $\{a, r, b, c\}, \{a, s, d, e\}, \{a, t, f, e\}\in\mathcal{B}$, 
\begin{multline*}
   (g(\{a, r, b, c\})+g(\{a, s, d, e\})+(g(\{a, s, d, e\})+g(\{a, t, f, e\}))\\
   =(g(\{a, r, b, c\})+g(\{a, t, f, e\}))\; . 
\end{multline*}
    The latter holds since $G$ is Boolean, yielding that $g$ is indeed transparent.
\end{proof}

\begin{definition} Let $(P, \mathcal{L})$ be a Boolean projective Steiner triple system. A set of lines $\mathcal{L}'\subseteq\mathcal{L}$ is \textbf{weight-determining} if any map $h':\mathcal{L}'\to G$ can be extended uniquely to a map $h:\mathcal{L}\to G$ which is constant on planes. We will prove below such sets of lines exist. Note that any two weight-determining sets of lines must have the same cardinality. In particular, for $n\geq 1$ the $n$th weight dimension $\mathrm{WD}(n)$ is the size of a weight-determining set of lines for the projective plane $\mathrm{PG}(n, 2)$. We set $\mathrm{WD}(0)=0$.
\end{definition}

\begin{remark}\label{rem:counting} Note that if $\mathrm{PG}(n, 2)$ has a weight-determining set of lines, the number of maps $h:\mathcal{L}\to G$ which are constant on planes is exactly $|G|^{\mathrm{WD}(n)}$. Indeed, since for a Steiner quadruple system $(S, \mathcal{B})$ with $|S|=2^n$ transparent maps are in one-to-one correspondence with maps on the lines of its induced Steiner triple system which are constant on planes, $|G|^{\mathrm{WD}(n-1)}$ would also be the total number of transparent maps.
\end{remark}

\begin{figure}[t]
\begin{center}
\begin{tikzpicture}[scale=1.8]

  \draw[very thick] (30:1)  -- (210:2)
        (150:1) -- (330:2)
        (270:1) -- (90:2)
        (90:2)  -- (210:2) -- (330:2) -- cycle
        (0:0)   circle (1);

\draw[very thick, Gray] (30:1)  -- (210:2)
        (90:2)  -- (210:2) -- (330:2) -- cycle;
  \fill (0:0)   circle(2pt) 
        (30:1)  circle(2pt)
        (90:2)  circle(2pt)
        (150:1) circle(2pt)
        (210:2) circle(2pt)
        (270:1) circle(2pt)
        (330:2) circle(2pt);
\node[anchor=west, inner sep=10pt] at (0:0) {$g$};
\node[anchor=south west, inner sep=10pt] at (30:1)   {$b$};
\node[anchor=south, inner sep=10pt] at         (90:2)   {$a$};
\node[anchor=south east, inner sep=10pt] at         (150:1)  {$e$};
\node[anchor=east, inner sep=10pt] at         (210:2) {$d$};
\node[anchor=north, inner sep=10pt] at         (270:1)  {$f$};
\node[anchor=west, inner sep=10pt] at         (330:2) {$c$};
\node at (-1.35, -0.25) [rectangle,fill=white, draw] () {\small $\alpha$};
\node at (-1.1, -0.6) [rectangle,fill=white, draw] () {\small $\beta$};
\node at (-0.8, -1) [rectangle,fill=white, draw] () {\small $\gamma$};
\node at (0.3, 1.5) [rectangle,fill=white, draw] () {\small $\delta$};
\node at (0.3, 1.5) [rectangle,fill=white, draw] () {\small $\delta$};
\node at (0, 1.2) [rectangle,fill=white, draw] () {\small$\beta+\gamma+\delta$};
\node at (-0.5, 0.22) [rectangle,fill=white, draw] () {\small $\alpha+\beta+\delta$};
\node at (0.6, -0.75) [rectangle,fill=white, draw] () {\small $\alpha+\gamma+\delta$};

\end{tikzpicture}
\end{center}
    \caption{Illustration of a weight-determining set $\mathcal{L}'$ for the Fano plane $\mathrm{PG}(2,2)$ and how to extend a map $h':\mathcal{L}'\to G$ (uniquely) to a map $h:\mathcal{L}\to G$ which is constant on planes. The lines in light-gray form a weight-determining set $\mathcal{L}'$. Starting from a map $h':\mathcal{L}'\to G$ such that $h'(\{d, e, a\})=\alpha, h'(\{d, g, b\})=\beta, h'(\{d, f, c\})=\gamma,$ and $h'(\{a,b,c\})=\delta$, we write in boxes the values in $G$ that each line must be assigned in its unique extension $h:\mathcal{L}\to G$ which is constant on planes. In particular, since the values assigned by $h$ to the lines passing through $a$ must sum to $\alpha+\beta+\gamma$, it is easy to see that we must have $h(\{a,g,f\})=\beta+\gamma+\delta$. Similarly, considering the lines passing through $b$ and through $c$, we obtain $h(\{c,g,e\})=\alpha+\beta+\delta$, and $h(\{b,f,e\})=\alpha+\gamma+\delta$. Finally, it is easy to verify that for each of $e, f$, and $g$, the sum of the values that $h$ assigns to the lines passing though them is still $\alpha+\beta+\gamma$, as desired. This yields that $h$ is constant on planes, and that it is the unique such extension of $h'$.
}\label{fig:Fanoplane}
\end{figure}

\begin{lemma}\label{lem:BCWD} We have that $\mathrm{WD}(1)=1$ and $\mathrm{WD}(2)=4$. In particular, a weight-determining set for the Fano plane $\mathrm{PG}(2,2)$ consists of all three lines passing through a fixed point and a fourth line not passing through that point.
\end{lemma}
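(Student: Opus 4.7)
I dispatch the two values separately.

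First, $\mathrm{WD}(1)=1$. In $\mathrm{PG}(1,2)$ there are three collinear points and a single line $\ell$, and no triple of independent points exists. Hence the constant-on-planes condition is vacuous, so every $h\colon\{\ell\}\to G$ satisfies it, and any prescribed value on $\ell$ has exactly one extension (namely itself). The empty subset of lines does not work since $G$ is nontrivial, so $\mathrm{WD}(1)=1$.

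For $\mathrm{WD}(2)=4$, I first check that the 4-element set described in the statement is weight-determining, then I use a group-isomorphism argument to rule out smaller sets. Using the notation of Figure~\ref{fig:Fanoplane}, let $\mathcal{L}'=\{\ell_1,\ell_2,\ell_3,\ell_0\}$ with $\ell_1,\ell_2,\ell_3$ the three lines through a fixed point $d$ and $\ell_0=\{a,b,c\}$ a fourth line not through $d$, and suppose $h'\colon\mathcal{L}'\to G$ assigns values $\alpha,\beta,\gamma,\delta$ respectively. Because the whole of $\mathrm{PG}(2,2)$ is a single Fano plane, $h$ is constant on planes precisely when there exists $\theta\in G$ with $\sum_{\ell\ni p}h(\ell)=\theta$ for every point $p$. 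The equation at $d$ forces $\theta=\alpha+\beta+\gamma$, and the equations at $a$, $b$, $c$ each determine uniquely the value of $h$ on the one remaining line through that point, producing the values pictured in the figure. The constraints at $e,f,g$ are then a short computation using $2x=0$ in the Boolean group $G$; all three sums collapse to $\alpha+\beta+\gamma$. So the extension exists and is unique, showing $\mathcal{L}'$ is weight-determining.

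For the matching lower bound, observe that the set $\mathcal{A}$ of all $h\colon\mathcal{L}\to G$ constant on planes is a subgroup of $G^{\mathcal{L}}$, and for any weight-determining set $\mathcal{L}''$ the restriction map $\mathcal{A}\to G^{\mathcal{L}''}$ is a group isomorphism: it is bijective by definition of weight-determining, and homomorphic because restriction commutes with coordinatewise addition. Applied to the 4-element $\mathcal{L}'$ above this gives $\mathcal{A}\cong G^4$, so for any other weight-determining set $\mathcal{L}''$ one has $G^{\mathcal{L}''}\cong G^4$. Since $G$ is a nontrivial $\mathbb{F}_2$-vector space, comparing $\mathbb{F}_2$-dimensions forces $|\mathcal{L}''|=4$, and hence $\mathrm{WD}(2)=4$. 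The only non-routine step of the whole argument is the compatibility check at $e,f,g$, which is a small arithmetic exercise resting on the Boolean identity $2x=0$.
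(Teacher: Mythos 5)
Your proof is correct and follows essentially the same route as the paper: the paper likewise treats $\mathrm{WD}(1)=1$ as immediate from $\mathrm{PG}(1,2)$ having a single line, and establishes $\mathrm{WD}(2)=4$ by exhibiting the same four-line set and verifying existence and uniqueness of the extension via exactly the computation recorded in Figure~\ref{fig:Fanoplane}. Your additional group-isomorphism argument showing that \emph{every} weight-determining set of the Fano plane has four lines is a welcome extra (valid here since $G$ is finite), whereas the paper simply relies on the unproved remark in the definition that any two weight-determining sets have the same cardinality.
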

\begin{proof} Since $\mathrm{PG}(1, 2)$ consists of a single line, it is trivial that $\mathrm{WD}(1)=1$. Now, consider the Fano plane $\mathrm{PG}(2, 2)$. Let $\{a,b,c\}\in\mathcal{L}$ and for $d\in \mathrm{PG}(2, 2)\setminus\{a,b,c\}$, take $\mathcal{L}'$ to be the three lines passing through $d$, and $\{a,b,c\}$. It is easy to compute that any map $h':\mathcal{L}'\to G$ extends uniquely to a map $h:\mathcal{L}\to G$ which is constant on planes. The reader may verify this by consulting Figure~\ref{fig:Fanoplane}. 
\end{proof}

\begin{lemma}\label{lem:WDformula} For $n\geq 0$, we have
\[\mathrm{WD}(n+1)=\mathrm{WD}(n)+2^{n+1}-1.\]
Hence, $\mathrm{WD}(n)=2^{n+1}-n-2$. 
\end{lemma}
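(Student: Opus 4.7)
The plan is to prove the recursion $\mathrm{WD}(n+1)=\mathrm{WD}(n)+2^{n+1}-1$ by exhibiting a weight-determining set of exactly this size; the closed form $\mathrm{WD}(n)=2^{n+1}-n-2$ then follows by telescoping from $\mathrm{WD}(0)=0$. Note that any two weight-determining sets of the same projective space have the same cardinality, since by definition the restriction map from constant-on-planes functions into $G^{\mathcal{L}'}$ is a bijection, and hence $|\mathcal{L}'|$ is determined by the number of constant-on-planes functions (using that $G$ is nontrivial). Fix a hyperplane $H\subset\mathrm{PG}(n+1,2)$ and a point $p\notin H$. By induction, let $\mathcal{L}_H$ be a weight-determining set for $H$ with $|\mathcal{L}_H|=\mathrm{WD}(n)$, and let $\mathcal{L}_p$ denote the set of all $2^{n+1}-1$ lines of $\mathrm{PG}(n+1,2)$ through $p$. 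I claim that $\mathcal{L}'=\mathcal{L}_H\cup\mathcal{L}_p$ is weight-determining.

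Given $h':\mathcal{L}'\to G$, I construct its unique constant-on-planes extension $h$ by classifying every line into one of three types: (A) a line of $H$; (B) a line through $p$; or (C) a line meeting $H$ in a single point and not containing $p$, using that over $\mathbb{F}_2$ each line meets each hyperplane in $1$ or $3$ points. On type A lines, take the unique constant-on-planes extension provided by $\mathcal{L}_H$; on type B lines, use $h'$. For a type C line $l=\{q,p+r',p+s'\}$ with $r',s'\in H$ and $q=r'+s'$, use the unique plane $V=\langle p,l\rangle$ through $p$ that contains $l$. This Fano plane consists of three lines through $p$ (in $\mathcal{L}_p$), the single line $\{r',s',q\}\subseteq H$, and three type C lines. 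Fixing $\theta_V$ as the sum over the three lines through $p$, a short computation within $V$ shows that the constant-on-planes condition imposed at the three $H$-points of $V$ determines the three type C values, forcing in particular
\[h(l)=h(\{p,r',p+r'\})+h(\{p,s',p+s'\})+h(\{r',s',q\}),\]
and that the remaining three point-sum constraints in $V$ are then automatically satisfied (using $3=1$ in $\mathbb{Z}_2$).

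The main obstacle is to verify that this candidate $h$ is indeed constant on every plane $W$ of $\mathrm{PG}(n+1,2)$. Planes $W\subseteq H$ are handled by the weight-determining property of $\mathcal{L}_H$, and planes containing $p$ are handled by construction. The substantive case is $W\not\subseteq H$ with $p\notin W$: then $W\cap H$ is a line $l^*=\{a,b,c\}$, and choosing any $v\in W\setminus H$ and writing $v=p+w$ with $w\in H$, the plane $W$ consists of $l^*$ together with six type C lines. Substituting the above formula into the sums through the four points $v,v+a,v+b,v+c$ produces an expression that is visibly symmetric in $\{a,b,c\}$, so those four sums are equal. Comparing these with the three sums at $a,b,c$ and telescoping in $\mathbb{Z}_2$ reduces the required identities to statements about a small family of $H$-lines all lying in the single plane $V_H:=\langle a,b,w\rangle$ of $H$; there the constant-on-planes property of $V_H$ (guaranteed by the type A extension) delivers exactly the needed vanishings. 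This establishes the recursion, and the closed form $\mathrm{WD}(n)=\sum_{k=0}^{n-1}(2^{k+1}-1)=2^{n+1}-n-2$ is immediate.
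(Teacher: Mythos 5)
Your proposal is correct and follows essentially the same route as the paper: the same weight-determining set $\mathcal{L}_H\cup\mathcal{L}_p$ (a weight-determining set of a hyperplane together with the $2^{n+1}-1$ lines through an external point), the same induction, and the same count. The only difference is that you explicitly verify the constant-on-planes condition for planes that neither contain $p$ nor lie in $H$ (reducing it, correctly, to the constant-on-planes property of the plane $\langle a,b,w\rangle$ inside $H$), a step the paper's proof dispatches with ``by construction''.
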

\begin{proof} We prove this by induction with Lemma~\ref{lem:BCWD} providing us with the base case. Assume by induction that $\mathrm{WD}(n)$ exists. Consider a vertex $a\in\mathrm{PG}(n+1, 2)$ and take an $n$-dimensional hyperplane $\mathcal{H}$ not containing $a$. By inductive hypothesis $\mathcal{H}$ has a weight-determining set $\mathcal{L}_1$ of size $\mathrm{WD}(n)$. Let $\mathcal{L}_2$ be the set of lines passing through $a$. Note that $|\mathcal{L}_2|=2^{n+1}-1$. Set $\mathcal{L}':=\mathcal{L}_1\cup\mathcal{L}_2$ and consider any map $h':\mathcal{L}'\to G$. We want to prove that $h'$ extends uniquely to a map $h:\mathcal{L}\to G$ which is constant on planes, where $\mathcal{L}$ is the set of lines of $\mathrm{PG}(n+1, 2)$. Firstly, by inductive hypothesis, $h'_{\upharpoonright\mathcal{L}_1}$ has a unique extension to the set of lines of $\mathcal{H}$, $\mathcal{L}_\mathcal{H}$, $h_1:\mathcal{L}_\mathcal{H}\to G$ which is constant on planes. Then, any (Fano) plane $\mathcal{F}$ containing $a$ intersects $\mathcal{H}$ in a line, and so it contains exactly four lines in $\mathcal{L}_2\cup\mathcal{H}$: the line in $\mathcal{H}$ and three lines passing through $a$. By Lemma~\ref{lem:BCWD}, this is a weight-determining set for the lines of $\mathcal{F}$, $\mathcal{L}_\mathcal{F}$, and so there is a unique extension of $h'\cup h_1$ to $\mathcal{L}_\mathcal{F}$. Since every line of $\mathrm{PG}(n+1, 2)$ not in $\mathcal{L}_2\cup\mathcal{H}$ is contained in a unique Fano plane containing $a$, we can extend $h'\cup h_1$ to $\mathcal{L}$ in a well-defined manner. Moreover, by construction, this extension is the unique extension of $h'$ which is constant on planes and $|\mathcal{L}'|=\mathrm{WD}(n)+2^{n+1}-1$. Solving the recurrence relation for $\mathrm{WD}(n)$ with $\mathrm{WD}(1)=1$ yields the desired final formula.
\end{proof}

\begin{corollary}\label{cor:counting} Let $G\acts B$ be the free action of a Boolean group with $2^n$ many orbits. For $n=0$ or $1$, there is a unique $G$-invariant Boolean Steiner $3$-quasigroup. For $n\geq 1$, the number of $G$-invariant Boolean Steiner $3$-quasigroups is
\[\frac{(2^{n}-1)!|G|^{(2^{n}-n-1)}}{\prod_{k=0}^{n-1}(2^n-2^k)} \;.\] 
\end{corollary}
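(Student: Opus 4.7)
The plan is to invoke Corollary~\ref{cor:correspondence} to reduce the count to two independent combinatorial problems. By that corollary, a $G$-invariant Boolean Steiner $3$-quasigroup $\mathfrak{q}$ is determined by the pair $(\Tilde{\mathfrak{q}}, f)$, where $\Tilde{\mathfrak{q}}$ is a Boolean Steiner $3$-quasigroup on $\mathrm{Orb}(G)$ (a set of size $2^n$) and $f$ is a transparent weight function on the associated Boolean Steiner quadruple system. Using the symmetry of $\Tilde{\mathfrak{q}}$ (Lemma~\ref{Boolsymm}) together with the defining conditions (\ref{eq:fis0}) and (\ref{eq:eqnfcond}), one may regard $f$ as a function on blocks rather than on ordered $4$-tuples. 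Thus the count factors as $N_1 \cdot N_2$, where $N_1$ is the number of choices of $\Tilde{\mathfrak{q}}$ and $N_2$ is, for each such $\Tilde{\mathfrak{q}}$, the number of admissible $f$. The special cases $n = 0$ and $n = 1$ are immediate since both factors collapse to $1$, and the general formula also evaluates to $1$ at $n = 1$, so these cases need no separate treatment.

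For $N_1$, I would invoke Remark~\ref{rem:boolrem}: Boolean Steiner $3$-quasigroups on a set $S$ with $|S| = 2^n$ correspond to Boolean group structures on $S$ modulo the choice of identity, since $x + y + z$ is invariant under replacing $+$ by the translated operation $x +_e y := x + y + e$. A Boolean group structure on $S$ is in turn specified by a bijection $S \to \mathbb{Z}_2^n$ up to post-composition by an element of $\mathrm{Aut}(\mathbb{Z}_2^n) = \mathrm{GL}_n(\mathbb{Z}_2)$, yielding $(2^n)!/|\mathrm{GL}_n(\mathbb{Z}_2)|$ group structures. Dividing by $2^n$ for the choice of identity and substituting $|\mathrm{GL}_n(\mathbb{Z}_2)| = \prod_{k=0}^{n-1}(2^n - 2^k)$ gives
\[
N_1 = \frac{(2^n - 1)!}{\prod_{k=0}^{n-1}(2^n - 2^k)}.
\]

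For $N_2$, I would invoke Remark~\ref{rem:counting}: for any fixed $\Tilde{\mathfrak{q}}$, Lemmas~\ref{lem:constonpl} and~\ref{lem:welldefined} (applied at any chosen basepoint $a \in \mathrm{Orb}(G)$) establish a bijection between transparent functions on the induced Boolean Steiner quadruple system and functions on the lines of the induced Boolean projective Steiner triple system $\mathrm{PG}(n-1, 2)$ that are constant on planes. By Lemma~\ref{lem:WDformula}, this projective space admits a weight-determining set of size $\mathrm{WD}(n-1) = 2^n - n - 1$, so $N_2 = |G|^{2^n - n - 1}$. Multiplying $N_1$ and $N_2$ then produces the claimed formula.

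The main conceptual step, and arguably the only point requiring genuine verification, is the reduction of the raw data $f : E \to G$ to a transparent function on blocks, which then allows $N_2$ to be read off from the weight-dimension machinery. This reduction follows from the symmetry of the $4$-ary relation $\Tilde{\mathfrak{q}}(x, y, z) = w$ combined with the explicit identity (\ref{eq:eqnfcond}), so all further ingredients are already in place in the preceding subsections and the remaining calculation is purely mechanical.
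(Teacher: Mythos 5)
Your proof is correct and follows essentially the same route as the paper: both factor the count as (number of Boolean Steiner structures on $\mathrm{Orb}(G)$) times the number of transparent weight functions, with the second factor obtained as $|G|^{\mathrm{WD}(n-1)}=|G|^{2^n-n-1}$ via Corollary~\ref{cor:correspondence}, Remark~\ref{rem:counting} and Lemma~\ref{lem:WDformula}. The only cosmetic difference is in the first factor, which the paper computes by orbit--stabilizer on Boolean Steiner quadruple systems using $\mathrm{Aut}(\mathrm{AG}(n,2))=\mathrm{Aff}(n,2)$, whereas you count Boolean group structures on $\mathrm{Orb}(G)$ modulo translation of the identity; the two computations coincide and yield $(2^n-1)!/\prod_{k=0}^{n-1}(2^n-2^k)$.
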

\begin{proof} Firstly, by Corollary~\ref{cor:correspondence}, there is a one-to-one correspondence between $G$-invariant Boolean Steiner $3$-quasigroup and Boolean $G$-weighted Steiner $3$-quasigroups. By the definition of the latter (Definition~\ref{def:boolgweighted}), these are in one-to-one correspondence with transparent maps on the blocks of the possible Steiner quadruple systems on $\mathrm{Orb}(G)$. Given a Boolean Steiner quadruple system $(S, \mathcal{B})$ on $\mathrm{Orb}(G)$, by Remark~\ref{rem:counting} and Lemma~\ref{lem:WDformula}, the total number of transparent maps on its blocks is 
\[|G|^{\mathrm{WD}(n-1)}=|G|^{2^n -n-1}.\]
As mentioned earlier, we can think of a Boolean Steiner quadruple system as the points and planes of the affine space $\mathrm{AG}(n, 2)$. It is well-known that the automorphism group of such a system is the space of affine transformations of $\mathbb{Z}_2^n$, $\mathrm{Aff}(n,2):=\mathbb{Z}_2^n\rtimes\mathrm{GL}(n, 2)$~\cite[Example 6.2.3]{dixon1996permutation}. Hence, by the orbit-stabilizer theorem, the number of distinct Boolean Steiner quadruple systems on $2^n$ many vertices is
\[\frac{2^n!}{2^n\prod_{k=0}^{n-1}(2^n-2^k)}\;.\]
This gives us the final number of $G$-invariant Boolean Steiner $3$-quasigroups.
\end{proof}

\section{Almost minimal and minimal operations above \texorpdfstring{${\langle\mathbb{Z}_2\rangle}$}{Z2}}\label{sec:z2}

In this Section, we classify almost minimal and minimal operations in the remaining case of $\mathbb{Z}_2$ acting freely on $B$. Subsection~\ref{sub:almostminz2} is dedicated to almost minimal operations, which we classify in Theorem~\ref{Z2case}. Two new kinds of operation appear: odd majorities (Definition~\ref{oddmajdef}) and odd Malcev operations (Definition~\ref{oddmaldef}). In Subsection~\ref{sub:noalmostminodd}, we show these operations cannot be strictly almost minimal (or minimal), thus completing the type-classification part of the proof of Theorem~\ref{maintheorem}.

\subsection{Almost minimal operations above \texorpdfstring{${\langle\mathbb{Z}_2\rangle}$}{Z2}}\label{sub:almostminz2}

\begin{remark} Recall the definition of an odd majority $m$ (Definition~\ref{oddmajdef}). Note that this makes sense in the context of $\mathbb{Z}_2$ acting freely on $B$. Given a triplet from $B$, if three elements are all in the same orbit of $\mathbb{Z}_2$, two of them must be equal and so $m$ acts as a majority. However, if a triplet contains two distinct elements in the same orbit and a third element $c$ outside that orbit, $m$ takes the value $m(c,c,c)$ for that triplet.
\end{remark}

\begin{lemma}\label{ood} Let $G\acts B$. Let $m$ be a quasi-majority almost minimal above $\overline{\langle G\rangle}$. Then $m$ is an odd majority.
\end{lemma}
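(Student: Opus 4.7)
The plan is to verify directly the three identities defining an odd majority from Definition~\ref{oddmajdef}. Fix $\gamma\in G\setminus\{1\}$. Two of the three identities can essentially be read off from Lemma~\ref{majtoMal} applied with $\beta=\gamma$: since $h(x,y,z):=m(x,\gamma y,z)$ is quasi-Malcev with $h(x,x,x)\approx m(x,x,x)$, unpacking the quasi-Malcev identity $h(x,y,y)\approx h(y,y,x)\approx h(x,x,x)$ gives
\[
m(x,\gamma y,y)\approx m(x,x,x)\qquad\text{and}\qquad m(y,\gamma y,x)\approx m(x,x,x).
\]
The first of these, after relabelling and the substitution $x\mapsto \gamma^{-1} x$, yields $m(y,x,\gamma^{-1}x)\approx m(y,y,y)$; running this over all $\gamma\in G\setminus\{1\}$ supplies the first odd-majority identity $m(y,x,\gamma x)\approx m(y,y,y)$. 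The second, after relabelling, is exactly the second identity $m(x,\gamma x,y)\approx m(y,y,y)$.

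Only the third identity $m(x,y,\gamma x)\approx m(y,y,y)$ requires a separate argument, since Lemma~\ref{majtoMal} only twists the middle coordinate. To handle it, I would consider the binary function
\[
h(x,y):=m(x,y,\gamma x).
\]
By almost minimality of $m$, we have $h\in\overline{\langle G\rangle}\cap\mathcal O^{(2)}$; as every binary function in $\overline{\langle G\rangle}$ arises by adding a dummy variable to a unary function in $\overline{G}$, $h$ must be essentially unary. Using the quasi-majority identity $m(x,x,z)\approx m(x,x,x)$ with $z=\gamma x$, we obtain $h(x,x)=m(x,x,\gamma x)=m(x,x,x)$, so whichever variable $h$ depends on, the unary factor must equal $x\mapsto m(x,x,x)$.

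The heart of the matter is then to rule out that $h$ depends on $x$ rather than $y$. If $h(x,y)\approx m(x,x,x)$, specialising $y=\gamma x$ and combining with the quasi-majority identity $m(y,x,x)\approx m(x,x,x)$ (applied to $m(x,\gamma x,\gamma x)$) gives $m(\gamma x,\gamma x,\gamma x)\approx m(x,x,x)$. Since $m(x,x,x)$ belongs to $\overline{G}$ and therefore is injective (on every finite set it agrees with some element of $G$), this forces $\gamma x=x$ for all $x\in B$, contradicting faithfulness of $G\acts B$ and $\gamma\neq 1$. The only remaining possibility is $h(x,y)\approx g(y)$ with $g(x)=m(x,x,x)$, which delivers the required third identity. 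The main technical point throughout is the interplay between the quasi-majority identities, injectivity of the diagonal $x\mapsto m(x,x,x)$, and faithfulness, which together allow us to convert the weak condition of almost minimality (binary reducts of $m$ lie in $\overline{\langle G\rangle}$) into the precise orbit-sensitive behaviour encoded in an odd majority.
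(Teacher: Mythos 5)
Your proof is correct. It follows essentially the same route as the paper: both arguments hinge on Lemma~\ref{majtoMal}, and your derivation of the first two odd-majority identities (unpacking the quasi-Malcev identities of $m(x,\gamma y,z)$ and substituting $\gamma^{-1}$ for $\gamma$) matches the paper's. The only divergence is the third identity $m(x,y,\gamma x)\approx m(y,y,y)$: the paper dispatches it by observing that the quasi-majority condition is symmetric under permuting variables, so Lemma~\ref{majtoMal} applies equally to every variable-permuted copy of $m$; you instead give a direct argument, showing the binary function $m(x,y,\gamma x)$ is essentially unary by almost minimality and ruling out dependence on the first variable via injectivity of the diagonal $x\mapsto m(x,x,x)$ and faithfulness. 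Both are valid, and your explicit case analysis is a legitimate (slightly longer) substitute for the paper's symmetry appeal.
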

\begin{proof}
By Lemma~\ref{majtoMal} we have that for any $\gamma\in G\setminus\{1\}$,  $m(x,\gamma y,z)$ is quasi-Malcev, and in particular \[m(x,\gamma x,y)\approx m(y,y,y)\; .
\]
By the symmetry of the definition of a  quasi-majority, the same identity holds for any permutation of the variables on the left side.
\end{proof}

\begin{corollary}\label{Maltooddmaj} Let $G\acts B$ be a Boolean group acting freely on $B$. Let $M$ be a quasi-Malcev operation almost minimal above ${\langle G\rangle}$. Suppose that $M$ is not a $G$-quasi-minority. Then, for all $\gamma\in G\setminus\{1\}, \ M(x,\gamma y, z)$ is an odd-majority. 
\end{corollary}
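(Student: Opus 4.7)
The plan is to apply the five types theorem (Theorem~\ref{fivetypes}, via Remark~\ref{rem:onlyalmostmin}) to the operation $h(x,y,z):=M(x,\gamma y,z)$ and eliminate every possible type except the quasi-majority type. First I would observe that since $G$ is Boolean and acts freely, $\gamma^2=1$ and $\overline{\langle G\rangle}=\langle G\rangle$, so $h$ and $M$ generate each other modulo $G$ via $M(x,y,z)\approx h(x,\gamma y,z)$. Hence $h$ is almost minimal above $\langle G\rangle$ and, being ternary and essential (modulo $\langle G\rangle$), Theorem~\ref{fivetypes} forces $h$ to be (up to a permutation of variables) either a quasi-majority, a quasi-Malcev, or a quasi-semiprojection.

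Next I would rule out the latter two. The quasi-semiprojection case is excluded directly by Lemma~\ref{Malnoproj}. For the quasi-Malcev case (even allowing a permutation of variables), Lemma~\ref{lotsMal} upgrades this to $h$ itself being quasi-Malcev. But then the hypothesis of Lemma~\ref{orbitmin} is met (with $\beta=\gamma$), so $M$ would be a $G$-quasi-minority, contradicting the standing assumption on $M$.

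Therefore $h$ is a quasi-majority, and a single application of Lemma~\ref{ood} yields that $h$ is an odd majority, as required. There is no substantial obstacle here: the statement is essentially an orchestration of Lemmas~\ref{Malnoproj},~\ref{lotsMal},~\ref{orbitmin}, and~\ref{ood}, together with the observation that for a Boolean group the map $y\mapsto\gamma y$ is self-inverse, so that $h$ and $M$ are interchangeable as far as almost minimality is concerned.
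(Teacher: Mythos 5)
Your proposal is correct and follows essentially the same route as the paper's proof: both apply the five types theorem to $M(x,\gamma y,z)$, exclude the quasi-semiprojection case via Lemma~\ref{Malnoproj}, exclude the quasi-Malcev case via Lemmas~\ref{lotsMal} and~\ref{orbitmin} (which would force $M$ to be a $G$-quasi-minority), and conclude with Lemma~\ref{ood}. The only addition is your explicit justification that $h$ and $M$ generate each other because $\gamma$ is an involution, which the paper leaves implicit.
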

\begin{proof} By the five types theorem (Theorem~\ref{fivetypes}), $M(x,\gamma y, z)$ is either a quasi-semiprojection, or it is quasi-Malcev up to permutation of variables, or it is a quasi-majority. By Lemma~\ref{Malnoproj}, it is not a quasi-semiprojection. 
By Lemmas~\ref{lotsMal} and~\ref{orbitmin}, if any permutation of the variables of $M(x, \gamma y, z)$ is quasi-Malcev, then $M$ is a $G$-quasi-minority. 
Since this is not the case by assumption, we have that $M(x, \gamma y, z)$ must be a quasi-majority. By Lemma~\ref{ood}, $M(x, \gamma y, z)$ is an odd-majority.
\end{proof}

\begin{notation}
    For the rest of this section $\gamma$ denotes the non-identity element in $\mathbb{Z}_2$.
\end{notation}

Recall Definition~\ref{oddmaldef}, defining an odd Malcev operation.

\begin{lemma}\label{getoddMal} Let $\mathbb{Z}_2$ act freely on $B$. Let $\gamma$ denote the non-identity element of $\mathbb{Z}_2$. Let $M$ be a quasi-Malcev operation such that $M(x,\gamma y, z)$ is an odd-majority. Then, $M$ is an odd Malcev. 
\end{lemma}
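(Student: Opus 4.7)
The plan is to substitute the abbreviation $m(u,v,w) = M(u, \gamma v, w)$ into the defining identities of an odd majority, translate each one into an identity on $M$ using the fact that $\gamma^2 = 1$, and combine the results with the quasi-Malcev identities $M(u,v,v) \approx M(v,v,u) \approx M(u,u,u)$ to derive the three identities (\ref{oddm1}), (\ref{oddm2}), (\ref{oddm3}) that characterise odd Malcev operations. Freeness of the action is not really used beyond guaranteeing that $\gamma$ is a well-defined involution.

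First, I would apply the odd identity for $m$ at the unique non-trivial element $\gamma\in\mathbb Z_2\setminus\{1\}$. The chain $m(y,x,\gamma x) \approx m(x,\gamma x, y) \approx m(x, y, \gamma x) \approx m(y,y,y)$ translates, using $\gamma^{2}=1$ to collapse the middle argument $\gamma \cdot \gamma x = x$, into
\[
M(y, \gamma x, \gamma x) \approx M(x, x, y) \approx M(x, \gamma y, \gamma x) \approx M(y, \gamma y, y).
\]
The quasi-Malcev identities immediately reduce the first two terms to $M(y,y,y)$, so I obtain both $M(x, \gamma y, \gamma x) \approx M(y,y,y)$ and $M(y, \gamma y, y) \approx M(y, y, y)$. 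Substituting $z = \gamma y$ in the first of these produces $M(x, z, \gamma x) \approx M(\gamma z, \gamma z, \gamma z)$, which is exactly (\ref{oddm3}).

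Second, I would translate the quasi-majority part $m(x,x,y) \approx m(x,y,x) \approx m(y,x,x) \approx m(x,x,x)$ to obtain the chain
\[
M(x, \gamma x, y) \approx M(x, \gamma y, x) \approx M(y, \gamma x, x) \approx M(x, \gamma x, x).
\]
The previously derived identity $M(y, \gamma y, y) \approx M(y, y, y)$, after renaming, gives $M(x, \gamma x, x) \approx M(x, x, x)$, so the first, third and final members of this chain yield both equalities in (\ref{oddm2}). Finally, from $M(x, \gamma y, x) \approx M(x, x, x)$ the substitution $z = \gamma y$ produces $M(x, z, x) \approx M(x, x, x)$ for all $x, z$, which is (\ref{oddm1}).

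The proof is essentially a sequence of careful substitutions, and I do not anticipate any real obstacle: the only mildly subtle point is exploiting $\gamma^{2}=1$ to identify $m(x,\gamma x, y)$ with $M(x,x,y)$, which then combines with quasi-Malcev to deliver the right-hand side $M(y,y,y)$ needed to read off (\ref{oddm3}) and, through $M(x,\gamma x, x) \approx M(x,x,x)$, also (\ref{oddm1}) and (\ref{oddm2}).
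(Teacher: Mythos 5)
Your proof is correct and follows essentially the same route as the paper: translate the odd-majority and quasi-majority identities of $m(x,y,z)=M(x,\gamma y,z)$ back into identities on $M$ using $\gamma^2=1$, and read off (\ref{oddm1})--(\ref{oddm3}). The only cosmetic difference is that you invoke the quasi-Malcev identities of $M$ where the paper works directly with the quasi-majority identities of $m$; under the translation these are interchangeable, so the arguments coincide.
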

\begin{proof} By hypothesis, $m(x,y,z)=M(x,\gamma y, z)$ is an odd-majority. Moreover, $m(x, \gamma y, z)=M(x, y, z)$. Firstly, we can obtain  identity (\ref{oddm1}) since
\[M(x,y,x)\approx m(x, \gamma y, x)\approx m(x, \gamma x, x)\approx M(x, x, x),\]
where the intermediate identity follows from $m$ being an odd-majority, and in particular a quasi-majority. For identity (\ref{oddm2}), we have 
\[M(x, \gamma x, y)\approx m(x, x, y)\approx m(x, \gamma x, x)\approx M(x, x, x),\]
where we use the fact that $m$ is a majority. Similarly, we can prove that
\[M(y, \gamma x, x)\approx M(x,x,x),\]
which together with the previous identity yields (\ref{oddm2}).
Finally, since $m(x, \gamma y, z)$ is an odd majority, 
\[M(x, y, \gamma x)\approx m(x, \gamma y, \gamma x)\approx m(\gamma y, \gamma y, \gamma y)\approx m(\gamma y, y, \gamma y)\approx M(\gamma y, \gamma y, \gamma y).\]
The above yields identity~\ref{oddm3}, concluding the argument.
\end{proof}

\begin{lemma} Let $\mathbb{Z}_2$ act on $B$. Let $f$ be a binary operation almost minimal above ${\langle \mathbb{Z}_2 \rangle}$. Then, $f$ is a binary orbit\-/semiprojection.
\end{lemma}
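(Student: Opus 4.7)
The plan is to apply almost minimality to the two unary functions obtained by identifying the arguments of $f$ in each of the two ways allowed by the $\mathbb{Z}_2$-action, and then to exploit $|\mathbb{Z}_2|=2$ to read off the orbit-semiprojection structure. Write $\mathbb{Z}_2=\{1,\gamma\}$. First I would verify that $\overline{\langle \mathbb{Z}_2\rangle}\cap\mathcal{O}^{(1)}=\{1,\gamma\}$: a function $h\in\overline{\mathbb{Z}_2}$ must, on every finite subset of $B$, agree with $1$ or with $\gamma$, and since by faithfulness $1$ and $\gamma$ differ at some point (in fact at every non-fixed point of $\gamma$), global consistency on all finite sets forces $h\in\{1,\gamma\}$.

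Next I would apply almost minimality to the unary operations $h_1(x):=f(x,x)$ and $h_2(x):=f(x,\gamma x)$. Both lie in $\overline{\langle \mathbb{Z}_2\cup\{f\}\rangle}\cap \mathcal{O}^{(1)}$, which by almost minimality coincides with $\overline{\langle \mathbb{Z}_2\rangle}\cap \mathcal{O}^{(1)}=\{1,\gamma\}$. Hence $h_1(x)=\alpha x$ and $h_2(x)=\beta x$ for some $\alpha,\beta\in\mathbb{Z}_2$, and the behaviour of $f$ on same-orbit pairs is already determined by the pair $(\alpha,\beta)$.

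Finally, since $|\mathbb{Z}_2|=2$ we have either $\beta=\alpha$ or $\beta=\alpha\gamma$. For any same-orbit pair $(a_1,a_2)$ one has $a_2=a_1$ or $a_2=\gamma a_1$, and a direct split into these two subcases gives, in the first alternative, $f(a_1,a_2)=\alpha a_1$, exhibiting $f$ as an orbit-semiprojection onto its first coordinate with witness $g=\alpha$; in the second alternative one gets $f(a_1,a_2)=\alpha a_2$, exhibiting $f$ as an orbit-semiprojection onto its second coordinate, again with witness $g=\alpha$. There is no genuine obstacle here: the whole proof reduces to the observation that the two-element group $\mathbb{Z}_2$ leaves only four possibilities for $(\alpha,\beta)$, each of which corresponds precisely to an orbit-semiprojection.
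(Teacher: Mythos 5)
Your proof is correct and follows essentially the same route as the paper's: both reduce the claim to computing the two unary minors $f(x,x)$ and $f(x,\gamma x)$, which almost minimality forces into $\{1,\gamma\}$, and then read off the orbit\-/semiprojection from the resulting case split. The only cosmetic difference is that the paper first normalises $f$ to be idempotent by replacing it with $\gamma\circ f$ if necessary, whereas you carry the coefficient $\alpha$ through both cases.
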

\begin{proof} Clearly, $f(x,x)$ is an orbit\-/semiprojection if and only if $\gamma\circ f$ is; since one of these functions is idempotent, we may assume without loss of generality that $f$ is. Since $f$ is almost minimal, $f(x, \gamma x)=h(x)$ for some $h\in \mathbb{Z}_2$. If $h$ is the identity, then whenever $a_1,a_2$ are elements from the same orbit, then $f(a_1,a_2)=a_1$, proving the lemma. If $h=\gamma$, then in the same situation we always have $f(a_1,a_2)=\gamma a_1$, again proving the statement.
\end{proof}

\begin{theorem}[Theorem~\ref{thme}, $\mathbb{Z}_2 $ case]\label{Z2case} Let $\mathbb{Z}_2$ act freely on $B$ with $s$-many orbits (where $s$ is possibly infinite). Let $f$ be an almost minimal operation above ${\langle \mathbb{Z}_2 \rangle}$. Then,  $f$ is of one of the following types:
\begin{enumerate}
    \item a unary operation;
    \item a ternary $G$-quasi-minority;
    \item an odd majority;
    \item an odd Malcev (up to permutation of variables);
    \item a $k$-ary orbit\-/semiprojection for $2\leq k \leq s$.
\end{enumerate} 
\end{theorem}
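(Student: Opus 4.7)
The plan is to start from the five types theorem (Theorem~\ref{fivetypes}) applied with $\mathcal{T}=\mathbb{Z}_2$, which tells us that up to permutation of variables $f$ is either (i)~unary, (ii)~binary, (iii)~a ternary quasi-majority, (iv)~quasi-Malcev, or (v)~a $k$-ary quasi-semiprojection for some $k\geq 3$. I will then feed each of these into the strengthening lemmas proved earlier in this section to recover the four non-unary types in the statement.

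Case (i) is immediate, giving type~1. In case~(ii), I would invoke the last lemma proved before the theorem (binary almost minimal operations above $\langle\mathbb{Z}_2\rangle$ are binary orbit\-/semiprojections), placing $f$ in type~5 with $k=2$. In case~(iii), since a quasi-majority is already almost minimal above $\overline{\langle G\rangle}$ in full generality, Lemma~\ref{ood} directly upgrades $f$ to an odd majority, i.e.~type~3. In case~(v), Lemma~\ref{semiorb} (which needs no assumption on $G$) upgrades the quasi\-/semiprojection $f$ to an orbit\-/semiprojection; the arity bound $k\leq s$ follows since on any $k$-tuple with $k>s$ two entries must share a $\mathbb{Z}_2$-orbit, forcing $f$ to be essentially unary and hence in $\langle\mathbb{Z}_2\rangle$, contradicting almost minimality. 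This places $f$ in type~5.

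The quasi-Malcev case~(iv) is the only non-trivial one and is the one where the hypothesis $G=\mathbb{Z}_2$ really matters (the previous section already handled $|G|>2$). Here I split on whether $f$ is a $G$-quasi-minority: if yes, we are done and $f$ falls into type~2. Otherwise, Corollary~\ref{Maltooddmaj} (applied with $\gamma$ the unique non-identity element of $\mathbb{Z}_2$) gives that $f(x,\gamma y,z)$ is an odd majority, and then Lemma~\ref{getoddMal} (which is stated precisely for the free action of $\mathbb{Z}_2$ on $B$) converts this back into the statement that $f$ itself satisfies the defining identities~(\ref{oddm1})--(\ref{oddm3}) of an odd Malcev. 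This places $f$ in type~4, where the clause ``up to permutation of variables'' in the theorem precisely mirrors the analogous clause in Theorem~\ref{fivetypes}.

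The main obstacle, already resolved in Corollary~\ref{Maltooddmaj} and Lemma~\ref{getoddMal}, is that for $|G|=2$ the obstruction used in Lemma~\ref{Bnomaj} to kill quasi-majorities disappears, so a quasi-Malcev that is \emph{not} a $G$-quasi-minority need not yield a contradiction as it did for $|G|>2$; instead it now consistently produces a new type (odd Malcev) paired with the odd majority of case~(iii) via the involution $y\mapsto \gamma y$. Once this bookkeeping is in place, the proof is simply a union of the five cases listed above.
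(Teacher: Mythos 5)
Your proposal is correct and follows essentially the same route as the paper's proof: both start from the five types theorem, upgrade quasi-majorities to odd majorities via Lemma~\ref{ood}, split the quasi-Malcev case on whether $f$ is a $G$-quasi-minority and otherwise chain Corollary~\ref{Maltooddmaj} with Lemma~\ref{getoddMal}, and upgrade quasi-semiprojections to orbit\-/semiprojections via Lemma~\ref{semiorb}. Your explicit handling of the binary case and of the arity bound $k\leq s$ merely spells out what the paper subsumes under ``the results in this section.''
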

\begin{proof} The theorem follows by the five types theorem (Theorem~\ref{fivetypes}) and the results in this section. If $f$ is a quasi-majority, then it must be an odd majority by Lemma~\ref{ood}. Suppose that $f$ is, up to permuting variables, quasi-Malcev. 
Let $M(x,y,z)$ be the corresponding quasi-Malcev operation. If $M(x,y,z)$ is a $G$-quasi-minority, by symmetry, $f$ is also a $G$-quasi-minority. Otherwise, by Corollary~\ref{Maltooddmaj}, $M(x, \gamma y, z)$ is an odd-majority, and so by Lemma~\ref{getoddMal}, $M(x,y,z)$ is an odd Malcev.
Finally, as usual, if $f$ is a quasi-semiprojection it has to be an orbit\-/semiprojection by Lemma~\ref{semiorb}.
\end{proof}

\begin{remark}\label{duality} 
By Lemma~\ref{majtoMal}, Lemma~\ref{ood} and Lemma~\ref{getoddMal}, we have the following: if $M(x,y,z)$ is almost minimal above ${\langle \mathbb{Z}_2\rangle}$, then $M(x,y,z)$ is an odd Malcev if and only if $M(x, \gamma y, z)$ is an odd majority.
\end{remark}

\begin{prop}\label{prop:optimalZ2} Let $\mathbb{Z}_2$ act freely on $B$. Then, there is an odd majority $m(x,y,z)$ almost minimal above ${\langle \mathbb{Z}_2\rangle}$. Moreover, $m(x, \gamma y, z)$ then is an odd Malcev almost minimal above ${\langle \mathbb{Z}_2\rangle}$.
\end{prop}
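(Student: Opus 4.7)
I will construct an explicit odd majority $m$ and verify its almost minimality via a term-induction mirroring the proof of Lemma~\ref{lem:collapse}. Define $m:B^3\to B$ case-by-case on a triple $(a_1,a_2,a_3)$ as follows. If some $a_i=a_j$ for $i\neq j$, set $m(a_1,a_2,a_3)$ to this repeated value (forcing the quasi-majority identity). Otherwise, if the three $a_i$ are pairwise distinct but some pair $a_i,a_j$ shares a $\mathbb{Z}_2$-orbit (i.e.\ $a_j=\gamma a_i$), set $m(a_1,a_2,a_3):=a_k$ where $k$ is the remaining index (forcing the identities of Definition~\ref{oddmajdef}). Finally, if all three $a_i$ lie in distinct $\mathbb{Z}_2$-orbits, set $m(a_1,a_2,a_3):=a_1$. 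Since $\mathbb{Z}_2$ acts freely, these three cases partition $B^3$, and a routine verification confirms that $m$ is an idempotent quasi-majority satisfying the odd-majority identities.

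To establish almost minimality, I appeal to Remark~\ref{rem:free}: $\overline{\langle\mathbb{Z}_2\rangle}=\langle\mathbb{Z}_2\rangle$ consists of essentially unary functions, so it suffices to show by induction on term complexity that every operation in $\langle\mathbb{Z}_2\cup\{m\}\rangle$ of arity $r<3$ is essentially unary; the local closure then preserves this. For the inductive step, consider $f:=m(g_1,g_2,g_3)$ where each $g_i(x_1,\ldots,x_r)=\eta_i(x_{t_i})$ with $\eta_i\in\mathbb{Z}_2$. Since $r<3$, pigeonhole gives indices $i<j$ with $t_i=t_j$, so $g_i(a)$ and $g_j(a)$ always lie in the same $\mathbb{Z}_2$-orbit. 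Freeness of the action splits this into two sub-cases: either $\eta_i=\eta_j$, whence $g_i\approx g_j$ and quasi-majority gives $f\approx g_i$; or $\eta_j=\gamma\eta_i$, whence $g_i(a)\neq g_j(a)$ for every $a$, and the odd-majority identities give $f\approx g_k$ for the remaining index $k$. Either way $f$ is essentially unary, closing the induction. That $m$ itself is essential (hence properly above $\overline{\langle\mathbb{Z}_2\rangle}$) follows from $m(x,x,y)\approx x$ versus $m(x,\gamma x,y)\approx y$.

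For the moreover part, set $M(x,y,z):=m(x,\gamma y,z)$. The identities $M(x,y,y)=m(x,\gamma y,y)\approx x$ and $M(y,y,x)=m(y,\gamma y,x)\approx x$ both follow from the odd-majority identity $m(u,v,\gamma v)\approx u$ and its symmetric counterpart, while $M(x,x,x)=m(x,\gamma x,x)\approx x$ is immediate from quasi-majority; hence $M$ is quasi-Malcev. Since $M(x,\gamma y,z)=m(x,y,z)$ is the odd majority $m$, Lemma~\ref{getoddMal} yields that $M$ is an odd Malcev. Finally, because $m$ and $M$ generate each other modulo $\mathbb{Z}_2$, we have $\overline{\langle\mathbb{Z}_2\cup\{m\}\rangle}=\overline{\langle\mathbb{Z}_2\cup\{M\}\rangle}$, so almost minimality of $M$ is inherited from that of $m$. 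The principal technical point of the argument is the inductive step for almost minimality: freeness of the $\mathbb{Z}_2$-action forces any two essentially unary arguments sharing a variable to be either pointwise equal or pointwise $\gamma$-twisted, and this rigid dichotomy, matched precisely by the quasi-majority and odd-majority identities, is what collapses $m(g_1,g_2,g_3)$ back to essentially unary form.
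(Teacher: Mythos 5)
Your proof is correct and follows essentially the same route as the paper: define $m$ on triples with an orbit-coincidence by the odd-majority identities (which are consistent precisely because the free $\mathbb{Z}_2$-action makes "same orbit" mean "equal or $\gamma$-twisted"), extend arbitrarily elsewhere, and obtain the odd Malcev by twisting one variable via Lemma~\ref{getoddMal}. The only difference is presentational: where the paper gets almost minimality by observing that an odd majority is a weak orbit\-/semiprojection and invoking Lemma~\ref{lem:collapse}, you inline that term induction for this special case, which is a valid (if redundant) specialization of the same argument.
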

\begin{proof} Remark~\ref{duality} already explains that if $m(x,y,z)$ is an odd majority almost minimal above ${\langle \mathbb{Z}_2\rangle}$, then, $m(x, \gamma y, z)$ is an odd Malcev almost minimal above ${\langle \mathbb{Z}_2\rangle}$. Hence, we just need to construct an odd-majority almost minimal above ${\langle \mathbb{Z}_2\rangle}$. \\

It is easy to check that the defining identities of an odd majority $m$ are consistent when the group is a free action of $\mathbb Z_2$ (even imposing idempotency on $m$); $m$ can then be extended to triplets from distinct orbits arbitrarily.
\end{proof}

\subsection{Odd majorities and odd Malcevs cannot be minimal}\label{sub:noalmostminodd}
We just showed that in the case of $\mathbb{Z}_2$ acting freely on a set there are two new almost minimal operations: odd majorities and odd Malcev operations. In this subsection, we show that these cannot be strictly almost minimal in Corollary~\ref{nominmaj}, and so they cannot be minimal. This gives us a classification of minimal operations above $G\acts B$ being a Boolean group acting freely on $B$ (Theorem~\ref{Boolminclass}). The key ingredient of our proof is Lemma~\ref{steinsemi}, which gives us a full understanding of the ternary operations generated by a minimal $G$-quasi-minority (i.e. only essentially unary operations and its translates by group elements). Hence, by proving that odd majorities and odd Malcev operations always generate a $G$-quasi-minority (Lemma~\ref{genmin}), we know these operations cannot be minimal.

\begin{lemma}\label{genmin} Let $\mathbb{Z}_2$ act freely on $B$. Suppose that $m$ is an idempotent odd majority almost minimal above $\langle \mathbb{Z}_2\rangle$. Then, 
\[\mathfrak{m}^\star(x,y,z):= m(x, \gamma m(x,y,z), m(\gamma x, y, z))\]
is a $G$-minority.
\end{lemma}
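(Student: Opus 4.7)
The plan is a direct verification that $\mathfrak{m}^\star$ satisfies the defining identities of a $\mathbb{Z}_2$-minority: namely, idempotency together with the two families of $\mathbb{Z}_2$-quasi-minority identities from Definition~\ref{deforbitmin}, indexed by $\beta\in\{1,\gamma\}$.

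First I would record the rewriting tools made available by the assumption that $m$ is an idempotent odd majority, namely the three quasi-majority identities $m(a,a,b)\approx m(a,b,a)\approx m(b,a,a)\approx a$, the three odd identities $m(b,a,\gamma a)\approx m(a,\gamma a,b)\approx m(a,b,\gamma a)\approx b$, and the fact that $\gamma^2=1$. Between them, these identities determine $m$ on every triple that is either non-injective or confined to a single $\mathbb{Z}_2$-orbit, and these are exactly the triples that will arise when checking the quasi-minority identities for $\mathfrak{m}^\star$.

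The key structural observation is that the construction
\[
\mathfrak{m}^\star(a,b,c)= m\bigl(a,\; \gamma m(a,b,c),\; m(\gamma a,b,c)\bigr)
\]
is arranged so that, in each of the seven identities to check, both inner $m$-terms collapse to something of the form $\alpha t$ (for some $\alpha\in\mathbb{Z}_2$ and $t$ a single variable) via either a quasi-majority or an odd rewrite, and the remaining outer $m$-term then collapses again by the same repertoire. For example, for the minority identity $\mathfrak{m}^\star(x,x,y)\approx y$ one has $m(x,x,y)=x$ by quasi-majority and $m(\gamma x,x,y)=y$ by the odd identity $m(a,\gamma a,b)\approx b$ with $a=\gamma x$, after which the outer expression $m(x,\gamma x,y)$ again equals $y$ by the odd identity. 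The remaining two minority cases, the three cases of the odd quasi-minority identity (which instead produce $\gamma y$), and idempotency of $\mathfrak{m}^\star$ all reduce by analogous two- or three-step chains of rewrites; idempotency in particular comes down to $m(x,\gamma x,x)=x$ from quasi-majority.

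The main obstacle is purely bookkeeping: one must match each inner $m$-subterm against the correct rewrite and track which slot the $\gamma$ sits in. Because $\mathbb{Z}_2$ has only the involution $\gamma$ and the odd identities exhaust every placement of a $\gamma$-related pair among the three arguments, no essentially new phenomenon arises beyond a careful case split across the seven identities.
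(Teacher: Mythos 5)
Your proposal is correct, and its second half takes a genuinely different route from the paper. For the plain minority identities and idempotency (the $\beta=1$ cases), you do exactly what the paper does: rewrite the two inner $m$-terms via the quasi-majority and odd identities and then collapse the outer term the same way. Where you diverge is in how the remaining $G$-minority identities (the $\beta=\gamma$ cases, e.g.\ $\mathfrak{m}^\star(x,\gamma x,y)\approx\gamma y$) are obtained. The paper stops after verifying that $\mathfrak{m}^\star$ is a minority and then upgrades this to ``$G$-minority'' abstractly: $\mathfrak{m}^\star$ is almost minimal because it is generated by the almost minimal $m$, and by the classification of almost minimal ternary operations above $\langle\mathbb{Z}_2\rangle$ (Theorem~\ref{Z2case}) an almost minimal idempotent minority can only be of the $G$-quasi-minority type. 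You instead verify all seven defining identities by direct computation; I checked that in each of the three $\beta=\gamma$ cases the two inner terms and the outer term do fall under the available rewrites (e.g.\ for $\mathfrak{m}^\star(x,\gamma x,y)$ one gets $m(x,\gamma x,y)\approx y$, $m(\gamma x,\gamma x,y)\approx\gamma x$, and then $m(x,\gamma y,\gamma x)\approx\gamma y$ by the odd identity), so your bookkeeping claim holds. The trade-off: the paper's argument is shorter given the machinery already in place but leans on the classification theorem; your argument is longer but entirely self-contained and in fact establishes the slightly stronger statement that $\mathfrak{m}^\star$ is a $G$-minority for \emph{any} idempotent odd majority $m$, with the almost minimality hypothesis playing no role.
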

\begin{proof} Firstly, we evaluate $m_1(x,y,z):=m(\gamma x,y,z)$ on tuples where two elements are equal:
\begin{align*}
m_1(x,x,y)&:=m(\gamma x, x,y)\approx y,\\
m_1(x,y, x)&:=m(\gamma x, y,x)\approx y,\\
    m_1(y,x,x)&:=m(\gamma y, x,x)\approx x,\\
      m_1(x,x,x)&:=m(\gamma x, x,x)\approx x.
\end{align*}
    Now, we evaluate $\mathfrak{m}^\star$ on triplets where two elements are equal
  \begin{align*}
\mathfrak{m}^\star(x,x,y)&:= m(x,\gamma x, y)\approx y,\\
\mathfrak{m}^\star(x,y, x)&:= m(x,\gamma x, y)\approx  y,\\
    \mathfrak{m}^\star(y,x,x)&:= m(y, \gamma x, x)\approx  y,\\
    \mathfrak{m}^\star(x,x,x)&:= m(x, \gamma x, x)\approx x,
\end{align*}
From the identities above we see that $\mathfrak{m}^\star$ is a minority operation. 
Since $\mathfrak{m}^\star$ is almost minimal (being generated by an almost minimal operation) and it is a minority, it must be a $G$-minority.
\end{proof}

\begin{corollary}\label{nominmaj} Let $\mathbb{Z}_2$ act freely on $B$. There are no strictly almost minimal odd majorities or odd Malcev operations above $\langle \mathbb{Z}_2\rangle$.
\end{corollary}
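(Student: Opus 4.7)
The plan is to handle odd majorities first and then deduce the odd Malcev case by duality (Remark~\ref{duality}). Assume for contradiction that $m$ is an odd majority strictly almost minimal above $\langle \mathbb{Z}_2\rangle$. Since almost minimality forces $g(x):=m(x,x,x)$ to lie in $\overline{\langle\mathbb{Z}_2\rangle}=\mathbb{Z}_2$, and composing $m$ with $g^{-1}$ on the outside does not change the clone generated with $\mathbb{Z}_2$ (so strict almost minimality is preserved), I may assume without loss of generality that $m$ is idempotent.

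The key step is then to invoke Lemma~\ref{genmin}, which produces the essential $G$-minority
\[
\mathfrak{m}^\star(x,y,z) := m(x,\gamma m(x,y,z), m(\gamma x, y, z))
\]
as a ternary operation in $\overline{\langle \mathbb{Z}_2\cup\{m\}\rangle}$. By the strict almost minimality of $m$, the operation $\mathfrak{m}^\star$ (being essential and ternary) must locally generate $m$ back, so that $\overline{\langle \mathbb{Z}_2\cup\{m\}\rangle}=\overline{\langle \mathbb{Z}_2\cup\{\mathfrak{m}^\star\}\rangle}$ and $\mathfrak{m}^\star$ is itself strictly almost minimal. Then Proposition~\ref{semichar} identifies $\mathfrak{m}^\star$ as a $G$-invariant Steiner $3$-quasigroup, and the sharp structural result of Lemma~\ref{steinsemi} pins down the essential ternary operations in the generated clone as the translates $\eta\mathfrak{m}^\star$ for $\eta\in\mathbb{Z}_2$. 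Hence $m=\eta\mathfrak{m}^\star$ for some $\eta$, which is immediately absurd: an idempotent quasi-majority satisfies $m(x,x,y)=x$, whereas $\eta\mathfrak{m}^\star(x,x,y)=\eta y$ because $\mathfrak{m}^\star$ is a minority.

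For the odd Malcev case, Remark~\ref{duality} and the reciprocal identities $M(x,y,z)=m(x,\gamma y,z)$, $m(x,y,z)=M(x,\gamma y,z)$ show that any odd Malcev $M$ and the corresponding odd majority $m(x,y,z):=M(x,\gamma y,z)$ generate the same closed clone together with $\mathbb{Z}_2$. Strict almost minimality therefore transfers from $M$ to $m$, and the first part supplies the contradiction. The main obstacle is not any hard combinatorics but the careful verification of the transfer arguments: that strict almost minimality descends from $m$ to the derived operation $\mathfrak{m}^\star$, that $\mathfrak{m}^\star$ really is essential and idempotent so that Proposition~\ref{semichar} and Lemma~\ref{steinsemi} apply, and that the odd majority/odd Malcev duality genuinely preserves strict almost minimality.
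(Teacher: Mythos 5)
Your proposal is correct and follows essentially the same route as the paper: reduce to an idempotent odd majority, apply Lemma~\ref{genmin} to produce the $G$-minority $\mathfrak{m}^\star$, transfer strict almost minimality to $\mathfrak{m}^\star$, and use Lemma~\ref{steinsemi} (via Proposition~\ref{semichar}) to see that $\mathfrak{m}^\star$ cannot generate $m$ back, with the odd Malcev case handled by the duality of Remark~\ref{duality}. The only cosmetic difference is that you phrase the final contradiction as $m=\eta\mathfrak{m}^\star$ being incompatible with $m$ being a quasi-majority, whereas the paper directly concludes $m\notin\langle G\cup\{\mathfrak{m}^\star\}\rangle$; these are the same argument.
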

\begin{proof} It is sufficient to prove there cannot be any strictly almost minimal idempotent odd majority: any odd Malcev generates an odd majority and any odd majority $m(x,y,z)$ generates an idempotent odd majority since either $m$ or $\gamma m$ is idempotent.\\
From Lemma~\ref{genmin}, we know that $m$ generates a $G$-minority $\mathfrak{m}^\star$. Since $m$ is strictly almost minimal, $\mathfrak{m}^\star$ must also be strictly almost minimal. However, we know from Lemma~\ref{steinsemi} that the only ternary operations in $\langle G \cup\{\mathfrak{m}^\star\}\rangle$ not in $\langle G \rangle$ are of the form $\eta\mathfrak{m}^\star$ for $\eta\in G$, and so other $G$-quasi-minorities. So
\[m\not\in\langle G\cup\{\mathfrak{m}^\star\}\rangle,\]
implying that $m$ is not strictly almost minimal.
\end{proof}

Since we know that odd majorities and odd Malcev operations are never minimal, we can deduce the classification of minimal operations above a Boolean group acting freely on a set:

\begin{theorem}[Theorem~\ref{thmf}, minimal operations, Boolean case]\label{Boolminclass} Let $G\acts B$ be a Boolean group acting freely on $B$ with $s$-many orbits (where $s$ is possibly infinite). Let $f$ be a minimal operation above $\langle G \rangle$. Then,  $f$ is of one of the following types:
\begin{enumerate}
    \item a unary operation;
    \item a binary operation;
    \item a ternary minority of the form $\alpha \mathfrak{m}$ where $\mathfrak{m}$ is a Boolean $G$-invariant Steiner $3$-quasigroup and $\alpha\in G$;
    \item a $k$-ary orbit\-/semiprojection for $3\leq k \leq s$.
\end{enumerate} 
Furthermore, all Boolean $G$-invariant Steiner $3$-quasigroups on $B$ are minimal and they exist if and only if $s=2^n$ for some $n\in\mathbb{N}$ or is infinite.
\end{theorem}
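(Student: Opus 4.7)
The plan is to assemble the earlier results of the paper, so that essentially no new work is required beyond combining them correctly. Since any minimal operation is in particular almost minimal, I will first invoke Theorem~\ref{Boolcase} (when $|G|>2$) or Theorem~\ref{Z2case} (when $|G|=2$) to restrict the form of $f$. In the $|G|>2$ case one immediately obtains a unary operation, a binary operation, a ternary $G$-quasi-minority, or a $k$-ary orbit-semiprojection with $3\leq k\leq s$. In the $\mathbb{Z}_2$ case Theorem~\ref{Z2case} additionally allows odd majorities and odd Malcev operations (up to permutation of variables); however, minimality implies strict almost minimality, and Corollary~\ref{nominmaj} rules out both of these types under that stronger assumption. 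Binary orbit-semiprojections, which can appear in the $\mathbb{Z}_2$ classification with $k=2$, are subsumed by item (2) of the statement, so after this reduction the only non-trivial case left is that of a minimal $G$-quasi-minority.

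Suppose then that $f$ is a $G$-quasi-minority minimal above $\langle G\rangle$. Its diagonal $x\mapsto f(x,x,x)$ lies in $\langle G\rangle$, so, using that the action is free, there is a unique $\alpha\in G$ with $f(x,x,x)\approx \alpha^{-1} x$. I would then set $\mathfrak{q}:=\alpha\circ f$. Because $G$ is Boolean one has $\alpha^2=1$, so $f=\alpha\mathfrak{q}$ and the clones $\langle G\cup\{f\}\rangle$ and $\langle G\cup\{\mathfrak{q}\}\rangle$ coincide; hence $\mathfrak{q}$ is also minimal above $\langle G\rangle$. A direct substitution in the defining identity (\ref{orbitmindef}) of a $G$-quasi-minority shows that $\mathfrak{q}$ is idempotent, so it is a $G$-minority. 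Now Proposition~\ref{prop:bts3g} applies and yields that $\mathfrak{q}$ is a $G$-invariant Boolean Steiner $3$-quasigroup; thus $f=\alpha\mathfrak{q}$ has the form claimed in item (3).

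For the \emph{furthermore} clause, Lemma~\ref{lem:boolmin} directly gives that every $G$-invariant Boolean Steiner $3$-quasigroup is minimal above $\langle G\rangle$, and Corollary~\ref{cor:correspondence} (which goes through the bijection with Boolean $G$-weighted Steiner $3$-quasigroups and Hanani's existence theorem for Steiner quadruple systems) states that such operations exist if and only if $s=2^n$ for some $n\in\mathbb{N}$ or $s$ is infinite.

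The only point demanding actual checking is the passage from a minimal $G$-quasi-minority $f$ to the normalised idempotent form $\mathfrak{q}=\alpha f$: one has to verify that $\mathfrak{q}$ is genuinely idempotent and still satisfies the $G$-quasi-minority identity, and that it is minimal above $\langle G\rangle$. Both checks are short, relying only on $\alpha^2=1$ and on the fact that post-composition by a group element is a bijection on operations that commutes with generation over $G$. Every other ingredient has already been proved, so the theorem follows by assembly.
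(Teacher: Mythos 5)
Your proposal is correct and follows essentially the same route as the paper: combine Theorems~\ref{Boolcase} and~\ref{Z2case} with Corollary~\ref{nominmaj} to eliminate odd majorities/Malcevs, reduce the remaining $G$-quasi-minority case to an idempotent one via composition with a group element, and then apply Proposition~\ref{prop:bts3g}, Lemma~\ref{lem:boolmin}, and Corollary~\ref{cor:correspondence}. The only difference is that you spell out the normalisation $f=\alpha\mathfrak{q}$ in more detail than the paper, which states it in one sentence; your verification of that step is accurate.
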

\begin{proof} Theorems~\ref{Boolcase} and~\ref{Z2case} classify almost minimal operations above a Boolean group acting freely on $B$. Proposition~\ref{prop:bts3g} tells us that minimal $G$-minorities have to be Boolean $G$-invariant $3$-quasigroups. All non-idempotent ones can be obtained by pre-composing an idempotent one by some $\alpha\in G$, and Corollary~\ref{cor:correspondence} gives us necessary and sufficient conditions for the existence of a Boolean $G$-invariant $3$-quasigroup. Theorem~\ref{nominmaj} tells us that there cannot be any strictly almost minimal (and thus no minimal) odd majorities or odd Malcev operations, thus concluding the proof.
\end{proof}

\section{On the existence of minimal orbit-semiprojections}\label{sec:palfy}
In Corollary~\ref{cor:correspondence}, we gave necessary and sufficient conditions for the existence of a minimal $G$-quasi-minority. In this section, we investigate the existence of $k$-ary orbit\-/semiprojections. For semiprojections, a theorem of P\'{a}lfy~\cite{palfy1986arity} yields that there is a $k$-ary semiprojection minimal above the clone of projections for each $2\leq k\leq s$ where $s$ is the size of the domain. Using a similar construction, we prove an analogue for orbit\-/semiprojections in Theorem~\ref{thm:Palfy}. We mimic the idea of P\'{a}lfy of constructing an essential semiprojection as close as possible to a projection in order to have a good understanding of the other operations it generates. With this, we prove that in a finite context orbit\-/semiprojections exist in each arity $2\leq k\leq s$ where $s$ is the number of $G$-orbits. For $B$ infinite, it sounds plausible that there are group actions $G\acts B$ with $k$ many orbits and no $k$-ary orbit\-/semiprojections. Still, in Theorem~\ref{thm:Palfy}, we prove that our result also holds when $G\acts B$ is oligomorphic. 

\begin{notation} For $a,b\in B$, we write $a\sim b$ to say that $a$ and $b$ are in the same orbit.
\end{notation}

\begin{definition}\label{def:palfy} Let $G\acts B$ with $s$ many orbits. Let $b\in B$ be in a $G$-orbit consisting of more than one element. Let {$\alpha,\beta\in\overline{G}$.} For finite $k\leq s$, we define the $k$-ary orbit\-/semiprojection $f^{(\alpha, \beta)}$ as follows: for $(a_1, \dots, a_k)\in B^k$, 
\begin{equation*}
 f^{(\alpha, \beta)}(a_1, \dots, a_k):=
    \begin{cases}
       \beta b \text{ if } a_1\sim b \text{ and } a_1, \dots, a_k \text{ are all in distinct orbits};\\
        \alpha a_1 \text{ otherwise.}
        \end{cases}
        \end{equation*}
We write $f$ for $f^{(1, 1)}$, where $1$ is the identity in $G$.
\end{definition}

\begin{remark}\label{rem:stuffinstuff} Note that any $f^{(\alpha, \beta)}$ for $\alpha, \beta\in G$ is in $\langle G\cup\{f\}\rangle$, being obtained by
\[\beta f(\beta^{-1}\alpha x_1, \dots, x_k).\] 
Conversely, for any $\alpha, \beta\in G$, $f\in \langle G\cup\{f^{(\alpha, \beta)}\}\rangle$. Also note that for $\gamma\in G$,
\[\gamma f^{(\alpha, \beta)}=f^{(\gamma\alpha, \gamma\beta)}.\]

\end{remark}

\begin{lemma}\label{lem:everythingpalfy} {Let $\alpha, \beta\in G$ and $t\in\langle G \cup \{f^{(\alpha, \beta)}\}\rangle \setminus \langle G\rangle$. Then, $t$ is of the form $f^{(\alpha', \beta')}$ for some $\alpha', \beta'\in G$ up to permuting variables and removing dummy variables.}
\end{lemma}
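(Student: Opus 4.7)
By Remark~\ref{rem:stuffinstuff}, $\langle G \cup \{f^{(\alpha,\beta)}\}\rangle = \langle G \cup \{f\}\rangle$, so we may assume $\alpha = \beta = 1$ and work with $f = f^{(1,1)}$ throughout. The plan is to prove by structural induction on terms in $\langle G \cup \{f\}\rangle$ that every operation $t$ in this clone is, up to permuting its variables and removing dummy ones, either of \textbf{type (a)}, i.e.\ essentially unary of the form $\delta x_i$ for some $\delta \in G$ and some index $i$ (hence in $\langle G\rangle$), or of \textbf{type (b)}, i.e.\ equal to $f^{(\alpha',\beta')}$ for some $\alpha', \beta' \in G$. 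The base cases are immediate: variables and $\gamma \in G$ applied to a variable yield type (a); $f$ itself is of type (b) with $\alpha' = \beta' = 1$. The unary composition $t = \gamma \circ t'$ preserves both types via Remark~\ref{rem:stuffinstuff}: a type-(a) $\delta x_i$ becomes $(\gamma\delta)x_i$, and $f^{(\alpha',\beta')}$ becomes $f^{(\gamma\alpha', \gamma\beta')}$.

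The heart of the argument is the composition case $t = f(g_1, \ldots, g_k)$, where each $g_i$ is inductively of type (a) or (b). The structure of $f$ is highly restrictive: on any input tuple $(a_1, \ldots, a_k)$, $f$ outputs $a_1$ unless $a_1 \sim b$ and $a_1, \ldots, a_k$ lie in pairwise distinct $G$-orbits, in which case it outputs $b$. Each $g_i(\vec{x})$ takes a value either of the form $\tau_i x_{s_i}$ (when $g_i$ is essentially unary, or when a type-(b) $g_i$ is in its non-exceptional case) or of the form $\sigma_i b$ (when a type-(b) $g_i$ is in its own exceptional case). The first thing to observe is the \emph{collapse subcase}: whenever there exist $i \neq j$ such that $g_i(\vec{x})$ and $g_j(\vec{x})$ always lie in the same $G$-orbit as functions of $\vec{x}$, the outer exceptional condition never triggers, so $t = g_1$, which is of the desired form by the inductive hypothesis. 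Otherwise, the leading-variable indices $s_1, \ldots, s_k$ of the $g_i$'s are pairwise distinct, and one carries out a case analysis tracking which combinations of inner-exceptional cases can coexist with the outer-exceptional trigger; the conclusion is that $t$ coincides with $f^{(\tau_1, \beta')}$ on the distinguished variables $x_{s_1}, \ldots, x_{s_k}$ for a suitable $\beta' \in G$ arising from the composition.

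The main obstacle is reconciling the interaction between the inner exceptional cases of type-(b) $g_i$'s and the outer exceptional trigger of $f$. An inner $g_i$ entering its exceptional case outputs $\sigma_i b$, which lies in $b$'s orbit; this can align or misalign with the outer trigger depending on the other inputs and on the inner structures of the remaining $g_j$'s. The decisive step is to verify, through orbit-based case analysis, that in any configuration where the outer exceptional case fires, the input tuple $\vec{x}$ satisfies precisely the canonical exceptional condition of some $f^{(\alpha',\beta')}$ on $(x_{s_1}, \ldots, x_{s_k})$, so that no spurious additional outputs arise. Tying the inductive step together then yields the claim.
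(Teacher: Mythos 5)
Your overall skeleton matches the paper's proof: induction on terms, the reduction to $\alpha=\beta=1$ via Remark~\ref{rem:stuffinstuff}, the observation that the orbit of each $g_i(\vec{x})$ is governed by a single ``leading'' variable, a collapse case when two leading variables coincide (so the outer exceptional clause never fires and $t=g_1$), and a composition case otherwise. The base cases, the unary-composition case, and the collapse case are all fine and agree with identities (\ref{eq:palfy2}) and (\ref{eq:palfy4}) of the paper.

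The gap is exactly in the step you defer to an unexecuted ``orbit-based case analysis''. In the non-collapse case you assert that $t=f(g_1,\dots,g_k)$ coincides with $f^{(\tau_1,\beta')}$ on the leading variables $x_{s_1},\dots,x_{s_k}$, all other variables being dummy. This is not true in general when $g_1$ is itself of type (b): the inner exceptional clause of $g_1$ is governed by \emph{all} of $g_1$'s variables, not just its leading one, and whenever the outer exceptional clause does not fire the output of $t$ is $g_1(\vec{x})$, which equals $\delta_1 b$ rather than $\tau_1 x_{s_1}$ precisely when $g_1$'s own exceptional clause fires while the outer one does not. Concretely, for the binary $f=f^{(1,1)}$ consider $t=f(f(x_1,x_2),x_3)$: here $s_1=1$ and $s_2=3$ are distinct, yet for $a\sim b$ with $a\neq b$ one computes $t(a,a,a)=a$ while $t(a,c,a)=b$ for any $c\not\sim b$, so $t$ genuinely depends on $x_2$ and cannot be a binary $f^{(\alpha',\beta')}$ up to permuting variables and removing dummy variables. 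Your proposed verification (``whenever the outer exceptional case fires, the inputs satisfy the canonical exceptional condition'') only checks the firing direction and would not detect this failure, which occurs when the outer clause does \emph{not} fire. Note that the paper's Claim is restricted to configurations where the inner and outer exceptional clauses are forced to coincide --- namely $l(t_i)=i$ for all $i$, so that the leading variables are exactly the variables of a type-(b) $t_1=f^{(\gamma,\delta)}(x_1,\dots,x_k)$ (identities (\ref{eq:palfy1}) and (\ref{eq:palfy3})) --- or where two leading variables collide. Your blanket assertion for arbitrary pairwise-distinct leading variables is strictly stronger than what that analysis yields and is refuted by the example above; the misaligned configuration has to be addressed explicitly (what does survive it, and is what Remark~\ref{rem:minimalityalways} actually needs, is that such a $t$ still generates $f$ back, e.g.\ $t(x_1,x_2,x_2)=f(x_1,x_2)$ in the example).
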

\begin{proof} We prove this by induction on complexity of $t$, where the base case is trivial. We now carry out the inductive step.
{Consider first the case where $t=\gamma t_1$ for $\gamma\in G$ and $t_1\in\langle G \cup \{f^{(\alpha, \beta)}\}\rangle$ satisfying the induction hypothesis. If $t_1\in\langle G\rangle$ then so is $t$; otherwise,  $t_1=f^{(\alpha',\beta')}$ (up to permuting variables and removing dummy variables), and $t=f^{(\gamma \alpha',\gamma\beta')}$ (with the same caveat) by Remark~\ref{rem:stuffinstuff}.}
{Secondly, consider $t$ of the form}
$f^{(\alpha, \beta)}(t_1, \dots, t_k)$ where the $t_i\in\langle G \cup \{f^{(\alpha, \beta)}\}\rangle$ {satisfy the induction hypothesis.} 
We may suitably permute variables to {assume $t_1$} is 
either of the form 
$\gamma x_1$ or of the form $f^{(\gamma, \delta)}(x_1, \dots, x_k)$ for some $\gamma, \delta\in G$. Also, 
 each of the $t_i$ for $i>1$ is of the form $\gamma x_{\tau(i)}$ for $\tau(i)\in\mathbb{N}$ and $\gamma_i\in G$, or of the form $f^{(\gamma_i, \delta_i)}(x_{\sigma^i(1)}, \dots, x_{\sigma^i(k)})$, where $\sigma^i$ is some injection $\{1, \dots, k\}\mapsto\mathbb{N}$, and $\gamma_i, \delta_i\in G$. For $i>1$, let $l(t_i)$ be $\tau(i)$ in the first case and $\sigma^i(1)$ in the second case. The identities in the following claim then tell us that {$t$} is either in $\langle G\rangle$ or again of the form $f^{(\alpha', \beta')}$ for $\alpha', \beta'\in G$, completing the proof.\\

\textbf{Claim:} we have the following identities:
\begin{itemize}
    \item If $l(t_i)=i$ for all $2\leq i\leq k$,
    \begin{equation}\label{eq:palfy1}
        f^{(\alpha, \beta)}(\gamma x_1, t_2, \dots, t_k)\approx f^{(\alpha \gamma, \beta)};
    \end{equation}
    \item If $l(t_i)=1$ or $l(t_i)=l(t_j)$ for some $2\leq i<j\leq k$, 
    \begin{equation}\label{eq:palfy2}
        f^{(\alpha, \beta)}(\gamma x_1, t_2, \dots, t_k)\approx \alpha\gamma x_1;
    \end{equation}
    \item If $l(t_i)=i$ for all $2\leq i\leq k$,
    \begin{equation}\label{eq:palfy3}
        f^{(\alpha, \beta)}(f^{(\gamma, \delta)}, t_2, \dots, t_k)\approx f^{(\alpha \gamma, \beta)};
    \end{equation}
     \item If $l(t_i)=1$ or $l(t_i)=l(t_j)$ for some $2\leq i<j\leq k$, 
    \begin{equation}\label{eq:palfy4}
        f^{(\alpha, \beta)}(f^{(\gamma, \delta)}, t_2, \dots, t_k)\approx \alpha f^{(\gamma, \delta)}.
    \end{equation}
\end{itemize}
\begin{proof}[Proof of claim] The identities follow directly from Definition~\ref{def:palfy}. For example, for (\ref{eq:palfy1}), given $(a_1, \dots, a_k)$, we have that
\[f^{(\alpha, \beta)}(\gamma x_1, t_2, \dots, t_k)(a_1, \dots, a_k)=f^{(\alpha, \beta)}(\gamma a_1, a_2', \dots, a_k'),\]
where $a_i'\sim a_i$. Hence, if $(a_1, \dots, a_k)$ were all in distinct orbits and $a_1\sim b$, we get that 
\[f^{(\alpha, \beta)}(\gamma a_1, a_2', \dots, a_k')=\beta b.\]
Otherwise, we have
\[f^{(\alpha, \beta)}(\gamma a_1, a_2', \dots, a_k')=\alpha\gamma a_1.\]
This yields the desired identity in (\ref{eq:palfy1}). For identity (\ref{eq:palfy2}), we get that 
\[f^{(\alpha, \beta)}(\gamma x_1, t_2, \dots, t_k)(a_1, \dots, a_k)=f^{(\alpha, \beta)}(\gamma a_1, a_{l(t_2)}', \dots, a_{l(t_k)}'),\]
where $a_{l(t_k)}'\sim a_{l(t_k)}$. In particular, since either $l(t_i)=1$ or $l(t_i)=l(t_j)$ for $2\leq i<j\leq k$, we are never in the first condition of Definition~\ref{def:palfy}, and so the identity (\ref{eq:palfy2}) holds. The final two identities (\ref{eq:palfy3}) and (\ref{eq:palfy4}) follow by the same reasoning as the previous two.
\end{proof}
\end{proof}


\begin{remark}\label{rem:minimalityalways}
{From Lemma~\ref{lem:everythingpalfy} and Remark~\ref{rem:stuffinstuff}, we get that whenever $G\acts B$ has $s$-many orbits for $s\geq 2$, for each $1< k\leq s$, the orbit\-/semiprojection $f$ from Definition~\ref{def:palfy} is such that every $h\in\langle G\cup\{f\}\rangle\setminus\langle G\rangle$ has arity $\geq k$ and is such that $f\in \langle G\cup\{h\}\rangle\setminus\langle G\rangle$. In particular, if $\overline{G}=G$, then $f$ is minimal above $\langle G\rangle$. Thus, Lemma~\ref{lem:everythingpalfy} proves the existence of minimal orbit\-/semiprojections above finite non-transitive permutation groups. Moreover, for every $G\acts B$ with $s$-many orbits and $1<k\leq s$, there is a $k$-ary orbit\-/semiprojection $f$ such that $\langle G\cup\{f\}\rangle$ is minimal in the monoidal interval of all (i.e.~not just the closed ones) clones above $\langle G\rangle$. Thus, for minimal operations above permutation groups in the lattice of all clones, we get a version of Theorem~\ref{maintheorem} where $k$-ary orbit\-/semiprojections always exist as minimal above any $G\acts B$ for all $1<k\leq s$ (cf. Remark~\ref{rem:oknoclosure}).}
\end{remark}

{Finding} minimal orbit\-/semiprojections becomes substantially harder when looking at closed clones above infinite permutation groups. Below, we prove the existence of orbit\-/semiprojections above oligomorphic permutation groups (Theorem~\ref{thm:Palfy}). Note that if $G$ is the automorphism group of an $\omega$-categorical structure in a finite language, the existence of minimal orbit\-/semiprojections above $\overline{\langle G\rangle}$ can be deduced non-constructively from Lemma~\ref{lem:everythingpalfy} and Fact~\ref{existencemin}. However, to deal with the case of $G\acts B$ being oligomorphic in full generality, we need to understand more deeply the structure of the construction of Definition~\ref{def:palfy}.\\

In order to understand what an operation $g\in\overline{\langle G\cup\{f^{(\alpha, \beta)}\}\rangle}$ can generate, we need a slight generalisation of the notion of a two-sided ideal in semigroup theory. Restricting the standard terminology (cf.~\cite[Chapter 1.4]{kilp2011monoids}) to our context, given a semigroup $\mathcal{S}$ and $\mathcal{T}$ a subsemigroup of $\mathcal{S}$, a $(\mathcal{T}, \mathcal{S})$\textbf{-biact} is a set $\mathcal{I}\subseteq\mathcal{S}$ such that $\mathcal{T}\mathcal{I}\mathcal{S}\subseteq\mathcal{I}$. Below, we prove that the construction of Definition~\ref{def:palfy} is tied to the existence of certain minimal closed biacts.

\begin{lemma}\label{lem:biacts} Let $G\acts B$ with $s\geq 2$ many orbits and $b\in B$. Then, there is an orbit\-/semiprojection  of the form $f^{(\gamma, \eta)}$ for $\gamma,\eta\in\overline{G}$ as in Definition~\ref{def:palfy} that is minimal above $\overline{\langle G\rangle}$ if and only if for some $c\sim b$, $\overline{G}$ contains a minimal closed $(\overline{G_{c}}, \overline{G})$-biact. 
\end{lemma}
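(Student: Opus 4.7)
The plan is to build a dictionary between $k$-ary orbit\-/semiprojections $f^{(\gamma,\eta)}$ and pairs $(\gamma,c)\in\overline{G}\times(G\cdot b)$ with $c=\eta b$, and then translate minimality of $f^{(\gamma,\eta)}$ above $\overline{\langle G\rangle}$ into minimality of the associated closed $(\overline{G_c},\overline{G})$\-/biact in $\overline{G}$. Since the definition of $f^{(\gamma,\eta)}$ uses $\eta$ only through $\eta b$, two pairs give the same operation iff they agree on the first coordinate and on $\eta b$; moreover, because $b$ lies in an orbit of size at least two and every element of $\overline{G}$ is injective on this orbit, every such $f^{(\gamma,\eta)}$ is essential, hence lies outside $\overline{\langle G\rangle}$.

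Next I would combine Lemma~\ref{lem:everythingpalfy} with the identities~(\ref{eq:palfy1})--(\ref{eq:palfy4}) and the post\-/composition rule $\mu f^{(\gamma,\eta)}=f^{(\mu\gamma,\mu\eta)}$ for $\mu\in G$ to show that the $k$-ary operations in $\langle G\cup\{f^{(\gamma,\eta)}\}\rangle\setminus\langle G\rangle$ are, up to permuting variables and removing dummies, exactly $\{f^{(\nu\gamma\delta,\nu\eta)}:\nu,\delta\in G\}$. Setting $c:=\eta b$ and noting that $f^{(\nu\gamma\delta,\nu\eta)}$ depends on $\nu\eta$ only through $\nu c$, passing to the local closure of operations corresponds to the topological closure of the parameter set $\{(\nu\gamma\delta,\nu c):\nu,\delta\in G\}$ in $\overline{G}\times(G\cdot b)$ (pointwise on $\overline{G}$, discrete on $B$). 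The slice with second coordinate $c$ forces $\nu c=c$ eventually, i.e.\ $\nu\in\overline{G_c}=\{\mu\in\overline{G}:\mu c=c\}$; using continuity of multiplication in $\overline{G}$, the first\-/coordinate projection of this slice equals $\overline{G_c\gamma G}=\overline{\overline{G_c}\gamma\overline{G}}$, which is exactly the closed $(\overline{G_c},\overline{G})$\-/biact generated by $\gamma$ in $\overline{G}$. The slice at $c'=\sigma c$ for $\sigma\in G$ is the left $\sigma$\-/translate of this biact.

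For the forward direction, assume $f^{(\gamma_0,\eta_0)}$ is minimal, set $c:=\eta_0 b$ and $\mathcal{I}:=\overline{\overline{G_c}\gamma_0\overline{G}}$. Given any nonempty closed $(\overline{G_c},\overline{G})$\-/sub-biact $\mathcal{K}\subseteq\mathcal{I}$, pick $\gamma'\in\mathcal{K}$: by the previous paragraph the essential operation $f^{(\gamma',\eta')}$ (for any $\eta'$ with $\eta'b=c$) lies in $\overline{\langle G\cup\{f^{(\gamma_0,\eta_0)}\}\rangle}$, so minimality of $f^{(\gamma_0,\eta_0)}$ forces $f^{(\gamma_0,\eta_0)}\in\overline{\langle G\cup\{f^{(\gamma',\eta')}\}\rangle}$; paragraph~2 applied to $f^{(\gamma',\eta')}$ then gives $\gamma_0\in\overline{\overline{G_c}\gamma'\overline{G}}\subseteq\mathcal{K}$, hence $\mathcal{I}\subseteq\mathcal{K}$ and $\mathcal{K}=\mathcal{I}$. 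Conversely, suppose $\mathcal{I}$ is a minimal closed $(\overline{G_c},\overline{G})$\-/biact for some $c\sim b$, and pick $\gamma\in\mathcal{I}$ together with $\eta\in\overline{G}$ satisfying $\eta b=c$; Lemma~\ref{lem:collapse} yields almost minimality of $f^{(\gamma,\eta)}$, and for any essential $h\in\overline{\langle G\cup\{f^{(\gamma,\eta)}\}\rangle}\setminus\overline{\langle G\rangle}$, paragraph~2 writes $h$ (up to permuting variables and removing dummies) as $f^{(\gamma',\eta')}$ with $\eta'b=\sigma c$ for some $\sigma\in G$. Then $\sigma^{-1}\gamma'$ lies in the slice at $c$ of the set generated by $f^{(\gamma,\eta)}$, namely $\overline{\overline{G_c}\gamma\overline{G}}\subseteq\mathcal{I}$; minimality of $\mathcal{I}$ gives $\gamma\in\overline{\overline{G_c}(\sigma^{-1}\gamma')\overline{G}}$, which by paragraph~2 translates to $f^{(\gamma,\eta)}\in\overline{\langle G\cup\{\sigma^{-1}h\}\rangle}\subseteq\overline{\langle G\cup\{h\}\rangle}$, establishing minimality of $f^{(\gamma,\eta)}$.

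The main obstacle will be paragraph~2: one must verify carefully that the local closure of the clone generated by $f^{(\gamma,\eta)}$ over $G$ matches, slice by slice, the topological closure of the corresponding parameter set in $\overline{G}\times(G\cdot b)$, and that this slice at $c$ really does coincide with the closed $(\overline{G_c},\overline{G})$\-/biact generated by $\gamma$. The crucial technical inputs there are the identity $\overline{G_c}=\{\nu\in\overline{G}:\nu c=c\}$ and the equality $\overline{G_c\gamma G}=\overline{\overline{G_c}\gamma\overline{G}}$, together with the stability of this set under the left $\overline{G_c}$\-/ and right $\overline{G}$\-/actions, which in turn rests on continuity of multiplication in $\overline{G}$.
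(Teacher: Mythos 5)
Your proposal is correct and follows essentially the same route as the paper: both rest on the characterisation (via Lemma~\ref{lem:everythingpalfy} and Remark~\ref{rem:stuffinstuff}) of the operations locally generated by $f^{(\gamma,\eta)}$ as the $f^{(\alpha\gamma\beta,\alpha\eta)}$, and both translate mutual generation among these into membership in the principal closed $(\overline{G_c},\overline{G})$-biact of $\gamma$. The only differences are presentational: you prove the forward direction directly rather than by contrapositive, and you work with the explicit closure $\overline{\overline{G_c}\gamma\overline{G}}$ where the paper treats $\overline{G_c}\gamma\overline{G}$ itself as the closed biact.
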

\begin{proof}  Before proving the equivalence we make a general observation. Take any $f^{(\gamma, \eta)}$ for $\gamma, \eta\in\overline{G}$. From the identities (\ref{eq:palfy1})-(\ref{eq:palfy4}) in Lemma~\ref{lem:everythingpalfy} and Remark~\ref{rem:stuffinstuff}, we can deduce that every essential operation depending on all variables in $\overline{\langle G\cup\{f^{(\gamma, \eta)}\}\rangle}$ is, up to permuting its variables, of the form $f^{(\alpha\gamma\beta, \alpha\eta)}$ for $\alpha, \beta\in\overline{G}$. {For the right-to-left direction of this lemma, we will need the following claim:}\\

\textbf{Claim 1:} {For $\alpha, \beta, \gamma, \eta\in\overline{G}$, we have that}
\[f^{(\gamma\beta, \eta)}\in \overline{\langle G\cup\{f^{(\alpha\gamma\beta, \alpha\eta)}\}\rangle}\;.\]
\begin{proof}[Proof of Claim 1]
To see this, for each $A\subseteq B$ finite, let $\alpha_A\in G$ agree with $\alpha$ on $\eta b\cup\gamma\beta A$. Then, by construction of $f^{(\gamma, \eta)}$,
    \[\alpha_A^{-1}f^{(\alpha\gamma\beta, \alpha\eta)}\upharpoonright_{A}=f^{(\gamma\beta, \eta)}\upharpoonright_{A}\;.\]
{By local closure, }
\[f^{(\gamma\beta, \eta)}\in\overline{\langle G\cup\{f^{(\alpha\gamma\beta, \alpha\eta)}\}\rangle}\;,\] yielding our desired claim.
\end{proof}
Now, we begin to prove the right-to-left direction of the lemma. Let $\mathcal{I}$ be a minimal closed $(\overline{G}_c, \overline{G})$-biact for some $c\sim b$. Let $\eta\in G$ be such that $\eta b=c$ and let $\gamma\in\mathcal{I}$. We consider $f^{(\gamma, \eta)}$ and prove it is minimal above $\overline{\langle G\rangle}$ . {As observed above, any  essential operation depending on all variables in $\overline{\langle G\cup\{f^{(\gamma, \eta)}\}\rangle}$ is, up to permuting its variables, of the form $f^{(\alpha\gamma\beta, \alpha\eta)}$ for {$\alpha, \beta\in\overline{G}$}, and we need to show it can generate back $f^{(\gamma, \eta)}$. By the above claim, we may replace this function by $f^{(\gamma\beta, \eta)}$.}
Since $\mathcal{I}$ is a minimal closed $(\overline{G}_c, \overline{G})$-biact, there are $\delta\in \overline{G}_c$ and $\beta^*\in\overline{G}$ such that $\delta\gamma\beta\beta^*=\gamma$. Hence, \[f^{(\delta\gamma\beta\beta^*, \delta\eta)}=f^{(\gamma, \delta\eta)}=f^{(\gamma, \eta)}\;,\]
where the final equality follows from the fact that $\delta c=c$. This proves that $f^{(\gamma, \eta)}$ is minimal.\\

For the left-to-right direction, we prove its contrapositive. Suppose there is no minimal closed $(\overline{G}_c, \overline{G})$-biact for any $c\sim b$. {For any $f^{(\gamma, \eta)}$, with $\gamma, \eta\in\overline{G}$
set $c=\eta b$.}\\

{\textbf{Claim 2:} There are $\xi\in \overline{G_c}, \chi\in \overline{G}$ such that  $\gamma\not\in\overline{G}_c\xi\gamma\chi\overline{G}$.} 
\begin{proof}[Proof of Claim 2]{ Firstly, it is easy to check that for any $\alpha\in\overline{G}$, $\overline{G}_c\alpha\overline{G}$ is a closed $(\overline{G}_c, \overline{G})$-biact and it is contained in any closed $(\overline{G}_c, \overline{G})$-biact containing $\alpha$. Now, suppose by contradiction that for all $\xi\in \overline{G_c}, \chi\in \overline{G}$ we have $\gamma\in\overline{G}_c\xi\gamma\chi\overline{G}$. This implies that for all $\alpha\in \overline{G}_c\gamma\overline{G}$, $\overline{G}_c\alpha\overline{G}=\overline{G}_c\gamma\overline{G}$, meaning that $\overline{G}_c\gamma\overline{G}$ is minimal, contradicting our assumption.}   
\end{proof}
{From Claim 2, and the general observation at the beginning of our proof, any operation of the form $f^{(\gamma, \theta)}\in \overline{\langle G\cup\{f^{(\xi\gamma\chi, \eta)}\}\rangle}$ for $\theta\in \overline{G}$ is such that $\theta=\alpha\eta$ for $\alpha\in\overline{G}\setminus\overline{G}_c$. In particular, $\theta(b)=\alpha c\neq c=\eta(b)$, and so }  
\[f^{(\gamma, \eta)}\not\in\overline{\langle G\cup\{f^{(\xi\gamma\chi, \eta)}\}\rangle},\]
yielding that $f^{(\gamma, \eta)}$ is not minimal. 
\end{proof}


Below, we prove the existence of minimal closed $(\overline{G}_b, \overline{G})$-biacts whenever $G\acts B$ is an oligomorphic permutation group. For our proof we need a {compactness argument} 
 {used} to show the existence of a minimal closed left-ideal in this context in~\cite{barto2017equivalence} (cf.~\cite{Topo-Birk, bodirsky2011n0}): 

\begin{lemma}[{Lemma 4 in~\cite{pinsker2021canonical}}]\label{lem:compactness} Let $G\acts B$ be oligomorphic and $b\in B$. Let $\sim_b$ be the equivalence relation on  $\overline{G}$ where $\alpha\sim_b\beta$ indicates that $\alpha\in\overline{G_b\beta}$. Then, the space $\overline{G}/{\sim_b}$ is compact and Hausdorff.
\end{lemma}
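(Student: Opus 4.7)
The plan is to realise $\overline{G}/{\sim_b}$ as a profinite (inverse limit of finite sets) space, which is automatically compact Hausdorff. The first step is a cleaner description of $\sim_b$: using that $G_b$ is a group, $\alpha\sim_b\beta$ holds if and only if, for every finite $A\subseteq B$, the restrictions $\alpha|_A$ and $\beta|_A$ lie in the same orbit of the post-composition action of $G_b$ on $B^A$. Writing $M_A$ for the set of $G_b$-orbits of $\{\alpha|_A:\alpha\in\overline{G}\}$, oligomorphicity of $G\acts B$ applied to $(|A|+1)$-tuples containing $b$ shows that each $M_A$ is finite, and the natural restriction maps $M_{A'}\to M_A$ for $A\subseteq A'$ organise the $M_A$ into an inverse system of finite sets.

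I would then consider the map
\[
\Phi\colon \overline{G}\longrightarrow \varprojlim_A M_A,\qquad \alpha\longmapsto \bigl([\alpha|_A]_{G_b}\bigr)_A \; ,
\]
whose codomain is profinite and hence compact Hausdorff. The map $\Phi$ is continuous (the topology on $\overline{G}$ is pointwise convergence and each $M_A$ is finite discrete), its fibres coincide with the $\sim_b$-classes by the first step, so it descends to a continuous injection $\overline{\Phi}\colon\overline{G}/{\sim_b}\to\varprojlim_A M_A$. To deduce compactness and Hausdorffness of the quotient, one shows that $\overline{\Phi}$ is a homeomorphism onto the whole inverse limit. Openness of $\overline{\Phi}$ reduces, via openness of the quotient map $q\colon\overline{G}\to\overline{G}/{\sim_b}$, to the observation that the $\sim_b$-saturation of a basic open $\{\gamma:\gamma|_A=f\}$ equals $\{\gamma:\gamma|_A\in G_b\cdot f\}$, a finite union of basic opens; a short check then shows that $\overline{\Phi}$ sends basic opens in the quotient to intersections of basic cylinders with the image.

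Surjectivity of $\overline{\Phi}$ is the heart of the matter. Given a coherent family $(t_A)_A\in\varprojlim_A M_A$, I would enumerate $B=\{b_0,b_1,\ldots\}$ (invoking Zorn's lemma for larger cardinalities) and build $\alpha\in\overline{G}$ realising it stage by stage: at step $n$, having defined $\alpha$ on $\{b_0,\ldots,b_{n-1}\}$ in the orbit $t_{\{b_0,\ldots,b_{n-1}\}}$, pick any representative $(d_0,\ldots,d_n)$ of $t_{\{b_0,\ldots,b_n\}}$; by coherence its initial segment lies in $t_{\{b_0,\ldots,b_{n-1}\}}$, so there is $g\in G_b$ with $g(d_i)=\alpha(b_i)$ for $i<n$, and setting $\alpha(b_n):=g(d_n)$ keeps the induction going. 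Because each restriction $\alpha|_A$ is, by construction, a $G_b$-translate of the restriction of some element of $\overline{G}$, it agrees on $A$ with some element of $G$, so $\alpha\in\overline{G}$.

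The main obstacle is this surjectivity/realisation step: verifying that every coherent choice of $G_b$-orbits is induced by an actual element of $\overline{G}$ requires the inductive amalgamation argument above, which exploits the finiteness of each $M_A$ supplied by oligomorphicity. Once this is handled, the other steps are routine topological verifications.
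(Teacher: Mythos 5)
The paper offers no proof of this lemma: it is imported as Lemma 4 of \cite{pinsker2021canonical}, so there is no in-paper argument to compare yours against. On its own terms your proof is correct, and it is the standard profinite realisation of this quotient. The reformulation of $\sim_b$ (agreement of the $G_b$-orbits of $\alpha|_A$ and $\beta|_A$ for every finite $A$) is exactly right and in passing shows that $\sim_b$ is indeed an equivalence relation; oligomorphicity applied to $(|A|+1)$-tuples through $b$ does give finiteness of each $M_A$; the saturation computation $q^{-1}(q(\{\gamma:\gamma|_A=f\}))=\{\gamma:\gamma|_A\in G_b\cdot f\}$ is valid because $g^{-1}\delta\in\overline{G}$ and $\delta\sim_b g^{-1}\delta$ for $g\in G_b$; and the stage-by-stage realisation of a coherent family, together with the observation that the resulting $\alpha$ agrees on each initial segment with $gh$ for some $g\in G_b$ and $h\in G$, does place $\alpha$ in $\overline{G}$ and establishes surjectivity. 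The only soft spot is the parenthetical appeal to Zorn's lemma for uncountable $B$: at a limit stage one would have to choose the next value consistently with infinitely many finite conditions simultaneously, which is not automatic. This is harmless here, since in this paper oligomorphic groups act on countable domains (the proof of Lemma~\ref{lem:existencebiacts}, where the present lemma is applied, enumerates $B$ as $(b_i)_{i\in\mathbb{N}}$), but you should either state the countability hypothesis explicitly or replace the transfinite step by a genuine compactness argument.
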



\begin{lemma}\label{lem:existencebiacts} Let $G\acts B$ be oligomorphic. Then, there is a minimal closed $(\overline{G}_b, \overline{G})$-biact $\mathcal{I}$.
\end{lemma}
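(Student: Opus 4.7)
The plan is to apply Zorn's lemma to the poset $\mathcal{P}$ of all non-empty closed $(\overline{G}_b, \overline{G})$-biacts of $\overline{G}$, ordered by reverse inclusion. Since a minimal closed biact in the reverse-inclusion order is exactly a minimal closed biact in the sense of Lemma~\ref{lem:biacts}, it will suffice to show that every chain in $\mathcal{P}$ has an upper bound, that is, that the intersection of a descending chain of non-empty closed biacts is itself a non-empty closed biact. Closedness under intersection and the biact property are both preserved by arbitrary intersections, so the core task is to prove that this intersection is non-empty.

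The key observation, which I would establish first, is that every closed $(\overline{G}_b, \overline{G})$-biact $\mathcal{J}$ is saturated under the equivalence relation $\sim_b$ from Lemma~\ref{lem:compactness}. Indeed, if $\alpha \in \mathcal{J}$ and $\beta \sim_b \alpha$, then by definition $\beta \in \overline{G_b \alpha}$; but $G_b \alpha \subseteq \overline{G_b} \cdot \mathcal{J} \subseteq \mathcal{J}$, so by closedness $\overline{G_b \alpha} \subseteq \mathcal{J}$, whence $\beta \in \mathcal{J}$. Consequently, writing $\pi : \overline{G} \to \overline{G}/{\sim_b}$ for the quotient map, the closed biacts $\mathcal{J}$ are precisely the preimages under $\pi$ of their images $\pi(\mathcal{J})$, and the latter are closed subsets of $\overline{G}/{\sim_b}$ (a subset of the quotient is closed iff its preimage is closed in $\overline{G}$).

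With this translation in hand, the compactness argument goes through cleanly. Given a chain $(\mathcal{J}_i)_{i \in I}$ of non-empty closed biacts, the family $(\pi(\mathcal{J}_i))_{i \in I}$ is a chain of non-empty closed subsets of the compact Hausdorff space $\overline{G}/{\sim_b}$ (by Lemma~\ref{lem:compactness}), and therefore has the finite intersection property. By compactness,
\[
\bigcap_{i \in I} \pi(\mathcal{J}_i) \neq \emptyset,
\]
and pulling back along $\pi$ yields $\bigcap_{i \in I} \mathcal{J}_i \neq \emptyset$. Since each $\mathcal{J}_i$ is a closed biact and these properties are preserved under intersection, this intersection is the desired upper bound in $\mathcal{P}$. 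Applying Zorn's lemma and noting that $\overline{G}$ itself is a non-empty closed $(\overline{G}_b, \overline{G})$-biact (so $\mathcal{P}$ is non-empty) produces a minimal closed $(\overline{G}_b, \overline{G})$-biact $\mathcal{I}$, as required. The main conceptual step is the saturation observation that transfers the problem from the non-compact space $\overline{G}$ to the compact quotient $\overline{G}/{\sim_b}$; after that, the argument is a routine Zorn-plus-compactness invocation.
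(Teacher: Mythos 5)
Your proof is correct, but it takes a genuinely different route from the paper's. You obtain existence abstractly via Zorn's lemma: the crucial point is your saturation observation that every closed $(\overline{G}_b,\overline{G})$-biact is a union of $\sim_b$-classes, so that closed biacts correspond to closed subsets of the compact Hausdorff quotient $\overline{G}/{\sim_b}$ of Lemma~\ref{lem:compactness}, and descending chains of non-empty closed biacts therefore have non-empty (closed, biact) intersection. All steps check out: the inclusion $G_b\alpha\subseteq\overline{G}_b\mathcal{J}\overline{G}\subseteq\mathcal{J}$ together with closedness gives saturation, preimages commute with intersections, and surjectivity of the quotient map transfers non-emptiness back to $\overline{G}$. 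The paper instead argues constructively: it introduces the $b$-orbit range of an element of $\overline{G}$, uses compactness of the same quotient to extract an accumulation point of a sequence of elements with minimal $(n,b)$-orbit ranges, thereby producing a single $\gamma$ with globally minimal $b$-orbit range, and then verifies directly that $\overline{G}_b\gamma\overline{G}$ is a minimal closed biact. Your argument is shorter and is the standard Ellis-style existence proof for minimal closed ideals, avoiding the combinatorics of orbit ranges entirely; the paper's version avoids Zorn and, more importantly, exhibits a concrete minimal biact of the form $\overline{G}_b\gamma\overline{G}$, which meshes with the way Lemma~\ref{lem:biacts} manipulates such products. Since Lemma~\ref{lem:biacts} only requires the existence of some minimal closed biact, your proof suffices for the application.
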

\begin{proof}
Let $\gamma\in\overline{G}$ {and $n\geq 1$}. By the $(n,b)$\textbf{-orbit range} of $\gamma$ we mean the set of $G_b$-orbits of $n$-tuples  occurring in the range of $\gamma$. The $b$\textbf{-orbit range} of $\gamma$ is the union over all $n\in\mathbb{N}$ of its $(n,b)$-orbit ranges. Note that if $\alpha\sim_b\beta$, then $\alpha$ and $\beta$ have the same $b$-orbit range.\\

\textbf{Claim 1:} There is $\gamma\in\overline{G}$ with minimal $b$-orbit range, in the sense that there is no $\delta\in \overline{G}$ with $b$-orbit range strictly contained in that of $\gamma$.
\begin{proof}[Proof of Claim 1] 
{By oligomorphicity of $G_b\acts B$, for each $n\in\mathbb{N}$, there is $\gamma_n\in\overline{G}$ with minimal $(n,b)$-orbit range in the sense that there is no $\delta\in\overline{G}$ with $(n,b)$-orbit range strictly contained in that of $\gamma_n$. This is just because for each $n$ there are only finitely many possible $(n,b)$-orbit ranges. Note that {this also implies that} each $\gamma_n$ has minimal $(l,b)$-orbit range for each $l\leq n$. Consider the sequence $(\gamma_n)_{n\in\mathbb{N}}$. By compactness of $\overline{G}/{\sim_b}$  (Lemma~\ref{lem:compactness}), the sequence of $\sim_b$-equivalence classes $([\gamma_n]_{\sim_b})_{n\in\mathbb{N}}$ has an accumulation point $[\gamma]_{\sim_b}$ in  $\overline{G}/{\sim_b}$. We claim that $\gamma$ has minimal $b$-orbit range. To see this, suppose by contradiction that for some $n$, $\gamma$ does not have minimal $(n,b)$-orbit range. Then, by oligomorphicity of $G_b$, there are finite tuples $\overline{a}$ and $\overline{c}$ such that $\gamma(\overline{a})=\overline{c}$ and there is some $\delta\in\overline{G}$ whose range contains fewer $G_b$-orbits of $n$-tuples than $\overline{c}$. Consider the following open {set} 
in $\overline{G}$:}
\[ \mathcal{W}:=\bigcup_{\overline{d}\in G_b\cdot  \overline{c}}\{\xi\in\overline{G} \ \vert \xi(\overline{a})=\overline{d}\}\;.\]

{Notice that $[\mathcal{W}]_{\sim_b}$ is an open neighbourhood of $\gamma$ in $\overline{G}/{\sim_b}$ since its preimage under the quotient map $\overline{G}\mapsto\overline{G}/\sim_b$ is $\mathcal{W}$. However, since the $(n,b)$-orbit range of $\gamma_m$ for $m\geq n$ is minimal, none of the $([\gamma_m]_{\sim_b})_{m\geq n}$ are in $[\mathcal{W}]_{\sim_b}$, contradicting  that $[\gamma]_{\sim_b}$ is an accumulation point of $([\gamma_n]_{\sim_b})_{n\in\mathbb{N}}$.  Thus, $\gamma$ must have minimal $b$-orbit range.}
\end{proof}

\textbf{Claim 2:} Suppose that $\gamma\in\overline{G}$ has minimal $b$-orbit range. Then, $\overline{G}_b\gamma\overline{G}$ is a minimal closed $(\overline{G}_b, \overline{G})$-biact. 
\begin{proof}[Proof of Claim 2] It is easy to see that $\overline{G}_b\gamma\overline{G}$ is a closed $(\overline{G}_b, \overline{G})$-biact. So, we need to prove minimality. To do this, it is sufficient to prove that for any $\alpha\gamma\beta$ for $\alpha\in\overline{G_b}$, $\beta\in\overline{G}$, we have that $\gamma\in\overline{G}_b\alpha\gamma\beta \overline{G}$. Firstly, by essentially the same argument as Claim 1 of Lemma~\ref{lem:biacts}, $\gamma\beta\in \overline{G}_b\alpha\gamma\beta \overline{G}$. Let $(b_i)_{i\in\mathbb{N}}$ be an enumeration of $B$, {and let $n\geq 1$}. Since we chose $\gamma$ to have minimal $b$-orbit range and $\gamma\beta(B)\subseteq\gamma(B)$, 
 there is some tuple $(b_i')_{i\leq n}$ such that $(\gamma \beta b_i')_{i\leq n}$ is in the same $G_b$-orbit as $(\gamma b_i)_{i\leq n}$.  {Since $\gamma,\beta\in\overline{G}$ {we have that} $(b_i)_{i\leq n}$ and $(b_i')_{i\leq n}$ are in the same $G$-orbit and so we can choose $\beta^*_n\in G$ such that $\beta_n^*b_i=b_i'$ for all $i\leq n$.} Also, since $(\gamma \beta b_i')_{i\leq n}$ is in the same $G_b$-orbit as $(\gamma b_i)_{i\leq n}$, there is $\delta_n\in G_b$ such that  $\delta_n\gamma\beta b_i'=\gamma b_i$ for all $i\leq n$. 
{Then,} 
    \[{\delta_n\gamma\beta\beta^*_n}\upharpoonright_{(b_i)_{i\leq n}}= \gamma \upharpoonright_{(b_i)_{i\leq n}},\]
    meaning that {$(\delta_n\gamma\beta\beta^*_n)_{n\geq 1}$ } converges to $\gamma$, yielding $\gamma\in\overline{G}_b\gamma\beta\overline{G}\subseteq \overline{G}_b\alpha\gamma\beta\overline{G}$.
\end{proof}
\end{proof}

Hence, from Lemmas~\ref{lem:biacts} and~\ref{lem:existencebiacts} (and more easily from Lemma~\ref{lem:everythingpalfy} in the finite context, cf. Remark~\ref{rem:minimalityalways}), we can deduce the following version of P\'{a}lfy's Theorem for orbit\-/semiprojections:

\begin{theorem}[Theorem~\ref{thmg}, P\'{a}lfy's Theorem for orbit\-/semiprojections]\label{thm:Palfy} Let $G\acts B$ with $s$-many orbits be finite or oligomorphic. Then, for all finite $1<k\leq s$, there is a $k$-ary orbit\-/semiprojection minimal above $\overline{\langle G\rangle}$.
\end{theorem}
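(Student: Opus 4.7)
The plan is to derive Theorem~\ref{thm:Palfy} as a direct corollary of the reduction Lemma~\ref{lem:biacts} together with either the finite-case analysis in Lemma~\ref{lem:everythingpalfy} or, in the infinite case, the biact existence Lemma~\ref{lem:existencebiacts}. Fix $k$ with $1<k\leq s$, and pick any $b\in B$ lying in a $G$-orbit of size at least two; such a point exists because $G$ is non-trivial and acts faithfully on $B$, so some orbit must be non-singleton. Definition~\ref{def:palfy} then produces, for each pair $\alpha,\beta\in\overline{G}$, a candidate $k$-ary orbit-semiprojection $f^{(\alpha,\beta)}$, and the task reduces to selecting $\alpha,\beta$ so that the resulting operation is minimal above $\overline{\langle G\rangle}$.

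For the finite case, set $\alpha=\beta=1$ and observe that the pointwise convergence topology on $\mathcal O$ is discrete, so $\overline{\langle G\rangle}=\langle G\rangle$ and $\overline{G}=G$. By Remark~\ref{rem:minimalityalways}, which is itself an immediate consequence of the identities (\ref{eq:palfy1})--(\ref{eq:palfy4}) in Lemma~\ref{lem:everythingpalfy} together with Remark~\ref{rem:stuffinstuff}, every operation in $\langle G\cup\{f\}\rangle\setminus\langle G\rangle$ has arity at least $k$ and locally generates $f:=f^{(1,1)}$ back together with $G$. Hence $f$ is a $k$-ary orbit-semiprojection minimal above $\langle G\rangle$, as required.

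For the oligomorphic case, invoke Lemma~\ref{lem:existencebiacts} to obtain a minimal closed $(\overline{G_b},\overline{G})$-biact $\mathcal{I}\subseteq\overline{G}$. Taking $c:=b$ in the right-to-left direction of Lemma~\ref{lem:biacts}, we extract from $\mathcal{I}$ parameters $\gamma,\eta\in\overline{G}$ such that the corresponding $k$-ary operation $f^{(\gamma,\eta)}$ is an orbit-semiprojection minimal above $\overline{\langle G\rangle}$. I do not anticipate any genuine obstacle at this stage: the delicate content of the theorem is already packaged in Lemma~\ref{lem:biacts} (which converts minimality of $f^{(\alpha,\beta)}$ into a biact-minimality question in the unary part) and in Lemma~\ref{lem:existencebiacts} (whose compactness argument on $\overline{G}/{\sim_b}$ produces an element of minimal $b$-orbit range). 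The only point to verify on top of these is that the chosen $b$ indeed lies in a non-trivial orbit, so that $f^{(\gamma,\eta)}$ is genuinely essential and of arity exactly $k$, which is guaranteed by the non-triviality and faithfulness of $G\acts B$.
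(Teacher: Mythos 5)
Your proposal is correct and follows exactly the paper's own route: the paper derives Theorem~\ref{thm:Palfy} in one line from Lemma~\ref{lem:everythingpalfy} (via Remark~\ref{rem:minimalityalways}) in the finite case and from Lemmas~\ref{lem:biacts} and~\ref{lem:existencebiacts} in the oligomorphic case, which is precisely your decomposition. The extra details you supply (choosing $b$ in a non-singleton orbit, taking $c=b$ in the right-to-left direction of Lemma~\ref{lem:biacts}) are accurate and consistent with the cited lemmas.
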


It is evident from Lemma~\ref{lem:biacts} that for arbitrary group actions $G\acts B$ our construction in Definition~\ref{def:palfy} may fail to yield any orbit\-/semiprojection minimal above $\overline{\langle G\rangle}$. However, this does not exclude the existence of other minimal orbit\-/semiprojections, perhaps less close in behaviour to an actual projection than the ones built from Definition~\ref{def:palfy}. Other constructions of minimal semiprojections over finite domains (such as that of~\cite{lengvarszky1986note}) do not seem to adapt as well as 
P\'{a}lfy's~\cite{palfy1986arity} to infinite settings since they tend to construct a strictly almost minimal operation above $\langle\mathrm{Id}\rangle$ and then deduce the existence of a minimal operation from the fact these exist over finite domains. Hence, it seems that new ideas would be needed for more general constructions than the one of Definition~\ref{def:palfy}. Nevertheless, it sounds plausible that there should be groups whose behaviour is wild enough that there are no orbit\-/semiprojections minimal above $\overline{\langle G\rangle}$. Hence, we ask:

\begin{prob} Give an example of $G\acts B$ with three orbits such that there is no ternary orbit\-/semiprojection minimal above $\overline{\langle G\rangle}$. 
\end{prob}

This does not sound like an easy problem to answer: for specific choices of $f$, one can show that there is no orbit\-/semiprojection in $\overline{\langle G\cup\{f\}\rangle}$ minimal above $\overline{\langle G\rangle}$ like we did in Lemma~\ref{lem:biacts}. However, it is unclear how on an infinite domain one may run an argument that any possible orbit\-/semiprojection is not minimal.

\section{Solvability in Datalog without binary injections}\label{sec:datalog}

From Theorem~\ref{thm:findbin}, we know that whenever $B$ is an $\omega$-categorical model-complete core whose $\mathrm{CSP}$ is not $\mathrm{NP}$-hard, $\mathrm{Pol}(B)$ has a binary essential polymorphism. Frequently in the literature on infinite-domain $\mathrm{CSP}$s, the existence of a binary essential polymorphism is used to deduce the existence of a binary injective polymorphism under some additional assumptions~\cite{BodKara, bodirsky2010complexity, minrandom, Smoothapp, bodirsky2020complexity}. Binary injective polymorphisms are  helpful when reducing $\mathrm{CSP}(B)$ to its injective variant~\cite{Smoothapp}, and more generally in most complexity classifications on an infinite domain~\cite{bodirsky2010complexity, minrandom, Smoothapp}. Moreover, they naturally appear as one of the Nelson-Opper criteria~\cite{nelson1979simplification} for deducing that the generic combination of two $\omega$-structures with no algebraicity and trivial $\mathrm{CSP}$ also has trivial $\mathrm{CSP}$~\cite{bodirsky2020complexity, bodirsky2022tractable}.\\

We conclude by giving a counterexample to the following question of Bodirsky on binary injective polymorphisms:

\begin{customq}{\ref{q:datalog}}(Question 14.2.6 (27) in {\cite{BodCSP}})
   Does every $\omega$-categorical structure without algebraicity that can be solved
by Datalog also have a binary injective polymorphism?
\end{customq}

  Datalog is a logic programming language which captures many standard polynomial-time algorithms used in the context of CSPs, such as path consistency. Indeed, solvability in Datalog of a CSP corresponds to solvability by $k$-consistency algorithms for some $k$~\cite{feder1998computational, bodirsky2013datalog}. We refer the reader to~\cite{bodirsky2013datalog} for a discussion of Datalog in an $\omega$-categorical setting and for the definition of a Datalog program. In this section, we use an algebraic characterisation of solvability in Datalog for a particular class of $\omega$-categorical structures~\cite{mottet2021symmetries}, thus avoiding the need to formally introduce the syntax and semantics of Datalog. Its significance for Question~\ref{q:datalog} is the idea that binary injective polymorphisms would always appear for $\omega$-categorical structures with easy enough $\mathrm{CSP}$s. Without the restriction to Datalog, some first-order reducts of $(\mathbb{Q}, <)$ have $\mathrm{CSP}$ solvable in polynomial time (though not in Datalog) but have no binary injective polymorphisms (cf. Theorem 1.4 in~\cite{bodirsky2022tractable} and Claim 4.8 in~\cite{bodirsky2022descriptive}).\\

Our counterexample (Definition~\ref{def:defD}) is a finitely bounded homogeneous structure; it is also 
 a first-order reduct of the unary structure consisting of two infinite disjoint unary predicates partitioning a countable domain. The $\mathrm{CSP}$s of infinite unary structures (and their reducts) have been studied in detail with a complexity dichotomy proved in~\cite[Theorems 1.1 \& 1.2]{bodirsky2018dichotomy} and a descriptive complexity analysis given in~\cite[Theorem 5.1]{mottet2021symmetries}. The latter characterises solvability in Datalog for reducts of unary structures as being witnessed by a particular class of (canonical) polymorphisms known as pseudo-WNU operations (Definition~\ref{def:WNU}). We will use this description for our proof of solvability in Datalog since it clarifies the intuition behind the counterexample.\\ 

The starting point of our proof is that any structure on the Boolean domain $\{0,1\}$ with a minimum operation among its polymorphisms has $\mathrm{CSP}$ solvable in Datalog (because it contains WNU operations in all arities). So, the idea of our counterexample is to ``blow up'' the points of a structure on $\{0,1\}$ with a minimum operation to countable sets $P_0$ and $P_1$,  and then destroy all binary injective polymorphisms by adding relations  simple enough to keep the $\mathrm{CSP}$ of the enriched  structure in Datalog. 


\begin{definition} Let $D$ be the first-order structure with domain the disjoint union of two copies of $\mathbb{N}$ in the language with two unary predicates $\{P_0,P_1\}$, where each predicate names one of the two disjoint copies (which we also call $P_0$ and $P_1$). 
\end{definition}

\begin{definition}\label{def:mdef}
For $i\in\{0,1\}$, let $\mathrm{Inj}_i$ be the set of injections $P_i\to P_i$ and $\mathrm{BInj}_i$ be the set of binary injections $P_i^2\to P_i$. Now, define $\mathfrak{M}'$ to be the set of binary operations $\mathrm{m}:D^2\to D$ such that, for some $g_0\in \mathrm{BInj}_0, g_1\in \mathrm{BInj}_1$, and $\alpha, \beta\in \mathrm{Inj}_0$, we have
\[\mathrm{m}(x,y):=\begin{cases} g_1(x,y) \text{ if } x,y\in P_1;\\
g_0(x,y) \text{ if } x,y\in P_0;\\
\alpha x \text{ if } x\in P_0, y\in P_1;\\
\beta y \text{ if } y\in P_0, x\in P_1.
    \end{cases}
    \]
So, $\mathrm{m}$ acts as a binary injection on each $\mathrm{Aut}(D)$-orbit and if its inputs are from different orbits, it selects the one in $P_0$ (and moves it by an appropriate injection of $P_0$). Sometimes we may write $\mathrm{m}_{(\alpha, \beta, g_0, g_1)}$ when we want to specify the functions used to construct $\mathrm{m}$.\\

We write $\mathfrak{M}$ for the operations in $\mathfrak{M}'$ such that $g_0, \alpha,$ and $\beta$ have disjoint images.
\end{definition}

As mentioned earlier, solvability in Datalog for reducts of unary structures is witnessed by canonical pseudo-WNU operations. Below we define each of these terms.

\begin{definition}[{\cite{pinsker2021canonical}}] Let $G\acts B$. We say that an $n$-ary operation $f:B^n\to B$ is \textbf{canonical} with respect to $G$ (or $G$-canonical) if for all $k\geq 1$, all $k$-tuples $\overline{a}_1, \dots, \overline{a}_n\in B^k$, and all $\alpha_1, \dots, \alpha_n\in G$, there is $\beta\in G$ such that
\begin{equation}\label{eq:canondef}
    f(\alpha_1 \overline{a}_1, \dots, \alpha_n \overline{a}_n)=\beta f(\overline{a}_1, \dots, \overline{a}_n),
\end{equation}
where $f$ and $\alpha$ are applied componentwise. 

Let $\mathfrak{C}^G$ denote the clone of $G$-canonical functions on $B$. Note that the definition of a canonical function in (\ref{eq:canondef}) says that the $G$-orbit of the $k$-tuple $f(\overline{a}_1, \dots, \overline{a}_n)$ is entirely determined by the $G$-orbits of the $k$-tuples $\overline{a}_1, \dots, \overline{a}_n$. In particular, writing $\mathcal{T}_k$ for the space of $G$-orbits of $k$-tuples on $B$, the $G$-canonical function $f$ induces an $n$-ary operation $\xi_k^G(f)$ on $\mathcal{T}_k$ defined as follows (cf.~\cite{BPP-projective-homomorphisms}): for $O_1, \dots, O_n\in\mathcal{T}_k$, let $\xi_k^G(f)(O_1, \dots, O_n)$ be given by the orbit of $f(\overline{a}_1, \dots, \overline{a}_n)$ for some/all $\overline{a}_i\in O_i$. This is well-defined by canonicity. It is easy to see (and well-known) that the map $\xi_k^G:\mathfrak{C}^G\to \mathcal{O}_{\mathcal{T}_k}$ is a continuous clone homomorphism.  For any closed clone $\mathcal{C}\subseteq \mathfrak{C}^G$, we denote by $\xi_k^G(\mathcal{C})$ its image with respect to $\xi_k^G$.
\end{definition}

\begin{definition}\label{def:WNU} An $n$-ary operation $w:B^n\to B$ is called a \textbf{weak near-unanimity operation (WNU)} if for any two tuples $\overline{z}_1, \overline{z}_2\in\{x,y\}^n$ containing exactly one instance of $y$, we have
\[w(\overline{z}_1)\approx w(\overline{z}_2).\]
So, for example, a ternary WNU operation $w$ satisfies 
\[w(x,x,y)\approx w(x,y,x)\approx w(y,x,x).\]
\end{definition}

\begin{definition} 
We say that $w:B^n\to B$ is a \textbf{pseudo-WNU} operation (modulo $\overline{G}$) if for any $\overline{z}_1, \overline{z}_2\in\{x,y\}^n$ containing exactly one instance of $y$ there are $\alpha_{\overline{z}_1}, \beta_{\overline{z}_2}\in\overline{G}$ such that $\alpha_{\overline{z}_1}w(\overline{z}_1)\approx\beta_{\overline{z}_2} w(\overline{z}_2)$.
\end{definition}

\begin{theorem}[Theorem 5.1 in~\cite{mottet2021symmetries}]\label{thm:symmetriesmott} Let $B$ be a model complete core and such that $\mathrm{Aut}(B)$ contains $\mathrm{Aut}(C)$, where $C$ is a structure in a finite unary language. Then, $\mathrm{CSP}(B)$ is solvable in Datalog if and only if the clone of $\mathrm{Aut}(C)$-canonical polymorphisms for $B$ has pseudo-WNU operations  with respect to $\overline{\mathrm{Aut}(C)}$ in all arities $n\geq 3.$
\end{theorem}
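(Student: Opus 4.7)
The plan is to transfer the characterisation to a finite quotient structure via the clone homomorphism $\xi_k^{\mathrm{Aut}(C)}$, and then to invoke the Barto--Kozik characterisation of bounded width for finite CSPs. First I would observe that since $C$ is in a finite unary language, the action of $\mathrm{Aut}(C)$ on $B^k$ has only finitely many orbits (a $k$-orbit is determined by the unary type of each coordinate together with the equality pattern), so $\mathcal{T}_k$ is finite for each $k$ and the map $\xi_k^{G}:\mathfrak{C}^{G}\to\mathcal{O}_{\mathcal{T}_k}$ with $G=\mathrm{Aut}(C)$ is a continuous clone homomorphism to a clone on a finite set.

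For the backward direction, suppose $\mathrm{Pol}(B)\cap\mathfrak{C}^G$ has pseudo-WNU operations $w_n$ in all arities $n\geq 3$ modulo $\overline{G}$. Applying $\xi_k^G$ collapses each pseudo-witness from $\overline{G}$ to the identity in $\mathcal{T}_k$, so the images $\xi_k^G(w_n)$ are genuine WNU operations in all arities on $\mathcal{T}_k$; the Barto--Kozik theorem then yields that the finite quotient $\tilde B_k$, whose relations are the $G$-orbits of the relations of $B$, has bounded width. I would then reduce $\mathrm{CSP}(B)$ to the CSPs of the $\tilde B_k$: since $B$ is a first-order reduct of the unary structure $C$ (up to augmenting the language by the unary predicates of $C$, which are preserved under every $\mathrm{Aut}(C)$-canonical polymorphism), the unary predicates of $C$ are pp-definable in the model-complete core, so a $k$-consistency algorithm on $B$ can be implemented by running a $k$-consistency algorithm on $\tilde B_k$ after guessing, for every variable, its unary type.

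For the forward direction, I would argue by contrapositive: if the canonical clone has no pseudo-WNU in some arity $n\geq 3$, then by a standard loop lemma / pp-construction argument in the finite setting, $\xi_k^G(\mathrm{Pol}(B)\cap\mathfrak{C}^G)$ has no WNU in arity $n$ on some $\mathcal{T}_k$ either, and hence $\mathrm{Pol}(\tilde B_k)$ admits a clone homomorphism to $\mathcal{P}_{\{0,1\}}$ by Barto--Kozik. This clone homomorphism lifts along $\xi_k^G$ to a uniformly continuous clone homomorphism $\mathrm{Pol}(B)\cap\mathfrak{C}^G\to\mathcal{P}_{\{0,1\}}$; using the canonisation machinery for reducts of finitely bounded homogeneous structures (applicable here because $C$ has a finite unary language, so canonisation of polymorphisms of $B$ is automatic), one extends this to a uniformly continuous clone homomorphism from all of $\mathrm{Pol}(B)$ to $\mathcal{P}_{\{0,1\}}$, which in turn precludes bounded width and hence Datalog solvability.

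The hard part will be two separate things. First, the forward direction genuinely uses the full canonisation theorem for reducts of unary structures, which is non-trivial: one needs that every polymorphism of $B$ can be turned into a canonical one while preserving the identities witnessing failure of pseudo-WNU. Second, even in the backward direction, the reduction $\mathrm{CSP}(B)\leq\mathrm{CSP}(\tilde B_k)$ is delicate because the $G$-orbits on $k$-tuples are not literally the unary types: one has to show that the Datalog program for $\tilde B_k$ can be rewritten as a Datalog program for $B$ by expanding the intensional predicates to track $G$-orbit membership of $k$-tuples, which works precisely because $G=\mathrm{Aut}(C)$ is defined by unary predicates, making orbit membership locally checkable. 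Without the unary language assumption neither direction would go through as cleanly, since then orbits of $k$-tuples need no longer be first-order definable from unary predicates and the pullback of a Datalog program would require a genuine pp-interpretation rather than a syntactic translation.
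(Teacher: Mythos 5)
The paper does not prove this statement: it is imported verbatim as Theorem 5.1 of the cited reference and used as a black box (the surrounding text explicitly says the authors use this algebraic characterisation precisely to avoid introducing the syntax and semantics of Datalog). So there is no internal proof to compare yours against; all one can assess is whether your sketch is a correct reconstruction of the external result.

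As written, it is not. The backward direction has the right shape (push the pseudo-WNUs through $\xi_k^{G}$ to obtain genuine WNUs on the finite orbit structure, invoke the finite-domain bounded-width theorem, and transfer the consistency algorithm back), but the transfer step you flag as ``delicate'' is in fact the main technical content of the cited proof and cannot be reduced to ``guessing unary types'' --- a Datalog program does not guess. The forward direction contains a genuine error: for a finite idempotent clone, the absence of WNU operations in \emph{all} arities $n\geq 3$ is equivalent to the failure of congruence meet-semidistributivity, i.e.\ to admitting type $\mathbf{1}$ or type $\mathbf{2}$ in the sense of tame congruence theory; it does \emph{not} yield a clone homomorphism to $\mathcal{P}_{\{0,1\}}$ (that would require the absence of a WNU of \emph{every} arity, i.e.\ the absence of a Taylor operation, and is not what Barto--Kozik states). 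The correct obstruction is that $\xi_k^{G}(\mathrm{Pol}(B)\cap\mathfrak{C}^{G})$, and hence $B$ itself, pp-constructs systems of linear equations over $\mathbb{Z}_p$ for some prime $p$, and one concludes by the \emph{unconditional} inexpressibility of such systems in Datalog (Feder--Vardi, Atserias--Bulatov--Dawar). Your route via a uniformly continuous clone homomorphism to $\mathcal{P}_{\{0,1\}}$ would, even where available, only give NP-hardness, which rules out Datalog solvability only under $\mathrm{P}\neq\mathrm{NP}$; the theorem is unconditional.
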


To check whether $\mathrm{Pol}(B)$ has canonical pseudo-WNU operations in our context (where $G=\mathrm{Aut}(D)$), it is sufficient to focus on 
the image of $\mathrm{Pol}(B)\cap\mathfrak{C}^G$ (i.e.~the canonical polymorphisms of $B$) under $\xi_2^G$. 
This is due to the following instantiation of Proposition 6.6 in~\cite{BPP-projective-homomorphisms}:

\begin{lemma}[{Proposition 6.6 in~\cite{BPP-projective-homomorphisms}}]\label{lem:pclonhom} Let $B$ be a homogeneous structure in a finite relational language $\mathcal{L}$ and with automorphism group $G$. Let $\mathcal{C}\supseteq G$ be a closed clone of canonical functions with respect to $G$. Let $l:=\mathrm{max}\{2, |\mathcal{L}|\}$. Suppose that $\xi_l^G(\mathcal{C})$ satisfies WNUs in all arities $n\geq 3$. Then, $\mathcal{C}$ satisfies pseudo-WNUs modulo $\overline{G}$ in all arities $n\geq 3$. 
\end{lemma}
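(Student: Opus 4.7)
The plan is to lift a witnessing WNU from the canonical quotient $\xi_l^G(\mathcal{C})$ back to $\mathcal{C}$ and then promote the orbit-level WNU identity to a genuine pseudo-WNU identity using the homogeneity of $B$. First, fix $n\geq 3$, use the hypothesis to obtain an $n$-ary WNU $\bar{w}\in\xi_l^G(\mathcal{C})$, and lift it to a $G$-canonical $w\in\mathcal{C}$ with $\xi_l^G(w)=\bar{w}$. For each $\overline{z}\in\{x,y\}^n$ containing exactly one $y$, let $w_{\overline{z}}(x,y)$ denote the binary operation obtained by substitution, and fix a reference tuple $\overline{z}_0$. It suffices, for every such $\overline{z}$, to produce a single $\alpha_{\overline{z}}\in\overline{G}$ with $\alpha_{\overline{z}}\,w_{\overline{z}}\approx w_{\overline{z}_0}$, because for any pair $(\overline{z}_1,\overline{z}_2)$ the identity $\alpha_{\overline{z}_1}\,w_{\overline{z}_1}\approx\alpha_{\overline{z}_2}\,w_{\overline{z}_2}$ is then precisely the pseudo-WNU condition.

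The key step combines $G$-canonicity of $w$ with the homogeneity of $B$. For any $l$-tuples $\overline{a},\overline{b}\in B^l$, the coordinatewise outputs $w_{\overline{z}}(\overline{a},\overline{b})$ and $w_{\overline{z}_0}(\overline{a},\overline{b})$ lie in the same $G$-orbit of $l$-tuples, because these orbits are computed by $\bar{w}$ and coincide by its WNU identities. Homogeneity of $B$ in a relational language of arity at most $l$ (with $l\geq 2$ ensuring that equality patterns between pairs of coordinates are already read off from $2$-subtuples) implies that two tuples of arbitrary length lie in the same $G$-orbit if and only if all of their $l$-subtuples do. Consequently, for every finite $A\subseteq B$ the restrictions $w_{\overline{z}}\upharpoonright_{A^2}$ and $w_{\overline{z}_0}\upharpoonright_{A^2}$, viewed as $|A|^2$-tuples, lie in the same $G$-orbit, producing $\gamma_A\in G$ with $\gamma_A\,w_{\overline{z}}\upharpoonright_{A^2}=w_{\overline{z}_0}\upharpoonright_{A^2}$.

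Finally, the $\gamma_A$'s can be packaged into a single element of $\overline{G}$. The partial map $\alpha_0\colon \mathrm{Im}(w_{\overline{z}})\to B$ sending $w_{\overline{z}}(a,b)\mapsto w_{\overline{z}_0}(a,b)$ is well-defined, since if two input pairs have the same image under $w_{\overline{z}}$ then applying $\gamma_A$ for $A$ containing all four coordinates shows their images under $w_{\overline{z}_0}$ coincide, and every finite restriction of $\alpha_0$ agrees on its domain with the corresponding $\gamma_A$. A standard back-and-forth argument, whose input is the oligomorphicity of $G\acts B$ (automatic from homogeneity in a finite relational language, since orbits of $n$-tuples correspond to finitely many isomorphism types of induced substructures), extends $\alpha_0$ to some $\alpha_{\overline{z}}\in\overline{G}$ satisfying $\alpha_{\overline{z}}\,w_{\overline{z}}\approx w_{\overline{z}_0}$. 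The main obstacle I foresee is the careful interplay between the arity bound $l=\max\{2,|\mathcal{L}|\}$ and the characterisation of orbits via $l$-subtuples: the bound must be at least the maximum relation arity to invoke homogeneity, and at least $2$ to control equality constraints, which is exactly what makes $l$ the correct threshold for promoting WNU on orbits of $l$-tuples to pseudo-WNU on all of $B$.
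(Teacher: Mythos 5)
The paper does not prove this statement; it imports it verbatim as Proposition~6.6 of~\cite{BPP-projective-homomorphisms}, so there is no in-paper argument to compare against. Judged on its own merits, your ``local'' analysis is correct and is indeed the heart of the matter: lifting $\bar w$ to a canonical $w\in\mathcal{C}$, observing that the WNU identities of $\bar w=\xi_l^G(w)$ force $w_{\overline z}(\overline a,\overline b)$ and $w_{\overline z_0}(\overline a,\overline b)$ into the same orbit of $l$-tuples, and then using homogeneity together with the bound $l=\max\{2,|\mathcal{L}|\}$ (so that orbits of arbitrary finite tuples are determined by orbits of their $l$-subtuples) to produce, for every finite $A$, some $\gamma_A\in G$ with $\gamma_A\,w_{\overline z}\!\upharpoonright_{A^2}=w_{\overline z_0}\!\upharpoonright_{A^2}$. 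All of that is sound.

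The gap is in the final globalization step. You reduce the pseudo-WNU condition to finding a \emph{single} $\alpha_{\overline z}\in\overline{G}$ with $\alpha_{\overline z}\,w_{\overline z}\approx w_{\overline z_0}$ exactly, and you obtain it by extending the partial map $\alpha_0\colon\mathrm{Im}(w_{\overline z})\to B$ via ``a standard back-and-forth argument''. No such extension result holds: a partial map with \emph{infinite} domain all of whose finite restrictions extend to automorphisms need not extend to a total element of $\overline{G}$. For instance, with $G=\mathrm{Aut}(\mathbb{Q},<)$, let $f$ be an order-isomorphism from $\mathbb{Q}$ onto $\mathbb{Q}\cap(-\infty,\sqrt 2)$ and $g=\mathrm{id}$; for every finite $A$ there is $\gamma_A\in G$ with $\gamma_A f\!\upharpoonright_A=g\!\upharpoonright_A$, yet any $\alpha$ with $\alpha f=g$ would have to map $\mathbb{Q}\cap(-\infty,\sqrt 2)$ onto all of $\mathbb{Q}$ and hence cannot be extended order-preservingly to the rest of $\mathbb{Q}$, so no $\alpha\in\overline{G}$ works. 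This is exactly why the pseudo-identity in Definition~\ref{def:WNU}ff.\ carries unary functions on \emph{both} sides: the correct tool here is the standard local-to-global compactness lemma for pseudo-identities over oligomorphic groups (proved via the compactness of quotients of $\overline{G}$ as in Lemma~\ref{lem:compactness}, or a K\H{o}nig's-lemma argument over the finitely many orbits), which converts ``for every finite $A$ there is $\gamma_A\in G$ with $\gamma_A w(\overline z_1)=w(\overline z_2)$ on $A$'' into ``there are $\alpha,\beta\in\overline{G}$ with $\alpha w(\overline z_1)\approx\beta w(\overline z_2)$''. Your steps up to the construction of the $\gamma_A$ feed precisely into that lemma; the one-sided strengthening you aim for is both unnecessary for the conclusion and, in general, false.
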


\ignore{
\begin{lemma} Let $\mathrm{m}\in\mathfrak{M}'$. If $\mathrm{m}$ is $\mathrm{Aut}(D)$-canonical, then $\mathrm{m}\in\mathfrak{M}$.
\end{lemma}
    \begin{proof} We prove this by contrapositive. Considering  $\mathrm{m}$ of the form $\mathrm{m}(\alpha, \beta, g_0, g_1)$ for $\alpha, \beta\in\mathrm{Inj}_0$, $g_i\in\mathrm{BInj}_i$ for $i\in\{0,1\}$. Suppose for $a_0, b_0\in P_0$, $\alpha a_0=\beta b_0$. Then, for $c_1, d_1\in P_1$ and $\gamma\in\mathrm{Aut}(D)$ moving $b_0$, we have that
\[\mathrm{m}\begin{pmatrix}
    a_0 & c_1\\
    d_1 & b_0
\end{pmatrix}=\begin{pmatrix} \alpha a_0\\
\alpha a_0
\end{pmatrix}
\text{ and } \mathrm{m}\begin{pmatrix}
    a_0 & \gamma c_1\\
    d_1 & \gamma b_0
\end{pmatrix}=\begin{pmatrix}
    \alpha a_0\\
   \beta \gamma b_0
\end{pmatrix}\;.
\]
Since $\beta$ is injective, and so $\beta \gamma b_0\neq\beta b_0= \alpha a_0$, we have that $(\alpha a_0, \alpha a_0)$ and $(\alpha a_0, \beta \gamma b_0)$ lie in different $\mathrm{Aut}(D)$-orbits, and so $\mathrm{m}$ is not canonical. Hence, without loss of generality, we are left with the case where for some $a_0,b_0, c_0\in P_0$, we have $\alpha a_0=g_0(b_0, c_0)$. Then, for $d_1\in P_1$, and $\gamma\in\mathrm{Aut}(D)$ moving $c_0$, we have
\[\mathrm{m}\begin{pmatrix}
    a_0 & d_1\\
    b_0 & c_0
\end{pmatrix}=\begin{pmatrix}
    \alpha a_0\\
    \alpha a_0
\end{pmatrix} \text{ and } \mathrm{m}\begin{pmatrix}
    a_0 & \gamma d_1\\
    b_0 & \gamma c_0
\end{pmatrix}=\begin{pmatrix}
    \alpha a_0\\
    g_0(b_0, \gamma c_0)
\end{pmatrix}\;.\]
Since $g_0$ is injective, $(a_0, a_0)$ and $(a_0, g_0(b_0, \gamma c_0))$ are in different orbits implying that $\mathrm{m}$ is not canonical.
\end{proof}
}

\begin{lemma}\label{lem:WNUsobtain} Let $\mathrm{m}\in\mathfrak{M}$. Then $\mathrm{m}$ is $\mathrm{Aut}(D)$-canonical. Moreover, $g:=\xi_2^G(\mathrm{m})$ satisfies $g(x,g(x,y))\approx g(x,y)$, and it generates WNU operations of all arities $\geq 3$ on $\mathcal{T}_2$.
\end{lemma}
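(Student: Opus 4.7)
The plan is to verify the three claims in sequence, with the bulk of the work being careful case analysis on the four subcases of Definition~\ref{def:mdef}.

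For the canonicity of $\mathrm{m}$, recall that $\mathrm{Aut}(D) = \mathrm{Sym}(P_0) \times \mathrm{Sym}(P_1)$, so the orbit of a $k$-tuple is determined by its predicate pattern (which entries lie in $P_0$ and which in $P_1$) together with its equality pattern. The predicate pattern of the componentwise image $\mathrm{m}(\overline{a}_1, \overline{a}_2)$ is immediately controlled by those of the inputs via the rule $\pi(\mathrm{m}(x, y)) = \min(\pi(x), \pi(y))$. For the equality pattern, I argue coordinatewise: when coordinates $i, j$ fall into the same subcase, injectivity of the relevant sub-operation ($g_0, g_1, \alpha$, or $\beta$) forces equality of outputs to depend only on the input equalities; when they fall into different subcases with outputs in the same $P_l$, the disjoint-image hypothesis built into $\mathfrak{M}$ (but not into $\mathfrak{M}'$) ensures the outputs are always distinct. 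This is precisely where the stronger condition defining $\mathfrak{M}$ is used.

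For the identity $g(x, g(x, y)) \approx g(x, y)$, pick representatives $(x_1, x_2) \in X$ and $(y_1, y_2) \in Y$, set $z_i := \mathrm{m}(x_i, y_i)$, and compare the orbits of $(z_1, z_2)$ and $(\mathrm{m}(x_1, z_1), \mathrm{m}(x_2, z_2))$. In each of the four coordinatewise subcases, a direct computation shows that $\pi(\mathrm{m}(x_i, z_i)) = \pi(z_i)$ and that equalities across the two coordinates depend on the input positions in the same way as for $(z_1, z_2)$; for instance, when $x_i \in P_0, y_i \in P_1$ we have $z_i = \alpha x_i \in P_0$ and $\mathrm{m}(x_i, z_i) = g_0(x_i, \alpha x_i)$, whose coordinate equality pattern is governed by whether $x_1 = x_2$, exactly as for $(z_1, z_2) = (\alpha x_1, \alpha x_2)$.

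For WNUs of all arities $\geq 3$, the strategy is to identify $g$ with the meet operation of a semilattice on $\mathcal{T}_2$. The six orbits on $D^2$ are $D_0, D_1$ (the diagonals in $P_i^2$), $O_{00}, O_{11}$ (non-diagonal pairs in $P_i^2$), and $O_{01}, O_{10}$ (cross-predicate pairs). I claim these form a lattice with $O_{00}$ at the bottom, the antichain $\{D_0, O_{01}, O_{10}\}$ in the middle, $O_{11}$ above these three, and $D_1$ at the top, and that $g(X, Y) = X \wedge Y$ for every pair. Granting this, $g$ is commutative, associative, and idempotent, so the iterated term
\[
w_n(x_1, \ldots, x_n) := g(\cdots g(g(x_1, x_2), x_3), \cdots, x_n)
\]
is symmetric in all its arguments and hence a WNU of arity $n$ for each $n \geq 3$.

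The main obstacle will be carrying out the finite but tedious case analysis needed to identify $g$ with the claimed meet — in particular associativity, which requires checking $g(g(X, Y), Z) = g(X, g(Y, Z))$ on a number of triples of orbits. The semilattice structure is possible only because the disjoint-image hypothesis defining $\mathfrak{M}$ rules out accidental collisions between outputs produced by different subcases of Definition~\ref{def:mdef}; without it, canonicity itself fails and there is no well-defined $g$ on $\mathcal{T}_2$ to begin with.
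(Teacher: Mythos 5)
Your proposal is correct, and the first two parts (canonicity and the absorption identity $g(x,g(x,y))\approx g(x,y)$) follow essentially the same route as the paper: a case analysis over the six $\mathrm{Aut}(D)$-orbitals of pairs, using injectivity of $g_0,g_1,\alpha,\beta$ within each subcase of Definition~\ref{def:mdef} and the disjoint-image condition of $\mathfrak{M}$ across subcases (and you correctly identify that this is exactly where $\mathfrak{M}$ rather than $\mathfrak{M}'$ is needed). The one genuine divergence is the final step. You identify $g$ as the meet operation of an explicit semilattice on $\mathcal{T}_2$ --- bottom $O_0^{\neq}$, the antichain $\{O_0^=, O_{(0,1)}, O_{(1,0)}\}$ above it, then $O_1^{\neq}$, then $O_1^=$ on top --- and deduce that the iterated term $w_n$ is totally symmetric, hence a WNU; I checked your claimed table against the definition of $\mathrm{m}$ and it is correct. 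The paper instead never establishes associativity: it evaluates $w_n$ only on tuples from $\{x,y\}^n$ and shows $w_n(\overline z)\approx g(x,y)$ for every non-constant $\overline z$ using just idempotency, commutativity, and the absorption identity, which is less case-checking but yields a weaker conclusion than full symmetry. Your route buys a cleaner structural picture (a semilattice polymorphism on the orbit algebra, which is the standard certificate for Datalog solvability), and note that your worry about verifying associativity on triples is unfounded: once the binary table is confirmed to agree with the meet of the stated poset, associativity is automatic, so your check is no longer than the paper's seven-case list.
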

\begin{proof} We first prove canonicity of $\mathrm{m}$. Note that it is sufficient to check canonicity of $\mathrm{m}$ on orbits of pairs. This is because the $\mathrm{Aut}(D)$-orbit of an $n$-tuple is entirely determined by its restrictions to the orbits of all of its pairs (cf. p.156 in~\cite{BPP-projective-homomorphisms}). We have six different orbitals to consider:
\begin{itemize}
    \item for $i\in\{0,1\}$, $O_i^=$, the orbit of a pair of equal elements both in $P_i$;
    \item for $i\in\{0,1\}$, $O_i^{\neq}$, the orbit of a pair of distinct elements both in $P_i$;
    \item the orbitals $O_{(0,1)}$ and $O_{(1,0)}$, denoting, respectively, for $a_0\in P_0, b_1\in P_1$, the orbits of $(a_0, b_1)$ and of $(b_1, a_0)$.
\end{itemize}
We now show that for $\overline{a}, \overline{b}\in D^2$, the orbital of $\mathrm{m}(\overline{a}, \overline{b})$ is entirely determined by the orbitals of $\overline{a}$ and $\overline{b}$. We can compute the following, exploiting the fact that the domains of $\alpha, \beta,$ and $g_0$ are disjoint and injections:
\begin{enumerate}
    \item for $o_0^{{\neq}}\in O_0^{\neq}$ and $q\in D^2$, we always have $\mathrm{m}(o^{\neq}_0, q)\in O^{\neq}_0$;
    \item for $o_1^{\neq}\in O_1^{\neq}$ and $q\in D^2$, $\mathrm{m}(o^{\neq}_1, q)$ is in the same orbital as $q$;
    \item for $o_0^=\in O_0^=$ and $q\not\in O_0^=$, $\mathrm{m}(o_0^=, q)\in O_0^{\neq}$;
    \item for $o_0^=, q_0^=\in O_0^=$, $\mathrm{m}(o_0^=, q_0^=)\in O_0^=$;
    \item for $o_1^{=}\in O_1^{=}$ and $q\in D^2$, $\mathrm{m}(o^=_1, q)$ is in the same orbital as $q$;
    \item for $J\in \{(0,1), (1,0)\}$ and $o_J, q_J\in O_J$, $\mathfrak{m}(o_J, q_J)\in O_J$;
    \item for $I, J\in \{(0,1), (1,0)\}$ distinct, and $o_I\in O_I, o_J\in O_J$, $\mathfrak{m}(o_I, o_J)\in O_0^{\neq}$.
\end{enumerate}
Hence, $\mathrm{m}$ is canonical. We can also see from the list above that $g(x,g(x,y))\approx g(x,y)$ (since $g$ is just the operation induced by $\mathfrak{m}$ on the orbitals). Firstly, note that $g$ is idempotent. In cases (1), (3), (4), and (6), $g(x,y)=x$, and so $g(x,g(x,y))=g(x,y)$ by idempotency. In cases (2) and (4), $g(x,y)=y$, yielding again $g(x,g(x,y))=g(x,y)$. In case (7), $g(O_I, O_J)=O^{\neq}_0$. By the same reasoning as in (1), $g(O_I, O^{\neq}_0)=O^{\neq}_0$. Hence, also in this case, the identity holds, yielding that $g(x,g(x,y))\approx g(x,y)$.

For $n\geq 3$, consider the operation on $\mathcal{T}_2^n$ given by
\[w_n(x_1, \dots, x_n):=g(x_1, g(x_2, \dots ,g(x_{n-1}, x_n)\dots )).\]
It follows readily from the identity $g(x,g(x,y))\approx g(x,y)$ and the idempotency and commutativity of $g$ that $w_n(\overline{z})\approx g(x,y)$ for all non-constant $\overline{z}\in\{x,y\}^n$; in particular $w_n$ is a weak near unanimity operation.
\end{proof}

\begin{remark} Whilst this is unnecessary for our purposes, one can also prove that $\mathfrak{M}$ is precisely the set of $\mathrm{Aut}(D)$-canonical operations in $\mathfrak{M}'$.
\end{remark}

\begin{corollary}\label{cor:solvabilitycriteriondat} Let $C$ be a model complete core in a finite language with automorphism group $\mathrm{Aut}(D)$ and with $\mathrm{Pol}(C)\cap\mathfrak{M}\neq\emptyset$. Then, $\mathrm{CSP}(C)$ is solvable in Datalog. 
\end{corollary}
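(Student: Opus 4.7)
The proof is a straightforward chaining of the preceding results, with the main content already contained in Lemma~\ref{lem:WNUsobtain}. Write $G=\mathrm{Aut}(D)$, and note that since $D$ is in a finite relational (unary) language with $\mathrm{Aut}(C)\supseteq G$, Theorem~\ref{thm:symmetriesmott} applies to $C$: it suffices to produce, in $\mathrm{Pol}(C)\cap\mathfrak{C}^G$, pseudo-WNU operations modulo $\overline{G}$ in every arity $n\geq 3$.

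Fix some $\mathrm{m}\in\mathrm{Pol}(C)\cap\mathfrak{M}$ (which exists by hypothesis). By Lemma~\ref{lem:WNUsobtain}, $\mathrm{m}$ is $G$-canonical, so it lies in $\mathrm{Pol}(C)\cap\mathfrak{C}^G$. Consider the closed clone $\mathcal{C}:=\overline{\langle G\cup\{\mathrm{m}\}\rangle}\cap \mathfrak{C}^G$. Since $G\subseteq \mathrm{Aut}(C)\subseteq \mathrm{Pol}(C)$ and $\mathrm{m}\in\mathrm{Pol}(C)$, we have $\mathcal{C}\subseteq\mathrm{Pol}(C)\cap\mathfrak{C}^G$. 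Moreover $\mathcal{C}\supseteq G$ and consists of $G$-canonical operations, so the continuous clone homomorphism $\xi_2^G$ is defined on it, and its image $\xi_2^G(\mathcal{C})$ contains the sub-clone generated by $\xi_2^G(\mathrm{m})$, which by the second part of Lemma~\ref{lem:WNUsobtain} satisfies WNU identities in all arities $n\geq 3$.

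Now apply Lemma~\ref{lem:pclonhom} with the structure $D$ (whose automorphism group is $G$ and whose relational language has cardinality $2$, so that the relevant index $l=\max\{2,|\mathcal{L}_D|\}$ equals $2$) and with the closed clone $\mathcal{C}$. We conclude that $\mathcal{C}$ satisfies pseudo-WNU identities modulo $\overline{G}$ in all arities $n\geq 3$. In particular, $\mathrm{Pol}(C)\cap\mathfrak{C}^G$ contains $G$-canonical pseudo-WNU polymorphisms in all such arities. By Theorem~\ref{thm:symmetriesmott}, $\mathrm{CSP}(C)$ is solvable in Datalog. The only step requiring any care is verifying the hypotheses of Lemma~\ref{lem:pclonhom}: namely that $\mathcal{C}$ is a closed clone of $G$-canonical operations containing $G$, which is immediate from the construction, and that the WNU identities obtained via $\xi_2^G$ suffice at the level $l=2$, which holds because $D$ has only unary predicates so its $G$-orbits on pairs already determine orbits on arbitrary tuples.
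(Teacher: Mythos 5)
Your proposal is correct and follows essentially the same route as the paper: extract a canonical operation from $\mathrm{Pol}(C)\cap\mathfrak{M}$ via Lemma~\ref{lem:WNUsobtain}, pass through $\xi_2^G$ to get WNUs on $\mathcal{T}_2$, lift to pseudo-WNUs with Lemma~\ref{lem:pclonhom}, and conclude with Theorem~\ref{thm:symmetriesmott}. Your only deviation is a cosmetic one — you apply Lemma~\ref{lem:pclonhom} to the explicitly canonical closed clone $\overline{\langle G\cup\{\mathrm{m}\}\rangle}\cap\mathfrak{C}^G$ rather than loosely to $\mathrm{Pol}(C)$, which is if anything slightly more careful about the lemma's hypotheses than the paper's own wording.
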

\begin{proof} We are assuming that $\mathrm{Pol}(C)$ contains a  polymorphism in $\mathfrak{M}$, which is canonical by Lemma~\ref{lem:WNUsobtain}. By  Lemma~\ref{lem:WNUsobtain}, this means that $\xi_2^{\mathrm{Aut}(D)}(\mathrm{Pol}(C))$ contains WNU operations of all arities $n\geq 3$. By Lemma~\ref{lem:pclonhom}, $\mathrm{Pol}(C)$ contains pseudo-WNUs of all arities $n\geq 3$. Theorem~\ref{thm:symmetriesmott} tells us this implies solvability in Datalog of $\mathrm{CSP}(C)$.
\end{proof}

\begin{definition}\label{def:defD} Let $(F; R_1, \dots, R_m)$ be any finite relational structure on $\{0,1\}$ with the minimum operation being its only binary essential polymorphisms\footnote{Such structures exist. Indeed, any structure on $\{0,1\}$ with constants for $0$ and $1$ and such all of its relations are definable by a Horn formula, but one of its relations is not definable by a dual-Horn formula will work (c.f.~\cite{schaefer1978complexity}, see~\cite[Theorem 6.2.7]{BodCSP}). We say that a formula in conjunctive normal form is Horn where each clause has at most one positive literal. We say it is dual-Horn if each clause has at most one negative literal. So, an example of a ternary relation which is Horn but not dual-Horn is $\{0,1\}^3\setminus(1,1,0)$.}. For each relation $R_i$ of $F$, 
let $R_i^*$ be its ``blow-up'' to $D$, i.e.~the preimage of $R_i$ under the map which sends $P_j$ onto $j$ for $j\in\{0,1\}$. 
Note that each of the $R_i^*$ is first-order definable in $D$ since the original $R_i$ were first-order definable 
using only the constants for $0$ and $1$.

We define two additional relations on $D$. Firstly, the $4$-ary relation $E$ denotes orbital equivalence of pairs; it will ensure all polymorphisms of our structure will be canonical. Secondly, the following relation will help destroy all binary injective polymorphisms:
\[ p(x,y) := (x=y\wedge P_0(x))\vee(P_1(x)\wedge P_1(y))\; .\]
Now, consider the first-order structure
\[D':=(D; P_0, P_1, R^*_1, \dots, R^*_m, E, p, \neq)\;.\]
\end{definition}

\begin{lemma}\label{lem:Datacore} The structure $D'$ is a model-complete core. 
\end{lemma}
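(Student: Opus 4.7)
The plan is to verify that $\mathrm{End}(D')=\overline{\mathrm{Aut}(D')}$, i.e.\ that every unary polymorphism of $D'$ is a local limit of automorphisms. The crucial observation will be that every relation of $D'$ is first-order definable from $\{P_0,P_1,=\}$ alone. Indeed, each $R_i^*$ is by construction the preimage of $R_i$ under the map sending $P_j\mapsto j$, so membership in $R_i^*$ depends only on which $P_j$ each coordinate belongs to. The relation $E$ expresses orbital equivalence under $\mathrm{Aut}(D)$, which factors as the product of the symmetric groups on $P_0$ and $P_1$; hence $E$ is determined by the $P_j$-types of the entries together with the equality pattern among them. The relations $p$ and $\neq$ are manifestly definable from $\{P_0,P_1,=\}$.

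From this, any injection $h\colon D\to D$ preserving both $P_0$ and $P_1$ automatically preserves and reflects every relation of $D'$. Consequently, the endomorphisms of $D'$ are exactly the injections of $D$ that preserve $P_0$ and $P_1$ (injectivity being forced by preservation of $\neq$), and the automorphisms of $D'$ are exactly the bijections with this property.

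To conclude, given $h\in\mathrm{End}(D')$ and a finite $A\subseteq D$, I would extend each of the finite partial injections $h_{\upharpoonright A\cap P_0}$ and $h_{\upharpoonright A\cap P_1}$ to full bijections $\alpha_j\colon P_j\to P_j$; this is possible since both $P_0$ and $P_1$ are countably infinite. The combined map $\alpha:=\alpha_0\cup\alpha_1$ is then a bijection of $D$ preserving $P_0$ and $P_1$, hence an automorphism of $D'$ by the preceding paragraph, and it agrees with $h$ on $A$. This shows $h\in\overline{\mathrm{Aut}(D')}$, proving that $D'$ is a model-complete core. No genuine obstacle arises here: the entire argument rests on the deliberate design of $D'$ so that all its relations remain first-order definable from the partition $\{P_0,P_1\}$ together with equality, which collapses the usual verification that endomorphisms are limits of automorphisms to a back-and-forth extension on two disjoint countable sets.
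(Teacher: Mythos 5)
Your proof is correct and follows essentially the same route as the paper: both arguments identify $\mathrm{End}(D')$ with the injections preserving $P_0$ and $P_1$ (forced by preservation of $P_0$, $P_1$, and $\neq$), note that $\mathrm{Aut}(D')=\mathrm{Aut}(D)$ consists of the corresponding bijections, and conclude $\mathrm{End}(D')=\overline{\mathrm{Aut}(D')}$. You merely make explicit two steps the paper leaves implicit — that every relation of $D'$ is quantifier-free determined by the $P_j$-memberships and equality pattern of its arguments, and the finite-extension argument showing each endomorphism is a local limit of automorphisms — both of which are accurate.
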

\begin{proof} We show that $\overline{\mathrm{Aut}(D')}=\mathrm{End}(D')$, which is given by 
\[\mathcal{G}:=\{g_0\cup g_1\vert g_0\in \mathrm{Inj}(P_0), g_1\in\mathrm{Inj}(P_1)\} \;.\]
Let $g\in\mathrm{End}(D')$. We know that $g$ preserves $P_0$ and $P_1$ and that on each set it must act as an injection since it preserves $\neq$. In particular, $g\in\mathcal{G}$. Conversely, any function in $\mathcal{G}$ preserves all relations of $D'$. Finally, we know that $\mathrm{Aut}(D')=\mathrm{Aut}(D)$ and that the latter is
\[\{\sigma_0\cup \sigma_1\vert \sigma_0\in\mathrm{Bij}(P_0), \sigma_1\in\mathrm{Bij}(P_1)\}\; ,\]
where $\mathrm{Bij}(P_i)$ is the set of bijections of the countably infinite set denoted by $P_i$.
Hence, $\overline{\mathrm{Aut}(D')}=\mathrm{End}(D')$.
\end{proof}

\begin{lemma}\label{lem:containment} We have that $\mathfrak{M}\subseteq \mathrm{Pol}(D')$. 
\end{lemma}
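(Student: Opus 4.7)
The plan is to go through the relations of $D'$ one by one and verify that an arbitrary $\mathrm{m}=\mathrm{m}_{(\alpha,\beta,g_0,g_1)}\in\mathfrak{M}$ preserves each. The unary predicates $P_0,P_1$ are handled directly from the definition of $\mathfrak{M}$: the only way an output of $\mathrm{m}$ lands in $P_1$ is when both inputs are in $P_1$, and in all other cases the output lies in $P_0$. This in fact means that $\mathrm{m}$ induces the minimum operation on the two-element quotient $\{P_0,P_1\}\cong\{0,1\}$; since each $R_i$ is preserved by $\min$ on $\{0,1\}$ by the choice of $F$ in Definition~\ref{def:defD}, the blow-up $R_i^*$ is automatically preserved by $\mathrm{m}$.

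For $E$, the preservation statement is literally canonicity with respect to $\mathrm{Aut}(D)$: $(a_1,a_2)$ and $(b_1,b_2)$ lying in the same orbital, together with $(c_1,c_2)$ and $(d_1,d_2)$ lying in the same orbital, must force $(\mathrm{m}(a_1,c_1),\mathrm{m}(a_2,c_2))$ and $(\mathrm{m}(b_1,d_1),\mathrm{m}(b_2,d_2))$ into the same orbital, which is exactly the $\mathrm{Aut}(D)$-canonicity of $\mathrm{m}\in\mathfrak{M}$ already established in Lemma~\ref{lem:WNUsobtain}. For $p$, I would do a short case analysis on which of $P_0,P_1$ each input belongs to. In the two mixed cases the value of $\mathrm{m}$ only sees one of its two arguments (via $\alpha$ or $\beta$), so equality of the first coordinates of the $p$-tuples (or the fact that both first coordinates lie in $P_1$) propagates to equality (or, respectively, a $P_1\times P_1$ output) of the two $\mathrm{m}$-values; the two pure cases are immediate.

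The longest (but still elementary) case is preservation of $\neq$. Here I would split based on which ``quadrant'' of $D^2$ each of the two input pairs lies in. When both pairs are in the same quadrant, injectivity of one of $g_0,g_1,\alpha,\beta$ closes the case; when the pairs lie in distinct quadrants, the two outputs either land in distinct $P_i$'s (done automatically) or both land in $P_0$, and this is precisely the place where one uses that $g_0,\alpha,\beta$ have pairwise disjoint images. This disjointness, imposed by the definition of $\mathfrak{M}$ (as opposed to $\mathfrak{M}'$), is the only nontrivial ingredient and is the step I expect to be the main technical obstacle — though calling it an obstacle is generous, since the hypothesis was engineered exactly to make this check go through. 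Once all cases are dispatched we conclude $\mathrm{m}\in\mathrm{Pol}(D')$, proving $\mathfrak{M}\subseteq\mathrm{Pol}(D')$.
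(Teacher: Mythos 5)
Your proposal is correct and follows essentially the same route as the paper: preservation of the blown-up relations $R_i^*$ via the induced minimum operation, preservation of $E$ via canonicity from Lemma~\ref{lem:WNUsobtain}, the same key computation $\mathrm{m}(a_0,b_1)=\alpha a_0=\mathrm{m}(a_0,c_1)$ for $p$, and injectivity plus disjointness of the images of $g_0,\alpha,\beta$ for $\neq$. The only difference is that you spell out the quadrant case analysis for $\neq$ explicitly, which the paper compresses into ``easy to verify from the proof of Lemma~\ref{lem:WNUsobtain}''.
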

\begin{proof} Since any polymorphism of $D'$ preserves the relation of orbit-equivalence on pairs, all polymorphisms in $\mathrm{Pol}(D')$ are $\mathrm{Aut}(D)$-canonical. We then see that  $\mathrm{Pol}(D')$ consists precisely of those $\mathrm{Aut}(D)$-canonical operations for which $\xi_2(f)$ is a polymorphism of $F$ and which additionally preserve $\neq$ and $p$. 
For any $\mathrm{m}\in\mathfrak{M}$, $\xi_2(\mathrm{m})$ is the operation of minimum on $F$, which is a polymorphism of $F$.  From the proof of  Lemma~\ref{lem:WNUsobtain}, it is easy to verify that all operations in $\mathfrak{M}$ preserve inequality. 
Since we know that $\mathrm{m}$ preserves $P_1$ and $O_{0}^=$ (by Lemma~\ref{lem:WNUsobtain}), to see that $p$ is preserved by $\mathrm{m}$, we only need to consider whether $p$ holds of $(\mathrm{m}(a_0, b_1), \mathrm{m}(a_0, c_1))$, where $a_0\in P_0$, and $b_1, c_1\in P_1$. This is the case because $\mathrm{m}(a_0, b_1)=\alpha a_0=\mathrm{m}(a_0, c_1)$.
\end{proof}

\begin{lemma}\label{lem:bininj}
  Let $g$ be a binary polymorphism of $D'$. Then, $g$ is not injective.
\end{lemma}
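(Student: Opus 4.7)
The plan is to reduce the problem to the underlying Boolean structure $F$ and then exploit the asymmetric shape of the relation $p$ to force a collision for $g$.

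First I would observe that $g$ is $\mathrm{Aut}(D)$-canonical: preservation of the $4$-ary relation $E$ encoding orbit-equivalence of pairs gives canonicity at the pair level, which in particular forces canonicity at the single-element level. Combined with the preservation of the unary relations $P_0, P_1$ (which name the two $\mathrm{Aut}(D)$-orbits of $D$), this gives a well-defined induced operation $\xi_1(g) \colon \{0,1\}^2 \to \{0,1\}$ on the quotient. Preservation of each blow-up $R_i^*$ then immediately implies that $\xi_1(g)$ preserves every $R_i$, i.e.\ $\xi_1(g)$ is a binary polymorphism of $F$. Since $F$ has constants for $0$ and $1$, every polymorphism is idempotent, so the binary polymorphisms of $F$ are exactly the essential ones together with the two projections; by the choice of $F$, this means $\xi_1(g) \in \{\pi_1, \pi_2, \min\}$.

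Next, in each of these three cases I would produce two distinct pairs on which $g$ agrees, using only the preservation of $p$. Suppose first that $\xi_1(g) \in \{\pi_1, \min\}$, and pick any $a \in P_0$ together with any two distinct $c, d \in P_1$. Then $p(a,a)$ holds via the first disjunct of the definition of $p$, and $p(c,d)$ holds via the second disjunct; hence $p(g(a,c), g(a,d))$ must hold. In both subcases $\xi_1(g)(0,1) = 0$, so $g(a,c), g(a,d) \in P_0$. This rules out the second disjunct of $p$ and forces $g(a,c) = g(a,d)$ via the first. Since $(a,c) \neq (a,d)$, the operation $g$ is not injective. The remaining case $\xi_1(g) = \pi_2$ is symmetric: applying the same argument to the pairs $(c,a)$ and $(d,a)$ yields $g(c,a) = g(d,a) \in P_0$.

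I do not anticipate a real obstacle here, since the design of $p$ does all the work: $p$ is loose on $P_1 \times P_1$ (its second disjunct is automatic) but tight on $P_0 \times P_0$ (its first disjunct forces equality). The only conceptual point is that no binary polymorphism of $F$ can avoid sending at least one mixed input to $0$, and this is ensured precisely by the dichotomy $\xi_1(g) \in \{\pi_1, \pi_2, \min\}$, which in turn was guaranteed by the Horn-but-not-dual-Horn choice of $F$.
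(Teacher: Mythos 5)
Your proof is correct and follows essentially the same route as the paper's: reduce via canonicity (preservation of $E$, $P_0$, $P_1$) to an induced binary polymorphism of $F$, then apply preservation of $p$ to the pairs $(a_0,a_0)\in p$ and $(b_1,c_1)\in p$ so that both outputs land in $P_0$, where $p$ forces equality. Your explicit three-case analysis over $\{\pi_1,\pi_2,\min\}$ is in fact slightly more careful than the paper's argument, which only treats the case where the induced operation is the minimum (asserting that essentiality forces this) and leaves the projection and essentially-unary cases implicit.
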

\begin{proof} If $g$ is an essential polymorphism of $D'$, then $\xi_2(g)$ must be the minimum operation on $\{0,1\}$. Hence, for $a_0\in P_0$ and $b_1\in P_1$, $g(a_0, b_1)\in P_0$.
Consider $c_1\in P_1$ and $(g(a_0, b_1), g(a_0, c_1))$. Note that $(a_0, a_0)\in p$ and $(b_1, c_1)\in p$ and $g(a_0, b_1)\in P_0$ and $g(d_0, b_1)\in P_0$. But since $g$ preserves $p$, we must have $g(a_0, b_1)=g(a_0, c_1)$, which breaks injectivity.
\end{proof}

\begin{corollary} \label{cor:WNUprogram} The structure $D'$ has $\mathrm{CSP}$ solvable in Datalog.
\end{corollary}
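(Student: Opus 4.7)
The plan is to verify that $D'$ satisfies all hypotheses of Corollary~\ref{cor:solvabilitycriteriondat}, which then immediately delivers solvability in Datalog. Concretely, the Corollary requires: (i) $D'$ is a model-complete core; (ii) $D'$ is in a finite relational language; (iii) $\mathrm{Aut}(D')=\mathrm{Aut}(D)$; (iv) $\mathrm{Pol}(D')\cap \mathfrak{M}\neq\emptyset$. Each of these is either already proved or immediate from the definitions.

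First, Lemma~\ref{lem:Datacore} gives both (i) and (iii): it shows $\overline{\mathrm{Aut}(D')}=\mathrm{End}(D')$ (so $D'$ is a model-complete core) and it identifies $\mathrm{Aut}(D')$ with $\mathrm{Aut}(D)$. Condition (ii) is immediate from Definition~\ref{def:defD}, since $D'$ is defined in the finite relational language $\{P_0,P_1,R_1^*,\ldots,R_m^*,E,p,\neq\}$. For condition (iv), Lemma~\ref{lem:containment} tells us that $\mathfrak{M}\subseteq \mathrm{Pol}(D')$, so it is enough to observe that $\mathfrak{M}$ is non-empty. This is a routine construction: since both $P_0$ and $P_1$ are countably infinite, one can choose binary injections $g_0:P_0^2\to P_0$ and $g_1:P_1^2\to P_1$, together with unary injections $\alpha,\beta:P_0\to P_0$, so that the images of $g_0$, $\alpha$ and $\beta$ are pairwise disjoint subsets of $P_0$ (for instance, split $P_0$ into three disjoint infinite pieces and let each of $g_0,\alpha,\beta$ take values in a distinct one); the resulting operation $\mathrm{m}_{(\alpha,\beta,g_0,g_1)}$ lies in $\mathfrak{M}$ by Definition~\ref{def:mdef}.

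With all four hypotheses verified, Corollary~\ref{cor:solvabilitycriteriondat} applies to $C:=D'$ and yields that $\mathrm{CSP}(D')$ is solvable in Datalog. There is no genuine obstacle here: the intellectual content has already been packaged into Corollary~\ref{cor:solvabilitycriteriondat} (which in turn rests on the canonicity and WNU analysis of $\mathfrak{M}$ in Lemma~\ref{lem:WNUsobtain}, the clone-homomorphism transfer of Lemma~\ref{lem:pclonhom}, and the Datalog criterion Theorem~\ref{thm:symmetriesmott}). The only thing to check by hand is the existence of a single operation in $\mathfrak{M}$, which is a purely combinatorial triviality.
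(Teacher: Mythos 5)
Your proposal is correct and follows essentially the same route as the paper: both invoke Lemma~\ref{lem:Datacore} for the model-complete core property, Lemma~\ref{lem:containment} for $\mathfrak{M}\subseteq\mathrm{Pol}(D')$, and then apply Corollary~\ref{cor:solvabilitycriteriondat}. Your explicit check that $\mathfrak{M}$ is non-empty is a small extra detail the paper leaves implicit, but it does not change the argument.
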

\begin{proof} We know that $D'$ is a core from Lemma~\ref{lem:Datacore}. Also, $\mathrm{Aut}(D')=\mathrm{Aut}(D)$. From Lemma~\ref{lem:containment}, $\mathfrak{M}\subseteq\mathrm{Pol}(D')$. Hence, $\mathrm{CSP}(D')$ is solvable in Datalog by Corollary~\ref{cor:solvabilitycriteriondat}.
\end{proof}

We say that an $\omega$-categorical structure $B$ has \textbf{algebraicity} if for some finite $A\subseteq B$, some orbit of the stabilizer of $A$ 
acting on $B\setminus A$ is finite. 

\begin{corollary}\label{cor:conclusionq3} There is a finitely bounded homogeneous model-complete core without algebraicity whose $\mathrm{CSP}$ can be solved by a Datalog program but which has no binary injective polymorphism.
\end{corollary}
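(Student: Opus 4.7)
The plan is to verify that the structure $D'$ from Definition~\ref{def:defD} satisfies all the properties claimed in the statement. Three of these properties are already established: $D'$ is a model-complete core (Lemma~\ref{lem:Datacore}), its CSP is solvable in Datalog (Corollary~\ref{cor:WNUprogram}), and it has no binary injective polymorphism (Lemma~\ref{lem:bininj}). Hence, it remains to establish finite boundedness, homogeneity, and the absence of algebraicity.

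For homogeneity, the key observation is that every relation of $D'$ (namely $P_0, P_1, R_1^*, \ldots, R_m^*, E, p, \neq$) is quantifier-free definable in the pure two-predicate structure $D = (D; P_0, P_1)$, using only the unary predicates and equality. The structure $D$ is obviously homogeneous: any bijection between finite subsets that preserves the partition into $P_0$ and $P_1$ extends to an automorphism since both parts are countably infinite. Given a partial isomorphism between finite substructures of $D'$, one checks that it must preserve $P_0$, $P_1$, and equality (the latter via $\neq$), and by the quantifier-free definability it is automatically an isomorphism in the reduct $D$; it therefore extends to an automorphism of $D$, which is an automorphism of $D'$ by Lemma~\ref{lem:Datacore}.

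For finite boundedness, since the age of $D'$ is determined by the orbits on finite tuples (by homogeneity) and every such orbit is characterised by the distribution of $P_0/P_1$-labels together with the induced equalities, the forbidden substructures can be taken to be the finite relational structures in the signature of $D'$ in which either (a) some element fails to satisfy exactly one of $P_0, P_1$, or (b) some tuple violates one of the quantifier-free definitions of $R_i^*$, $E$, $p$, or $\neq$ in terms of $P_0, P_1$, and equality. Since each relation has fixed finite arity, the finitely many minimal such forbidden configurations can be collected into a finite set $\mathcal{D}$ with $\mathrm{Age}(D') = \mathrm{Forb}^{\mathrm{emb}}(\mathcal{D})$.

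Finally, absence of algebraicity follows immediately from the description of $\mathrm{Aut}(D')=\mathrm{Aut}(D)$ given in the proof of Lemma~\ref{lem:Datacore}: for any finite $A \subseteq D'$, the pointwise stabilizer of $A$ acts as $\mathrm{Bij}(P_0 \setminus A) \times \mathrm{Bij}(P_1 \setminus A)$ on $D' \setminus A$, and so its only orbits on $D' \setminus A$ are $P_0 \setminus A$ and $P_1 \setminus A$, both of which are infinite. None of these steps should pose real difficulty; the mildly delicate point is simply keeping track of why the additional relations $E$, $p$, $R_i^*$ do not refine the orbits of $\mathrm{Aut}(D)$ and hence do not destroy homogeneity — this is immediate from their quantifier-free definitions over $\{P_0, P_1, =\}$.
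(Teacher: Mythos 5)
Your proposal is correct and follows essentially the same route as the paper: it uses the same witness $D'$, cites the same three lemmas (Lemma~\ref{lem:Datacore}, Corollary~\ref{cor:WNUprogram}, Lemma~\ref{lem:bininj}), and derives homogeneity, finite boundedness, and non-algebraicity from the fact that $D'$ is an expansion of the obviously well-behaved structure $D$ by (quantifier-free) definable relations. The paper dispatches these last three properties with a one-line remark, whereas you spell out the details; your elaboration is accurate and adds nothing that conflicts with the paper's argument.
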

\begin{proof} The structure $D$ is clearly homogeneous and finitely bounded without algebraicity. Since $D'$ is obtained by expanding $D$ by definable relations, it also has all of these properties. We proved that $D'$ is a model-complete core in Lemma ~\ref{lem:Datacore}. We know that $D'$ is solvable by a Datalog program by Lemma~\ref{lem:containment} and Corollary~\ref{cor:WNUprogram}. Finally, we know that $D'$ has no binary injections from Lemma~\ref{lem:bininj}.
\end{proof}

 \printbibliography

\end{document}